\newtheorem{theorem}{Theorem}[section]
\newtheorem*{theorem*}{Theorem}
\newtheorem{lem}[theorem]{Lemma}
\numberwithin{equation}{section}
\newtheorem{cor}[theorem]{Corollary}
\newtheorem*{cor*}{Corollary}
\theoremstyle{remark}
\newtheorem*{claim}{Claim}
\newtheorem*{rem}{Remark}
\newtheorem*{ex}{Example}
\theoremstyle{definition}
\newtheorem{defi}[theorem]{Definition}
\newcommand*{\rom}[1]{\expandafter\@slowromancap\romannumeral #1@}
\DeclareMathOperator{\SL}{SL}
\DeclareMathOperator{\GL}{GL}
\DeclareMathOperator{\PGL}{PGL}
\DeclareMathOperator{\Coeff}{Coeff}
\DeclareMathOperator{\id}{id}
\DeclareMathOperator{\Image}{Im}
\DeclareMathOperator{\Ker}{Ker}
\DeclareMathOperator{\Quid}{Quid}
\DeclareMathOperator{\charac}{char}
\DeclareMathOperator{\lcm}{lcm}
\newcommand{\Z}{\mathbb{Z}}
\newcommand{\N}{\mathbb{N}}
\newcommand{\R}{\mathbb{R}}
\newcommand{\C}{\mathbb{C}}
\newcommand{\M}{\mathcal{M}}
\newcommand{\pr}{\mathbb{P}}
\newcommand{\F}{\mathbb{F}}
\newcommand{\G}{\mathscr{G}}
\title{Counting tame $\SL_3$- and $\SL_4$-frieze patterns over finite fields}
\address{Institute of Algebra, Number Theory and discrete Mathematics, Leibniz Universität Hannover, Welfengarten 1, Hannover, DE}
\email{surmann@math.uni-hannover.de}
\author{Lucas Surmann}
\begin{document}

\begin{abstract}
In this article we count tame $ \SL_3 $- and $ \SL_4 $-frieze patterns with width $ w $ over a finite field $ K $, as well as some tame $ \SL_k $-frieze patterns for higher $ k $. Let $ n = w + k + 1 $. We consider the sets $ C_k(n) $ of tuples of $ n $ points in the projective space $ \pr^{k-1}(K) $, such that $ k $ consecutive points are always independent (the first and last point in the tuple are considered to be consecutive).\\
First assume $ \gcd(k,n) = 1 $. In this case, we prove that the problem of counting tame $ \SL_k $-frieze patterns can be reduced to counting $ C_k(n) $. We also show that $ \lvert C_k(n) \rvert $ is essentially already known as long as $ k $ and $ n $ are coprime, and we derive the number of tame $ \SL_k $-frieze-patterns in that case.\\
In the case $ \gcd(k,n) \neq 1 $, we define certain subsets $ C_k^*(n) $ and show that it is sufficient to count these sets. Afterwards, we count $ C_k^*(n)$ in the cases $ k = 3 $ and $ k = 4 $ and thus the number of tame $ \SL_3 $- and $ \SL_4 $-frieze patterns for any width $ w $.
\end{abstract}

\maketitle

\section*{Introduction}

$ \SL_2 $-frieze-patterns are infinite arrays of numbers with a finite number of rows where all $ 2 \times 2 $-determinants of entries in a so called $ 2 \times 2 $-diamond in the array are required to equal $ 1 $. They were introduced by Conway and Coxeter \cite{Conway_Coxeter_1973_A}, \cite{Conway_Coxeter_1973_B} and are also known as Conway-Coxeter frieze patterns if the entries are positive integers. They have been studied a lot in the decades since. As a result, they are quite well understood. $ \SL_k $-frieze patterns are a generalization of Conway-Coxeter frieze patterns where $ k \times k $-determinants of $ k \times k $-diamonds are considered instead for some $ k \geq 2 $. The idea of using higher determinants in frieze-patterns is originally due to Cordes and Roselle \cite{cordes1972generalized}, who introduced so called $ m $-friezes. Here, certain $ k \times k $-determinants are required to be equal to $ 1 $ for all $ 2 \leq k \leq m $, whereas an $ \SL_k $-frieze pattern only requires $ k \times k $-determinants to be equal to $ 1 $. $ \SL_k $-frieze patterns arise as a special case of $ \SL_k $-tilings, where both the number of rows and columns is infinite. $ \SL_k $-tilings were first introduced by Bergeron and Reutenauer \cite{bergeron2010}.\\
$ \SL_k $-frieze patterns are defined to be array of numbers (or more generally elements of a ring $ R $) with a finite number of infinite rows such that every block of $ k \times k $ adjacent entries -- a so called $ k \times k $-diamond -- has determinant $ 1 $. Additionally, we require that the highest and lowest $ k - 1 $ rows only contain zeros, while the rows immediately below respectively above that only contain ones.
Let $ w $ be the number of nontrivial rows in a given $ \SL_k $-frieze pattern. Then we say that the frieze pattern has width $ w $. Often it is also required that every $ (k+1) \times (k+1) $-diamond has determinant zero, in which case the $ \SL_k $-frieze pattern is called tame.
One big advantage of studying tame $ \SL_k $-frieze patterns instead of $ \SL_k $-frieze patterns in general is that their rows are periodic with period $ n = w + k + 1 $ \cite[Corollary 7.1.1]{morier2014linear}, so there are only finitely many different elements to deal with.\\
Not much is known about $ \SL_k $-frieze patterns for $ k \geq 3 $. We know that there is a connection to the Grassmannian and cluster algebras, see \cite{morier2014linear} and \cite{baur2021friezes}. This link is further explored in \cite{cuntz2023}. \cite{surmann} uses the cluster structure to calculate entries of tame $ \SL_3 $-frieze patterns from a cluster. For wild $ \SL_k $-frieze patterns even less is known, but there is some work by Cuntz about the existence of wild $ \SL_k $-frieze patterns with specific properties \cite{cuntz2017wild}.\\
In this paper we specifically consider tame $ \SL_k $-frieze patterns with entries in a finite field $ K = \F_q $. Our goal is to count such frieze patterns. The case $ k = 2 $ was already done by Morier-Genoud in 2021 \cite{morier2021counting}. Later, tame $ \SL_2 $-frieze patterns were also counted more generally over finite, commutative, local rings by Böhmler and Cuntz \cite{bohmler2024frieze}. In this paper we count tame $ \SL_3 $-frieze patterns as well as tame $ \SL_4 $-frieze patterns of any width $ w $ over finite fields. We will also count $ \SL_k $-frieze of width $ w $ for all $ k $ such that $ k $ and $ n = w + k + 1 $ are coprime. We closely follow the approach in \cite{morier2021counting}, generalizing the method used there to higher $ k $.\\
In the case $ \gcd(k,n) = 1 $ we show that tame $ \SL_k $-frieze patterns of width $ w = n-k-1 $ over a field $ K $ are closely related to the spaces
\[
C_k(n) := \{ (v_1,\dots,v_n) \in (\pr^{k-1}(K))^n \mid \{v_i,v_{i+1},\dots,v_{i+k-1}\} \text{ is independent for } i \in [n] \},
\]
where the indices are considered modulo $ n $. In other words, $ C_k(n) $ contains tuples of $ n $ points from the projective space $ \pr^{k-1}(K) $ with the requirement that $ k $ consecutive points have to be independent. Here we consider $ v_n $ and $ v_1 $ to be consecutive. The group $ \PGL(k,K) $ acts on $ C_k(n) $ and this fact is also fundamental to our method. We denote the set of orbits under this action as $ \M_k(n) $.\\
In the case $ g := \gcd(k,n) > 1 $ one has to instead consider certain subspaces of $ C_k(n) $: Choose some lift $ (V_1,\dots,V_n) \in (K^k)^n $ and define $ d_i := \det(V_i,V_{i+1},\dots,V_{i+k-1}) $. Consider the equations
\begin{equation}\label{eq:c*equation}
d_i d_{i+g} \dots d_{i+n-g} = (-1)^{(k-1)\frac{k}{g}} d_{i+1} d_{i+g+1} \dots d_{i+n-g+1} \forall i \in [g-1].
\end{equation}
Then we write
\[
C_k^*(n) := \{ (v_1,\dots,v_n) \in C_k(n) \mid \eqref{eq:c*equation} \text{ holds for some lift } (V_1,\dots,V_n)\text{ of } (v_1,\dots,v_n)\}.
\]
Note that this definition is independent of the choice of lift. We will prove that the problem of counting tame $ \SL_k $-frieze patterns can be reduced to counting these spaces. More precisely, we have
\begin{theorem*}[\ref{thm:numberOfFriezes}]
Let $ k \geq 2 $ and $ w \geq 1 $. Let $ n := w + k + 1 $. Let $ g := \gcd(k,n) $. Let $ K = \F_q $ be a finite field with $ q $ elements.
\begin{enumerate}
    \item If $ g = 1 $, then the number of tame $ \SL_k $-frieze patterns of width $ w $ is
    \[
    f_q(k,n) = \frac{\lvert C_k(n) \rvert}{\lvert \PGL(k,K) \rvert} = \lvert \M_k(n) \rvert.
    \]
    \item If $ g > 1 $, then the number of tame $ \SL_k $-frieze patterns of width $ w $ is
    \[
    f_q(k,n) = \frac{\lvert C_k^*(n) \rvert (q-1)^{g-1}}{\lvert \PGL(k,K) \rvert}.
    \]
\end{enumerate}
\end{theorem*}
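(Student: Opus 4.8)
The plan is to reduce the count to a problem in linear algebra over $K^*$ by lifting projective configurations to honest vector sequences. First I would invoke the correspondence underlying the theory of tame $\SL_k$-friezes (see \cite{morier2014linear}): a tame $\SL_k$-frieze pattern of width $w$ is the same datum as a bi-infinite sequence $(V_i)_{i\in\Z}$ in $K^k$ with $\det(V_i,V_{i+1},\dots,V_{i+k-1}) = 1$ for all $i$ and monodromy $V_{i+n} = (-1)^{k-1}V_i$ (a fixed scalar, hence trivial in $\PGL(k,K)$), taken up to the diagonal left action of $\SL(k,K)$; the frieze entries are recovered as the minors of the $V_i$. Since $V_1,\dots,V_k$ form a basis, this action is free, so the number of friezes equals $|\mathcal{V}|/|\SL(k,K)|$, where $\mathcal{V}$ is the finite set of such sequences read modulo $n$. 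Because $|\SL(k,K)| = |\GL(k,K)|/(q-1) = |\PGL(k,K)|$, the theorem follows once I show
\[
|\mathcal{V}| = |C_k^*(n)|\,(q-1)^{g-1},
\]
with the convention $C_k^*(n) = C_k(n)$ when $g = 1$.

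The second step studies the projectivization $\pi\colon \mathcal{V}\to C_k(n)$, $(V_i)\mapsto([V_i])$, which lands in $C_k(n)$ because consecutive determinants are nonzero. Fixing $(v_i)\in C_k(n)$ and any lift $(\widetilde V_i)$ with $\widetilde d_i := \det(\widetilde V_i,\dots,\widetilde V_{i+k-1})$, a preimage is a rescaling $V_i = \lambda_i \widetilde V_i$, $\lambda_i\in K^*$, solving the multiplicative system $\prod_{j=0}^{k-1}\lambda_{i+j} = \widetilde d_i^{\,-1}$ for $i\in\Z/n$. Choosing a generator of the cyclic group $K^*$ and passing to exponents turns this into a linear system $Mx = b$ over $\Z/(q-1)$, where $M$ is the circulant whose rows are the $k$ consecutive-ones windows. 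Subtracting adjacent rows forces $x_{i+k}=x_i$, so solutions are constant on the $g$ residue classes modulo $g$; a direct count gives $|\ker M| = (q-1)^{g-1}\gcd(k/g,\,q-1)$, and shows that the system is solvable precisely when the $g$ products $P_r := \prod_{i\equiv r\pmod g}\widetilde d_i$ are all equal and their common value is a $(k/g)$-th power in $K^*$. The first condition is exactly the defining condition of $C_k^*(n)$, and it is lift-independent as the paper notes.

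It remains to absorb the extra power condition and to check that the $\gcd(k/g,q-1)$-factors cancel. For this I would use the $\GL(k,K)$-action: replacing $(\widetilde V_i)$ by $(A\widetilde V_i)$ multiplies every $\widetilde d_i$ by $\det(A)$, hence multiplies the common value (when the $P_r$ agree) by $\det(A)^{n/g}$, moving its class in $K^*/(K^*)^{k/g}$ by $[\det(A)^{n/g}]$. Since $\gcd(n/g,k/g) = 1$, the $(n/g)$-th powers already generate $K^*$ modulo $(k/g)$-th powers, so this class is equidistributed as $(v_i)$ ranges over $C_k^*(n)$; hence exactly a $1/\gcd(k/g,q-1)$ fraction of the configurations in $C_k^*(n)$ are liftable. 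Multiplying the number of liftable configurations by the common fibre size $|\ker M|$ yields
\[
|\mathcal{V}| = \frac{|C_k^*(n)|}{\gcd(k/g,q-1)}\cdot (q-1)^{g-1}\gcd(k/g,q-1) = |C_k^*(n)|\,(q-1)^{g-1},
\]
which is what was needed, the case $g=1$ reducing to $|\mathcal{V}| = |C_k(n)|$. The main obstacle is the bookkeeping of the last two steps: pinning down the monodromy scalar so the exponent system is exactly the clean circulant $M$, and proving the solvability criterion together with the equidistribution of the common coset-product, since it is precisely the interplay between $\ker M$ and the power condition that produces the factor $(q-1)^{g-1}$ and makes $\gcd(k/g,q-1)$ disappear.
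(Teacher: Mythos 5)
Your proposal is correct in outline and takes a genuinely different route from the paper. The paper never normalizes determinants to $1$: it works with lifts whose constant consecutive determinant is an arbitrary $c \in K^*$ (Lemma \ref{lem:suitableLift}), encodes friezes through the quiddity matrices and the coefficient sets $\Coeff(v)$, and therefore has to handle a non-free $\PGL(k,K)$-action. Its key mechanism is that $\lvert \Coeff(v) \rvert = (q-1)^{g-r}$ and $\lvert \PGL(k,K)_v \rvert = (q-1)^{r-1}$ each depend on the maximal-decomposition type $r$ of $v$ (Lemmas \ref{lem:countingCoeff} and \ref{lem:countingStabilizers}, which need the entire decomposition theory of Lemmas \ref{lem:decompositionMap}--\ref{lem:decompositionZeros}), but their product $(q-1)^{g-1}$ is constant, so an orbit--stabilizer summation finishes the proof. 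You instead fix determinants equal to $1$, which makes the $\SL(k,K)$-action on sequences free (and uses $\lvert \SL(k,\F_q)\rvert = \lvert \PGL(k,\F_q)\rvert$) and removes all stabilizer and decomposition analysis; the price is an arithmetic obstruction the paper never meets, namely that solvability of your circulant system over $\Z/(q-1)$ requires, beyond the $C_k^*$ condition, that the common class product be a $(k/g)$-th power. Your resolution of it is sound: the class in $K^*/(K^*)^{k/g}$ is a lift-independent invariant that shifts by $[\det(A)^{n/g}]$ under $A \in \GL(k,K)$, and since $\gcd(n/g,k/g)=1$ this shift realizes every element of the cyclic target of order $\gcd(k/g,q-1)$, so group elements permute the fibres of the invariant bijectively, all fibres have size $\lvert C_k^*(n)\rvert / \gcd(k/g,q-1)$, and combined with your kernel count $(q-1)^{g-1}\gcd(k/g,q-1)$ (correct, including the constraint $(k/g)\sum_r y_r \equiv 0$) the unwanted factors cancel; the solvability criterion itself follows from $\Image M = (\Ker M^T)^\perp$ over $\Z/(q-1)$, e.g.\ via Smith normal form. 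Two caveats for a complete write-up. First, the correspondence ``tame frieze $=$ determinant-one monodromic sequence modulo $\SL(k,K)$'' is cited from \cite{morier2014linear}, which works over $\R$; over $\F_q$ it is exactly what the paper's Lemmas \ref{lem:recursion}--\ref{lem:quiddityCriterion} and \ref{lem:coeffQuid}, plus the injectivity argument in the lemma following Lemma \ref{lem:coeffPGL}, establish, so this step is not free and would have to be reproved or carefully transferred. Second, your products $P_r$ use determinants of the antiperiodic continuation $V_{i+n} = (-1)^{k-1}V_i$, while $C_k^*(n)$ is defined with indices reduced modulo $n$ and no signs; for even $k$ the wrap-around windows introduce signs that must be tracked before ``all $P_r$ equal'' can be identified with the defining condition of $C_k^*(n)$ --- a bookkeeping point you flag yourself, and one the paper's own Lemma \ref{lem:suitableLift} elides in exactly the same way, so it is a shared rather than a new gap.
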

Galashin and Lam \cite{galashin2024positroids} gave the point count of the subspace $ \Pi_{k,n}^\circ \subset \G(k,n) $ of the Grassmannian, where none of the cyclically consecutive Plücker coordinates vanish, in the case $  \gcd(k,n) = 1 $:
\begin{theorem*}[Galashin, Lam, 2024]
Let $ q $ be some prime power and $ \gcd(k,n) = 1 $. Then the point count of $ \Pi_{k,n}^\circ $ over the finite field $ \F_q $ is
\[
\lvert \Pi_{k,n}^\circ \rvert = (q-1)^{n-1} \frac{1}{[n]_q} \left[ \begin{matrix} n \\ k \end{matrix} \right]_q.
\]
\end{theorem*}
Here,
\begin{align*}
[n]_q &:= q^{n-1} + \dots + q + 1,\\
[n]_q! &:= [n]_q \dots [1]_q,
\intertext{and}
\left[ \begin{matrix} n \\ k \end{matrix} \right]_q &:= \frac{[n]_q!}{[n-k]_q! [k]_q!}.
\end{align*}
From this we can derive the following result:
\begin{cor*}[\ref{cor:numberOfFriezegcd1}]
Let $ \gcd(k,n) = 1 $. Then the number of tame $ \SL_k $-frieze patterns over $ K = \F_q $ with width $ w = n-k-1 $ is
\[
f_q(k,n) = \frac{1}{[n]_q} \left[ \begin{matrix} n \\ k \end{matrix} \right]_q.
\]
\end{cor*}

Unfortunately, counting the spaces $ C_k^*(n) $ turns out to be quite hard in general and we only managed to count $ C^*_k(n) $ for $ k = 3 $ and $ k = 4 $. It may be possible to adapt our methods for counting these sets to higher $ k $, but doing so involves a system of recursions that grows exponentially with $ k $, meaning it gets quite unwieldy for higher $ k $.\smallskip\\
In section 1 we explore the relationship between tame $ \SL_k $-frieze patterns and their quiddity sequence, an $ n $-periodic sequence of vectors $ q_i \in K^{k-1} $ that uniquely determines the frieze pattern in question. We use the fact that the variety of tame $ \SL_k $-frieze patterns can be embedded into the Grassmannian $ \G(k,n) $ in such a way that all entries correspond to certain Plücker coordinates \cite{morier2014linear}, \cite{baur2021friezes}. The entries of the quiddity sequence vectors then also correspond to certain Plücker coordinates. We then show, using Plücker relations, how to calculate arbitrary entries from the quiddity sequence and that the set of tame $ \SL_k $-frieze patterns of a given width are in bijection with the set of quiddity sequences of a specific period. We also give a criterion for when a sequence of vectors is the quiddity sequence of a tame $ \SL_k $-frieze pattern.\smallskip\\
Section 2 then establishes the connection between $ C_k^{(*)}(n) $ and $ \Quid(k,n) $. We show that a certain set $ \Coeff(v) \subset \Quid(k,n) $ can be assigned to each $ v \in C_k^{(*)}(n) $ and that $ \Quid(k,n) $ is the union of these sets. Moreover we prove that $ \Coeff(v) $ only depends on the class of $ v $ under the action of $ \PGL(k,K) $ and characterizes this class. We also show how to count $ \Coeff(v) $ and the stabilizer $ \PGL(k,K)_v $. Putting everything together we then prove Theorem \ref{thm:numberOfFriezes}. Afterwards, we show how to use the result from \cite{galashin2024positroids} to solve the case $ \gcd(k,n) = 1 $.\smallskip\\
Section 3 is dedicated to counting $ C_3(n) $ and thus tame $ \SL_3 $-frieze patterns with $ n \not\equiv 0 \mod 3 $. This involves defining some other, similar sets and solving a system of recursions. The main result from this section is a special case of Corollary \ref{cor:numberOfFriezegcd1}, but instead of relying on \cite{galashin2024positroids} we give a different, constructive method for counting $ C_3(n) $ that does not use any advanced theory. Only some basic Linear Algebra and Projective Geometry is required. Additionally, we also count $ C_3(n) $ in the case where $ n $ is divisible by $ 3 $, something which does not follow from \cite{galashin2024positroids}. More importantly, we derive some intermediate results that will be used in the next section.\smallskip\\
In section 4 we take care of the remaining tame $ \SL_3 $-frieze patterns by counting $ C_3^*(n) $. The method here is very similar to the previous section, we define some sets, derive a system of recursions for the number of elements of these sets, and then solve it. Our result here is
\begin{cor*}[\ref{cor:k3divisibleBy3}]
Let $ \lvert K \rvert = q < \infty $. Let $ w \geq 1 $ and $ n = w + 4 $. If $ \gcd(3,n) \neq 1 $, then the number of tame $ \SL_3 $-frieze patterns of width $ w $ is
\[
f_q(3,n) = \frac{q^{2n} + 3f(q)q^{\frac{4}{3}n} + (q^5-3q^4-2q^3+5q^2+9q+6)q^n + 3f(q)q^{\frac{2}{3}n+1} + q^3}{(q^2+q+1)(q+1)q^3(q-1)^2}
\]
with
\[
f(q) = q^3-q^2-q-2.
\]
\end{cor*}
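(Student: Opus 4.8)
The plan is to reduce the statement to a count of $C_3^*(n)$ via Theorem \ref{thm:numberOfFriezes} and then perform that count. Since $3$ is prime and $\gcd(3,n)\neq 1$, we have $g:=\gcd(3,n)=3$, so $3\mid n$ and part (2) of the theorem applies with $(q-1)^{g-1}=(q-1)^2$. A direct computation gives $\lvert\PGL(3,\F_q)\rvert = q^3(q-1)^2(q^2+q+1)(q+1)$, so the number of frieze patterns equals $\frac{\lvert C_3^*(n)\rvert (q-1)^2}{\lvert\PGL(3,\F_q)\rvert}=\frac{\lvert C_3^*(n)\rvert}{q^3(q^2+q+1)(q+1)}$. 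Comparing with the claimed expression, it therefore suffices to show that $\lvert C_3^*(n)\rvert=\frac{1}{(q-1)^2}\bigl(q^{2n}+3f(q)q^{\frac43 n}+(q^5-3q^4-2q^3+5q^2+9q+6)q^n+3f(q)q^{\frac23 n+1}+q^3\bigr)$; the factor $(q-1)^{g-1}=(q-1)^2$ supplied by the theorem will then cancel the prefactor $\frac{1}{(q-1)^2}$ and reproduce the stated denominator $(q^2+q+1)(q+1)q^3(q-1)^2$.

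For the count itself I would start from the value $\lvert C_3(n)\rvert = q^3(q^{n-1}-1)(q^{n-2}-1)=q^{2n}-q^{n+2}-q^{n+1}+q^3$ obtained in Section 3, and recall that $C_3^*(n)$ is the subset of $C_3(n)$ cut out by the two equations $P_0=P_1=P_2$, where $P_r:=\prod_{j\equiv r\ (3)} d_j$ is the product over the residue class $r$ modulo $3$ of the consecutive $3\times 3$ determinants $d_j$ of a lift. Because $3\mid n$, the ratios $P_r/P_{r'}$ are independent of the lift, so these two conditions are well defined on $C_3(n)$. I plan to linearize them by orthogonality of the multiplicative characters of $\F_q^\times$, writing $\lvert C_3^*(n)\rvert=\frac{1}{(q-1)^2}\sum_{\chi,\psi}S(\chi,\psi)$ with $S(\chi,\psi):=\sum_{v\in C_3(n)}\chi(P_0/P_1)\,\psi(P_1/P_2)$. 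The trivial pair contributes $S(1,1)=\lvert C_3(n)\rvert$, which already accounts for the $q^{2n}$ and $q^3$ terms; the fractional-exponent terms must come from the nontrivial pairs. This viewpoint also explains the denominator: the $\frac{1}{(q-1)^2}$ from orthogonality is precisely cancelled by the $(q-1)^{g-1}$ of Theorem \ref{thm:numberOfFriezes}.

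Each weighted sum $S(\chi,\psi)$ would then be evaluated by the transfer/recursion machinery of Section 3, now carrying the character weights on the determinants. The new feature is that these weights depend on the index only through its residue modulo $3$, so the transfer operator is governed by a product of three matrices and the recursion advances in steps of $3$; equivalently its characteristic polynomial relates the count at $n$ to the count at $n-3$ with multipliers that are powers of $q$. This is the source of the exponents $\tfrac43 n=4\cdot\tfrac n3$ and $\tfrac23 n=2\cdot\tfrac n3$: the relevant eigenvalues are cube roots of $q^4$ and of $q^2$, and since $3\mid n$ the three cube-root-of-unity rotates of each eigenvalue contribute equally, collapsing into the common factor $3$ that multiplies $f(q)$. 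I would sort the nontrivial pairs into the finitely many classes producing each eigenvalue pattern, compute the corresponding coefficients (this is where $f(q)=q^3-q^2-q-2$ and the polynomial $q^5-3q^4-2q^3+5q^2+9q+6$ arise), sum over all pairs, and simplify.

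The main obstacle is the third step: setting up the character-weighted recursion for every pair $(\chi,\psi)$ and solving the resulting period-$3$ system while correctly bookkeeping the eigenvalue contributions — in particular verifying that the cube-root eigenvalues collapse with total coefficient exactly $3f(q)$ and that everything assembles into the stated numerator. Controlling lift-independence of the weights throughout, and treating the degenerate cases in which $\chi$ or $\psi$ is trivial or in which determinants coincide across residue classes, is the delicate point; this is the analogue of, but materially more intricate than, the computation carried out for $C_3(n)$ in Section 3 and for the coprime corollary \ref{cor:k3relativelyPrime}.
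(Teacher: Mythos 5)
Your opening reduction is exactly the paper's: since $3$ is prime, $\gcd(3,n)\neq 1$ forces $g=3$, Theorem \ref{thm:numberOfFriezes}(2) applies, $\lvert\PGL(3,K)\rvert=(q^2+q+1)(q+1)q^3(q-1)^2$, and the corollary reduces to showing that $\lvert C_3^*(n)\rvert$ equals the stated numerator divided by $(q-1)^2$ --- which is precisely the paper's Theorem \ref{thm:ck(n)divisibleBy3}. Your description of $C_3^*(n)$ as the locus in $C_3(n)$ where the three residue-class products $P_0,P_1,P_2$ of consecutive determinants agree, and the observation that the ratios $P_r/P_{r'}$ are lift-independent, are also correct.

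There are, however, two genuine problems. First and decisively, the count itself is never carried out. The paper proves Theorem \ref{thm:ck(n)divisibleBy3} by introducing auxiliary sets $C_3^{**}(n)$, $C_3^{+-*}(n)$, $C_3^{--*}(n)$, $C_3^{--**}(n)$, $C_3^{-+*}(n)$ (with definitions depending on $n \bmod 3$), deriving the closed system of recursions of Lemma \ref{lem:recursionsk3*}, and verifying explicit closed forms against that system and the initial values. Your substitute --- character orthogonality to linearize $P_0=P_1=P_2$, followed by a character-weighted transfer/recursion analysis of each $S(\chi,\psi)$ --- is a legitimately different strategy and may well be workable, but every step that makes the theorem true is deferred to it: you never set up the weighted recursions (note that the wrap-around determinants $d_{n-2},d_{n-1},d_n$ involve the initial points, which is exactly what forces the paper's proliferation of boundary-type sets, and any transfer-operator formulation faces the same cyclic-closure issue), you never evaluate a single nontrivial $S(\chi,\psi)$, and the assertions that the relevant eigenvalues are cube roots of $q^4$ and $q^2$ with total coefficient exactly $3f(q)$ are read off from the target formula rather than derived. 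Second, a concrete error: you quote $\lvert C_3(n)\rvert = q^{2n}-q^{n+2}-q^{n+1}+q^3$, but that is the case $n\not\equiv 0 \bmod 3$ of Theorem \ref{thm:ck(n)relativelyPrime}; in the present situation $3\mid n$, so $S(1,1)=\lvert C_3(n)\rvert = q^{2n}+2q^{n+2}+2q^{n+1}+q^3$. Consequently the baseline from which the nontrivial character pairs must supply the remaining $q^n$-coefficient is wrong, so even the partial bookkeeping you do perform does not start from the correct value.
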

We suspect that there is a way to formulate this with $ q $-expressions like in the case $ \gcd(k,n) = 1 $, but were unable to find it. \smallskip\\
Section 5 counts $ C_4(n) $ and thus $ \SL_4 $-frieze patterns where $ n $ is odd, or equivalently where the width $ w = n - 5 $ is even. Like in Section $ 3 $, this is a special case of Corollary \ref{cor:numberOfFriezegcd1}, but again we give a different, constructive proof and obtain intermediate results needed for the next section. We also count $ C_4(n) $ in the case $ n $ even. We use the same method as in Section $ 3 $, solving a system of recursions.\smallskip\\
Finally, Section 6 completes the case $ k = 4 $ by counting $ C_4^*(n) $ in the cases $ \gcd(4,n) = 2 $ and $ \gcd(4,n) = 4 $. We start with the easier case $ \gcd(4,n) = 2 $ and get the following result:
\begin{cor*}[\ref{cor:k4gcd2}]
Let $ n \geq 6 $ with $ \gcd(4,n) = 2 $ and $ \lvert K \rvert = q $. Then the number of tame $ \SL_4 $-frieze patterns over $ K $ with width $ n - 5 $ is
\begin{align*}
f_q(4,n) = \frac{q^{3n} + f(q)q^{2n} - g(q)q^{\frac{3}{2}n+1} + f(q)q^{n+2} + q^6}{(q^3+q^2+q+1)(q^2+q+1)(q+1)q^6(q-1)^3},
\end{align*}
where
\begin{align*}
f(q) &:= q^5-q^4-q^3-4q^2-2q-2,\\
g(q) &:= 2q^5-2q^4-2q^3-6q^2-4q-4.
\end{align*}
\end{cor*}
In the case $ \gcd(4,n) = 4 $ we obtain
\begin{cor*}[\ref{cor:k4gcd4}]
Let $ n \in \N $ be divisible by $ 4 $. Let $ \lvert K \rvert = q $.
Set
\[
s := \begin{cases}
    + &\text{if }\charac(K) \neq 2 \text{ and } 8 \mid n,\\
    - &\text{otherwise}.
\end{cases}
\]
Then
\begin{align*}
f_q(4,n) = \frac{f(q)}{(q^3+q^2+q+1)(q^2+q+1)(q+1)q^6(q-1)^3}
\end{align*}
is the number of tame $ \SL_4 $-frieze patterns of width $ n - k - 1 $ over $ K $, where
\begin{align*}
f(q) = q^{3n} &+ 4f_1^s(q)q^{\frac{9}{4}n} + f_2(q)q^{2n} + 6f_3^s(q)q^{\frac{7}{4}n} + f_4^s(q)q^{\frac{3}{2}n}\\
&+ 6f_3^s(q)q^{\frac{5}{4}n+1} + f_2(q)q^{n+2} + 4f_1^s(q)q^{\frac{3}{4}n+3} + q^6
\end{align*}
and
\begin{align*}
f_1^+(q) &:= -q^3-q^2-q-1,\\
f_2(q) &:= 3q^5-3q^4+3q^3-6q^2-6,\\
f_3^+(q) &:= -q^6+q^5+3q^4+6q^3+7q^2+5q+3,\\
f_4^+(q) &:= 3q^7+6q^6-26q^5-41q^4-73q^3-56q^2-44q-9
\intertext{and}
f_1^-(q) &:= q^4-q^3-q^2-q-2,\\
f_3^-(q) &:= q^7-3q^6-q^5+5q^3+9q^2+7q+6,\\
f_4^-(q) &:= q^9-6q^8+4q^7+29q^6+3q^5-20q^4-74q^3-79q^2-74q-24.
\end{align*}
\end{cor*}
Like in the case $ k =3 $ there may very well be a closed formula in terms of $ q $-expressions, but finding it remains an open problem.

\section{$ SL_k $-frieze patterns and quiddity sequences}
\begin{defi}[\cite{bergeron2010}]
Let $ w \in \N_{>0}, k \geq 2 $ and let $ K $ be a field. Consider the following infinite array of elements of $ K $
\[
\xymatrix@C=0.06em@R=.09em{
\dots && 0 && 0 && 0 && \dots \\
&&&&& \ddots \\
&& \dots && 0 && 0 && 0 && \dots \\
&&& \dots && 1 && 1 && 1 && \dots \\
&&&& \dots && m_{0,w} && m_{1,w} && m_{2,w} && \dots \\
&&&&&&&&& \ddots \\
&&&&&& \dots && m_{0,1} && m_{1,1} && m_{2,1} && \dots \\
&&&&&&& \dots && 1 && 1 && 1 && \dots \\
&&&&&&&& \dots && 0 && 0 && 0 && \dots \\
&&&&&&&&&&&&& \ddots \\
&&&&&&&&&& \dots && 0 && 0 && 0 && \dots
}
\]
where the $ k-1 $ highest and lowest rows only contain $ 0 $, the next rows after that contain only $ 1 $ and all other entries $ m_{i,j} $ for $ i \in \Z $ and $ j \in \{1,\dots,w\} $ are not known.
\begin{itemize}
    \item[(i)] The array is called a $ \SL_k $-frieze pattern if all $ k \times k $-diamonds of entries have determinant $ 1 $. $ w $ is called the width of the frieze pattern.
    \item[(ii)] If all $ (k+1) \times (k+1) $-diamonds have determinant $ 0 $ the frieze pattern is called tame.
\end{itemize}
\end{defi}

One can more generally define $ \SL_k $-frieze patterns over rings, this is used for instance in \cite{baur2021friezes}. We will only consider tame $ \SL_k $-frieze patterns over fields in this paper. Note that tame $ \SL_k $-frieze patterns are periodic with horizontal period $ n := w + k + 1 $ (see for instance \cite{morier2014linear} Corollary 7.1.1).
\begin{defi}
We write $ \mathcal{F}_K(k,n) $ for the set of all tame $ \SL_k $-frieze patterns with period $ n $ over the field $ K $. If $ K = \F_q $ is a finite field we write $ f_q(k,n) := \lvert \mathcal{F}_{\F_q}(k,n) \rvert $. 
\end{defi}
Recall that the Grassmannian $ \G(k,n) $ over $ K $ can be embedded into projective space using the Plücker embedding
\begin{align*}
\G(k,n) &\to \pr(\bigwedge\nolimits^k K^n) \\
\langle v_1,\dots,v_k\rangle &\mapsto [v_1 \wedge \dots \wedge v_k].
\end{align*}
Let $ \{e_1,\dots,e_n\} $ be the standard basis of $ K^n $. Then a basis of $ \bigwedge\nolimits^k K^n $ is given by the vectors $ e_{i_1} \wedge \dots \wedge e_{i_k} $ with $ 1 \leq i_1 < i_2 < \dots < i_k \leq n $. We denote the corresponding coordinates in the homogeneous coordinate ring $ K[\pr(\bigwedge^k K^n)] $ by $ x_{\{i_1,\dots,i_n\}} $. Now let $ \pi: K[\pr(\bigwedge^k K^n)] \to K[\G(k,n)] $ be the canonical projection.

\begin{defi}
Let $ I \subset [n] $ be a subset with $ k $ elements. Then the Plücker coordinate indexed by $ I $ is defined as
\[
\Delta^I := \pi(x_I).
\]
If $ |I| < k $ we will use the convention $ \Delta^I = 0 $.
\end{defi}

Plücker coordinates fulfill Plücker relations:

\begin{lem}[$ 3 $-term Plücker relations]
    Let $ I \subset [n] $ be a subset with $ k-2 $ elements. Let $ a,b,c,d \in [n] $ with $ a \neq b $ and $ c \neq d $. If $ a,b,c,d $ are all pairwise distinct, we also assume that $ (a,b) $ and $ (c,d) $ are crossing pairs, i.e. $ \lvert (a,b) \cap \{c,d\} \rvert = 1 $, where we assume w.l.o.g. that $ a < b $. Then
    \[
    \Delta^{I,a,b} \Delta^{I,c,d} = \Delta^{I,a,c} \Delta^{I,b,d} + \Delta^{I,a,d} \Delta^{I,b,c}.
    \]
Here $ \Delta^{I,x,y} $ is short for $ \Delta^{I\cup \{x,y\}} $.
\end{lem}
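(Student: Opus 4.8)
The plan is to reduce the statement, which is an identity in the coordinate ring $ K[\G(k,n)] $, to a universal identity among the maximal minors of a matrix. Since $ \Delta^J = \pi(x_J) $ and $ \pi $ is a ring homomorphism, it suffices to show that the homogeneous polynomial $ x_{I\cup\{a,b\}}x_{I\cup\{c,d\}} - x_{I\cup\{a,c\}}x_{I\cup\{b,d\}} - x_{I\cup\{a,d\}}x_{I\cup\{b,c\}} $ lies in $ \ker\pi $, the ideal of the Grassmannian. As $ \G(k,n) $ is the image of the Plücker embedding, this amounts to checking that the polynomial vanishes on every decomposable vector $ v_1\wedge\dots\wedge v_k $; being a polynomial identity with integer coefficients, it is then enough to verify it formally. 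Collecting $ v_1,\dots,v_k $ as the rows of a $ k\times n $ matrix $ M $ with columns $ M^{(1)},\dots,M^{(n)}\in K^k $, the value of $ x_J $ on this vector is the maximal minor $ [J]:=\det(M^{(j_1)},\dots,M^{(j_k)}) $ for $ J=\{j_1<\dots<j_k\} $. Everything thus reduces to the minor identity $ [I\cup\{a,b\}][I\cup\{c,d\}] = [I\cup\{a,c\}][I\cup\{b,d\}] + [I\cup\{a,d\}][I\cup\{b,c\}] $.

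First I would dispose of the degenerate cases. If some index of $ \{a,b,c,d\} $ lies in $ I $, or if two of them coincide, then one of the sets appearing has fewer than $ k $ elements, so the corresponding $ \Delta $ vanishes by convention; a quick inspection of each such case shows the relation collapses to a trivial equality. So assume $ a,b,c,d $ are pairwise distinct and disjoint from $ I $. For the main case I introduce the \emph{ordered} minor $ \langle x,y\rangle := \det(M^{(i_1)},\dots,M^{(i_{k-2})},M^{(x)},M^{(y)}) $, where $ I=\{i_1<\dots<i_{k-2}\} $; it is alternating in $ x,y $ and equals $ [I\cup\{x,y\}] $ up to the sign of the permutation sorting $ (i_1,\dots,i_{k-2},x,y) $. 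If the columns $ M^{(i)} $, $ i\in I $, are linearly dependent, every minor in sight vanishes and there is nothing to prove; otherwise let $ U $ be their span and $ V=K^k/U $, a two-dimensional space with quotient map $ \phi $. Wedging with $ \omega_I := M^{(i_1)}\wedge\dots\wedge M^{(i_{k-2})} $ defines a linear map $ \bigwedge^2 K^k \to \bigwedge^k K^k \cong K $ that annihilates $ U\wedge K^k $ and therefore factors through $ \bigwedge^2 V $; as $ \dim V = 2 $, this factor is a scalar $ \lambda $ times a fixed area form $ D $ on $ V $. Hence $ \langle x,y\rangle = \lambda\, D(\phi M^{(x)},\phi M^{(y)}) $ with a single constant $ \lambda $ independent of $ x,y $.

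Applying the elementary two-dimensional identity $ D(u_1,u_2)D(u_3,u_4) - D(u_1,u_3)D(u_2,u_4) + D(u_1,u_4)D(u_2,u_3) = 0 $ (a one-line expansion, or the Plücker relation for $ \G(2,4) $) to $ u_1,\dots,u_4 = \phi M^{(a)},\dots,\phi M^{(d)} $ and multiplying by $ \lambda^2 $ yields the alternating relation
\[
\langle a,b\rangle\langle c,d\rangle - \langle a,c\rangle\langle b,d\rangle + \langle a,d\rangle\langle b,c\rangle = 0 .
\]
It remains to convert this into the stated identity, and this sign bookkeeping is the only delicate point. Each of $ a,b,c,d $ occurs exactly once in every product, so the signs coming from sorting the $ I $-part past $ a,b,c,d $ are common to all three terms and cancel; only the relative order of $ a,b,c,d $ matters. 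The crossing hypothesis pins this down: after possibly swapping $ c,d $ (both sides are symmetric in $ c,d $) we may take $ c $ to be the index lying between $ a $ and $ b $, with $ d $ outside $ (a,b) $. In the representative ordering $ a<c<b<d $ the only out-of-order pair is $ (b,c) $, so exactly the third term picks up an extra transposition; dividing the alternating relation by the common sign turns it into $ \Delta^{I,a,b}\Delta^{I,c,d} - \Delta^{I,a,c}\Delta^{I,b,d} - \Delta^{I,a,d}\Delta^{I,b,c} = 0 $, which is precisely the assertion. The remaining ordering $ d<a<c<b $ is checked the same way and gives the identical conclusion. I expect this matching of signs to be the main obstacle: one must verify that the crossing condition is exactly what forces the extra transposition pattern producing the plus sign, the other reorderings cancelling in pairs.
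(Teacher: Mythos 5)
Your proof is correct, but it takes a genuinely different route from the paper. For the generic case (all of $a,b,c,d$ pairwise distinct and disjoint from $I$) the paper gives no argument at all: it simply cites the three-term Plücker relation, equation (1) on p.~5 of \cite{scott2006grassmannians}, and spends its effort only on the degenerate cases, which it settles exactly as you do --- whenever an index repeats or lies in $I$, enough factors vanish by the convention $\Delta^J = 0 $ for $\lvert J \rvert < k$ that the identity collapses to a triviality such as $\Delta^{I,a,b}\Delta^{I,a,d} = \Delta^{I,a,d}\Delta^{I,b,a}$. You instead prove the generic case from scratch: you reduce the statement to an identity among maximal minors of a generic matrix, quotient by the span of the $I$-columns to land in a two-dimensional space, invoke the classical four-vector identity for an area form on that space, and then do the sign bookkeeping, which makes transparent that the crossing hypothesis is exactly what produces the all-plus sign pattern (both of your orderings $a<c<b<d$ and $d<a<c<b$ check out, and the symmetry of the identity in $c,d$ justifies reducing to them). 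Your route buys self-containedness and an explanation of where the crossing condition is actually used --- something the paper delegates entirely to the reference --- at the cost of length.

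One caution on your opening reduction: the assertion that membership in $\ker\pi$ ``amounts to checking that the polynomial vanishes on every decomposable vector'' is not valid over a finite field if read pointwise, since vanishing at all $K$-rational points of the cone does not imply membership in the defining ideal. But the sentence you append immediately afterwards --- verify the identity formally, i.e.\ for a matrix with indeterminate entries --- is the correct statement: $\ker\pi$ is the kernel of the substitution sending $x_J$ to the corresponding maximal minor of a generic matrix, and it is precisely that formal identity which your linear-algebra argument establishes (it works over the function field in the entries). So there is no gap, but the pointwise phrasing should be deleted rather than relied upon.
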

\begin{proof}
If $ a,b,c,d $ are pairwise distinct and $ a,b,c,d \notin I $ this is $ (1) $ from \cite{scott2006grassmannians}, page 5. If $ \{a,b,c,d\} \cap I \neq \emptyset $ then all products are zero by our convention and the equation holds. If $ a = c $, then the equation simplifies to $ \Delta^{I,a,b} \Delta^{I,a,d} = 0 + \Delta^{I,a,d} \Delta^{I,b,a} $ which is trivial. The cases $ a = b, b = c $ and $ b = d $ are analogous.
\end{proof}

There are also Plücker relations with more than $ 3 $ terms, but $ 3 $-term Plücker relations imply all others.

In \cite{morier2014linear} it was proved that the variety of tame $ \SL_k $-frieze patterns of width $ w = n - k - 1 $ is a subvariety of the Grassmannian $ \G(k,n) $. More precisely, in \cite{baur2021friezes} it was shown that each tame $ \SL_k $-frieze pattern of this width arises from specializing an arbitrary cluster in the cluster structure of the coordinate ring of the Grassmannian. Moreover, each entry of the $ \SL_k $-frieze pattern is then the value of a specific Plücker coordinate at the point of the Grassmannian corresponding to the tame $ \SL_k $-frieze pattern. To make this more precise we introduce the following notation:

\begin{defi} \label{def:plücker_coord}
Let $ i, j \in \Z $ and $ s \in [k-1] $. We write
\[
\Delta_j(i) := \Delta^{\{i,i+1,\dots,i+k-2\} \cup \{i+k-1+j\}}
\]
and
\[
\Delta_j^s(i) := \Delta^{\{i,i+1,\dots,i+k-1\} \cup \{i+k-1+j\} \backslash \{i+s\}}.
\]
The indices on the right hand side are considered modulo $ n $.
\end{defi}
Plücker coordinates of the form $ \Delta^{\{i,i+1,\dots,i+k-1\}} $ are called consecutive Plücker coordinates and the construction from \cite{baur2021friezes} requires us to specialize all the consecutive Plücker coordinates to $ 1 $. The Plücker coordinates of the form $ \Delta_j(i) $ are called "almost consecutive Plücker coordinates" and these are exactly the Plücker coordinates whose values appear in our $ \SL_k $-frieze pattern. Specifically, the fundamental region of our frieze pattern can be written as

\[
\xymatrix@C=0.06em@R=.05em{
\Delta_w(1) && \Delta_w(2) & \dots & \Delta_w(n)\\
& \Delta_{w-1}(2) && \Delta_{w-1}(3) & \dots & \Delta_{w-1}(1)\\
& \ddots && \ddots && \ddots \\
&& \Delta_2(w-1) && \Delta_2(w) & \dots & \Delta_2(w-2)\\
&&& \Delta_1(w) && \Delta_1(w+1) & \dots & \Delta_1(w-1).\\
}
\]

Here we assume that it is clear from context that $ \Delta_j(i) $ and $ \Delta_j^s(i) $ refer to the value of these Plücker coordinates at the point in the Grassmannian corresponding to a specific $ \SL_k $-frieze pattern. If we want to specify the frieze pattern we can write $ v_F(\Delta_j(i)) $ or $ v_F(\Delta_j^s(i)) $ for some tame $ \SL_k $-frieze pattern $ F $. \\
The relationship also holds true for the trivial rows of the frieze pattern: $ \Delta_0(i) $ and $ \Delta_{w+1}(i) $ are both consecutive Plücker coordinates and can therefore be identified with the rows of ones. And we have $ \Delta_j(i) = 0 $ for $ j \in \{-1,\dots,-k+1\} $ and for $ j \in \{w+2,\dots,w+k = n-1\} $, so the relationship applies to the rows of zeroes as well. Also, note that the entries on the same south-east diagonal as a given entry $ \Delta_j(i) $ are of the form $ \Delta_{j-t}(i+t) $ for some integer $ t $.

\begin{defi}
Let $ F $ be a tame $ \SL_k $-frieze pattern with width $ w $. Consider the vectors
\[
q_i := \begin{pmatrix}
    \Delta_1^{k-1}(i)\\
    - \Delta_1^{k-2}(i+1)\\
    \vdots\\
    (-1)^{k-2} \Delta_1^{1}(i+k-2)
\end{pmatrix}
\]
for $ i \in \Z $. The sequence $ (q_i)_{i \in \Z} $ of these vectors is called the quiddity sequence of $ F $. If we want to specify the frieze pattern, we write $ q_i(F) $. Set $ n := w + k + 1 $. The set of all such sequences for a fixed $ w $ and $ k $ is denoted by
\[
\Quid(k,n) = \{(q_i(F))_{i \in \Z} \mid F \in \mathcal{F}_K(k,n)\}.
\]
If we want to specify the base field, we write $ \Quid_K(k,n) $.
\end{defi}

It is unclear who invented the notion of quiddity sequences for tame $ \SL_k $-frieze patterns. Similar definitions can for instance be found in \cite{cuntz2017wild} and \cite{peterson2025slk}.

\begin{rem}
The quiddity sequence, like the tame $ \SL_k$-frieze pattern it is based on, has period $ n = w + k + 1 $. In particular, there are only $ n $ quiddity vectors that need to be considered. This is why we choose $ n $ as an index in $ \Quid(k,n) $ instead of $ w $.
\end{rem}

In the special case $ k = 2 $ this definition coincides with the usual definition of the quiddity sequence of a Conway-Coxeter frieze pattern. For $ k = 3 $ the quiddity vectors each contain one entry from the highest and lowest nontrivial row in the frieze pattern. For higher $ k $ some entries of the quiddity vectors do not appear directly as entries of the corresponding frieze pattern, but the entries of the highest and lowest nontrivial rows are still the lowest and highest entries of the quiddity vectors. The quiddity sequence is of interest because of the next result:

\begin{lem} \label{lem:recursion}
Let $ F $ be a tame $ \SL_k $-frieze pattern with entries in some field $ K $ and let $ (q_i)_{i \in \Z} $  with
\[
q_i = \begin{pmatrix}
    q_{i,1}\\
    q_{i,2}\\
    \vdots\\
    q_{i,k-1}
\end{pmatrix}
\]
be the corresponding quiddity sequence. Let $ i,j \in \Z $. Then the entry $ \Delta_j(i) $ of $ F $ fulfills the following equation:
\[
\Delta_j(i) = \sum_{t=1}^{k-1} q_{i,t} \Delta_{j-t}(i+t) + (-1)^{k-1} \Delta_{j-k}(i+k).
\]
In other words $ \Delta_j(i) $ is a linear expression in $ k $ other entries of the frieze pattern, specifically the next $ k $ entries on the same south-east diagonal. And the coefficients are the entries of the quiddity vector $ q_i $, except for the last coefficient which is simply $ (-1)^{k-1} $.
\end{lem}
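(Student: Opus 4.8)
The plan is to realize the frieze pattern as a point of the Grassmannian and compute directly with maximal minors. By the framework of \cite{baur2021friezes} we may represent the point of $\G(k,n)$ corresponding to $F$ by a matrix $V$ with columns $V_1,\dots,V_n \in K^k$ (indices read modulo $n$) whose maximal minors realize the values $v_F$; in particular every consecutive minor $\det(V_i,\dots,V_{i+k-1})$ equals $1$, while $\Delta_j(i) = \det(V_i,\dots,V_{i+k-2},V_{i+k-1+j})$ and $\Delta_1^s(i) = \det(V_i,\dots,\widehat{V_{i+s}},\dots,V_{i+k})$. The first thing I would record is that every entry occurring in the claimed identity lies on a single south–east diagonal and therefore shares the same ``far'' column: writing $m := i+k-1+j$, each of $\Delta_j(i),\Delta_{j-1}(i+1),\dots,\Delta_{j-k}(i+k)$ has $V_m$ as its last column.

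Next I would set up the key linear relation. Since $V_{i+r},\dots,V_{i+r+k-1}$ form a basis (their consecutive minor equals $1$), Cramer's rule expands $V_{i+r+k}$ in this basis, and after reordering columns into increasing index order the coefficients are exactly the signed quiddity elements; solving the resulting identity for the first vector gives
\[
V_{i+r} = (-1)^{k-1}V_{i+r+k} - \sum_{s=1}^{k-1}(-1)^s \Delta_1^s(i+r)\,V_{i+r+s},
\]
where the leading coefficient $(-1)^{k-1}$ arises because $\Delta_1^0(i+r)$ is a consecutive minor, hence $1$ (this is why the last coefficient in the statement is simply $(-1)^{k-1}$).

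The heart of the argument is a one–step recursion for the auxiliary determinants
\[
E_r := \det\bigl(V_{i+r},\dots,V_{i+k-2},\,V_{i+k},\dots,V_{i+k+r-1},\,V_m\bigr), \qquad 0 \le r \le k-1,
\]
which interpolate between $E_0 = \Delta_j(i)$ and $E_{k-1} = \Delta_{j-k}(i+k)$ (the lower block is empty when $r=k-1$). Observe that the columns of $E_r$ other than $V_m$ are exactly the consecutive indices $i+r,\dots,i+k+r-1$ with $i+k-1$ omitted. I would substitute the displayed expression for $V_{i+r}$ into the first column of $E_r$ and expand by multilinearity. Every term of the sum whose new column already appears elsewhere vanishes, so the only survivor is $s=k-1-r$, which reinserts precisely the missing column $V_{i+k-1}$; sorting that column back into place yields, up to a sign I would compute, the factor $\Delta_{j-r-1}(i+r+1)$. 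The $V_{i+r+k}$-term, once reordered, becomes $E_{r+1}$. Collecting signs produces the clean recursion $E_r = -E_{r+1} + \Delta_1^{k-1-r}(i+r)\,\Delta_{j-r-1}(i+r+1)$ for $0 \le r \le k-2$.

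Finally I would telescope: iterating this one-step recursion from $r=0$ to $r=k-2$ gives
\[
E_0 = \sum_{r=0}^{k-2}(-1)^r \Delta_1^{k-1-r}(i+r)\,\Delta_{j-r-1}(i+r+1) + (-1)^{k-1}E_{k-1},
\]
and substituting $E_0 = \Delta_j(i)$ together with $E_{k-1} = \Delta_{j-k}(i+k)$ is exactly the assertion. The only genuine difficulty is the bookkeeping of signs: three separate reorderings (Cramer's rule, solving for $V_{i+r}$, and sorting the two resulting determinants into increasing-index order) each contribute a power of $-1$, and one must verify they combine to give precisely $+1$ in front of the quiddity term and $-1$ in front of $E_{r+1}$. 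One should also keep the indices-mod-$n$ convention in view, but since $V_m$ is fixed and every manipulation stays inside the window $i,\dots,i+2k-2$, no wraparound subtlety arises beyond the definition of the Plücker coordinates. (Equivalently, the same identity follows from iterating the $3$-term Plücker relations, but the determinant computation above is the most transparent route.)
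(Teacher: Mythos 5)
Your proof is correct, and its skeleton actually coincides with the paper's: up to column sorting, your auxiliary determinants $E_r$ are exactly the mixed Plücker coordinates $\Delta_{j-r}^{k-1-r}(i+r)$ of Definition \ref{def:plücker_coord}, your one-step recursion $E_r = \Delta_1^{k-1-r}(i+r)\,\Delta_{j-r-1}(i+r+1) - E_{r+1}$ is precisely the paper's identity $\Delta_j^s(i) = \Delta_1^s(i)\Delta_{j-1}(i+1) - \Delta_{j-1}^{s-1}(i+1)$ specialized to $s=k-1-r$, and the final telescoping is identical. (Your deferred sign bookkeeping does close up: the Cramer sign $(-1)^{k-1-s}$, the sign from solving for $V_{i+r}$, and the two sorting signs $(-1)^{k-2-r}$ and $(-1)^{k-2}$ combine to $+1$ on the quiddity term and $-1$ on $E_{r+1}$.) Where you genuinely diverge is in how that one-step identity is obtained: the paper specializes the $3$-term Plücker relation (with $a=i$, $b=i+k$, $c=i+s$, $d=i+k-1+j$), staying in the coordinate ring and never choosing a representative, whereas you fix a matrix representative with all consecutive minors equal to $1$ and use Cramer's rule plus multilinearity. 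Your route is more elementary (Scott's relations are not needed), and your expansion of $V_{i+r}$ is literally equation \eqref{eq:basisExpansion}, so it anticipates the machinery of Section 2. What it costs is the existence and coherence of the normalized representative, and here your remark that ``no wraparound subtlety arises'' is too quick: with columns read literally modulo $n$, cyclic-order determinants and sorted Plücker coordinates differ by sorting signs once indices wrap around $n$, and the representative you want exists in general only with the twisted periodicity $V_{i+n}=(-1)^{k-1}V_i$ --- exactly the ``constant consecutive determinants'' lifts the paper constructs later (Lemma \ref{lem:suitableLift} and the proof of Lemma \ref{lem:coeffQuid}). This is worth stating explicitly, not least because for $j$ outside the frieze window (e.g.\ $j=0$, $k$ even, where the last term is $(-1)^{k-1}\Delta_{-k}(i+k)$) the claimed identity is only true under this cyclic-determinant reading of the symbols, so your framework, once the twisted convention is put in place, is the one in which the statement for all $i,j\in\Z$ literally holds.
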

\cite[Theorem 3.4.1]{morier2014linear} establishes this relationship between difference equations and tame $ \SL_k $-frieze patterns over $ \R $. \cite[Theorem 2.11]{surmann} contains the special case $ k = 3 $ over $ \C $.
\begin{proof}[Proof of Lemma \ref{lem:recursion}]
Apply the $ 3 $-term Plücker relations with $ a = i, b = i + k, c \in \{i+1,\dots,i+k-1\}, d = i + k - 1 + j $ and $ I = \{i+1,\dots,i+k-1\} \backslash \{c\} $. We get
\[
\Delta^{I,i,i+k} \Delta^{I,c,i+k-1+j} = \Delta^{I,i,c} \Delta^{I,i+k,i+k-1+j} + \Delta^{I,i,i+k-1+j} \Delta^{I,i+k,c}.
\]
Using the notation from Definition \ref{def:plücker_coord} this becomes
\[
\Delta_1^{c-i}(i) \Delta_{j-1}(i+1) = \Delta_0(i) \Delta_{j-1}^{c-i-1}(i+1) + \Delta_j^{c-i}(i) \Delta_0(i+1).
\]
Now let $ s \in \{0,1,\dots,k-1\} $ and set $ c = i + s $. Using $ \Delta_0(i) = 1 $ for all $ i $ and rearranging yields
\[
\Delta_j^s(i) = \Delta_1^s(i) \Delta_{j-1}(i+1) - \Delta_{j-1}^{s-1}(i+1).
\]
We now recursively substitute this equation into itself and get
\[
\Delta_j^s(i) = \sum_{t=0}^{s-1} (-1)^t \Delta_1^{s-t}(i+t) \Delta_{j-t-1}(i+t+1) + (-1)^s \Delta_{j-s}^0(i+s).
\]
Note that $ \Delta_{j-s}^0(i+s) = \Delta_{j-s-1}(i+s+1) $. Finally we choose $ s = k-1 $. Then $ \Delta_j^{k-1}(i) = \Delta_j(i) $ and our equation becomes
\begin{align*}
\Delta_j(i) &= \sum_{t=0}^{k-2} (-1)^t \Delta_1^{k-1-t}(i+t) \Delta_{j-t-1}(i+t+1) + (-1)^{k-1} \Delta_{j-k}(i+k)\\
&= \sum_{t=1}^{k-1} q_{i,t} \Delta_{j-t}(i+t) + (-1)^{k-1} \Delta_{j-k}(i+k)
\end{align*}
as claimed.
\end{proof}

\begin{lem} \label{lem:friezeQuidBijection}
All entries of a tame $ \SL_k $-frieze pattern are uniquely determined by its quiddity sequence. Likewise, the quiddity sequence is uniquely determined by the entries in $ F $. Hence there is a bijection between the set of tame $ \SL_k $-frieze patterns of period $ n $ and $ \Quid(k,n) $ and accordingly,
\[
f_q(k,n) = \lvert \Quid_{\F_q}(k,n) \rvert.
\]
\end{lem}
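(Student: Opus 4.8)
The plan is to prove the two asserted implications separately and then assemble the bijection. The engine for both is the recursion of Lemma \ref{lem:recursion} together with the boundary values $\Delta_0(i) = \Delta_{w+1}(i) = 1$ and $\Delta_j(i) = 0$ for $j \in \{-1,\dots,-(k-1)\}$ and $j \in \{w+2,\dots,w+k\}$, which are fixed for every tame frieze.

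For the first direction (entries determined by the quiddity elements) I would argue by induction on the row index $j$. The rows $j \le 0$ are boundary rows, known a priori (only zeros and a single row of ones). Lemma \ref{lem:recursion} expresses $\Delta_j(i)$ for $j \ge 1$ as a fixed linear combination of the entries $\Delta_{j-1}(i+1),\dots,\Delta_{j-k}(i+k)$ lying strictly below it on the same south-east diagonal, with coefficients that are generalized quiddity elements $\Delta_1^{k-1-t}(i+t)$ up to sign, and a final coefficient $(-1)^{k-1}$. Since each such entry has strictly smaller row index, strong induction on $j$ from the boundary upward shows that every entry $\Delta_j(i)$ with $1 \le j \le w$ is a polynomial expression in the quiddity elements, hence uniquely determined by them.

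For the converse (quiddity elements determined by the entries) I would fix a starting index $a \in [n]$ and read Lemma \ref{lem:recursion} at $i = a$ for the $k-1$ values $j = 1,\dots,k-1$ as a linear system in the $k-1$ unknowns $x_t := \Delta_1^{k-1-t}(a+t)$, $t = 0,\dots,k-2$:
\[
\Delta_j(a) = \sum_{t=0}^{k-2} (-1)^t \Delta_{j-t-1}(a+t+1)\, x_t + (-1)^{k-1}\Delta_{j-k}(a+k).
\]
Here every $\Delta_j(a)$, $\Delta_{j-t-1}(a+t+1)$ and $\Delta_{j-k}(a+k)$ is an entry or boundary value of $F$, and in fact $\Delta_{j-k}(a+k) = 0$ since $j - k \in \{-(k-1),\dots,-1\}$. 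The key observation is that the coefficient matrix $\big((-1)^t\Delta_{j-t-1}(a+t+1)\big)_{1\le j\le k-1,\,0\le t\le k-2}$ is lower triangular with respect to the pairing $t \leftrightarrow j-1$: when $t > j-1$ the index $j-t-1$ lies in $\{-(k-1),\dots,-1\}$ and the entry vanishes, while for $t = j-1$ one gets $(-1)^{j-1}\Delta_0(a+j) = (-1)^{j-1}$. Hence the matrix is invertible and the system has a unique solution expressing each $x_t$ as a polynomial in the entries of $F$. As $a$ ranges over $[n]$, the pairs $(k-1-t,\,a+t)$ run over all of $\{1,\dots,k-1\}\times[n]$, so this recovers every generalized quiddity element $\Delta_1^s(i)$ from the entries.

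Finally, the bijection follows formally: the map sending a tame $\SL_k$-frieze pattern of width $w$ to its quiddity matrix is surjective onto $\Quid(k,n)$ by the definition of $\Quid(k,n)$, and injective by the first direction, since two friezes with equal quiddity elements have all entries equal. I expect the only genuinely delicate point to be the converse direction, specifically the recovery of the interior quiddity elements $\Delta_1^s(i)$ with $1 < s < k-1$, which for $k \ge 4$ are not themselves entries of the frieze; the triangularity of the above system is exactly what makes this recovery both possible and unique.
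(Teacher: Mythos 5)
Your proof is correct and follows essentially the same route as the paper: both directions rest on Lemma \ref{lem:recursion} together with the boundary rows, with the forward direction by induction on the row index and the converse by exploiting the vanishing of $\Delta_{j-t-1}(a+t+1)$ for $t>j-1$ and the value $1$ at $t=j-1$. Your triangular linear system solved by forward substitution is precisely the paper's downward induction on $s$ (row $j=k-s$ recovering $\Delta_1^s(a+k-1-s)$ from the already-known elements of higher superscript), just packaged in matrix form.
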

\begin{proof}
The lowest $ k $ rows are trivial. All other entries $ \Delta_j(i) $ can be calculated recursively using Lemma \ref{lem:recursion}, since the formula there only requires entries from lower rows and the quiddity vectors. \smallskip\\
For the other direction let $ F $ be some tame $ \SL_k $-frieze pattern. We want to show that its quiddity sequence $ (q_i)_{i \in \Z} $ can be calculated from the entries in $ F $. We use (downwards) induction by $ s $. \\
$ s = k-1 $: Apply Lemma \ref{lem:recursion} with $ j = 1 $. Then $ \Delta_{j-1}(i+1) = 1 $ and $ \Delta_{j-t}(i+t) = 0 $ for all $ t > 1 $. Hence the equation simplifies to
\[
\Delta_1(i) = q_{i,1}.
\]
$ s+1 \mapsto s $: We can assume $ 1 \leq s < k-1 $. Now apply Lemma \ref{lem:recursion} with $ j = k - s $. Then $ \Delta_{j-t}(i+t) = 1 $ for $ t = k - s $ and $ \Delta_{j-t}(i+t) = 0 $ for $ t > k - s $. Substituting this into the equation yields
\[
\Delta_j(i) = \sum_{t=1}^{k-s-1} q_{i,t} \Delta_{j-t}(i+t) + q_{i,k-s}.
\]
After rearranging for $ q_{i,k-s} $ it becomes clear that this entry of $ q_i $ can be written as an expression in the entries of $ F $ and the already known entries of $ q_i $.
\end{proof}

The last Lemma shows that it is sufficient to count $ \Quid(k,n) $ instead of counting tame $ \SL_k $-frieze patterns of a prescribed width directly. We now derive a criterion for when a given sequence $ (q_i)_{i \in \Z} $ is in $ \Quid(k,n) $.

\begin{lem} \label{lem:quiddityCriterion}
Let $ K $ be a field, let $ k \geq 2 $ and let $ (q_i)_{i \in \Z} \in K^{(k-1) \times \Z} $ be an $ n $-periodic sequence of vectors. Consider the matrices
\[
A_i = 
    \begin{pmatrix}
        q_{i-1,1} & q_{i-1,2} & \dots  & q_{i-1,k-1} & (-1)^{k-1} \\
        1       & 0       & \dots  & 0         & 0 \\
        0       & 1       & \dots  & 0         & 0 \\
        \vdots  & \vdots  & \ddots & \vdots    & \vdots \\
        0       & 0       & \dots  & 1         & 0
    \end{pmatrix} \in K^{k\times k}.
\]
Then $ (q_i)_i \in \Quid(k,n) $ if and only if
\begin{equation}
    A_1 \dots A_n = (-1)^{k-1}I_k.
\end{equation}
\end{lem}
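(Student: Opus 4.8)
The plan is to read the matrices $A_i$ as the transfer matrices of the order-$k$ linear recurrence that Lemma \ref{lem:recursion} imposes along a south-east diagonal. Fix a diagonal by choosing $i,j$ and set $u_m := \Delta_{j-m}(i+m)$ and $\mathbf{w}_m := (u_m, u_{m+1}, \dots, u_{m+k-1})^{T} \in K^k$. First I would check that the first row of $A_{i+m}$ reproduces exactly the recursion of Lemma \ref{lem:recursion} for $u_m$ (note the generalized quiddity elements appearing there are $\Delta_1^{k-1-t}(i+t) = a_{k-1-t,\,i+t}$), while the subdiagonal of ones merely shifts the remaining coordinates; this yields
\[
\mathbf{w}_m = A_{i+m}\,\mathbf{w}_{m+1}, \qquad\text{hence}\qquad \mathbf{w}_0 = A_1 A_2 \cdots A_n\, \mathbf{w}_n
\]
after iterating over one period along the diagonal through column $1$.

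For the direction $(a_{s,i}) \in \Quid(k,n) \Rightarrow A_1\cdots A_n = (-1)^{k-1}I_k$ the two remaining ingredients are antiperiodicity and spanning. Representing the frieze by a $k\times n$ matrix in $\G(k,n)$, each $u_m$ is a maximal minor whose $k-1$ sliding columns advance along the diagonal while the remaining ``extra'' column, indexed by $i+j+k-1$, stays fixed; the glide symmetry of a tame frieze multiplies each column by $(-1)^{k-1}$ after a shift by $n$, so only the $k-1$ sliding columns contribute and $u_{m+n} = (-1)^{(k-1)^2}u_m = (-1)^{k-1}u_m$. Thus $\mathbf{w}_n = (-1)^{k-1}\mathbf{w}_0$, and $A_1\cdots A_n$ acts as $(-1)^{k-1}$ on every diagonal state vector. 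To upgrade this to all of $K^k$, I would run the diagonals through the trivial boundary rows: for $j = w+k-t$ with $t = 0,1,\dots,k-1$ the vector $\mathbf{w}_0$ starts with $k-t-1$ zeros followed by a $1$ (the entry in the row of ones), so these $k$ vectors form a triangular, hence invertible, system and span $K^k$. Therefore $A_1\cdots A_n = (-1)^{k-1}I_k$.

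For the converse I would reverse the construction. Given $(a_{s,i})$ with $A_1\cdots A_n = (-1)^{k-1}I_k$, build a candidate array from the bottom trivial rows $\Delta_{-(k-1)} = \dots = \Delta_{-1} = 0$, $\Delta_0 = 1$ by propagating upward through Lemma \ref{lem:recursion}. Since the $a_{s,i}$ are $n$-periodic in $i$ and the starting rows are constant, an induction on the row index shows the whole array is automatically $n$-periodic in $i$ and satisfies every local $k\times k$-determinant-one and $(k+1)\times(k+1)$-determinant-zero condition (this is the converse reading of the Plücker computation in Lemma \ref{lem:recursion}). What is \emph{not} automatic is that the top boundary comes out as a row of ones followed by $k-1$ rows of zeros; by the transfer relation this closure is governed precisely by $A_1\cdots A_n$, and the hypothesis $A_1\cdots A_n = (-1)^{k-1}I_k$ is exactly what forces $\mathbf{w}_n = (-1)^{k-1}\mathbf{w}_0$ on every diagonal and hence the correct top rows. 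The resulting array is then a tame $\SL_k$-frieze of width $w$, and by construction its generalized quiddity elements $\Delta_1^s(i)$ equal $a_{s,i}$ (Lemma \ref{lem:friezeQuidBijection}), so $(a_{s,i}) \in \Quid(k,n)$.

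The step I expect to be the main obstacle is the sign bookkeeping together with the closure argument in the converse. The antiperiodicity factor must be tracked carefully --- it is $(-1)^{(k-1)^2} = (-1)^{k-1}$ precisely because only the $k-1$ sliding columns, and not the fixed extra column, acquire the glide sign --- and in the converse one must prove rigorously that ``correct top boundary'' is \emph{equivalent} to the prescribed value of the monodromy $A_1\cdots A_n$, rather than merely compatible with it. Verifying that the upward propagation genuinely produces all the diamond determinant conditions (and width exactly $w$) is the routine-but-lengthy part that I would isolate into a separate computation.
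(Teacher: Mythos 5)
Your overall strategy coincides with the paper's: both proofs read the $A_i$ as transfer matrices for the recursion of Lemma \ref{lem:recursion}, deduce that the monodromy $A_1\cdots A_n$ acts as the scalar $(-1)^{k-1}$ on diagonal state vectors, upgrade this to the matrix identity using the triangular state vectors supplied by the trivial boundary rows, and prove the converse by rebuilding the array from the bottom rows via the recursion. Where you diverge in the forward direction is the wrap-around sign. The paper never invokes antiperiodicity or glide symmetry: it observes that the state vector at the top row is $v_{n-1}(i) = e_k$ (read off from the rows of zeros and ones) and computes $A_i e_k = (-1)^{k-1}e_1 = (-1)^{k-1}v_0(i-1)$ directly from the last column of $A_i$. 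Your route through the glide symmetry, with the sign $(-1)^{(k-1)^2} = (-1)^{k-1}$ coming from the $k-1$ sliding columns, has the correct sign count, but it must be set up with care: a full period of the transfer relation involves $n+k$ consecutive diagonal entries, one more than the frieze has rows, so the antiperiodic continuation is not part of the given data and has to be defined (say, by extending the columns of a representing matrix via $C_{m+n} := (-1)^{k-1}C_m$); one must then still verify the transfer relation across the seam, which is exactly the one-line boundary computation the paper does. Citing ``the glide symmetry of a tame frieze'' as a known fact would be circular here, since the monodromy statement is precisely what the lemma asserts; your column-sign derivation avoids the circularity but ends up reproving the paper's boundary step in heavier language.

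The genuine gap is in your converse. You claim the constructed array ``automatically'' satisfies every $k\times k$-determinant-one and $(k+1)\times(k+1)$-determinant-zero condition as ``the converse reading of the Plücker computation in Lemma \ref{lem:recursion}''. That justification fails: Plücker relations are identities among minors of an actual point of $\G(k,n)$, and the array you have built from the recursion is not yet known to arise from such a point --- that is essentially what has to be proved. The paper fills this hole with two concrete arguments absent from your proposal. For the $\SL_k$ condition it shows that each $k\times k$-diamond satisfies $M_j(i) = A_{i+1}\cdots A_{i+j}M_0(i+j)$, that $\det A_i = 1$ (Laplace expansion along the last column), and that $M_0(i)$ is upper triangular with ones on the diagonal, so every diamond has determinant $1$. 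For tameness it introduces $(k+1)\times(k+1)$ transfer matrices $B_i$ that carry one large diamond to the next and have a zero column, forcing all $(k+1)\times(k+1)$-diamonds to be singular. The tameness step in particular is not routine bookkeeping: it requires this specific device (a singular transfer matrix propagating the big diamonds), and without it your construction yields only an array satisfying the recursion with the correct boundary rows, not a tame $\SL_k$-frieze pattern.
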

\begin{proof}
First assume that $ (q_i)_i $ is the quiddity sequence of a tame $ \SL_k $-frieze pattern. For a given entry $ \Delta_j(i) $ of this frieze pattern with $ 0 \leq j \leq n - 1 $ let $ v_j(i) $ be the vector of the next $ k $ entries (including $ \Delta_j(i) $) on the same south-east diagonal, i.e.
\[
v_j(i) =
\begin{pmatrix}
    \Delta_j(i) \\
    \Delta_{j-1}(i+1) \\
    \vdots \\
    \Delta_{j-k+1}(i+k-1)
\end{pmatrix}.
\]
If $ j < n - 1 $, then by Lemma \ref{lem:recursion} and construction of the matrices $ A_i $ these vectors have the property
\begin{equation} \label{eq:recursionMatrix}
A_i v_j(i) = v_{j+1}(i-1).
\end{equation}
For $ j = n - 1 $ we have
\begin{equation} \label{eq:recursionMatrix2}
v_{n-1}(i) =
\begin{pmatrix}
    0 \\
    \vdots \\
    0 \\
    1
\end{pmatrix}
\text{ and }
A_i v_{n-1}(i) =
\begin{pmatrix}
    (-1)^{k-1} \\
    0 \\
    \vdots \\
    0
\end{pmatrix}
= (-1)^{k-1} v_0(i-1).
\end{equation}

\eqref{eq:recursionMatrix} and \eqref{eq:recursionMatrix2} put together imply
\[
A_1 \dots A_n v_j(n) = (-1)^{k-1} v_j(n)
\]
for all $ 0 \leq j \leq n - 1 $. Moreover, $ \{v_0(n),\dots,v_{k-1}(n)\} $ is a basis of $ K^k $. Therefore
\[
A_1 \dots A_n = (-1)^{k-1} I_k.
\]
For the other direction, assume that $ A_1 \dots A_n = (-1)^{k-1} I_k $. We construct an $ SL_k $-frieze pattern $ F $. Let $ e_j(i) $ be the entries with $ i \in \Z $ and $ j \in \{-k+1, \dots, w+k = n-1 \} $. We set $ e_j(i) := 0 $ for $ j < 0 $ and $ e_0(i) := 1 $ for all $ i \in \Z $. Then we recursively define all other entries as
\[
e_j(i) := \sum_{t=1}^{k-1} q_{i,t} e_{j-t}(i+t) + (-1)^{k-1} e_{j-k}(i+k),
\]
which is the same formula as in Lemma \ref{lem:recursion} with $ e_j(i) $ instead of $ \Delta_j(i) $. We now need to verify that $ F $ is indeed an $ \SL_k $-frieze pattern. Let
\[
v_j(i) :=
\begin{pmatrix}
    e_j(i) \\
    e_{j-1}(i+1) \\
    \vdots \\
    e_{j-k+1}(i+k-1)
\end{pmatrix}.
\]
First we will prove that the highest $ k-1 $ rows are filled with zeroes and the row below that with ones, or equivalently that $ v_{n-1}(i) = e_k $ for all $ i $ where $ e_k $ is the last vector of the standard basis $ \{e_1,\dots,e_k\} $ of $ K^k $. \eqref{eq:recursionMatrix} still holds for our new vectors $ v_j(i) $ by their definition and therefore we have
\begin{equation} \label{eq:almostFullCircle} 
v_{n-1}(i) = A_{i+1} \dots A_n A_1 \dots A_{i-1} v_0(i-1).
\end{equation}
We know that $ A_1 \dots A_n = (-1)^{k-1} I_k $ and it follows that $ A_{i} \dots A_n A_1 \dots A_{i-1} = (-1)^{k-1} I_k $ as well. Combined with \eqref{eq:almostFullCircle} this implies
\[
A_i v_{n-1}(i) = (-1)^{k-1} v_0(i-1) = (-1)^{k-1} e_1.
\]
Moreover, $ A_i $ is invertible and thus the map $ v \mapsto A_i v $ is bijective. And since we also have $ A_i e_k = (-1)^{k-1} e_1 $, it follows that $ v_{n-1}(i) = e_k $. \\
Next we need to check that all $ k \times k $-diamonds in $ F $ have determinant $ 1 $. For any entry $ e_j(i) $ with $ 0 \leq j \leq w+1 $ the diamond $ M_j(i) $ starting in $ e_j(i) $ can be written as
\[
M_j(i) = (v_j(i),v_{j+1}(i),\dots,v_{j+k-1}(i)).
\]
Clearly \eqref{eq:recursionMatrix} implies
\begin{align*}
    A_i M_j(i) &= (A_i v_j(i), A_i v_{j+1}(i), \dots, A_i v_{j+k-1}(i)) \\
               &= (v_{j+1}(i-1), v_{j+2}(i-1), \dots, v_{j+k}(i-1)) = M_{j+1}(i-1).
\end{align*}
In particular, we have $ M_j(i) = A_{i+1} \dots A_{i+j} M_0(i+j) $. Hence it is sufficient to calculate the determinants of $ A_i $ and $ M_0(i) $ for all $ i $. Laplace expansion of $ A_i $ along  the last column yields
\[
\det A_i = (-1)^{k+1} (-1)^{k-1} \det I_{k-1} = 1.
\]
Additionally, $ M_0(i) $ is an upper triangular matrix with only $ 1 $ on the main diagonal, therefore $ \det M_0(i) = 1 $. It follows that $ \det M_j(i) = 1 $ for all relevant $ i $ and $ j $. \\
We have shown that $ F $ is an $ \SL_k $-frieze pattern, the only thing that remains is proving that $ F $ is tame. Let $ N_j(i) $ for $ i \in \Z $ and $ j \in \{1,\dots,w\} $ be the $ (k+1) \times (k+1) $-diamond starting in $ e_j(i) $. Then we can write
\[
N_j(i) = (w_j(i),w_{j+1}(i),\dots,w_{j+k}(i)),
\]
where
\[
w_j(i) :=
\begin{pmatrix}
    e_j(i) \\
    e_{j-1}(i+1) \\
    \vdots \\
    e_{j-k}(i+k)
\end{pmatrix}.
\]
Consider the $ (k+1) \times (k+1) $ matrices
\[
B_i :=
\begin{pmatrix}
    q_{i-1,1} & q_{i-1,2} & \dots  & q_{i-1,k-1} & (-1)^{k-1} & 0 \\
    1       & 0       & \dots  & 0           & 0          & 0 \\
    0       & 1       & \dots  & 0           & 0          & 0 \\
    \vdots  & \vdots  & \ddots & \vdots      & \vdots     & \vdots \\
    0       & 0       & \dots  & 1           & 0          & 0 \\
    0       & 0       & \dots  & 0           & 1          & 0
\end{pmatrix}.
\]
We have $ B_i w_j(i) = w_{j+1}(i-1) $ and thus $ B_{i+1} N_{j-1}(i+1) = N_j(i) $ for all $ j \geq 2 $. Since $ B_i $ has a zero column its determinant vanishes, and thus $ \det N_j(i) = 0 $ for all $ j \geq 2 $. For the case $ j = 1 $ we use
\[
N_1(i) = B_{i+1}
\begin{pmatrix}
    v_0(i+1) & v_1(i+1) & \dots & v_k(i+1) \\
    *        & *        & \dots & *
\end{pmatrix},
\]
where the entries of the last row of the matrix on the right do not matter. Clearly, we have $ \det N_1(i) = 0 $ as well and $ F $ is a tame $ \SL_k $-frieze pattern with the desired quiddity sequence.

\end{proof}

\section{$ C_k(n), C_k^*(n) $ and $ \SL_k $-frieze patterns}

\begin{defi}
Let $ K $ be some field. Define
\[
C_k(n) := \{ (v_1,\dots,v_n) \in (\pr^{k-1}(K))^n \; | \; \{v_i,v_{i+1}\dots,v_{i+k-1}\} \text{ independent for } i \in [n] \},
\]
where the indices of the $ v_i $ are modulo $ n $ and
\[
\M_k(n) := C_k(n) / \PGL(k,K).
\]
\end{defi}
If $ n $ and $ k $ are relatively prime, we will be able to show that there is a correspondence between $ \Quid(k,n) $ and the set $ C_k(n) $. On the over hand, if $ g := \gcd(k,n) > 1 $, then we instead need to consider a subset of $ C_k(n) $. Let $ (v_1,\dots,v_n) \in C_k(n) $ and let $ (V_1,\dots,V_n) \in (K^k)^n $ be an arbitrary lift. Then we can look at the determinants $ d_i := \det(V_i,V_{i+1},\dots,V_{i+k-1}) $ for $ i \in [n] $, where the indices are modulo $ n $. Now consider the equations
\begin{equation} \label{eq:subsetDetCondition}
d_{i} d_{i+g} \dots d_{i+n-g} = (-1)^{(k-1)\frac{k}{g}} d_{i+1} d_{i+g+1} \dots d_{i+n-g+1}.
\end{equation}
for $ i \in [g-1] $. The reason why we choose the sign this way will become apparent during the proof of the next lemma. Note that all determinants in this equation are nonzero because of $ (v_1,\dots,v_n) \in C_k(n) $. Also note that the equation does not depend on the specific choice of lift, because replacing $ V_i $ with $ \lambda V_i $ for some nonzero scalar $ \lambda \in K^* $ will lead to both sides of the equation being multiplied with the same power of $ \lambda $. Therefore \eqref{eq:subsetDetCondition} is a condition not on the lift but rather on the underlying tuple $ (v_1,\dots,v_n) \in C_k(n) $ of projective points. \\
Additionally, note that the transformation $ (V_1,\dots,V_n) \mapsto (A V_1,\dots, A V_n) $ for some $ A \in \GL(K,k) $ does not affect \eqref{eq:subsetDetCondition} either since this is equivalent to multiplying all determinants with $ \det(A) $. Therefore \eqref{eq:subsetDetCondition} only depends on the class of $ (v_1,\dots,v_n) $ under the action of $ \PGL(k,K) $. This means that the following definitions are well defined:
\begin{defi}
Let $ k, n \in \N $ with $ k \geq 2 $ and $ n \geq k $. Let $ g := gcd(k,n) $. If $ g > 1 $ we define
\begin{align*}
C_k^*(n) := \{ (v_1,\dots,v_n) \in C_k(n) \mid &\eqref{eq:subsetDetCondition} \text{ holds for some lift } (V_1,\dots,V_n)\text{ of } (v_1,\dots,v_n)\\
&\text{for all } i \in [g-1]\}
\intertext{and}
\M_k^*(n) &:= C_k^*(n) / \PGL(k,K).
\end{align*}
\end{defi}

Let $ (v_1,\dots,v_n) \in C_k(n) $ and let $ (V_1,\dots,V_n) $ again be some lift. We continue the sequence of vectors $ V_1,\dots,V_n $ (anti-)periodically:
\[
V_i^\prime := (-1)^{k-1} V_i
\]
for $ i \in [n] $. The next step in our construction requires a specific choice of lift:
\begin{defi}
Let $ (v_1,\dots,v_n) \in C_k(n) $. We say that a lift $ (V_1,\dots,V_n) \in (K^k)^n $ has constant consecutive determinants if
\begin{align*}
det(V_1,\dots,V_k) &= det(V_2,\dots,V_{k+1}) = \dots = det(V_{n-k+1},\dots,V_n) \\
                   &= det(V_{n-k+2},\dots,V_n,V^\prime_1) = \dots = det(V_n,V^\prime_1,\dots,V^\prime_{k-1}).
\end{align*}
\end{defi}

We now prove that such lifts do in fact exist:

\begin{lem} \label{lem:suitableLift}
Let $ (v_1,\dots,v_n) \in C_k(n) $.
\begin{enumerate}[(i)]
\item If $ gcd(k,n) = 1 $, then a lift $ (V_1, \dots, V_n) \in (K^k)^n $ with constant consecutive determinants exists.
\item If $ gcd(k,n) = g > 1 $, then a lift with constant consecutive determinants exists if and only if $ (v_1,\dots,v_n) \in C^*_k(n) $.
\end{enumerate}
Moreover, if $ (V_1,\dots,V_n) $ is a lift with constant consecutive determinants, then
\[
\{ (\lambda_1 V_1, \dots, \lambda_g V_g, \lambda_1 V_{g+1}, \dots, \lambda_g V_{2g}, \dots, \lambda_1 V_{n-g+1}, \dots \lambda_g V_n) \mid \lambda_i \in K^* \}
\]
is the set of all such lifts.
\end{lem}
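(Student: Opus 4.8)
The plan is to fix one arbitrary lift $(W_1,\dots,W_n)$ of $(v_1,\dots,v_n)$ and to look for a lift with constant consecutive determinants among its rescalings $V_i=\mu_i W_i$ with $\mu_i\in K^*$. Extending $(W_i)$ anti-periodically by $W_{n+i}=(-1)^{k-1}W_i$ and the scalars $n$-periodically by $\mu_{n+i}=\mu_i$, the rescaled lift inherits the same anti-periodic extension, and each consecutive determinant scales as
\[
\det(V_i,\dots,V_{i+k-1})=\Big(\prod_{t=0}^{k-1}\mu_{i+t}\Big)\det(W_i,\dots,W_{i+k-1}),
\]
where every determinant occurring is nonzero because $(v_1,\dots,v_n)\in C_k(n)$. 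Writing $D_i:=\det(W_i,\dots,W_{i+k-1})$ with this anti-periodic wrap, the requirement of constant consecutive determinants is $\det(V_i,\dots,V_{i+k-1})=\det(V_{i+1},\dots,V_{i+k})$ for all $i\in[n]$, which after cancellation becomes the multiplicative recursion
\[
\mu_{i+k}=\frac{D_i}{D_{i+1}}\,\mu_i\qquad(i\in[n])
\]
in the group $K^*$.

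Next I would analyse this recursion via the orbits of the shift $i\mapsto i+k$ on $\Z/n\Z$. Since $\langle k\rangle=\langle g\rangle$ inside $\Z/n\Z$, these orbits are exactly the $g$ residue classes modulo $g$, each of size $n/g$. On any one orbit $O$ one may prescribe $\mu$ at a single base point and propagate by the recursion; the sole obstruction is the closing condition that the product of the factors $D_i/D_{i+1}$ around $O$ equals $1$. This product telescopes to $\big(\prod_{O}D\big)\big/\big(\prod_{O+1}D\big)$, so the obstruction for the orbit $O_i$ of $i$ reads $\prod_{O_i}D=\prod_{O_{i+1}}D$. When $g=1$ there is a single orbit equal to all of $\Z/n\Z$, and the product telescopes completely to $1$, so a solution always exists, giving (i). When $g>1$ the $g$ obstructions form the cyclic chain $\prod_{O_1}D=\prod_{O_2}D=\dots=\prod_{O_g}D$, of which $g-1$ are independent; listing each coset with step $g$ identifies these with equations (\ref{eq:subsetDetCondition}), i.e.\ with membership in $C_k^*(n)$, giving (ii).

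For the parametrization I would compare two lifts $V$ and $V'=(\mu_i V_i)$ that both have constant consecutive determinants. Then $\prod_{t=0}^{k-1}\mu_{i+t}$ is independent of $i$, whence $\mu_{i+k}=\mu_i$ for all $i$; thus $\mu$ is constant on the orbits of $i\mapsto i+k$, that is, constant on residue classes modulo $g$, which is precisely the shape $\mu_j=\lambda_{j\bmod g}$ of the asserted family. Conversely, any such $\lambda$-rescaling preserves constant consecutive determinants: because $g\mid k$, every window $\{i,\dots,i+k-1\}$ meets each residue class modulo $g$ exactly $k/g$ times, so $\prod_{t=0}^{k-1}\lambda_{(i+t)\bmod g}=\prod_{r=1}^{g}\lambda_r^{\,k/g}$ is independent of $i$, and the anti-periodic extension is respected since $g\mid n$. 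This displays the stated set as exactly the set of all lifts with constant consecutive determinants.

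The main obstacle is the middle step: matching the telescoped obstruction $\prod_{O_i}D=\prod_{O_{i+1}}D$ with the defining equations (\ref{eq:subsetDetCondition}) of $C_k^*(n)$. This requires rewriting a step-$k$ product over a coset as the step-$g$ product appearing in (\ref{eq:subsetDetCondition}) (legitimate since $\langle k\rangle=\langle g\rangle$), reducing the $g$ cyclic equalities to $g-1$ independent ones, and—most delicately—tracking the signs $(-1)^{k-1}$ introduced by the anti-periodic wrap, so that the anti-periodically indexed determinants $D_i$ are correctly reconciled with the determinants $d_i$ used in (\ref{eq:subsetDetCondition}). By contrast, the $g=1$ case and the final parametrization are sign-free and should be routine.
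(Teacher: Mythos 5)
Your proposal follows the same route as the paper's own proof: fix one lift, reduce the existence question to the multiplicative recursion $\mu_{i+k}=\frac{D_i}{D_{i+1}}\mu_i$ in $K^*$ (the paper's equation \eqref{eq:moduloK}, written there with reciprocal determinants $d_i$), read off the obstruction as equality of determinant products over the residue classes modulo $g$, and obtain the parametrization from $\mu_{i+k}=\mu_i$. The only real difference is packaging: you decouple the system into the $g$ orbits of $i\mapsto i+k$ on $\Z/n\Z$, each carrying a single closing condition, whereas the paper constructs an explicit candidate solution from B\'ezout coefficients $an+bk=g$ and then verifies it equation by equation (the computations around \eqref{eq:cProduct} and \eqref{eq:lastCheck}). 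Your treatment of solvability is cleaner, and your parametrization argument is correct and matches the paper's.

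The step you postpone, however, is not a technicality: reconciling the anti-periodically indexed $D_i$ with the plainly periodic $d_i$ of \eqref{eq:subsetDetCondition} is exactly where part (ii) is decided, and leaving it open leaves the main claim unproved. Two things you should have said. First, in every case the paper actually needs (ii) for --- namely $k=3$, and more generally any odd $k$ --- the resolution is one line: $(-1)^{k-1}=1$, so the ``anti-periodic'' extension is honestly periodic, $D_i=d_i$ for all $i$, and your orbit conditions are verbatim \eqref{eq:subsetDetCondition}; adding this sentence completes your proof for all odd $k$ (and part (i) and the parametrization need no signs at all, as you note). Second, for even $k$ your worry is not merely delicate bookkeeping: the signs cancel when $k/g$ is even but genuinely do not cancel when $k/g$ is odd, and then no bookkeeping can rescue the statement. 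Concretely, for $k=2$, $n=4$, the tuple $v=([1{:}0],[0{:}1],[1{:}1],[0{:}1])$ with lift $V_1=(1,0)$, $V_2=(0,1)$, $V_3=(1,1)$, $V_4=(0,1)$ satisfies $d_1d_3=1=d_2d_4$, so $v\in C_2^*(4)$ under the paper's modulo-$n$ definition, yet constant consecutive determinants force $\mu_1=-\mu_3$ and $\mu_1=\mu_3$, which is impossible over $\F_q$ with $q$ odd. So for even $k$ with $k/g$ odd the equivalence you hoped to establish is false as stated, and the lemma would need $C_k^*(n)$ to be defined with the anti-periodic convention. Note that the paper's proof is silent on exactly this point --- it passes from \eqref{eq:detCondition}, whose determinants use the extension $V_{n+j}=(-1)^{k-1}V_j$, to \eqref{eq:subsetDetCondition} with the bare assertion that they are equivalent --- so the unresolved ``obstacle'' you identified is a genuine defect of the paper in the even-$k$ case, not of your method; but for the lemma as you were asked to prove it, your write-up still has a hole that, for odd $k$, you could and should have closed.
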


\begin{proof}
Choose any lift $ (V_1,\dots,V_n) $. We are searching for coefficients $ \lambda_i, c \in K^* $ with
\[
det(\lambda_i V_i,\dots,\lambda_{i+k-1} V_{i+k-1}) = c
\]
for all $ i \in [n] $, where $ V_{n+j} := V^\prime_j $. Equivalently, we can write
\begin{equation} \label{eq:lambdas}
\lambda_i \dots \lambda_{i+k-1} = b_i c,
\end{equation}
where $ b_i := 1 / det(V_i,\dots,V_{i+k-1}) $ and the indices on the lambdas are understood modulo $ n $. If we rearrange \eqref{eq:lambdas} for $ \lambda_{i+1} \dots \lambda_{i+k-1} $ and substitute this into the equation for $ i + 1 $ we get
\begin{equation} \label{eq:moduloK}
\lambda_{i+k} = \frac{b_{i+1}}{b_i} \lambda_i
\end{equation}
after rearranging again. Now let $ g := \gcd(k,n) $ be the greatest common divisor and $ m := \lcm(k,n) $ be the least common multiple. Let $ j $ be the factor with $ jk = m $ and $ l $ the factor with $ lg = n $. By applying \eqref{eq:moduloK} repeatedly we get
\begin{equation} \label{eq:periodic}
\lambda_i = \lambda_{i+m} = \frac{b_{i+(j-1)k+1} b_{i+(j-2)k+1} \dots b_{i+1}}{b_{i+(j-1)k} b_{i+(j-2)k} \dots b_i} \lambda_i
\end{equation}
Here $ \{ i + sk \; | \; 0 \leq s \leq j-1\} = \{ i + sg \; | \; 0 \leq s \leq l-1 \} $, since the indices are modulo $ n $. Combined with $ \lambda_i \neq 0 $, \eqref{eq:periodic} implies
\begin{equation} \label{eq:detCondition}
b_i b_{i+g} \dots b_{i+n-g} = b_{i+1} b_{i+g+1} \dots b_{i+n-g+1}.
\end{equation}
If $ g = 1 $, this condition is trivial. Otherwise, consider once again the numbers $ d_j := \det(V_j,\dots,V_{j+k-1}) $ used in the definition of \eqref{eq:subsetDetCondition}. Note that the definition there assumes the sequence $ (V_n)_n $ to be $ n $-periodic while the definition in this Lemma uses $ V_{n+j} := V_j^\prime = (-1)^{k-1}V_j $. Thus we have
\begin{equation}\label{eq:signChange}
b_j = \begin{cases}
    \frac{1}{d_j}, &\text{if } 0 < j \leq n-k+1 \text{ or } k \text{ odd},\\
    (-1)^{j-n+k-1}\frac{1}{d_j}, &\text{if } k \text{ even and } n-k+1 < j \leq n. 
\end{cases}
\end{equation}
Also note that \eqref{eq:detCondition} does not change under $ i \mapsto i + g $, therefore we may assume $ n-k+1 \leq i \leq n-k+g $. Then
\[
\frac{b_{i+jg}}{b_{i+jg+1}} = \begin{cases}
    (-1)^{k-1}\frac{d_{i+jg+1}}{d_{i+jg}}, &\text{if } 0 \leq j < \frac{k}{g},\\
    \frac{d_{i+jg+1}}{d_{i+jg}}, &\text{otherwise}.
\end{cases}
\]
It follows that substituting \eqref{eq:signChange} into \eqref{eq:detCondition} for all $ b_j $ changes the sign if and only if $ k $ is even and $ \frac{k}{g} $ is odd. Therefore, \eqref{eq:detCondition} is equivalent to \eqref{eq:subsetDetCondition}. Hence $ (v_1,\dots,v_n) $ can only have a lift with constant consecutive determinants if $ (v_1,\dots,v_n) \in C_k^*(n) $ and it remains to show that the system is solvable in that case (or always if $ g = 1 $). \\
Let $ e,f \in \Z $ be Bézout coefficients with $ en + fk = g $. We can choose them with $ f > 0 $. Then we have $ fk \equiv g \; mod \; n $ and thus
\[
\lambda_i = \frac{b_{i-k+1} b_{i-2k+1} \dots b_{i-fk+1}}{b_{i-k} b_{i-2k} \dots b_{i-fk}} \lambda_{i-fk} = \frac{b_{i-k+1} b_{i-2k+1} \dots b_{i-fk+1}}{b_{i-k} b_{i-2k} \dots b_{i-fk}} \lambda_{i-g},
\]
where we again repeatedly apply \eqref{eq:moduloK}. By writing
\[
c_i := \prod_{q=1}^f \frac{b_{i-qk+1}}{b_{i-qk}}
\]
we can simplify this to $ \lambda_i = c_i \lambda_{i-g} $. If $ i \equiv r \; mod \; g $ with $ 1 \leq r \leq g $, then
\[
\lambda_i = c_i c_{i-g} \dots c_{r+g} \lambda_r.
\]
Now we can rewrite \eqref{eq:lambdas} with $ i = 1 $ as
\[
\lambda_1 \dots \lambda_g c_{g+1} \lambda_1 \dots c_{2g} \lambda_g \dots (c_{k-g+1} \dots c_{g+1}) \lambda_1 \dots (c_k \dots c_{2g}) \lambda_g = b_1 c.
\]
By abbreviating the product of all constants on the left as $ c^\prime $ and rearranging we get
\[
\lambda_g^t = \frac{b_1 c}{c^\prime \lambda_1^t \dots \lambda_{g-1}^t},
\]
where $ t = k/g $. We can choose $ c $ such that the right hand side permits a $ t $-th root. Also note that every $ \lambda_g \in K^* $ can be a solution with the right choice of $ c $. Thus we have our solution candidate with free variables $ \lambda_1, \dots, \lambda_g $:
\begin{equation} \label{eq:solutionLambdas}
    \lambda_{sg + r} := c_{sg+r} \dots c_{g+r} \lambda_r, \\
\end{equation}
for $ 1 \leq r \leq g $. We now need to verify that this does indeed solve \eqref{eq:lambdas} for all $ i $. The first equation ($ i = 1 $) is clearly satisfied by our choice of $ c $. For all other equations it is enough to check that \eqref{eq:moduloK} holds for $ 1 \leq i < n $. For $ i \leq n-k $ this is equivalent to
\begin{equation} \label{eq:cProduct}
c_{i+k} c_{i+k-g} \dots c_{i+g} = \frac{b_{i+1}}{b_i},
\end{equation}
which we can rewrite as
\[
\prod_{p=0}^{t-1}c_{i+k-pg} = \prod_{p=0}^{t-1} \prod_{q=1}^f \frac{b_{i+k-pg-qk+1}}{b_{i+k-pg-qk}} = \prod_{p = 0}^{ft-1} \frac{b_{i-pg+1}}{b_{i-pg}} = \frac{b_{i+1}}{b_i},
\]
where we use $ k = tg $. Also note that $ (ft-1)g = ftg - g = fk - g \equiv g - g = 0 \mod n $, i.e. $ (ft-1)g $ is multiple of $ n $, lets say $ (ft-1)g = ln $. Taking the first term out of the product and using the $ n $-periodicity of the indices, we then get the equivalent equation
\[
\frac{b_{i+1}}{b_i} \left( \prod_{p=1}^{n/g} \frac{b_{i-pg+1}}{b_{i-pg}} \right)^l = \frac{b_{i+1}}{b_i}.
\]
Here the product in the brackets is $ 1 $ because of \eqref{eq:detCondition} and the equation holds. \\
Now consider $ n-k < i < n $. Let $ i \equiv r \; mod \; g $ with $ 0 < r < g $. \eqref{eq:moduloK} is then equivalent to
\begin{equation} \label{eq:lastCheck}
1 = \frac{b_{i+1}}{b_i} c_i c_{i-g} \dots c_{r+g}.
\end{equation}
To prove this, consider
\[
c_r c_{r+g} \dots c_{r-g} = \prod_{p=0}^{\frac{n}{g}-1} \prod_{q=1}^f \frac{b_{r-qk+pg+1}}{b_{r-qk+pg}} = \prod_{q=1}^f \prod_{p=0}^{\frac{n}{g}-1} \frac{b_{r-qk+pg+1}}{b_{r-qk+pg}} = \prod_{q=1}^f 1 = 1,
\]
where we used \eqref{eq:detCondition} on the inner product for the second to last equation. Consequently we have $ c_i c_{i-g} \dots c_{r+g} = \frac{1}{c_r c_{r-g} \dots c_{i+g}} $. Inserting this in \eqref{eq:lastCheck} and rearranging yields
\[
c_r c_{r-g} \dots c_{i+g} = \frac{b_{i+1}}{b_i}.
\]
This can be proven exactly like \eqref{eq:cProduct}. We have shown that a lift with constant consecutive determinants exists if and only if $ (v_1,\dots,v_n) \in C_4^*(n) $. \smallskip\\
Now let $ (V_1,\dots,V_n) $ be a lift with constant consecutive determinants. Then clearly $ \lambda_i = 1 $ for all $ i \in [n] $ and $ c = \det(V_1,\dots,V_k) $ is a solution of \eqref{eq:lambdas}. This implies that the product of constants on the right hand side of \eqref{eq:solutionLambdas} is equal to $ 1 $. Hence all solutions have the form $ \lambda_i = \lambda_r $ where $ 1 \leq r \leq g $ and $ i \equiv r \mod g $. On the other hand we already proved that $ \lambda_r $ for $ 1 \leq r \leq g $ can be chosen arbitrarily in $ K^* $ (possibly requiring a different choice of $ c $). This proves the second part of the Lemma.
\end{proof}

Now let $ (v_1,\dots,v_n) \in C_k(n) $ (or $ \in C_k^*(n) $ if $ gcd(k,n) > 1 $). Choose a lift $ (V_1,\dots,V_n) $ as in Lemma \ref{lem:suitableLift}. We define $ V_{i+n} = (-1)^{k-1} V_i $ anti-periodically for all $ i \in \Z $. For every $ i \in \Z $ expand $ V_i $ in the basis $ (V_{i+1},\dots,V_{i+k}) $.  Note that $ \det(V_i,\dots,V_{i+k-1}) = \det(V_{i+1},\dots,V_{i+k}) $ and therefore
\begin{align*}
0 &= \det(V_i,\dots,V_{i+k-1}) - (-1)^{k-1} \det(V_{i+k},V_{i+1},\dots,V_{i+k-1})\\
  &= \det(V_i - (-1)^{k-1} V_{i+k},V_{i+1},\dots,V_{i+k-1}).
\end{align*}
This implies that we can write
\begin{equation} \label{eq:basisExpansion}
V_i = a_{i,1} V_{i+1} + a_{i,2} V_{i+2} + \dots + a_{i,k-1} V_{i+k-1} + (-1)^{k-1} V_{i+k}
\end{equation}
with suitable coefficients $ a_{i,j} \in K $, where $ j \in [k-1] $ and $ i \in \Z $. Let
\[
a_i = \begin{pmatrix}
a_{i,1}\\
\vdots\\
a_{i,k-1}
\end{pmatrix}
\]
be the vector containing the coefficients of this basis expansion (except for the last coefficient $ (-1)^{k-1} $, which is not included).

\begin{defi}
Let $ v \in C_k^{(*)}(n) $. We write
\begin{align*}
\Coeff(v) = \{(a_i)_{i \in \Z} \mid &a_i \in K^{k-1}, (a_i)_i \text{ fulfills \eqref{eq:basisExpansion} for some lift } (V_1,\dots,V_n)\text{ of $ v $ with}\\
&\text{constant consecutive determinants}\}
\end{align*}
for the set of sequences of coefficient vectors that can arise from $ v $ for different choices of lifts with constant consecutive determinants.
\end{defi}

\begin{lem} \label{lem:coeffQuid}
We have
\[
\Quid(k,n) = \bigcup_{v\in C_k^{(*)}(n)} \Coeff(v).
\]
\end{lem}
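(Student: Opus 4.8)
The plan is to reduce the whole statement to the matrix criterion of Lemma \ref{lem:quiddityCriterion}: a matrix $(a_{s,i})$ lies in $\Quid(k,n)$ if and only if $A_1 \cdots A_n = (-1)^{k-1} I_k$, with the $A_i$ the companion-type matrices of that lemma. The bridge to $\Coeff(v)$ is the observation that the first row of $A_i$, namely $(a_{k-1,i}, -a_{k-2,i+1}, \dots, (-1)^{k-2}a_{1,i+k-2}, (-1)^{k-1})$, is exactly the tuple of coefficients occurring in the expansion \eqref{eq:basisExpansion}. In other words $A_i$ is precisely the companion matrix of the linear recurrence \eqref{eq:basisExpansion}, and I would exploit this identification in both directions of the required set equality.

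For the inclusion $\bigcup_{v} \Coeff(v) \subseteq \Quid(k,n)$: let $(a_{j,i}) \in \Coeff(v)$ come from a lift $(V_1,\dots,V_n)$ with constant consecutive determinants, extended antiperiodically by $V_{i+n} = (-1)^{k-1} V_i$. First I would form, for each $i$, the $k\times k$ matrix $S_i$ whose rows are $V_i^T,\dots,V_{i+k-1}^T$, so that $\det S_i = \det(V_i,\dots,V_{i+k-1})$. Reading \eqref{eq:basisExpansion} one coordinate at a time shows that it is exactly the companion relation $A_i S_{i+1} = S_i$. Telescoping then gives $S_1 = A_1 \cdots A_n S_{n+1}$, while the antiperiodic extension gives $S_{n+1} = (-1)^{k-1} S_1$. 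Since $v \in C_k(n)$ forces $\det S_1 \neq 0$, the matrix $S_1$ is invertible and one cancels it to obtain $A_1 \cdots A_n = (-1)^{k-1} I_k$, i.e. $(a_{j,i}) \in \Quid(k,n)$.

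For the reverse inclusion $\Quid(k,n) \subseteq \bigcup_v \Coeff(v)$, I would run the recurrence backwards. Given $(a_{s,i})$ with $A_1\cdots A_n = (-1)^{k-1}I_k$, pick any basis $V_1,\dots,V_k$ of $K^k$ and define $V_{i+k}$ for $i \geq 1$ by solving \eqref{eq:basisExpansion} for its last term, equivalently by $S_{i+1} = A_i^{-1} S_i$. Then $S_{n+1} = (A_1\cdots A_n)^{-1} S_1 = (-1)^{k-1} S_1$, which reads off as $V_{n+j} = (-1)^{k-1} V_j$ for $j=1,\dots,k$; since the $A_i$ are $n$-periodic in $i$, this antiperiodicity propagates to all indices. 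Because $\det A_i = 1$ (computed in the proof of Lemma \ref{lem:quiddityCriterion}), all $\det S_i$ agree and equal $\det S_1 \neq 0$, so every $k$ consecutive vectors are independent and the consecutive determinants are constant; moreover the wrapped determinant $\det(V_{n-k+2},\dots,V_n,V_1')$ equals $\det S_{n-k+2}$ via $V_1' = V_{n+1}$, so $(V_1,\dots,V_n)$ is genuinely a lift with constant consecutive determinants. Hence $v := ([V_1],\dots,[V_n]) \in C_k(n)$, and by Lemma \ref{lem:suitableLift} in fact $v \in C_k^{(*)}(n)$; by construction the (unique, since the relevant vectors form a basis) expansion \eqref{eq:basisExpansion} of this lift has coefficients $(a_{j,i})$, so $(a_{j,i}) \in \Coeff(v)$.

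The main obstacle is not any single computation but keeping the index and sign bookkeeping consistent across the antiperiodic wrap-around: one must verify that the companion relation $A_i S_{i+1} = S_i$ still holds for the indices $i$ near $n$, where some $V$'s are replaced by $(-1)^{k-1}$ times their periodic counterparts, and that the accumulated factor $(-1)^{k-1}$ matches exactly the right-hand side of Lemma \ref{lem:quiddityCriterion}. Identifying the wrapped consecutive determinants appearing in the definition of ``constant consecutive determinants'' with the ordinary $\det S_i$ under the antiperiodic extension is the delicate point that ties the two characterizations together.
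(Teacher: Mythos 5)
Your proof is correct. The first inclusion $\bigcup_v \Coeff(v) \subseteq \Quid(k,n)$ is exactly the paper's argument: the companion relation $A_i S_{i+1} = S_i$, telescoping around the antiperiodic wrap, and cancellation of the invertible matrix $S_1$. For the reverse inclusion, however, you take a genuinely different route. The paper constructs the witness $v$ concretely out of the frieze pattern $F$ determined by the quiddity matrix (via Lemma \ref{lem:friezeQuidBijection}): the lift vectors are read off the south-east diagonals of $F$, so that the matrices you call $S_i$ are exactly the $k \times k$-diamonds $M_{n-i}(i)$ of $F$, whose determinants equal $1$; this immediately gives a lift with constant consecutive determinants, and the expansion \eqref{eq:basisExpansion} is obtained from the identity $A_i M_{n-i}(i) = M_{n-i+1}(i-1)$ established \emph{inside} the proof of Lemma \ref{lem:quiddityCriterion}. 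You instead build the lift abstractly: start from an arbitrary basis, propagate by $S_{i+1} = A_i^{-1} S_i$, deduce antiperiodicity $S_{n+1} = (-1)^{k-1}S_1$ from the matrix identity $A_1 \cdots A_n = (-1)^{k-1} I_k$, and deduce constancy of the consecutive determinants from $\det A_i = 1$. Your version uses only the \emph{statement} of Lemma \ref{lem:quiddityCriterion} (in both directions) and never mentions the frieze pattern itself, which makes it more self-contained and arguably cleaner in its dependencies; the paper's version buys the explicit dictionary between frieze entries (Plücker coordinates) and the lift, with the constant determinant normalized to $1$, which is the geometric picture exploited in the rest of Section 2. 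You also correctly dispatch the two delicate points: the recursively produced $V_{n+j}$ coincide with $(-1)^{k-1}V_j = V_j'$, so the wrapped determinants in the definition of constant consecutive determinants are honest $\det S_i$'s, and the appeal to Lemma \ref{lem:suitableLift}(ii) is the right way to conclude $v \in C_k^*(n)$ when $\gcd(k,n) > 1$.
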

\begin{proof}
Let $ (a_i)_i \in \Coeff(v) $ for $ v \in C_k^{(*)}(n) $ and let $ (V_1,\dots,V_n) $ be the corresponding lift with constant consecutive coefficients. Like in Lemma \ref{lem:quiddityCriterion}, consider the matrices
\[
A_i =
\begin{pmatrix}
    a_{i-1,1} & a_{i-1,2} & \dots  & a_{i-1,k-1} & (-1)^{k-1} \\
    1       & 0       & \dots  & 0         & 0 \\
    0       & 1       & \dots  & 0         & 0 \\
    \vdots  & \vdots  & \ddots & \vdots    & \vdots \\
    0       & 0       & \dots  & 1         & 0
\end{pmatrix}.
\]
Then we have
\[
A_i
\begin{pmatrix}
    V_i \\
    \vdots \\
    V_{i+k-1}
\end{pmatrix}
=
\begin{pmatrix}
    V_{i-1} \\
    \vdots \\
    V_{i+k-2}
\end{pmatrix}
\]
for all $ i \in \Z $, where we use the $ V_i $ as row vectors. In particular, it follows that
\[
\begin{pmatrix}
    V_0 \\
    \vdots \\
    V_{k-1}
\end{pmatrix}
= A_1 \dots A_n
\begin{pmatrix}
    V_n \\
    \vdots \\
    V_{n+k-1}
\end{pmatrix}
= A_1 \dots A_n (-1)^{k-1}
\begin{pmatrix}
    V_0 \\
    \vdots \\
    V_{k-1}
\end{pmatrix}.
\]
Since $ (V_0,\dots,V_{k-1}) $ is a basis of $ K^k $ we get $ A_1 \dots A_n = (-1)^{k-1} I_k $ and by Lemma \ref{lem:quiddityCriterion} $ (a_i)_i \in \Quid(k,n) $. \smallskip\\
For the other direction let $ (q_i)_i \in \Quid(k,n) $ and $ F $ be the $ \SL_k $-frieze pattern of width $ n-k-1 $ generated by this quiddity sequence. Now we consider the vectors $ V_i := (\Delta_{n-i}(i),\dots,\Delta_{n-i+k+1}(i)) $. Then the matrices
\[
M_{n-i}(i) =
\begin{pmatrix}
    V_i \\
    \vdots \\
    V_{i+k-1}
\end{pmatrix}
\]
are all $ k \times k $-diamonds of $ F $ and have determinant $ 1 $. In particular,
\[
v := (\pr(\langle V_1 \rangle),\dots,\pr(\langle V_n \rangle)) \in C_k(n).
\]
Moreover, $ (V_1,\dots,V_n) $ is a lift with constant consecutive determinants and thus $ v \in C_k^*(n) $ by Lemma \ref{lem:suitableLift} if $ g > 1 $. Also recall that in the proof of Lemma \ref{lem:quiddityCriterion} we showed
\[
A_i M_{n-i}(i) = M_{n-i+1}(i-1)
\]
for all $ i \in \Z $, where $ V_{n+i} = (-1)^{k-1} V_i $ and the matrices $ A_i $ use the entries of the vectors in the quiddity sequence. Consequently, \eqref{eq:basisExpansion} holds for the $ V_i $ and $ (q_i)_i \in \Coeff(v) $ follows.
\end{proof}
Next we will show that different choices of lift can indeed result in different sequences of coefficient vectors $ (a_i)_i $. Moreover, we will show how these different sequences are related to each other for a fixed $ v \in C_k^{(*)}(n) $.
\begin{lem} \label{lem:coeffStructure}
Let $ g := \gcd(k,n) $ and $ v \in C_k^{(*)}(n) $. Let $ (V_1,\dots,V_n) $ be some lift with constant consecutive determinants and $ (a_i)_i \in \Coeff(v) $ the corresponding coefficient vector sequence.
\begin{enumerate}[(i)]
    \item If $ g = 1 $, then $ \Coeff(v) = \{ (a_i)_i \} $.
    \item If $ g > 1 $, then $ \Coeff(v) = \{ (\begin{pmatrix}
        \frac{\lambda_i}{\lambda_{i+1}}a_{i,1}\\
        \vdots\\
        \frac{\lambda_i}{\lambda_{i+k-1}}a_{i,k-1}
    \end{pmatrix})_i \mid \lambda_1,\dots,\lambda_{g-1} \in K^*\} $, where $ \lambda_g = 1 $ and the indices of the lambdas are understood modulo $ g $.
\end{enumerate}
\end{lem}
\begin{proof}
According to Lemma \ref{lem:suitableLift} the set of suitable lifts is exactly
\[
\{ (\lambda_1 V_1, \dots, \lambda_g V_g, \lambda_1 V_{g+1}, \dots, \lambda_g V_{2g}, \dots, \lambda_1 V_{n-g+1}, \dots \lambda_g V_n) \mid \lambda_i \in K^* \}.
\]
Note that multiplying \eqref{eq:basisExpansion} with some nonzero scalar $ \mu \in K^* $ shows that the lifts $ (V_1,\dots,V_n) $ and $ (\mu V_1,\dots,\mu V_n) $ lead to the same coefficients. Therefore we can divide any lift by $ \lambda_g $ and assume $ \lambda_g = 1$. If $ g = 1 $ this leaves only one lift, namely $ (V_1,\dots,V_n) $ itself and we are done. \smallskip\\
Now let $ g > 1 $. Consider some lift $ (V^\prime_1,\dots,V^\prime_n) = (\lambda_1 V_1,\dots,\lambda_nV_n) $ with parameters $ \lambda_1, \dots, \lambda_{g-1} $ (we still understand the indices of the lambdas modulo $ g $ and assume $ \lambda_g = 1 $). Then define a sequence of vectors $ (b_i)_i $ with
\[
b_{i,j} := \frac{\lambda_i}{\lambda_{i+j}}a_{i,j}.
\]
Multiplying \eqref{eq:basisExpansion} by $ \lambda_i $ yields
\begin{align*}
V_i &= \sum_{j=1}^{k-1} a_{i,j} V_{i+j} + (-1)^{k-1} V_{i+k} \\
\Leftrightarrow \lambda_i V_i &= \sum_{j=1}^{k-1} \frac{\lambda_i}{\lambda_{i+j}} a_{i,j} \lambda_{i+j} V_{i+j} + (-1)^{k-1} \frac{\lambda_i}{\lambda_{i+k}} \lambda_{i+k} V_{i+k} \\
\Leftrightarrow V^\prime_i &= \sum_{j=1}^{k-1} b_{i,j} V^\prime_{i+j} + (-1)^{k-1} V^\prime_{i+k},
\end{align*}
where we use that $ k $ is a multiple of $ g $. Clearly $ (b_i)_i $ is the sequence that arises from the lift $ (V^\prime_1,\dots,V^\prime_n) $.
\end{proof}

\begin{lem}\label{lem:coeffPGL}
Let $ v,w \in C_k^{(*)}(n) $ with classes $ [v], [w] \in \M_k^{(*)}(n) $.
\begin{enumerate}[(i)]
    \item If $ [v] = [w] $, then $ \Coeff(v) = \Coeff(w) $.
    \item If $ [v] \neq [w] $, then $ \Coeff(v) \cap \Coeff(w) = \emptyset $.
\end{enumerate}
\end{lem}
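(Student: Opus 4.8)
The plan is to show that $\Coeff$ is a complete invariant of the $\PGL(k,K)$-orbit: part (i) is the statement that it is constant on orbits, part (ii) that it separates distinct orbits, and I would prove (ii) in contrapositive form, namely that a shared coefficient matrix forces $[v]=[w]$. Both parts rest on the single observation that \eqref{eq:basisExpansion} is a linear recurrence which can be solved for its last vector:
\[
V_{i+k} = (-1)^{k-1}\Bigl(V_i - \sum_{l=1}^{k-1}(-1)^{l-1} a_{k-l,\,i+l-1}\, V_{i+l}\Bigr).
\]
Thus, once the coefficients $(a_{j,i})$ and the first $k$ vectors $V_1,\dots,V_k$ are fixed, the whole lift is determined.

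For (i), suppose $[v]=[w]$ and choose $A\in\GL(k,K)$ representing the $\PGL$-element with $w_i = A\cdot v_i$. Given $(a_{j,i})\in\Coeff(v)$, realize it by a lift $(V_1,\dots,V_n)$ of $v$ with constant consecutive determinants. Then $(AV_1,\dots,AV_n)$ is a lift of $w$, and since $\det(AV_i,\dots,AV_{i+k-1}) = \det(A)\det(V_i,\dots,V_{i+k-1})$ while $AV_{i+n} = (-1)^{k-1}AV_i$, this lift again has constant consecutive determinants. Applying the linear map $A$ to both sides of \eqref{eq:basisExpansion} shows the identical coefficients $(a_{j,i})$ arise from it, so $(a_{j,i})\in\Coeff(w)$. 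Hence $\Coeff(v)\subseteq\Coeff(w)$, and the argument with $A^{-1}$ gives the reverse inclusion.

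For (ii) I would prove the contrapositive. Assume $(a_{j,i})\in\Coeff(v)\cap\Coeff(w)$, realized by lifts $(V_i)$ of $v$ and $(W_i)$ of $w$ (both with constant consecutive determinants) satisfying \eqref{eq:basisExpansion} with the \emph{same} coefficients. Since $\det(V_1,\dots,V_k)\neq 0$ and $\det(W_1,\dots,W_k)\neq 0$, both $\{V_1,\dots,V_k\}$ and $\{W_1,\dots,W_k\}$ are bases of $K^k$, so there is a unique $A\in\GL(k,K)$ with $AV_i = W_i$ for $i\in[k]$. I then prove $AV_i = W_i$ for all $i$ by induction using the forward recurrence above: if $AV_j = W_j$ for $j = i,\dots,i+k-1$, then applying $A$ to the expression for $V_{i+k}$ and comparing with the identical expression for $W_{i+k}$ yields $AV_{i+k} = W_{i+k}$. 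Running this for $i = 1,\dots,n-k$ gives $AV_i = W_i$ for all $i\in[n]$, hence $A\cdot v_i = w_i$ in $\pr^{k-1}(K)$ and $[v]=[w]$.

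The routine parts are the determinant bookkeeping in (i) and the signs in the recurrence. The point needing the most care is the cyclic structure in (ii): read around the whole circle the recurrence could a priori over-determine the sequence, since the instances $i = n-k+1,\dots,n$ involve the anti-periodic continuation $V_{n+j}=(-1)^{k-1}V_j$. The resolution, which I would state explicitly, is that reaching every $V_i$ with $i\in[n]$ uses only the $n-k$ non-wrapping instances $i=1,\dots,n-k$, in which all indices $i+l$ stay in $[2,n]$; the wrapped equations hold automatically because $(a_{j,i})$ genuinely arose from the lift, but they are never needed. Once $AV_i=W_i$ is established for all $i\in[n]$, no further compatibility check is required.
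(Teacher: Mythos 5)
Your proposal is correct and takes essentially the same approach as the paper: in (i) you transport a lift of $v$ by a matrix representative of the projective transformation and apply linearity to \eqref{eq:basisExpansion}, and in (ii) you prove the contrapositive by constructing the unique $A \in \GL(k,K)$ agreeing on the first $k$ vectors and propagating it along the recurrence, exactly as in the paper's induction (your remark that only the non-wrapping instances $i = 1,\dots,n-k$ are needed is precisely what makes that induction work). The only cosmetic difference is in (i), where the paper starts from a pre-chosen lift of $w$, handles the resulting scalars $\lambda_i$ via Lemma \ref{lem:suitableLift}, and concludes with Lemma \ref{lem:coeffPGL}, whereas you verify constant consecutive determinants of $(AV_1,\dots,AV_n)$ directly and obtain the reverse inclusion by symmetry with $A^{-1}$.
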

\begin{proof}
Let $ (V_1,\dots,V_n) $ and $ (W_1,\dots,W_n) $ be lifts with constant consecutive determinants of $ v $ and $ w $ and let $ (a_i)_i \in \Coeff(v) $ be the coefficient vector sequence yielded by $ (V_1,\dots,V_n) $.\\
First assume $ [v] = [w] $.  Then there are $ M \in \GL(k,K) $ and $ \lambda_i \in K^* $ with $ W_i = \lambda_i M V_i $. Since
\begin{align*}
\det(W_i,\dots,W_{i+k-1}) &= \det(\lambda_i M V_i, \dots, \lambda_{i+k-1} M V_{i+k-1}) \\
                          &= \lambda_i \dots \lambda_{i+k-1} \det(M) \det(V_i,\dots,V_{i+k-1})
\end{align*}
and both lifts have constant consecutive determinants, it follows that $ \lambda_i \dots \lambda_{i+k-1} = \lambda_{i+1} \dots \lambda_{i+k} $ for all $ i \in [n] $ (the indices are considered modulo $ n $). This implies $ \lambda_i = \lambda_{i+k} $ and combined with Bézouts identity we get $ \lambda_i = \lambda_{i+g} $ where $ g = \gcd(k,n) $. By Lemma \ref{lem:suitableLift} $ (M V_1,\dots,M V_n) $ is then another lift with constant consecutive determinants for $ w $. Now applying $ M $ to both sides of \eqref{eq:basisExpansion} and using linearity we get
\[
    M V_i = a_{i,1} M V_{i+1} + a_{i,2} M V_{i+2} + \dots + a_{i,k-1} M V_{i+k-1} + (-1)^{k-1} M V_{i+k}.
\]
Consequently, $ (a_i)_i $ is also the coefficient vector sequence yielded by $ (M V_1, \dots M V_n) $ and $ (a_i)_i \in \Coeff(w) $. Lemma \ref{lem:coeffStructure} now implies that $ \Coeff(v) = \Coeff(w) $. \smallskip\\
For the other direction assume that the lifts $ (V_1,\dots,V_n) $ and $ (W_1,\dots,W_n) $ both yield the same coefficient vector sequence $ (a_i)_i \in \Coeff(v) \cap \Coeff(w) $. $ (V_1,\dots,V_k) $ and $ (W_1,\dots,W_k) $ are both ordered bases of $ K^k $, thus there exists a unique matrix $ M \in \GL(k,K) $ with $ M V_i = W_i $ for $ i \in [k] $. We claim $ M V_i = W_i $ for all $ i \in [n] $. Induction by $ i $: \smallskip\\
$ i \leq k $: Trivial. \smallskip\\
$ i-1 \mapsto i, i > k $: Consider \eqref{eq:basisExpansion} for $ i-k $ and rearrange for $ W_i $. This yields
\begin{align*}
W_{i} &= \sum_{j=1}^{k-1} a_{i-k,j} W_{i-k+j} + (-1)^{k-1} W_{i-k} \\
      &= \sum_{j=1}^{k-1} a_{i-k,j} M V_{i-k+j} + (-1)^{k-1} M V_{i-k} \\
      &= M\bigg (\sum_{j=1}^{k-1} a_{i-k,j} V_{i-k+j} + (-1)^{k-1} V_{i-k}\bigg ) \\
      &= M V_i.
\end{align*}
The second equality uses the induction hypothesis and the last equality uses that the coefficients for both lifts are the same. \smallskip\\
We proved that $ W_i = M V_i $ for all $ i \in [n] $. Hence $ [v] = [w] $. By contraposition, it follows that $ \Coeff(v) \cap \Coeff(w) = \emptyset $ if $ [v] \neq [w] $.
\end{proof}

Lemma \ref{lem:coeffStructure} implies that $ \lvert \Coeff(v) \rvert \leq (q-1)^{g-1} $ if $ \lvert K \rvert = q $, but we do not always have equality. To better understand $ \Coeff(v) $ we need the next definition:

\begin{defi}
Let $ v \in C_k(n) $ and $ (V_1,\dots,V_n) $ be a lift. Let $ r \in \N $ and let $ K^k = \bigoplus_{s=1}^r U_s $ be a decomposition of $ K^k $ into a direct sum of subspaces, where $ \dim(U_i) \geq 1 $ for all $ i \in [r] $. W.l.o.g we assume $ \dim(U_i) \leq \dim(U_j) $ if $ i < j $.
\begin{enumerate}[(i)]
    \item We say that $ v $ has the decomposition $ (U_1,\dots,U_r) $ if $ V_i \in \bigcup_{j=1}^r U_j $ for all $ i \in [n] $.
    \item If none of the subspaces $ U_j $ can be further decomposed into a direct sum $ U_i = W_1 \oplus W_2 $ such that $ (U_1,\dots,U_{j-1},W_1,W_2,U_{j+1},\dots,U_r) $ is again a decomposition of $ v $ (up to ordering), then we call this the maximal decomposition and say that $ v $ has type $ (\dim(U_1),\dots,\dim(U_r)) $.
\end{enumerate}
\end{defi}
This definition does not depend on the choice of lift. \\
Obviously, each $ V_i $ can only be contained in one of the subspaces $ U_i $. We can show that the distribution of the $ V_i $ in these subspaces is $ g $-periodic:

\begin{lem} \label{lem:decompositionMap}
Let $ v \in C_k(n) $ and let $ (U_1,\dots,U_r) $ be a decomposition of $ v $. For every $ i \in [n] $ there is exactly one $ j \in [r] $ with $ V_i \in U_j $. Let $ \varphi:\Z \to [r] $ be the map that maps $ i $ to that $ j $. Here the argument for $ \varphi $ is considered modulo $ n $. Let $ g := \gcd(k,n) $. Then $ \varphi $ is $ g $-periodic. In particular, $ r \leq g $ for any decomposition $ (U_1,\dots,U_r) $.
\end{lem}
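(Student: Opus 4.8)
The plan is to exploit the independence condition defining $C_k(n)$ together with the directness of the sum $K^k = \bigoplus_{s=1}^r U_s$. First I would establish that $\varphi$ is well defined: each $V_i$ is nonzero, since it represents a projective point, and if $V_i$ lay in two distinct summands $U_j$ and $U_{j'}$ it would lie in $U_j \cap U_{j'} = \{0\}$, a contradiction. Hence each $V_i$ lies in exactly one $U_j$, which is precisely what produces the map $\varphi$ and proves the first assertion of the lemma.

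The key structural observation is a counting statement. For any $i$ the $k$ vectors $V_i, V_{i+1}, \dots, V_{i+k-1}$ form a basis of $K^k$ by definition of $C_k(n)$. Since the sum is direct and each $V_{i+t}$ lies wholly inside a single summand, the span of the whole family decomposes as $\bigoplus_{s=1}^r \operatorname{span}\{ V_{i+t} : \varphi(i+t) = s\}$. For this to equal all of $K^k = \bigoplus_s U_s$ the vectors inside each $U_s$ must span $U_s$, so there are at least $\dim(U_s)$ of them; summing over $s$ and using $\sum_s \dim(U_s) = k$ forces exactly $\dim(U_s)$ of the indices in every length-$k$ window $\{i, i+1, \dots, i+k-1\}$ to be sent to $s$ by $\varphi$.

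From this I would extract the periodicity. Comparing the window starting at $i$ with the one starting at $i+1$, the count in each $U_s$ is unchanged, yet the window loses the index $i$ and gains the index $i+k$. Hence $V_i$ and $V_{i+k}$ must lie in the same summand, i.e. $\varphi(i) = \varphi(i+k)$, so $\varphi$ has period $k$. By the mod-$n$ convention $\varphi$ also has period $n$. The set of periods of $\varphi$ is a subgroup of $\Z$ containing both $k$ and $n$, hence it contains $g = \gcd(k,n)$, so $\varphi$ is $g$-periodic. Finally, since $\dim(U_s) \geq 1$ for every $s$, the counting statement guarantees that each $s \in [r]$ is attained, so $\varphi$ is surjective onto $[r]$; as a $g$-periodic map its image has at most $g$ elements, whence $r \leq g$.

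I do not expect a serious obstacle. The only point that needs care is the passage from ``the $V_{i+t}$ span $K^k$'' to ``exactly $\dim(U_s)$ of them lie in $U_s$'', which rests on the elementary fact that a direct-sum decomposition is spanned by a family exactly when the part of the family inside each summand already spans that summand. Everything else is bookkeeping with the sliding windows and the standard subgroup-of-$\Z$ argument for combining the two periods.
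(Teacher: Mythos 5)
Your proposal is correct and follows essentially the same route as the paper's proof: well-definedness from directness of the sum, the counting of exactly $\dim(U_s)$ vectors per summand in each length-$k$ window to deduce $\varphi(i)=\varphi(i+k)$, then combining the $k$- and $n$-periods to get $g$-periodicity (the paper writes out Bézout coefficients where you invoke the subgroup of periods, and it bounds the per-summand count from above via independence where you bound it from below via spanning, but these are the same argument), and finally surjectivity of $\varphi$ to conclude $r \leq g$.
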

\begin{proof}
The fact that $ V_i $ is only contained in one $ U_i $ follows immediately from the sum being direct and $ V_i \neq 0 $. To prove that $ \varphi $ is $ g $-periodic we will first show that $ \varphi $ is $ k $-periodic. For any $ i \in [n] $ the vectors $ V_i,\dots,V_{i+k-1} $ are a basis of $ K^k $. In particular, they are linearly independent and thus each $ U_j $ can contain at most $ \dim(U_j) $ of theses $ k $ vectors. But since $ \sum_{j=1}^r \dim(U_j) = k $ and since all $ V_i $ are contained in one of these subspaces, each of them contains exactly $ \dim(U_i) $ of the $ k $ vectors. The same argument can be applied to the vectors $ V_{i+1},\dots,V_{i+k} $. But since $ V_i \in U_{\varphi(i)} $, only $ \dim(U_{\varphi(i)}) - 1 $ of the vectors $ V_{i+1},\dots,V_{i+k-1} $ can be in $ U_{\varphi(i)} $. Hence $ V_{i+k} \in U_{\varphi(i)} $, which implies $ \varphi(i) = \varphi(i+k) $.\smallskip\\
Next choose Bézout coefficients $ a,b \in \Z $ with $ an + bk = g $. Then
\[
\varphi(i) = \varphi(i+an) = \varphi(i+an+bk) = \varphi(i+g)
\]
for all $ i \in \Z $. \smallskip\\
Finally, note that $ \varphi $ must be surjective, since every $ U_j $ contains at least one $ V_i $. Since $ \varphi $ is $ g $-periodic, this is only possible if $ r \leq g $.
\end{proof}

Next we will show that the maximal decomposition deserves the name.

\begin{lem} \label{lem:maximalDecomposition}
Let $ v \in C_k(n) $ and let $ (U_1,\dots,U_r) $ be a maximal decomposition of $ v $. Let $ (W_1,\dots,W_s) $ also be a decomposition of $ v $. Then for every $ j \in [r] $ there is an $ l \in [s] $ with $ U_j \subseteq W_l $. In particular, the maximal decomposition is unique up to order.
\end{lem}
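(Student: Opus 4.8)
The plan is to build the common refinement of $(U_1,\dots,U_r)$ and $(W_1,\dots,W_s)$ and then invoke maximality. Throughout I fix a lift $(V_1,\dots,V_n)$ of $v$; by Lemma~\ref{lem:decompositionMap} each $V_i$ lies in exactly one $U_j$ and in exactly one $W_l$, and I write $\varphi_U,\varphi_W$ for the two resulting assignment maps. The first step I would record is that every summand is spanned by the lift vectors it contains: for a fixed window $V_i,\dots,V_{i+k-1}$ — which is a basis of $K^k$ — the argument in the proof of Lemma~\ref{lem:decompositionMap} shows that exactly $\dim(U_j)$ of these vectors lie in $U_j$, and being part of a basis they are linearly independent, hence span $U_j$. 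Thus $U_j=\langle\,V_i : V_i\in U_j\,\rangle$, and likewise $W_l=\langle\,V_i : V_i\in W_l\,\rangle$.

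Next I would set $Z_{j,l}:=\langle\,V_i : V_i\in U_j\cap W_l\,\rangle$ for each pair $(j,l)$ and claim that the nonzero $Z_{j,l}$ constitute a decomposition of $v$. Spanning is immediate: each $V_i$ lies in $Z_{\varphi_U(i),\varphi_W(i)}$, and a single window already spans $K^k$, so $\sum_{j,l}Z_{j,l}=K^k$. The main point — and the step I expect to be the real content of the proof — is that this sum is direct, and this is exactly where the two decompositions must be used together. Given a relation $\sum_{j,l}z_{j,l}=0$ with $z_{j,l}\in Z_{j,l}$, I would first group by $j$: each partial sum $\sum_l z_{j,l}$ lies in $U_j$, so directness of $\bigoplus_j U_j$ forces $\sum_l z_{j,l}=0$ for every $j$; then, for fixed $j$, the vectors $z_{j,l}$ lie in the independent subspaces $W_l$, so directness of $\bigoplus_l W_l$ forces each $z_{j,l}=0$. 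Hence $K^k=\bigoplus_{j,l}Z_{j,l}$, and in particular $U_j=\bigoplus_l Z_{j,l}$ and $W_l=\bigoplus_j Z_{j,l}$.

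With the refinement in hand I would apply maximality of $(U_1,\dots,U_r)$. Fix $j$. If two or more of the $Z_{j,l}$ were nonzero, then splitting $U_j=Z_{j,l_0}\oplus\bigl(\bigoplus_{l\neq l_0}Z_{j,l}\bigr)$ into two nonzero summands while leaving the other $U_i$ untouched still yields a decomposition of $v$ — each $V_i$ with $\varphi_U(i)=j$ lands in one of the two new pieces, and directness is inherited from the refinement — contradicting maximality. Therefore exactly one $Z_{j,l}$ is nonzero, say $Z_{j,l(j)}$, whence $U_j=Z_{j,l(j)}\subseteq W_{l(j)}$, which is the assertion.

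Finally, for uniqueness suppose $(W_1,\dots,W_s)$ is also maximal. Applying the assertion in both directions gives, for each $j$, indices $l$ and $j'$ with $U_j\subseteq W_l\subseteq U_{j'}$; since the $U_i$ form a direct sum of nonzero subspaces, $U_j\subseteq U_{j'}$ with $U_j\neq 0$ forces $j'=j$ and hence $U_j=W_l$. Thus the two maximal decompositions consist of the same subspaces, i.e.\ they agree up to order.
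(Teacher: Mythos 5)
Your proof is correct and follows essentially the same route as the paper's: both establish that each summand is spanned by the lift vectors it contains (via the argument from Lemma \ref{lem:decompositionMap}), form the common refinement of the two decompositions, invoke maximality of $(U_1,\dots,U_r)$ to conclude that each $U_j$ meets only one $W_l$, and finish uniqueness by the same two-sided inclusion argument. The only (harmless) differences are stylistic: you build the full refinement $\{Z_{j,l}\}$ globally and split $U_j$ into exactly two pieces --- which matches the paper's definition of maximality very literally --- whereas the paper works with one $j$ at a time and uses the intersections $U_j\cap W_l$ directly.
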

\begin{proof}
Let $ j \in [r] $. Let $ M := \{ V_i \mid i \in [n], V_i \in U_j \} $. Then the argument from the proof of Lemma \ref{lem:decompositionMap} can be used to show that $ M $ generates $ U_j $. Moreover, for every $ V_i \in M $ there is a $ l \in [s] $ such that $ V_i \in W_l $. Let $ N = \{ l\in[s] \mid M \cap W_l \neq \emptyset \}.$ If $ \lvert N \rvert = 1 $, i.e. $ N = \{l\} $, then $ U_j = \langle M \rangle \subseteq W_l $ and we are done. Otherwise let $ U_{j,l} := U_j \cap W_l $ for $ l \in N $. Then replacing $ U_j $ with the nonzero subspaces $ U_{j,i} $ yields a finer decomposition of $ v $ in contradiction to $ (U_1,\dots,U_r) $ being maximal. The first part of the Lemma is proved. \smallskip\\
Now let $ (U_1,\dots,U_r) $ and $ (W_1,\dots,W_s) $ be two maximal decompositions of $ v $. What we just proved implies that for every $ j \in [r] $ there is an $ l \in [s] $ with $ U_j \subseteq W_l $. A second application shows that there is also an $ m \in [r] $ with $ W_l \subseteq U_m $. Here we have $ j = m $ because otherwise the sum of the $ U_j $ would not be direct. Hence $ U_j = W_l $ and both decompositions have the same subspaces.
\end{proof}

\begin{ex}
Let $ k = 3 $. Then there are three possible types for $ v \in C_3(n) $ if $ n $ is divisible by $ 3 $: $ (3), (1,2) $ and $ (1,1,1) $ (if $ \gcd(3,n) =1 $ only type $ (3) $ is possible by Lemma \ref{lem:decompositionMap}). If the type is $ (1,1,1) $, then $ v $ only contains three distinct projective points and is of the form
\[
(v_1,v_2,v_3,v_1,v_2,v_3,\dots,v_1,v_2,v_3).
\]
If $ v $ has type $ (1,2) $, then the decomposition has the form $ (L,P) $ where $ L $ is a projective line in $ \pr^2(K) $, while $ P $ is a single projective point. W.l.o.g., assume $ \varphi(1) = 2 $ for the map from Lemma \ref{lem:decompositionMap}. Then $ v $ has the form
\[
(P,v_1,v_2,P,v_3,v_4,\dots,P,v_{\frac{2n}{3}-1},v_{\frac{2n}{3}})
\]
with $ v_i \in L $ for all $ i $. Moreover, $ \{v_i,v_{i+1}\} $ is independent for all $ i $ (where $ i $ is understood modulo $ \frac{2n}{3} $). If we choose a homography $ L \to \pr^1(K) $ we can identify $ (v_1,\dots,v_\frac{2n}{3}) $ with an element from $ C_2(\frac{2n}{3}) $. Elements of type $ (3) $ are those that cannot be decomposed in such a fashion.
\end{ex}

Let $ v \in C_k^{(*)}(n) $ and $ (a_i)_i \in \Coeff(v) $. We will show that the possible decompositions of $ v $ are related to the question of which of the entries of the coefficient vectors are zero.
\begin{lem} \label{lem:decompositionZeros}
Let $ k \geq 2 $ and $ n \geq k $. Let $ g := \gcd(k,n) $. Let $ v \in C_k^{(*)}(n) $ with decomposition $ (U_1,\dots,U_r) $ and the corresponding map $ \varphi: \Z \to [r] $. Let $ (a_i)_i \in \Coeff(v) $.
\begin{enumerate}[(i)]
    \item If $ \varphi(i) \neq \varphi(i+j) $ for some $ i \in [n] $ and $ j \in [k-1] $, then $ a_{i,j} = 0 $.
    \item Let $ t \in [r] $ and let $ M := \varphi^{-1}(t) \cap [g] $. If the decomposition is maximal and $ M_1,M_2 \subset M $ are two nonempty subsets with $ M_1 \cap M_2 = \emptyset $ and $ M_1 \cup M_2 = M $, then there are $ i \in [n] $ and $ j \in [k-1] $ with $ i \in M_1 $ and $ i+j \in M_2 $ (modulo $ g $), such that $ a_{i,j} \neq 0 $.
\end{enumerate}
\end{lem}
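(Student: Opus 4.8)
The plan is to read off the vanishing and non-vanishing of the coefficients directly from the basis expansion \eqref{eq:basisExpansion}. Fix a lift $(V_1,\dots,V_n)$ with constant consecutive determinants realizing the given $(a_{j,i})\in\Coeff(v)$, and recall that in \eqref{eq:basisExpansion} the coefficient of $V_{i+l}$ is $(-1)^{l-1}a_{k-l,i+l-1}$ for $l\in[k-1]$, while the coefficient of $V_{i+k}$ is $(-1)^{k-1}$. Since $K^k=\bigoplus_{s=1}^r U_s$, let $\pi_s\colon K^k\to U_s$ denote the associated projections, and write $\bar m$ for the residue of $m$ modulo $g$ taken in $[g]$.

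For part (i) I would apply $\pi_t$ to \eqref{eq:basisExpansion}, where $t:=\varphi(i+j)\neq\varphi(i)$. The left-hand side gives $\pi_t(V_i)=0$ because $V_i\in U_{\varphi(i)}$ and $t\neq\varphi(i)$. On the right-hand side only the $V_{i+l}$ with $\varphi(i+l)=t$ survive, and the final term drops out because $\varphi(i+k)=\varphi(i)\neq t$ by the $k$-periodicity established in Lemma \ref{lem:decompositionMap}. As $V_{i+1},\dots,V_{i+k}$ form a basis of $K^k$, the surviving vectors are linearly independent, so each of their coefficients vanishes; reading off the $V_{i+j}$ term gives $a_{k-j,i+j-1}=0$.

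For part (ii) I would argue by contradiction from maximality. Suppose no $i\in[n]$, $j\in[k-1]$ with $\bar i\in M_1$ and $\overline{i+j}\in M_2$ produces a nonzero coefficient, and set $U_t^{(1)}:=\langle V_m : \bar m\in M_1\rangle$ and $U_t^{(2)}:=\langle V_m : \bar m\in M_2\rangle$ inside $U_t$. The goal is to show $U_t=U_t^{(1)}\oplus U_t^{(2)}$, which makes $(U_1,\dots,U_{t-1},U_t^{(1)},U_t^{(2)},U_{t+1},\dots,U_r)$ a strictly finer decomposition of $v$ (both summands nonzero since $M_1,M_2\neq\emptyset$), contradicting maximality. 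To this end I would combine part (i) with the standing assumption to rewrite \eqref{eq:basisExpansion}, for any $i$ with $\bar i\in M_1$, as
\[
V_i = \sum_{\substack{l\in[k-1]\\ \overline{i+l}\in M_1}} (-1)^{l-1} a_{k-l,i+l-1} V_{i+l} + (-1)^{k-1} V_{i+k},
\]
since the terms with $\overline{i+l}\notin M$ vanish by (i) and those with $\overline{i+l}\in M_2$ vanish by assumption, while $\overline{i+k}=\bar i\in M_1$ as $g\mid k$. Writing $W_1^{(i)}:=\langle V_m : m\in[i+1,i+k],\ \bar m\in M_1\rangle$, a short case analysis on $\bar i$ then shows $W_1^{(i-1)}=W_1^{(i)}$ for every $i$: when $\bar i\in M_1$ the displayed identity gives $V_i\in W_1^{(i)}$ so the two equal-dimensional spaces coincide, and in the remaining cases both $V_i$ and $V_{i+k}$ lie outside the $M_1$-part, so the windows visibly agree. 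Hence this space is independent of $i$ and equals $U_t^{(1)}$.

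The main obstacle, and essentially the only place requiring computation, is the dimension bookkeeping that upgrades window-invariance to a direct sum. Here I would invoke the $g$-periodicity of $\varphi$: any window $V_{i+1},\dots,V_{i+k}$ is a basis meeting each residue class modulo $g$ exactly $k/g$ times, so its $M_1$-residue vectors are linearly independent and $\dim U_t^{(1)}=(k/g)\lvert M_1\rvert$, and likewise $\dim U_t^{(2)}=(k/g)\lvert M_2\rvert$ and $\dim U_t=(k/g)\lvert M\rvert$. Since $M=M_1\sqcup M_2$ and $U_t^{(1)}+U_t^{(2)}=U_t$, the dimensions add up exactly, forcing the sum to be direct and yielding the desired contradiction.
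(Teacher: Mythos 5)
Your proof of part (i) is correct and is essentially the paper's own argument: projecting \eqref{eq:basisExpansion} onto $U_t$ and invoking linear independence of the surviving window vectors amounts to the same thing as the paper's appeal to uniqueness of the expansion of $V_i$ in the basis $(V_{i+1},\dots,V_{i+k})$.

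Part (ii) contains a genuine gap, located precisely at the word ``likewise''. Your window-invariance argument establishes $\dim U_t^{(1)}=(k/g)\lvert M_1\rvert$ only because, for $\bar i\in M_1$, the $M_2$-terms of \eqref{eq:basisExpansion} are killed by your standing assumption, which by its wording only concerns coefficients \emph{from} $M_1$ \emph{to} $M_2$. The analogous rewriting for $\bar i\in M_2$ would need the coefficients from $M_2$ to $M_1$ to vanish, and you never assumed that; hence $U_t^{(2)}$ need not coincide with the span of a single window's $M_2$-vectors, its dimension can exceed $(k/g)\lvert M_2\rvert$, and the sum $U_t^{(1)}+U_t^{(2)}=U_t$ need not be direct. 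This cannot be repaired under the one-sided hypothesis, because the one-sided statement is in fact false. Take $k=2$, $n=6$ (so $g=2$), $K$ arbitrary, and the lift $(V_1,\dots,V_6)=(e_1,\,e_2,\,-e_1,\,-e_1-e_2,\,e_1,\,e_2)$ with $V_7:=-V_1$, $V_8:=-V_2$: all consecutive determinants equal $1$, so the underlying tuple $v$ lies in $C_2^*(6)$ by Lemma \ref{lem:suitableLift}, and \eqref{eq:basisExpansion} yields $(a_{1,1},\dots,a_{1,6})=(0,1,0,-1,0,0)\in\Coeff(v)$. The points $v_i\in\pr^1(K)$ take the three distinct values $[e_1],[e_2],[e_1+e_2]$, so the maximal decomposition is the trivial one, $r=1$ and $M=\{1,2\}$; yet for $M_1=\{1\}$, $M_2=\{2\}$ every coefficient from $M_1$ to $M_2$, namely $a_{1,1},a_{1,3},a_{1,5}$, vanishes. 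Here indeed $U_t^{(1)}=\langle e_1\rangle$ while $U_t^{(2)}=\langle e_2,\,e_1+e_2\rangle=K^2$, which is exactly the failure mode just described.

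For what it is worth, the paper's own proof of (ii) has the same one-sided flaw: its induction claims $V_i\in W_l$ for $l=1,2$, but for $l=2$ the displayed simplification of the expansion of $V_{i-k}$ drops terms whose targets lie in $M_1$, and those are covered neither by part (i) nor by the assumption; in the example above that induction already fails at $V_4=-e_1-e_2\notin W_2=\langle e_2\rangle$. Both your argument and the paper's (and the statement itself) are repaired by symmetrizing: assume for contradiction that all coefficients between $M_1$ and $M_2$ vanish \emph{in both directions}, and conclude only that some nonzero coefficient joins $M_1$ and $M_2$ in \emph{some} direction. Under the two-sided hypothesis your window argument applies verbatim to $M_1$ and to $M_2$ alike, the dimensions add up to $\dim U_t=(k/g)\lvert M\rvert$, the sum is direct, and maximality is contradicted; your window-invariance route is, if anything, cleaner than the paper's induction because it handles the wrap-around transparently. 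The symmetric conclusion also suffices for the lemma's only application, Lemma \ref{lem:countingCoeff}, since the inequality $\lambda_i\neq\lambda_{i+j}$ exploited there is symmetric in the two indices.
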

\begin{proof}
(i): Let $ (V_1,\dots,V_n) $ be a lift of $ v $ with constant consecutive determinants that yields $ (a_i)_i \in \Coeff(v) $, let $ U := U_{\varphi(i)} $ and let $ M := \{ j \in [k] \mid \varphi(i) = \varphi(i+j) \} $. Then $ \{V_{i+1},\dots V_{i+k}\} $ is a basis of $ K^k $ and $ \{V_{i+j} \mid j \in M \} $ is a basis of $ U $. We can expand $ V_i \in U $ in the latter basis:
\[
V_i = \sum_{j \in M} a^\prime_{i,j} V_{i+j}
\]
with coefficients $ a^\prime_{i,j} \in K $. At the same time we have the basis expansion \eqref{eq:basisExpansion}. By uniqueness of basis expansions, these two expansion must be identical. In particular, $ a_{i,j} = 0 $ for all $ j \notin M $. \smallskip\\
(ii): Assume that $ a_{i,j} = 0 $ for all $ i \in [n] $ and $ j \in [k-1] $ with $ i \in M_1 $ and $ i+j \in M_2 $. Then let
\[
W_l := \big \langle V_i \mid i \in [k], i \in M_l \big \rangle
\]
for $ l = 1,2 $. Clearly $ U_t = W_1 \oplus W_2 $ with the argument from the proof of Lemma \ref{lem:decompositionMap}. We claim that $ V_i \in W_l $ for all $ i \in [n] $ with $ i \in M_l $. Induction: \smallskip\\
$ i \leq k $: Follows immediately from the definition of $ W_l $. \smallskip\\
$ i > k, i \in M_l $: We can assume $ V_j \in W_l $ for all $ j < i $ with $ j \in M_l $. Then consider the basis expansion \eqref{eq:basisExpansion} for $ V_{i-k} $:
\begin{align*}
V_{i-k} &= \sum_{j = 1}^{k-1} a_{i-k,j} V_{i-k+j} + (-1)^{k-1} V_{i} \\
    &= \sum_{\substack{j \in [k-1]\\i+j \in M_l}} a_{i-k,j} V_{i-k+j} + (-1)^{k-1} V_i,
\end{align*}
where we used (i) and our assumption. $ V_{i-k} $ and the sum on the right hand side except for $ (-1)^{k-1} V_i $ are in $ W_l $, therefore $ V_i $ must also be in $ W_l $. \smallskip\\
Hence all $ V_i \in U_t $ are also contained in either $ W_1 $ or $ W_2 $ and the decomposition is clearly not maximal.
\end{proof}

\begin{lem} \label{lem:countingCoeff}
Let $ k \geq 2 $ and $ n \geq k $. Let $ g := \gcd(k,n) $. Let $ K = \F_q $ be a finite field. Let $ v \in C_k^{(*)}(n) $ and let $ (U_1,\dots,U_r) $ be the maximal decomposition of $ v $. Then $ \lvert \Coeff(v) \rvert = (q-1)^{g-r} $.
\end{lem}
\begin{proof}
Let $ (a_i)_i \in \Coeff(v) $ and $ (V_1,\dots,V_n) $ be a corresponding lift with constant consecutive determinants. Let $ \varphi: \Z \to [r] $ be the map associated to the decomposition. For $ l \in [r] $ let
\[
M_l := \varphi^{-1}(l).
\]
Now let $ (\lambda_1,\dots,\lambda_{g-1}) $ and $ (\mu_1,\dots,\mu_{g-1}) $ be two sets of parameters ($ \lambda_g = \mu_g = 1 $ is assumed). We want to study when $ (\lambda_1 V_1,\dots,\lambda_n V_n) $ and $ (\mu_1 V_1,\dots,\mu_n V_n) $ yield the same coefficients and when they do not. This is equivalent to asking when
\[
\frac{\lambda_{i}}{\lambda_{i+j}}a_{i,j} = \frac{\mu_{i}}{\mu_{i+j}}a_{i,j}
\]
for all $ i \in [n] $ and $ j \in [k-1] $. If we consider the $ \mu_i $ to be fixed, we can divide the $ \lambda_i $ by $ \mu_i $ and see that it is sufficient to consider
\begin{equation}\label{eq:coefficientEquality}
\frac{\lambda_{i}}{\lambda_{i+j}}a_{i,j} = a_{i,j}
\end{equation}
for all $ i \in [n] $ and $ j \in [k-1] $.\\
First assume that $ \lambda_s = \lambda_t $ whenever $ s, t \in M_l $ for some $ l \in [r] $. Let $ i \in [n] $ and $ j \in [k-1] $. There are two cases: \smallskip\\
Case 1. $ \varphi(i) = \varphi(i+j) $: \\
Then $ \lambda_i = \lambda_{i+j} $ by our assumption and \eqref{eq:coefficientEquality} is trivial. \smallskip\\
Case 2. $ \varphi(i) \neq \varphi(i+j) $ \\
Then $ a_{i,j} = 0 $ by Lemma \ref{lem:decompositionZeros} (i) and \eqref{eq:coefficientEquality} holds. \smallskip\\
We have shown that \eqref{eq:coefficientEquality} holds if the $ \lambda_i $ are constant on the sets $ M_l $. In particular, the coefficients will not change if we multiply all $ \lambda_i $ with a fixed $ i \in M_l $ by the same value. \smallskip\\
Now let $ x \in M_l $ and let $ y = \lambda_x $. Let $ N = \{i\in M_l \mid \lambda_i = y \} $. If $ N \neq M_l $ we can apply Lemma \ref{lem:decompositionZeros} (ii) to $ N $ and $ M_l \backslash N $. This yields that there are $ i \in [n] $ and $ j \in [k-1] $ with $ i \in N $ and $ i+j \in M_l \backslash N $ and $ a_{i,j} \neq 0 $. Then
\[
\frac{\lambda_i}{\lambda_{i+j}}a_{i,j} \neq a_{i,j},
\]
since $ \lambda_i = y \neq \lambda_{i+j} $. Therefore \eqref{eq:coefficientEquality} can only hold if the $ \lambda_i $ are constant on the sets $ M_l $. \\
Now write
\[
M_l = \{z_{l,1}, \dots, z_{l,r_l}\}
\]
with $ z_{i,j} \in [g] $, where we assume that the $ z_{i,j} $ are in ascending order within $ M_l $. Then set $ \lambda_{z_{l,r_l}} = 1 $ for all $ l \in [r] $ while choosing all other $ \lambda_i $ arbitrarily. Since we can multiply on any of the sets $ M_l $ by some nonzero scalar without changing the coefficients, we can get all elements of $ \Coeff(v) $ this way. On the other hand, if $ (\lambda_i) \neq (\mu_i) $ are two different choices of parameters with $ \lambda_{z_{l,r_l}} = \mu_{z_{l,r_l}} = 1 $ for all $ l $, they will yield different coefficients, since $ \frac{\lambda_i}{\mu_i} $ is then not constant on at least one of the sets $ M_l $. \\
Hence our construction yields a bijective map $ (K^*)^{g-r} \to \Coeff(v) $ and $ \lvert \Coeff(v) \rvert = (q-1)^{g-r} $ follows.
\end{proof}

Now that we understand $ \Coeff(v) $, we still need to study how $ \PGL(k,K) $ acts on $ C_k^{(*)}(n) $. As it turns out, this is also related to the decompositions of $ v $:
\begin{lem} \label{lem:countingStabilizers}
Let $ k \geq 2 $ and $ n \geq k $. Let $ \lvert K \rvert = q $ be finite. Let $ v \in C_k^{(*)}(n) $. Let $ (U_1,\dots,U_r) $ be the maximal decomposition of $ v $. Then
\[
\rvert \PGL(k,K)_v \rvert = (q-1)^{r-1},
\]
where $ \PGL(k,K)_v $ is the stabilizer of $ v $.
\begin{proof}
\begin{claim}
\[
\PGL(k,K)_v = \{[\phi] \mid \phi \in \GL(K^k), \phi\mid_{U_j} = \lambda_j \id_{U_j} \text{ for some } \lambda_i \in K^* \}.
\]
\end{claim}
$ \subseteq $: Let $ (V_1,\dots,V_n) $ be a lift of $ v = (v_1,\dots,v_n) $. Let $ [\phi] \in \PGL(k,K)_v $ with $ \phi \in \GL(K^k) $. Then $ \phi(V_i) = \lambda_i V_i $ for some $ \lambda_i \in K^* $. In other words, the vectors $ V_i $ are all eigenvectors of $ \phi $. Therefore each $ V_i $ is contained in some eigenspace of $ \phi $ and the eigenspaces $ E_1,\dots,E_s $ of $ \phi $ are a decomposition of $ v $. Since $ (U_1,\dots,U_r) $ is a maximal decomposition, Lemma \ref{lem:maximalDecomposition} implies that for each $ j \in [r] $ there is an $ l \in [s] $ with $ U_j \subseteq E_l $. Since $ \phi $ is a nonzero multiple of $ id $ on $ E_l $, the same is true on $ U_j $ and $ \phi $ is in the set on the right hand side of the claim. \smallskip\\
$\supseteq $: Let $ \phi \in \GL(K^k) $ with $ \phi\mid_{U_j} = \lambda_j \id_{U_j} $ with $ \lambda_1,\dots,\lambda_r \in K^* $ and for all $ j \in [r] $. Then each $ V_i $ is an eigenvector of $ \phi $ with nonzero eigenvalue and therefore $ \phi(\langle V_i \rangle) = \langle V_i \rangle $. This implies $ [\phi](v_i) = v_i $ for all $ i \in [n] $ and thus $ [\phi] \in \PGL(k,K)_v $ and we have proved the claim. \smallskip\\
There are exactly $ (q-1)^r $ linear maps $ \phi \in \GL(K^k) $ such as in the set on the right hand side of the claim, since linear maps of this form are uniquely determined by the values of the $ \lambda_j $. Taking the quotient group by the centre $ Z(\GL(K^k)) = \{\lambda id_{K^k} \mid \lambda \in K^* \} $ leaves us with exactly $ (q-1)^{r-1} $ elements in $ \PGL(k,K)_v $.
 \end{proof}
\end{lem}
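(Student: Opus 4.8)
The plan is to reduce the computation to an explicit description of the stabilizer in terms of the maximal decomposition and then to count. The central observation is that a projective class $[\phi]$ fixes a projective point $v_i = \langle V_i \rangle$ precisely when $V_i$ is an eigenvector of a representing $\phi \in \GL(k,K)$; hence $[\phi] \in \PGL(k,K)_v$ if and only if every lift vector $V_1,\dots,V_n$ is an eigenvector of $\phi$. I would therefore first establish the claim that $\PGL(k,K)_v$ equals the set of classes $[\phi]$ for which $\phi|_{U_j} = \lambda_j \id_{U_j}$ for suitable $\lambda_j \in K^*$, proving both inclusions.

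For the inclusion from left to right, I would take $[\phi]$ in the stabilizer and consider the eigenspaces $E_1,\dots,E_s$ of a representative $\phi$. Since all $V_i$ are nonzero eigenvectors and the eigenspaces form a direct sum, each $V_i$ lies in exactly one $E_l$, so $(E_1,\dots,E_s)$ is a decomposition of $v$. Applying Lemma \ref{lem:maximalDecomposition} to the maximal decomposition $(U_1,\dots,U_r)$ then yields, for each $j \in [r]$, an index $l$ with $U_j \subseteq E_l$; as $\phi$ acts by a single scalar on $E_l$, it acts by that scalar on $U_j$ as well. The reverse inclusion is immediate: if $\phi$ is a nonzero scalar on each $U_j$, then since each $V_i$ belongs to some $U_j$ it is an eigenvector, so $[\phi]$ fixes every $v_i$.

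Finally I would count. Maps $\phi$ of the prescribed form are determined by the tuple $(\lambda_1,\dots,\lambda_r) \in (K^*)^r$, giving $(q-1)^r$ elements of $\GL(k,K)$. Passing to $\PGL(k,K)$ amounts to quotienting by the centre $\{\lambda \id \mid \lambda \in K^*\}$ of order $q-1$, which identifies two such tuples exactly when they differ by a common scalar factor. This leaves $(q-1)^r/(q-1) = (q-1)^{r-1}$ distinct classes, as claimed.

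I expect the main obstacle to be the left-to-right inclusion, where maximality of $(U_1,\dots,U_r)$ is genuinely needed: a stabilizing $\phi$ only guarantees that its eigenspaces form \emph{some} decomposition of $v$, and it is Lemma \ref{lem:maximalDecomposition} that forces each $U_j$ to sit inside a single eigenspace and hence to be a block on which $\phi$ is scalar. Everything else is routine linear algebra together with the counting argument modulo the centre.
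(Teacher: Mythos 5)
Your proposal is correct and follows essentially the same route as the paper's own proof: the identical claim describing $\PGL(k,K)_v$ via maps that are scalar on each block of the maximal decomposition, the same use of Lemma \ref{lem:maximalDecomposition} for the harder inclusion, and the same count of $(q-1)^r$ maps modulo the centre. Your closing remark correctly identifies where maximality is genuinely needed, which is exactly the role it plays in the paper's argument.
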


Now we bring everything together:

\begin{theorem} \label{thm:numberOfFriezes}
Let $ k \geq 2 $ and $ w \geq 1 $. Let $ n := w + k + 1 $. Let $ g := \gcd(k,n) $. Let $ K = \F_q $ be a finite field with $ q $ elements.
\begin{enumerate}
    \item If $ g = 1 $, then the number of tame $ \SL_k $-frieze patterns of width $ w $ is
    \[
    f_q(k,n) = \frac{\lvert C_k(n) \rvert}{\lvert \PGL(k,K) \rvert} = \lvert \M_k(n) \rvert.
    \]
    \item If $ g > 1 $, then the number of tame $ \SL_k $-frieze patterns of width $ w $ is
    \[
    f_q(k,n) = \frac{\lvert C_k^*(n) \rvert (q-1)^{g-1}}{\lvert \PGL(k,K) \rvert}.
    \]
\end{enumerate}
\end{theorem}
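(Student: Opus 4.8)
The plan is to reduce everything to counting $\Quid(k,n)$. By Lemma~\ref{lem:friezeQuidBijection} the tame $\SL_k$-frieze patterns of width $w$ are in bijection with $\Quid(k,n)$ (recall $n = w+k+1$), so it suffices to evaluate $\lvert \Quid(k,n) \rvert$ and match it with the claimed formulas.

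First I would set up the partition. Lemma~\ref{lem:coeffQuid} gives $\Quid(k,n) = \bigcup_{v \in C_k^{(*)}(n)} \Coeff(v)$, and the disjointness lemma following Lemma~\ref{lem:coeffPGL} shows that $\Coeff(v)$ depends only on the class $[v] \in \M_k^{(*)}(n)$ and that distinct classes give disjoint $\Coeff$-sets. Hence this union is in fact a disjoint union indexed by the $\PGL(k,K)$-orbits, so that
\[
\lvert \Quid(k,n) \rvert = \sum_{[v] \in \M_k^{(*)}(n)} \lvert \Coeff(v) \rvert .
\]

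The heart of the argument is a cancellation. For $v$ whose maximal decomposition has length $r$, Lemma~\ref{lem:countingCoeff} gives $\lvert \Coeff(v) \rvert = (q-1)^{g-r}$ while Lemma~\ref{lem:countingStabilizers} gives $\lvert \PGL(k,K)_v \rvert = (q-1)^{r-1}$; multiplying, the exponent $r$ disappears and $\lvert \Coeff(v) \rvert \cdot \lvert \PGL(k,K)_v \rvert = (q-1)^{g-1}$, a constant independent of the orbit. Since $\M_k^{(*)}(n)$ and $\PGL(k,K)$ are finite, the orbit-stabilizer theorem gives $\lvert [v] \rvert \cdot \lvert \PGL(k,K)_v \rvert = \lvert \PGL(k,K) \rvert$, whence $\lvert \Coeff(v) \rvert = (q-1)^{g-1} \lvert [v] \rvert / \lvert \PGL(k,K) \rvert$. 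Summing over orbits and using that they partition $C_k^{(*)}(n)$, i.e. $\sum_{[v]} \lvert [v] \rvert = \lvert C_k^{(*)}(n) \rvert$, yields
\[
\lvert \Quid(k,n) \rvert = \frac{(q-1)^{g-1}}{\lvert \PGL(k,K) \rvert} \sum_{[v]} \lvert [v] \rvert = \frac{(q-1)^{g-1}\, \lvert C_k^{(*)}(n) \rvert}{\lvert \PGL(k,K) \rvert}.
\]
Specializing then finishes the proof: for $g=1$ one has $C_k^{(*)}(n) = C_k(n)$ and $(q-1)^{g-1}=1$, giving part~(1); for $g>1$ one has $C_k^{(*)}(n) = C_k^*(n)$, giving part~(2).

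Since every substantive input has already been established, I do not expect a genuine obstacle here: the theorem is essentially the assembly of the earlier lemmas. The one step deserving care is the cancellation $\lvert \Coeff(v) \rvert \cdot \lvert \PGL(k,K)_v \rvert = (q-1)^{g-1}$, which is exactly what decouples the orbit sum from the decomposition type $r$ and collapses the count to a single closed expression; I would also double-check that the $\Coeff(v)$ genuinely partition $\Quid(k,n)$ along orbits (using both parts of the disjointness lemma, not merely disjointness) before invoking orbit-stabilizer.
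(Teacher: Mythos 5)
Your proposal is correct and follows essentially the same route as the paper's own proof: reduce to counting $\Quid(k,n)$ via Lemma~\ref{lem:friezeQuidBijection}, partition it into the sets $\Coeff(v)$ indexed by $\PGL(k,K)$-orbits, and combine Lemmas~\ref{lem:countingCoeff} and~\ref{lem:countingStabilizers} with the orbit-stabilizer theorem so that the exponents $g-r$ and $r-1$ cancel to the constant $(q-1)^{g-1}$. The only difference is cosmetic (you keep the sum over orbits and multiply by orbit sizes, while the paper rewrites it as a sum over elements of $C_k^{(*)}(n)$), so there is nothing to fix.
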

\begin{proof}
By Lemmas \ref{lem:coeffQuid} and \ref{lem:coeffPGL} (i) we have
\[
\Quid(k,n) = \bigcup_{v \in C_k^{(*)}(n)} \Coeff(v) = \bigcup_{[v] \in \M_k^{(*)}(n)} \Coeff(v).
\]
According to Lemma \ref{lem:coeffPGL} (ii) the second union is disjoint. Hence
\[
\lvert \Quid(k,n) \rvert = \sum_{[v] \in \M_k^{(*)}} \lvert \Coeff(v) \rvert = \sum_{v \in C_k^{(*)}} \frac{\lvert \Coeff(v) \rvert}{\lvert \PGL(k,K) \cdot v \rvert},
\]
where $ \PGL(k,K) \cdot v $ denotes the orbit of $ v $ under the group action. Using the Orbit-Stabilizer-Theorem
\[
\lvert \PGL(k,K) \rvert = \lvert \PGL(k,K) \cdot v \rvert \lvert \PGL(k,K)_v \rvert
\]
we get
\[
\lvert \Quid(k,n) \rvert = \sum_{v \in C_k^{(*)}(n)} \frac{\lvert \Coeff(v) \rvert \lvert \PGL(k,K)_v \rvert}{\lvert \PGL(k,K) \rvert}.
\]
Now we can apply Lemma \ref{lem:countingCoeff} and Lemma \ref{lem:countingStabilizers}:
\[
\lvert \Quid(k,n) \rvert = \sum_{v \in C_k^{(*)}(n)} \frac{(q-1)^{g-r(v)}(q-1)^{r(v)-1}}{\lvert \PGL(k,K) \rvert} = \frac{\lvert C_k^{(*)}(n) \rvert (q-1)^{g-1}}{\lvert \PGL(k,K) \rvert},
\]
where $ r(v) $ is the number of subspaces in the maximal decomposition of $ v $. By Lemma \ref{lem:friezeQuidBijection}, $ f_q(k,n) = \lvert \Quid_{\F_q}(k,n) \rvert $.
If $ g = 1 $, then all elements of $ C_k(n) $ have trivial stabilizers under the action of the $ \PGL(k,K) $ by Lemma \ref{lem:countingStabilizers}. Hence
\[
\lvert \M_k(n) \rvert = \frac{\lvert C_k(n) \rvert}{\lvert \PGL(k,K) \rvert}.
\]
\end{proof}

The cardinality of $ \M_k(n) $ is effectively already known in the case $ \gcd(k,n) = 1 $. To see this, we need to consider a specific subset of the Grassmannian:

\begin{defi}
Let
\[
    \Pi_{k,n}^\circ := \{ V \in \G(k,n) \mid \Delta^{1,\dots,k}(V), \Delta^{2,\dots,k+1}(V), \dots, \Delta^{n,\dots,k-1}(V) \neq 0 \}
\]
be the subset of the Grassmannian $ \G(k,n) $ over the finite field $ K = \F_q $, where none of the consecutive Plücker coordinates vanishes.
\end{defi}

We now show how $ \M_k(n) $ and $ \Pi_{k,n}^\circ $ are related:

\begin{lem} \label{lem: mk(n) and pi}
Let $ n > k $ and $ \gcd(k,n) = 1 $. Then
\[
\lvert \M_k(n) \rvert = \frac{\lvert \Pi_{k,n}^\circ \rvert}{(q-1)^{n-1}}.
\]
\end{lem}
\begin{proof}
Let $ M(n,k) \subset K^{n \times k} $ be the set of max-rank $ n \times k $-matrices with entries in the field $ K $. Consider the action of $ \GL(k,n) $ on $ M(n,k) $ by right-multiplication. Then the Grassmannian $ \G(n,k) $ can be equivalently defined as the set of orbits
\[
\G(k,n) = M(n,k) / \GL(k,K)
\]
under this action (the columns of such a matrix generate a $ k $-dimensional subspace and the action of the $ \GL(k,K) $ corresponds to sequences of elementary column operations, hence the $ k $-dimensional subspace generated by the columns does not change under this action). In this definition of the Grassmannian, all Plücker coordinates are determinants of $ k \times  k $-minors of the given matrix and the consecutive Plücker coordinates in particular correspond to $ k \times k $-minors formed by consecutive rows (where the first and last row are considered to be consecutive). Let $ M^\circ(n,k) $ be the set of all $ n \times k $-matrices such that each consecutive $ k \times k $-minor is linearly independent. Then
\[
\Pi_{k,n}^\circ = M^\circ(n,k) / \GL(k,K).
\]
Now consider the map
\begin{align*}
    M^\circ(n,k) &\to C_k(n) \\
    \begin{pmatrix}
        v_1 \\
        \vdots \\
        v_n
    \end{pmatrix} &\mapsto ([v_1],\dots,[v_n]).
\end{align*}
Clearly, this map is surjective and each element of $ C_k(n) $ has exactly $ (q-1)^n $ preimages. Hence
\[
\lvert C_k(n) \rvert = \frac{\lvert M^\circ(n,k) \rvert}{(q-1)^n}.
\]
The actions of $ \GL(k,K) $ on $ M^\circ(n,k) $ and $ \PGL(k,K) $ on $ C_k(n) $ are both free, as long as $ \gcd(k,n) = 1 $. This follows from the fact that $ \GL(k,K) $ acts freely on any basis of $ K^k $ and from Lemma \ref{lem:countingStabilizers}. Moreover, we have $ \PGL(k,K) = \GL(k,K) / Z(\GL(k,K)) $ and $ \lvert Z(\GL(k,K)) \rvert = q-1 $. It follows that
\[
\lvert \M_k(n) \rvert = \frac{\lvert C_k(n) \rvert}{\lvert \PGL(k,K) \rvert} = \frac{q-1}{(q-1)^n} \frac{\lvert M^\circ(n,k) \rvert}{\lvert \GL(k,K) \rvert} = \frac{\lvert \Pi_{k,n}^\circ \rvert}{(q-1)^{n-1}}.
\]
\end{proof}

To give the point count of $ \Pi_{k,n}^\circ $ it is convenient to use the following notation:

\begin{defi}[Gaussian polynomials]
Let $ n \in \N_{>0} $ and $ q $ be some prime power. Then set
\begin{enumerate}
    \item $ [n]_q := 1 + q + \dots + q^{n-1} $,
    \item $ [n]_q! := [1]_q[2]_q \dots [n]_q $,
    \item $ \left[ \begin{matrix} n \\ k \end{matrix} \right]_q := \frac{[n]_q!}{[k]_q![n-k]_q!} $.
\end{enumerate}
\end{defi}

Galashin and Lam \cite{galashin2024positroids} calculated the point count of $ \Pi_{k,n}^\circ $ over finite fields in the case $ \gcd(k,n) = 1 $:

\begin{theorem}[Galashin, Lam, 2024] \label{thm:galashinLam}
Let $ q $ be some prime power and $ \gcd(k,n) = 1 $. Then the point count of $ \Pi_{k,n}^\circ $ over the finite field $ \F_q $ is
\[
\lvert \Pi_{k,n}^\circ \rvert = (q-1)^{n-1} \frac{1}{[n]_q} \left[ \begin{matrix} n \\ k \end{matrix} \right]_q.
\]
\end{theorem}
\begin{proof}
This is \cite{galashin2024positroids}[Corollary 1.2]
\end{proof}

\begin{cor}\label{cor:numberOfFriezegcd1}
Let $ \gcd(k,n) = 1 $. Then the number of tame $ \SL_k $-frieze patterns over $ K = \F_q $ with width $ w = n-k-1 $ is
\[
f_q(k,n) = \frac{1}{[n]_q} \left[ \begin{matrix} n \\ k \end{matrix} \right]_q.
\]
\end{cor}
\begin{proof}
This follows immediately from Theorem \ref{thm:numberOfFriezes}, Lemma \ref{lem: mk(n) and pi} and Theorem \ref{thm:galashinLam}.
\end{proof}

\section{The case $ k = 3 $}

In addition to $ C_3(n) $ we will consider some other sets with similar definitions:
\begin{defi}
Let $ s_1,s_2 \in \{+,-\} $ and $ I_{s_1,s_2} := [n-2] \cup \{ n-2 + j \mid j \in \{1,2\}, s_j = + \} $. Define
\begin{align*}
C_3^{s_1,s_2}(n) := \{ (v_1,\dots,v_n) \in (\pr^2(K))^n \mid &\{v_i,v_{i+1},v_{i+2}\} \text{ independent } \Leftrightarrow i \in I_{s_1,s_2} \\
&\text{ and } v_n \neq v_1 \}
\end{align*}
and
\[
c_3^{s_1,s_2}(n) := \lvert C_3^{s_1,s_2}(n) \rvert.
\]
\end{defi}
Note that $ C_3^{++}(n) = C_3(n) $ and that $ C_3^{+-}(n) $ and $ C_3^{-+}(n) $ are in bijection via the map $ (v_1,\dots,v_n) \mapsto (v_2,\dots,v_n,v_1) $, hence $ c_3^{+-}(n) = c_3^{-+}(n) $. Also, the condition $ v_n \neq v_1 $ is redundant unless $ s_1 = s_2 = - $, because $ \{v_{n-1},v_n,v_1 \} $ independent or $ \{v_n,v_1,v_2\} $ independent immediately implies $ v_n \neq v_1 $.

\begin{lem} \label{lem:recursionsk3}
Let $ K $ be finite with $ \lvert K \rvert = q $. The following recursions hold for $ n \geq 4 $:
\begin{align}
c_3(n) &= (q-1)^2c_3(n-1) + 2q(q-1)c_3^{+-}(n-1) + q^2c_3^{--}(n-1), \label{eq:recursion++}\\
c_3^{+-}(n) &= (q-1)c_3(n-1) + qc_3^{+-}(n-1), \label{eq:recursion+-}\\
c_3^{--}(n) &= (q-1)c_3^{+-}(n-1) + qc_3(n-2). \label{eq:recursion--}
\end{align}
\end{lem}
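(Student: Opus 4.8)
The plan is to count each of the three sets by a ``peeling'' map that forgets the last point $v_n$ and records the independence pattern of the two wrap-around triples of the truncated tuple $(v_1,\dots,v_{n-1})$. The key observation is that the only triples of an $n$-tuple that involve $v_n$ are $\{v_{n-2},v_{n-1},v_n\}$, $\{v_{n-1},v_n,v_1\}$ and $\{v_n,v_1,v_2\}$; every other triple of the $n$-tuple is an inner triple of $(v_1,\dots,v_{n-1})$ and is thus inherited. Consequently, once an independence pattern is prescribed on those three triples, the number of admissible extensions $v_n\in\pr^2(K)$ equals the number of points lying on, or off, the three lines $L_1:=\langle v_{n-2},v_{n-1}\rangle$, $L_2:=\langle v_{n-1},v_1\rangle$, $L_3:=\langle v_1,v_2\rangle$ (with the extra constraint $v_n\neq v_1$ where it is not already forced). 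Each $L_i$ has $q+1$ points, and every count will drop out of a short inclusion--exclusion once the incidences among the $L_i$ are known. These incidences are governed exactly by whether the wrap-around triples $\{v_{n-2},v_{n-1},v_1\}$ and $\{v_{n-1},v_1,v_2\}$ of $(v_1,\dots,v_{n-1})$ are independent, i.e.\ by the superscripts $s_1,s_2$ classifying the truncation into $C_3(n-1)$, $C_3^{+-}(n-1)$, $C_3^{-+}(n-1)$ or $C_3^{--}(n-1)$.

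For \eqref{eq:recursion++} I would note that truncating an element of $C_3(n)$ gives a tuple with all inner triples independent and $v_{n-1}\neq v_1$, hence an element of exactly one of the four classes. The admissible $v_n$ are those with $v_n\notin L_1\cup L_2\cup L_3$: when $s_1=s_2=+$ the three lines form a genuine triangle and one gets $(q^2+q+1)-3q=(q-1)^2$; when exactly one superscript is $-$ two of the lines coincide and one gets $q(q-1)$; when $s_1=s_2=-$ all three coincide and one gets $q^2$. Summing over the four classes and using $c_3^{+-}=c_3^{-+}$ gives \eqref{eq:recursion++}. Equation \eqref{eq:recursion+-} is the same computation for the pattern ``avoid $L_1,L_2$, meet $L_3$'': the admissible $v_n$ lie in $L_3\setminus(L_1\cup L_2\cup\{v_1\})$, which is empty unless $s_2=+$, and a line count then yields $q-1$ extensions over $C_3(n-1)$ and $q$ over $C_3^{-+}(n-1)=C_3^{+-}(n-1)$.

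The genuinely delicate point is \eqref{eq:recursion--}, and this is where the term $c_3(n-2)$ (rather than a length-$(n-1)$ count) appears. The definition of $C_3^{--}(n)$ forbids $v_n=v_1$ but says nothing about $v_{n-1}$ versus $v_1$, so I would split into the cases $v_{n-1}\neq v_1$ and $v_{n-1}=v_1$. In the first case the usual truncation is valid and a line count shows admissible extensions exist only over $C_3^{+-}(n-1)$, contributing $(q-1)c_3^{+-}(n-1)$. In the degenerate case $v_{n-1}=v_1$ the point $v_{n-1}$ carries no information, so I would forget both $v_{n-1}$ and $v_n$ and land on $(v_1,\dots,v_{n-2})$. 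The crux, and the main obstacle of the whole lemma, is to verify that this truncation lies in $C_3(n-2)$: the only non-inherited triple is $\{v_{n-2},v_1,v_2\}$, and it is forced to be independent, for otherwise $v_{n-2}$ and $v_n$ would both lie on $\langle v_1,v_2\rangle$, contradicting the required independence of $\{v_{n-2},v_1,v_n\}=\{v_{n-2},v_{n-1},v_n\}$. Conversely, over each element of $C_3(n-2)$ the point $v_{n-1}=v_1$ is forced and $v_n$ ranges over $\langle v_1,v_2\rangle\setminus\{v_1\}$, giving $q$ extensions, whence the $q\,c_3(n-2)$ term. Once this degeneracy is recognized and handled, the remaining work is routine bookkeeping with lines in $\pr^2(K)$.
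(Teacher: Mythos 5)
Your proposal is correct and follows essentially the same route as the paper's proof: truncate the last point, classify the truncation $(v_1,\dots,v_{n-1})$ by the independence pattern of its two wrap-around triples, count admissible $v_n$ via incidences of the lines $L_1,L_2,L_3$ with inclusion--exclusion, and handle \eqref{eq:recursion--} by splitting into $v_{n-1}\neq v_1$ (giving $(q-1)c_3^{+-}(n-1)$) and the degenerate case $v_{n-1}=v_1$ (dropping two points to land in $C_3(n-2)$ with $q$ extensions). The only difference is organizational: where the paper proves directly that certain truncation classes must occur, you instead show the extension count over the excluded classes is zero, which is equivalent bookkeeping.
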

\begin{proof}
\eqref{eq:recursion++}: Let $ v = (v_1,\dots,v_n) \in C_3(n) $. Then $ \{v_{n-1},v_n,v_1\} $ is independent, which implies $ v_{n-1} \neq v_1 $. Also $ \{v_i,v_{i+1},v_{i+2}\} $ is independent for $ i \leq n-3 $. Hence $ (v_1,\dots,v_{n-1}) \in C_3^{s_1s_2}(n-1) $ for some $ s_1,s_2 \in \{+,-\} $. \\
Conversely, let $ (v_1,\dots,v_{n-1}) \in C_3^{s_1s_2}(n-1) $ and $ v_n \in \pr^2(K) $. Let
\[
L_1 := v_{n-2} \vee v_{n-1}, L_2 := v_{n-1} \vee v_1, L_3 := v_1 \vee v_2.
\]
Then $ (v_1,\dots,v_n) \in C_3(n) $ if and only if $ v_n \notin L_1 \cup L_2 \cup L_3 $. \smallskip\\
Case 1: $ s_1 = s_2 = + $. \\
Then $ \{v_{n-2},v_{n-1},v_1\} $ is independent and therefore $ L_1 \neq L_2 $. Likewise, $ \{v_{n-1},v_1,v_2\} $ independent implies $ L_2 \neq L_3 $. And $ L_1 = L_3 $ would imply that $ v_{n-2},v_{n-1},v_1 $ and $ v_2 $ all lie on one projective line which contradicts $ \{v_{n-2},v_{n-1},v_1\} $ independent. Accordingly, $ L_1, L_2, L_3 $ are pairwise different. Moreover, two different projective lines in $ \pr^2(K) $ always share exactly one point, thus $ \lvert L_i \cap L_j \rvert = 1 $ for all $ i,j \in [3] $ with $ i \neq j $. Furthermore, we have $ L_1 \cap L_2 = \{v_{n-1}\} $ and $ L_2 \cap L_3 = \{v_1\} $, hence $ L_1 \cap L_2 \cap L_3 = \emptyset $. And we have $ \lvert L \rvert = q+1 $ for all projective lines $ L \subset \pr^2(K) $. By the exclusion-inclusion principle, it follows that
\begin{align*}
\lvert L_1 \cup L_2 \cup L_3 \lvert &= \lvert L_1 \rvert + \lvert L_2 \rvert + \lvert L_3 \rvert - \lvert L_1 \cap L_2 \rvert - \lvert L_1 \cap L_3 \rvert - \lvert L_2 \cap L_3 \rvert + \lvert L_1 \cap L_2 \cap L_3 \rvert \\
&= 3(q+1) - 3 + 0 = 3q.
\end{align*}
Hence we have
\[
\lvert \pr^2(K) \backslash (L_1 \cup L_2 \cup L_3) \rvert = q^2+q+1-3q = q^2-2q+1=(q-1)^2
\]
suitable choices of $ v_n $ for every $ (v_1,\dots,v_{n-1}) \in C_3(n-1) $. Consequently, there are exactly $ (q-1)^2 c_3(n-1) $ elements of $ C_3(n) $ with $ (v_1,\dots,v_{n-1}) \in C_3(n-1) $. \smallskip\\
Case 2: $ s_1 = +, s_2 = - $. \\
Then $ L_2 = L_3 $ since $ \{v_{n-1},v_1,v_2\} $ is dependent. But we still have $ L_1 \neq L_2 $ because $ \{v_{n-2},v_{n-1},v_1\} $ is independent. Therefore $ \lvert L_1 \cup L_2 \cup L_3 \rvert = 2q + 1 $. Hence
\[
\lvert \pr^2(K) \backslash (L_1 \cup L_2 \cup L_3) \rvert = q^2+q+1-2q-1 = q^2-q = q(q-1)
\]
and there are $ q(q-1)c_3^{+-}(n-1) $ elements of $ C_3(n) $ with $ (v_1,\dots,v_{n-1}) \in C_3^{+-}(n-1) $. \smallskip\\
Case 3: $ s_1 = -, s_2 = + $. \\
This case is symmetrical to Case 2 and thus there are
\[
q(q-1)c_3^{-+}(n-1) = q(q-1)c_3^{+-}(n-1)
\]
elements of $ C_3(n) $ with $ (v_1,\dots,v_{n-1}) \in C_3^{-+}(n-1) $. \smallskip\\
Case 4: $ s_1 = s_2 = - $. \\
Then we have $ L_1 = L_2 = L_3 $ and
\[
\lvert \pr^2(K) \backslash (L_1 \cup L_2 \cup L_3) \rvert = q^2+q+1-q-1 = q^2
\]
possible choices of $ v_n $. Hence there are $ q^2 c_3^{--}(n-1) $ elements of $ C_3(n) $ with $ (v_1,\dots,v_{n-1}) \in C_3^{--}(n-1) $. \smallskip\\
All the cases put together show \eqref{eq:recursion++}. \smallskip\\
\eqref{eq:recursion+-}: Let $ (v_1,\dots,v_n) \in C_3^{+-}(n) $. Then $ \{v_n,v_1,v_2\} $ is dependent, which implies $ L_3 := v_1 \vee v_2 = v_n \vee v_1 $. At the same time $ \{v_{n-1},v_n,v_1\} $ is independent. In particular, $ v_{n-1} \notin v_n \vee v_1 = L_3 $. Hence $ \{v_{n-1},v_1,v_2\} $ is independent and thus $ (v_1,\dots,v_{n-1}) \in C_3^{s+}(n-1) $ with $ s \in \{+,-\} $. \smallskip\\
Conversely, let $ (v_1,\dots,v_{n-1}) \in C_3^{s+}(n-1) $ and $ v_n \in \pr^2(K) $. Let $ L_1 = v_{n-2} \vee v_{n-1} $ and $ L_2 = v_{n-1} \vee v_1 $. Then $ (v_1,\dots,v_n) \in C_3^{+-}(n) $ is equivalent to $ v_n \in L_3 \backslash (L_1 \cup L_2) $. \smallskip\\
If $ s = + $, then just like in the proof of \eqref{eq:recursion++} case 1 the lines $ L_1, L_2 $ and $ L_3 $ are distinct and two of them intersect in exactly one point, but $ L_1 \cap L_2 \cap L_3 = \emptyset $. Hence there are exactly two points on $ L_3 $ that also lie in $ L_1 \cup L_2 $ and thus $ \lvert L_3 \backslash (L_1 \cup L_2) \rvert = q-1 $. \smallskip\\
If $ s = - $, then $ L_1 = L_2 \neq L_3 $. In this case we have $ \lvert L_3 \backslash (L_1 \cup L_2) \rvert = q $. \\
It follows that
\[
c_3^{+-}(n) = (q-1)c_3(n-1) + qc_3^{-+}(n-1) = (q-1)c_3(n-1) + qc_3^{+-}(n-1)
\]
as claimed. \smallskip\\
\eqref{eq:recursion--}: Let $ (v_1,\dots,v_n) \in C_3^{--}(n) $. Then $ v_{n-1} \vee v_n = v_n \vee v_1 = v_1 \vee v_2 =: L_2 $. We separate two cases: \\
Case 1: $ v_{n-1} \neq v_1 $. \\
Then $ v_{n-1} \vee v_1 = L_2 $. We have $ \{v_{n-2},v_{n-1},v_n\} $ independent, which implies
\[
v_{n-2} \notin v_{n-1} \vee v_n = L_2 = v_{n-1} \vee v_1.
\]
Hence $ \{v_{n-2},v_{n-1},v_1\} $ is independent. At the same time $ \{v_{n-1},v_1,v_2\} $ is clearly dependent, since these three points all lie on $ L_2 $. Thus $ (v_1,\dots,v_{n-1}) \in C_3^{+-}(n-1) $. \\
For the other direction, let $ (v_1,\dots,v_{n-1}) \in C_3^{+-}(n-1) $ and $ v_n \in \pr^2(K) $. Let $ L_1 = v_{n-2} \vee v_{n-1} $. Then we have $ (v_1,\dots,v_n) \in C_3^{--}(n) $ if and only if $ v_n \in L_2 \backslash (L_1 \cup \{v_1\}) $. Since $ L_1 \cap L_2 = \{v_{n-1}\} $, there are $ q-1 $ suitable choices for $ v_n $. \smallskip\\
Case 2: $ v_{n-1} = v_1 $.\\
Then $ \{v_{n-3},v_{n-2},v_1\} = \{v_{n-3},v_{n-2},v_{n-1}\} $ is independent. Also, $ \{v_{n-2},v_{n-1},v_n\} $ independent implies
\[
v_{n-2} \notin v_{n-1} \vee v_n = L_2 = v_1 \vee v_2,
\]
and thus $ \{v_{n-2},v_1,v_2\} $ is independent. Hence $ (v_1,\dots,v_{n-2}) \in C_3(n-2). $ \\
On the other hand, let $ (v_1,\dots,v_{n-2}) \in C_3(n-2), v_{n-1} = v_1 $ and $ v_n \in \pr^2(K) $. Then $ \{v_{n-3},v_{n-2},v_{n-1}\} = \{v_{n-3},v_{n-2},v_1\} $ is clearly independent. Let $ L_1 := v_{n-2} \vee v_{n-1} = v_{n-2} \vee v_1 $ and $ L_2 = v_1 \vee v_2 $. Then $ (v_1,\dots,v_n) \in C_3^{--}(n) $ is equivalent to $ v_n \in L_2 \backslash L_1 $ ($ v_n \neq v_1 $ follows automatically, since $ v_1 = v_{n-1} \in L_1 $). Therefore, $ v_{n-1} $ is uniquely determined and we have $ q $ possible choices for $ v_n $. \smallskip\\
Both cases put together yield
\[
c_3^{--}(n) = (q-1)c_3^{+-}(n-1) + qc_3(n-2)
\]
as desired.
\end{proof}

With these recursions we can calculate $ c_3(n) $:

\begin{theorem} \label{thm:ck(n)relativelyPrime}
Let $ n \geq 3 $ and $ \lvert K \rvert = q < \infty $. Then we have
\[
c_3(n) =
\begin{cases}
    q^{2n} - q^{n+2} - q^{n+1} + q^3,   &n \not\equiv 0 \mod 3, \\
    q^{2n} + 2q^{n+2} + 2q^{n+1} + q^3, &n \equiv 0 \mod 3.
\end{cases}
\]
\end{theorem}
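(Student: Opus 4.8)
The plan is to treat the three recursions of Lemma~\ref{lem:recursionsk3} as a single linear recurrent system in the vector $(c_3(n), c_3^{+-}(n), c_3^{--}(n))$, solve it in closed form, and then fit the free constants against a few directly computed base values. Since recursion~\eqref{eq:recursion--} reaches back to $c_3(n-2)$, the natural state to iterate is the four-dimensional vector $(c_3(n), c_3^{+-}(n), c_3^{--}(n), c_3(n-1))$, whose transfer matrix I would write down explicitly from the three recursions.

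First I would eliminate the auxiliary sequences $c_3^{+-}$ and $c_3^{--}$ to obtain a single scalar linear recurrence for $c_3(n)$ alone (equivalently, compute the characteristic polynomial of the $4\times 4$ transfer matrix). The crux of the whole argument is that this characteristic polynomial factors as
\[
(x - q^2)(x - 1)(x^2 + qx + q^2),
\]
so its roots are $q^2$, $1$, and $q\omega, q\omega^2$, where $\omega$ is a primitive cube root of unity (note $\omega + \omega^2 = -1$ and $\omega^3 = 1$, which is exactly what produces the factor $x^2 + qx + q^2$). These four roots are distinct for $q \geq 2$, so the general solution has the form
\[
c_3(n) = A\,q^{2n} + B + q^n\bigl(C\,\omega^n + D\,\omega^{2n}\bigr).
\]

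The two cases of the theorem come directly from the conjugate pair: the quantity $\omega^n + \omega^{2n}$ equals $2$ when $3 \mid n$ and $-1$ otherwise, so once one checks $C = D$ the $q^n$-term collapses to $2q^n$ respectively $-q^n$, matching $2q^{n+2}+2q^{n+1}$ versus $-q^{n+2}-q^{n+1}$ after multiplying out $q^n(q^2+q)$. I would pin down the constants $A, B, C, D$ by evaluating $c_3(n)$ directly for small $n$, using the same inclusion--exclusion counts of independent consecutive triples in $\pr^2(K)$ that appear in the proof of Lemma~\ref{lem:recursionsk3}; for instance $c_3(3) = (q^2+q+1)(q^2+q)q^2$. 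Solving the resulting linear system and simplifying yields $A = 1$, $B = q^3$, and $C = D = q^2 + q$, which is precisely the claimed formula.

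The main obstacle is the elimination/characteristic-polynomial computation itself, which is where essentially all of the algebra lives, together with two bookkeeping points that are easy to get wrong: one must track the range of $n$ for which the derived scalar recurrence is valid (the recursions of Lemma~\ref{lem:recursionsk3} only hold for $n \geq 4$) so that the base values are placed correctly and the identity is seen to propagate, and one must correctly translate the complex conjugate roots $q\omega, q\omega^2$ into the real $n \bmod 3$ dichotomy. An alternative that avoids the elimination is to guess closed forms for all three sequences $c_3, c_3^{+-}, c_3^{--}$ at once and verify the three recursions by direct substitution with a case split on $n \bmod 3$; this gives a clean induction but requires carrying the (slightly less symmetric) formulas for the two auxiliary sequences as well.
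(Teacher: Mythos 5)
Your proposal is correct, and your primary route is genuinely different from the paper's. The paper's proof is precisely the ``alternative'' you mention at the end: it writes down explicit closed forms for the auxiliary sequences $c_3^{+-}(n)$ and $c_3^{--}(n)$ (each split into three cases according to $n \bmod 3$), notes that these together with the claimed formula for $c_3(n)$ satisfy the three recursions of Lemma~\ref{lem:recursionsk3} for $n \geq 4$, and anchors everything at the starting values $c_3(3) = (q^2+q+1)(q^2+q)q^2$, $c_3^{+-}(3) = c_3^{--}(3) = 0$ and $c_3(2) = 0$, the last being needed because \eqref{eq:recursion--} reaches back two steps. Your elimination route checks out: the characteristic polynomial of your $4\times 4$ transfer matrix is $x^4-(q^2-q+1)x^3-q(q-1)^2x^2-q^2(q^2-q+1)x+q^4$, which indeed factors as $(x-q^2)(x-1)(x^2+qx+q^2)$; the four roots $q^2$, $1$, $q\omega$, $q\omega^2$ (with $\omega$ a primitive cube root of unity) are distinct for $q \ge 2$; and the constants $A=1$, $B=q^3$, $C=D=q^2+q$ reproduce the stated formula (for instance they give $q^8-q^6-q^5+q^3=(q-1)^2c_3(3)$ at $n=4$, as the recursion demands). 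What your approach buys is a derivation rather than a verification: nothing has to be guessed, the mod-$3$ dichotomy is explained structurally by the conjugate pair of roots of $x^2+qx+q^2$, and the messy case-split formulas for $c_3^{+-}$ and $c_3^{--}$ never need to be written down. What the paper's approach buys is that the verification is pure substitution into the recursions, with no linear algebra and no passage through roots of unity. Two points you should make explicit when writing this up. First, fitting four constants requires the four values $c_3(3),c_3(4),c_3(5),c_3(6)$; only $c_3(3)$ comes from a one-line direct count, and the other three must be generated by running the recursions from the seed $c_3(3)$, $c_3^{+-}(3)=c_3^{--}(3)=0$, $c_3(2)=0$, so the auxiliary sequences (though not their closed forms) still enter your argument. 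Second, since the vector recursion only holds from $n=4$ on, Cayley--Hamilton gives the scalar recurrence for $c_3$ only when the full window lies in range, i.e.\ for $n \ge 7$; this is exactly compatible with fitting at $n=3,\dots,6$ and then propagating forward by induction --- you flagged this bookkeeping issue, and it does work out.
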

\begin{proof}
We have
\begin{align*}
c_3^{+-}(n) &=
\begin{cases}
    q^{2n-1} - q^{n+2} + q^n - q^3, \quad\quad\quad\quad\quad\quad\, &n \equiv 0 \mod 3, \\
    q^{2n-1} + q^{n+2} - q^n - q^3, &n \equiv 1 \mod 3, \\
    q^{2n-1} - q^3,                 &n \equiv 2 \mod 3
\end{cases}
\intertext{and}
c_3^{--}(n) &=
\begin{cases}
    q^{2n-2} - q^{n+1} - q^n + q^3, &n \equiv 0 \mod 3, \\
    q^{2n-2} - q^{n+2} - q^{n-1} + q^3, &n \equiv 1 \mod 3, \\
    q^{2n-2} + q^{n+2} + q^{n+1} + q^n + q^{n-1} + q^3, &n \equiv 2 \mod 3.
\end{cases}
\end{align*}
It can be easily verified that these solutions fulfill the recursions in Lemma \ref{lem:recursionsk3} for $ n \geq 4 $. Thus it only remains to check the starting values $ c_3(3), c_3^{+-}(3), c_3^{--}(3) $ and $ c_3(2) $.\\
For $ (v_1,v_2,v_3) \in C_3(3) $ we can choose $ v_1 \in \pr^2(K) $ arbitrarily, so there are $ q^2+q+1 $ options. $ v_2 $ can be picked from $ \pr^2(K) \backslash \{v_1\} $, hence we have $ q^2+q $ possibilities. For $ v_3 $ we can pick any point in $ \pr^2(K) \backslash v_1 \vee v_2 $, which leaves $ q^2 $ choices. Hence
\[
c_3(3) = (q^2+q+1)(q^2+q)q^2 = q^6 + 2q^5 + 2q^4 + q^3.
\]
We also have $ c_3^{+-}(3) = c_3^{--}(3) = 0 $, because $ \{v_1,v_2,v_3\} $ independent implies that the sets $ \{v_2,v_3,v_1\} $ and $ \{v_3,v_1,v_2\} $ are also independent. Finally, $ c_3(2) = 0 $ as the condition $ \{v_1,v_2,v_1\} $ independent is impossible to fulfill. This also fits with the formulas we claim.
\end{proof}

\begin{cor} \label{cor:k3relativelyPrime}
Let $ \lvert K \rvert = q < \infty $. Let $ w \geq 1 $ and $ n = w + 4 $. If $ \gcd(3,n) = 1 $, then the number of tame $ \SL_3 $-frieze patterns of width $ w $ is
\[
f_q(3,n) = \frac{q^{2n} - q^{n+2} - q^{n+1} + q^3}{(q^2+q+1)(q+1)q^3(q-1)^2} = \frac{(q^{n-1}-1)(q^{n-2}-1)}{(q^2+q+1)(q+1)(q-1)^2}.
\]
\end{cor}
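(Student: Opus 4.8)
The plan is to assemble this corollary from two results already established in the excerpt: Theorem \ref{thm:numberOfFriezes} and Theorem \ref{thm:ck(n)relativelyPrime}. The hypothesis $\gcd(3,n) = 1$ means $g = 1$, so part (1) of Theorem \ref{thm:numberOfFriezes}, applied with $k = 3$, tells us immediately that the number of tame $\SL_3$-frieze patterns of width $w$ equals $\lvert C_3(n)\rvert / \lvert \PGL(3,K)\rvert$. The first thing I would do is observe that $\gcd(3,n) = 1$ is equivalent to $n \not\equiv 0 \bmod 3$, which is precisely the first case in Theorem \ref{thm:ck(n)relativelyPrime}; hence $\lvert C_3(n)\rvert = c_3(n) = q^{2n} - q^{n+2} - q^{n+1} + q^3$, which is exactly the numerator appearing in the claimed formula.

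The remaining work is to compute $\lvert \PGL(3,K)\rvert$ and confirm it equals the claimed denominator. I would start from
\[
\lvert \GL(3,\F_q)\rvert = (q^3-1)(q^3-q)(q^3-q^2)
\]
and divide by the order $q-1$ of the centre $Z(\GL(3,K))$ of scalar matrices. Using the factorizations $q^3-1 = (q-1)(q^2+q+1)$, $q^3-q = q(q-1)(q+1)$ and $q^3-q^2 = q^2(q-1)$, this yields $\lvert \GL(3,K)\rvert = q^3(q-1)^3(q+1)(q^2+q+1)$ and therefore
\[
\lvert \PGL(3,K)\rvert = q^3(q-1)^2(q+1)(q^2+q+1),
\]
which matches the denominator in the statement. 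Substituting both quantities into $\lvert C_3(n)\rvert / \lvert \PGL(3,K)\rvert$ gives the first of the two displayed fractions.

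Finally, to obtain the second (factored) form, I would factor the numerator as
\[
q^{2n} - q^{n+2} - q^{n+1} + q^3 = q^3\bigl(q^{n-1}-1\bigr)\bigl(q^{n-2}-1\bigr),
\]
which one checks by expanding the right-hand side, and then cancel the common factor $q^3$ against the $q^3$ in the denominator. None of these steps is genuinely hard: the corollary is really a bookkeeping consequence of the two cited theorems. The only place demanding any care is the order computation for $\PGL(3,K)$, where I must be sure to divide $\lvert \GL(3,K)\rvert$ by the correct central factor $q-1$ and track the powers of $(q-1)$ so that the denominator comes out as $(q^2+q+1)(q+1)q^3(q-1)^2$ rather than with a spurious extra or missing factor.
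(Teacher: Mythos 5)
Your proposal is correct and follows exactly the paper's route: the paper likewise deduces the corollary immediately from Theorem \ref{thm:numberOfFriezes} (case $g=1$) and Theorem \ref{thm:ck(n)relativelyPrime} together with the formula $\lvert \PGL(3,K)\rvert = (q^2+q+1)(q+1)q^3(q-1)^2$. Your added verifications (deriving the $\PGL$ order from $\lvert\GL(3,\F_q)\rvert/(q-1)$ and checking the factorization $q^{2n}-q^{n+2}-q^{n+1}+q^3 = q^3(q^{n-1}-1)(q^{n-2}-1)$) are accurate details the paper leaves implicit.
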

\begin{proof}
This follows immediately from Theorem \ref{thm:numberOfFriezes} and Theorem \ref{thm:ck(n)relativelyPrime} and
\[
\lvert \PGL(3,K) \rvert = (q^2+q+1)(q+1)q^3(q-1)^2.
\]
\end{proof}

\section{The case $ k = 3 $ and $ 3 \mid n $}

Let $ n \in 3\N $ and let $ (v_1,\dots,v_n) \in (\pr^2(K))^n $. Let $ (V_1,\dots,V_n) \in (K^3)^n $ be any lift and let $ d_i := \det(V_i,V_{i+1},V_{i+2}) $. Recall the equation
\begin{equation} \label{eq:subsetDetConditionk3}
d_i d_{i+3} \dots d_{i+n-3} = (-1)^{(k-1)\frac{k}{g}} d_{i+1} d_{i+4} \dots d_{i+n-2}
\end{equation}
and that $ C_3^*(n) $ was defined as the set of all $ (v_1,\dots,v_n) \in C_3(n) $ such that \eqref{eq:subsetDetConditionk3} holds for $ i = 1 $ and $ i = 2 $. Since $ k = 3 $ is odd we can omit the sign. To count this set, we again define several other sets:

\begin{defi}
Let $ n \equiv 0 \mod 3 $. We write
\begin{align*}
C_3^{**}(n) &:= \{(v_1,\dots,v_n) \in C_3(n) \mid \eqref{eq:subsetDetConditionk3} \text{ holds for } i = 1 \}\\
\intertext{and}
C_3^{+-*}(n) &:= \{(v_1,\dots,v_n) \in C^{+-}_3(n) \mid \eqref{eq:subsetDetConditionk3} \text{ holds for } i = 1 \}.\\
\intertext{Now let $ n \equiv 2 \mod 3 $. Let $ v = (v_1,\dots,v_n) \in (\pr^2(K))^n $ and let $ (V_1,\dots,V_n) $ be any lift. Let $ d_i := \det(V_i,V_{i+1},V_{i+2}) $. Then we write}
W_i &:= d_i d_{i+3} \dots d_{i+n-5} V_{i+n-2} - d_{i+1} d_{i+4} \dots d_{i+n-4} V_{i}\\
\intertext{for $ i = 1,2 $. Using this we define}
C_3^{--*}(n) &:= \{ v \in C_3^{--}(n) \mid \{W_1,V_n\},\{W_2,V_1\} \text{ linearly dependent}\}\\
\intertext{and}
C_3^{--**}(n) &:= \{ v \in C_3^{--}(n) \mid \{W_1,V_n\} \text{ linearly dependent}\},\\
\intertext{as well as}
C_3^{+-*}(n) &:= \{ v \in C_3^{+-}(n) \mid \{W_2,V_1\} \text{ linearly dependent}\}\\
\intertext{and}
C_3^{-+*}(n) &:= \{ v \in C_3^{-+}(n) \mid \{W_1,V_n\} \text{ linearly dependent} \}.\\
\intertext{Finally, let $ n \equiv 1 \mod 3 $. Then we set}
C_3^{+-*}(n) &:= \{(v_1,\dots,v_n) \in C_3^{+-}(n) \mid d_1 d_4 \dots d_{n-3} = d_3 d_6 \dots d_{n-4} \det(V_{n-1},V_1,V_2) \}\\
\intertext{and}
C_3^{-+*}(n) &:= \{(v_1,\dots,v_n) \in C_3^{+-}(n) \mid d_2 d_5 \dots d_{n-2} = d_3 d_6 \dots d_{n-4} \det(V_{n-1},V_n,V_2) \}.
\end{align*}
For all these sets, we use a lower-case $ c $ to denote their cardinality.
\end{defi}
Note that all these definitions are independent of the chosen lift $ (V_1,\dots,V_n) $. Also note $ c_3^{+-*}(n) = c_3^{-+*}(n) $ for $ n \equiv 2 \mod 3 $ by symmetry.

\begin{lem} \label{lem:recursionsk3*}
The following recursions hold for $ n \geq 4 $:
\begin{enumerate}[(i)]
    \item If $ n \equiv 0 \mod 3 $:
    \begin{align}
        c_3^*(n) &= c_3(n-1) + 2q c_3^{+-*}(n-1) + q^2 c_3^{--*}(n-1), \label{eq:recursion++*}\\
        c_3^{**}(n) &= (q-1)c_3(n-1) + qc_3^{+-}(n-1) + q(q-1)c_3^{+-*}(n-1) + q^2c_3^{--**}(n-1), \label{eq:recursion++**}\\
        c_3^{+-*}(n) &= c_3(n-1) + qc_3^{+-*}(n-1). \label{eq:recursion+-*0}
    \intertext{\item If $ n \equiv 2 \mod 3 $:}
        c_3^{+-*}(n) &= c_3(n-1) + qc_3^{+-*}(n-1), \label{eq:recursion+-*2}\\
        c_3^{--*}(n) &= c_3^{+-*}(n-1) + q c_3^*(n-2), \label{eq:recursion--*}\\
        c_3^{--**}(n) &= c_3^{+-}(n-1) + q c_3^{**}(n-2). \label{eq:recursion--**}\\
    \intertext{\item If $ n \equiv 1 \mod 3 $:}
        c_3^{+-*}(n) &= c_3^{-+*}(n) = (q-1)c_3^{**}(n-1) + qc_3^{+-*}(n-1) \label{eq:recursion+-*1}
    \end{align}
\end{enumerate}
\end{lem}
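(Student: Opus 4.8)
The plan is to mirror the proof of Lemma~\ref{lem:recursionsk3} case by case at the combinatorial level and to graft onto each case an analysis of the extra determinant conditions. As there, I would fix a lift $(V_1,\dots,V_{n-1})$ of a truncation $(v_1,\dots,v_{n-1})$, record which of the sets $C_3^{s_1s_2}(n-1)$ (or $C_3^{s_1s_2}(n-2)$) it belongs to, and count the admissible new points $v_n$. The lines $L_1=v_{n-2}\vee v_{n-1}$, $L_2=v_{n-1}\vee v_1$, $L_3=v_1\vee v_2$ control the unstarred count exactly as in Lemma~\ref{lem:recursionsk3}; the only new task is to decide, among those admissible $v_n$, how many additionally satisfy the relevant determinant condition defining the starred target set.

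The central observation is that each determinant condition is homogeneous of degree one in $V_n$. Isolating the two determinants that involve $V_n$, namely $\det(V_{n-2},V_{n-1},V_n)$ and $\det(V_{n-1},V_n,V_1)$ for the $i=1$ instance, and $\det(V_{n-1},V_n,V_1)$ and $\det(V_n,V_1,V_2)$ for the $i=2$ instance, I can rewrite the condition as $\ell(V_n)=0$ for a single linear form $\ell$ on $K^3$ whose remaining coefficients are the nonzero scalars $d_jd_{j+3}\cdots$ coming from the base tuple. Restricting $\ell$ to the line on which $v_n$ is forced to lie, I distinguish two regimes. When the relevant base triple is independent (the ``$+$'' end, so $L_1\neq L_2$), $\ell$ is a nonzero form vanishing at the common point of the two lines whose determinants it combines; a short incidence count then shows that imposing $\ell=0$ removes one free parameter, sending a count $(q-1)^2$ to $q-1$ (as for $c_3^{**}$) or a count $q-1$ to $1$ (as for $c_3^{+-*}$), and for $c_3^*$ two such forms, shown to be distinct, cut $(q-1)^2$ down to the single point $\{\ell_1=\ell_2=0\}$. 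When instead the triple is dependent (the ``$-$'' end, so $L_1=L_2$), I would use the collinearity to write $V_1=\alpha V_{n-2}+\beta V_{n-1}$ and thereby factor $\ell(V_n)=(A-B\alpha)\det(V_{n-2},V_{n-1},V_n)$; since $v_n$ avoids $L_1$ the determinant is nonzero, so the condition collapses to the lift-independent scalar identity $A=B\alpha$, which holds for every admissible $v_n$ or for none.

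The key step is to identify this residual identity $A=B\alpha$ with membership of the base tuple in the correct starred set, and this is precisely what the vectors $W_i$ are built for. At a base level $\equiv2\bmod3$ one computes $W_1=A\,V_{n-2}-B\,V_1$ with $A=d_1d_4\cdots d_{n-5}$ and $B=d_2d_5\cdots d_{n-4}$; substituting $V_1=\alpha V_{n-2}+\beta V_{n-1}$ gives $W_1=(A-B\alpha)V_{n-2}-B\beta V_{n-1}$, so $\{W_1,V_{n-1}\}$ is linearly dependent exactly when $A-B\alpha=0$. This pins the term $q^2c_3^{--**}(n-1)$ in \eqref{eq:recursion++**} and, using both forms simultaneously, the term $q^2c_3^{--*}(n-1)$ in \eqref{eq:recursion++*}. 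It also explains the asymmetry between the ``$+-$'' and ``$-+$'' contributions: the single form $\ell_1$ attached to the $i=1$ condition stays genuine at the $+-$ end but collapses at the $-+$ end, which is why the unstarred term $2q(q-1)c_3^{+-}(n-1)$ of \eqref{eq:recursion++} splits into $q\,c_3^{+-}(n-1)+q(q-1)c_3^{+-*}(n-1)$ in \eqref{eq:recursion++**}. For base levels $\equiv0$ or $\equiv1\bmod3$ the same elimination must instead be matched against the product-form definitions of $C_3^{**}$ and $C_3^{+-*}$, which is where the determinant $\det(V_{n-1},V_1,V_2)$ enters once $v_n$ is forced onto $L_3$ and $V_n$ can be expanded in $V_1,V_2$.

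I expect the main obstacle to be exactly this bookkeeping across residue classes: one must check that eliminating $V_n$ from the level-$n$ condition reproduces the $W_i$-dependence condition when the base level is $\equiv2\bmod3$, and the product-form condition when it is $\equiv0$ or $\equiv1$, with all index ranges and the cyclic convention correctly aligned. I would organize the write-up as one case table following Lemma~\ref{lem:recursionsk3}, treating each of \eqref{eq:recursion++*}--\eqref{eq:recursion+-*1} by its $(s_1,s_2)$ sub-cases; the most delicate verifications should be the two-condition cases $c_3^*$ and $c_3^{--*}$, where two linear forms must be shown independent and to meet the admissible region in a single point, and the $n\equiv1$ case \eqref{eq:recursion+-*1}, where the forced collinearity $v_n\in L_3$ trades $V_n$ for the fixed vector appearing in $\det(V_{n-1},V_1,V_2)$.
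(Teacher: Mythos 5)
Your proposal is correct and takes essentially the same route as the paper's own proof: the same extension-by-$v_n$ count over the $(s_1,s_2)$ sub-cases of Lemma \ref{lem:recursionsk3}, the same linearization of each determinant condition in $V_n$ (which the paper phrases as $V_n\in E_1=\langle W_1,V_{n-1}\rangle$, $V_n\in E_2=\langle W_2,V_1\rangle$ for $n\equiv 0$, and via a linear map $\phi$ on $E=\langle V_1,V_2\rangle$ for $n\equiv 2$ --- exactly your expansion of $V_n$ in $V_1,V_2$), and the same factor-and-collapse identification $W_1=(A-B\alpha)V_{n-2}-B\beta V_{n-1}$ matching the degenerate ends with the $W_i$-dependence, respectively product-form, conditions defining the starred sets one level down. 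The delicate checks you flag are the ones the paper carries out; the only steps you leave unnamed are the cyclic-shift bijection giving $c_3^{***}(n-1)=c_3^{**}(n-1)$ and the parallel derivation of the same recursion for $c_3^{-+*}(n)$, which together yield \eqref{eq:recursion+-*1} when $n\equiv 1\bmod 3$.
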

\begin{proof}
First consider $ n \equiv 0 \mod 3 $. For $ (v_1,\dots,v_n) \in C_3^*(n) $ or $ C_3^{**}(n) $ or $ C_3^{+-*}(n) $ let $ (V_1
,\dots,V_n) $ be any lift. Let $ L_1 := v_{n-2} \vee v_{n-1} $, $ L_2 := v_{n_1} \vee v_1 $ and $ L_3 := v_1 \vee v_2 $. Let $ d_i := \det(V_i,V_{i+1},V_{i+2}) $ and
\[
W_1 := d_1 d_4 \dots d_{n-5} V_{n-2} - d_2 d_5 \dots d_{n-4} V_1.
\]
as well as
\[
W_2 := d_2 d_5 \dots d_{n-4} V_{n-1} - d_3 d_6 \dots d_{n-3} V_2.
\]
We also write $ E_1 := \langle W_1, V_{n-1} \rangle $ and $ E_2 := \langle W_2, V_1 \rangle $. Note that the definitions of $ W_1, W_2, E_1 $ and $ E_2 $ are independent of the choice of lift and also do not depend on $ v_n $. \smallskip\\
\eqref{eq:recursion++*}: Let $ (v_1,\dots,v_n) \in C_3^*(n) $. Like in the proof of \eqref{eq:recursion++}, we have $ (v_1,\dots,v_{n-1}) \in C_3^{s_1s_2}(n-1) $ with $ s_1,s_2 \in \{+,-\} $. In addition, we have
\begin{align*}
    d_1 d_4 \dots d_{n-5} \det(V_{n-2},V_{n-1},V_n) &= d_2 d_5 \dots d_{n-4} \det(V_{n-1},V_n,V_1) \\
    \Leftrightarrow d_1 d_4 \dots d_{n-5} \det(V_{n-2},V_{n-1},V_n) &= d_2 d_5 \dots d_{n-4} \det(V_1,V_{n-1},V_n) \\
    \Leftrightarrow \det(W_1,V_{n-1},V_n) &= 0, \\
\intertext{and}
    d_2 d_5 \dots d_{n-4} \det(V_{n-1},V_n,V_1) &= d_3 d_6 \dots d_{n-3} \det(V_n,V_1,V_2) \\
    \Leftrightarrow d_2 d_5 \dots d_{n-4} \det(V_{n-1},V_n,V_1) &= d_3 d_6 \dots d_{n-3} \det(V_2,V_n,V_1) \\
    \Leftrightarrow \det(W_2,V_n,V_1) &= 0.
\end{align*}
This is equivalent to requiring that $ \{W_1,V_{n-1},V_n\} $ and $ \{W_2, V_n, V_1\} $ be linearly dependent. We separate cases: \smallskip\\
Case 1: $ s_1 = s_2 = + $. \\
Then $ E_1 $ and $ E_2 $ have dimension $ 2 $, since $ \{V_{n-2},V_{n-1},V_1\} $ and $ \{V_{n-1},V_1,V_2\} $ are linearly independent. The determinant conditions are then equivalent to $ V_n \in E_1 \cap E_2 $. We have $ \dim(E_1 \cap E_2) = 1 $, because $ E_1 = E_2 $ would imply that $ V_{n-2},V_{n-1},V_1 $ and $ V_2 $ all lie in this plane, contradicting $ \{v_{n-2},v_{n-1},v_1\} $ independent. Thus $ v_n $ is uniquely determined by this condition. We have $ \pr(E_1) \cap L_1 = \{ v_{n-1}\} $, because if $ \pr(E_1) = L_1 $ we would have $ V_{n-2} \in E_1 $, which implies $ V_1 \in E_1 $, contradicting $ \{v_{n-2},v_{n-1},v_1\} $ independent. Similarly we also have $ \pr(E_1) \cap L_2 = \{ v_{n-1} \} $ and $ \pr(E_2) \cap L_3 = \{ v_1 \} $. Since $ V_{n-1}, V_1 \notin E_1 \cap E_2 $, this implies $ \pr(E_1 \cap E_2) \cap (L_1 \cup L_2 \cup L_3) = \emptyset $. Hence for any $ (v_1,\dots,v_{n-1}) \in C_3(n-1) $ and the point $ v_n $ determined by $ V_n \in E_1 \cap E_2 $ we have $ (v_1,\dots,v_n) \in C_3^*(n) $. \smallskip\\
Case 2: $ s_1 = +, s_2 = - $. \\
Then we still have $ \dim(E_1) = 2 $ with the same argument as in Case 1. However, $ \{v_{n-1},v_1,v_2\} $ dependent implies $ \pr(E_2) \subseteq L_2 = L_3 $. Equality of these sets would imply that $ \{W_2,V_n,V_1\} $ is linearly independent since $ v_n \notin L_2 $. Therefore $ \{W_2,V_1\} $ must be linearly dependent. Hence $ (v_1,\dots,v_{n-1}) \in C_3^{+-*}(n-1) $.\\
Conversely, let $ (v_1,\dots,v_{n-1}) \in C_3^{+-*}(n-1) $ and $ v_n \in \pr^2(K) $. Then $ (v_1,\dots,v_n) \in C_3^*(n) $ is equivalent to $ v_n \notin L_1 \cup L_2 $ and $ v_n \in \pr(E_1) $. We still have $ \pr(E_1) \cap L_1 = \pr(E_1) \cap L_2 = \{v_{n-1}\} $. Thus all but one point on $ \pr(E_1) $ is a suitable choice for $ v_n $. Hence there are $ q c_3^{+-*}(n-1) $ points in $ C_3^*(n) $ with $ s_1 = + $ and $ s_2 = - $. \smallskip\\
Case 3: $ s_1 = -, s_2 = + $. \\
Symmetrical to case 2, thus we also have
\[
q c_3^{-+*}(n-1) = q c_3^{+-*}(n-1)
\]
points that fall in this case. \smallskip\\
Case 4: $ s_1 = s_2 = -$:
Then $ \pr(E_1), \pr(E_2) \subseteq L_1 = L_2 = L_3 $ and with the same argument as in case 2 $ \{W_1,V_{n-1}\} $ and $ \{W_2,V_1\} $ have to be linearly dependent. Thus $ (v_1,\dots,v_{n-1}) \in C_3^{--*}(n-1) $. The determinant conditions are then automatically fulfilled regardless of our choice of $ v_n $ and just like in the proof of \eqref{eq:recursion++} case 4 there are $ q^2 $ viable choices for $ v_n $. \smallskip\\
Hence we have
\[
c_3^*(n) = c_3(n-1) + 2qc_3^{+-*}(n-1) + q^2c_3^{--*}(n-1).
\]
\eqref{eq:recursion++**}: This is similar to \eqref{eq:recursion++*} except we only have the first determinant condition, which is equivalent to $ \{W_1,V_{n-1},V_1\} $ being linearly dependent. We separate the same cases: \smallskip\\
Case 1: $ s_1 = s_2 = + $.\\
Here we require $ v_n \in \pr(E_1) $ and $ v_n \notin L_1 \cup L_2 \cup L_3 $. We have $ \pr(E_1) \cap L_1 = \pr(E_1) \cap L_2 = \{v_{n-1}\} $. But $ L_3 $ does not contain $ v_{n-1} $ and therefore intersects $ \pr(E_1) $ in a different point. This leaves us with $ q-1 $ possibilities for $ v_n $. \smallskip\\
Case 2: $ s_1 = +, s_2 = - $. \\
Almost the same as case 2 for \eqref{eq:recursion++*}, except that now we do not have a requirement for $ \{W_2,V_1\} $. Thus any $ (v_1,\dots,v_{n-1}) \in C_3^{+-}(n-1) $ can be extended, and since $ L_2 = L_3 $, $ v_n \in \pr(E_1) \backslash (L_1\cup L_2) $ again leaves us with $ q $ choices. \smallskip\\
Case 3: $ s_1 = -, s_2 = + $. \\
In this case we must have $ \{W_1,V_{n-1}\} $ linearly dependent. Hence $ (v_1,\dots,v_{n-1}) \in C_3^{-+*}(n-1) $. Conversely we have $ q(q-1) $ options for $ v_n \in \pr^2(K) $ which do not lie on $ L_1 = L_2 $ or $ L_3 $. Thus there are
\[
q(q-1)c_3^{-+*}(n-1) = q(q-1)c_3^{+-*}(n-1)
\]
points in $ C_3^{**}(n) $ that fall in this case.
\smallskip\\
Case 4: $ s_1 = s_2 = - $. \\
We still require $ \{W_1,V_{n-1}\} $ linearly dependent, hence $ (v_1,\dots,v_{n-1}) \in C_3^{--**}(n-1) $. Conversely, any of the $ q^2 $ choices for $ v_n $ that do not line in $ L_1 = L_2 = L_3 $ work. \smallskip\\
\eqref{eq:recursion+-*0}: Just like in the proof of \eqref{eq:recursion+-} we have $ (v_1,\dots,v_{n-1}) \in C_3^{s+}(n-1) $ with $ s \in \{+,-\} $. \smallskip\\
Case 1: $ s = + $. Then $ \dim(E_1) = 2 $ and $ v_n $ must lie in $ \pr(E_1) $ as well as in $ L_3 $. Since $ \pr(E_1) \neq L_3 $ there is only one possibility. On the other hand, $ L_2 \cap L_3 = \{v_1\} $ and $ L_1 \cap \pr(E_1) = \{v_{n-1}\} $ and $ v_{n-1},v_1 \notin L_3 \cap \pr(E_1) $, hence this one possibility does indeed work. \smallskip\\
Case 2: $ s = - $. Then $ \{W_1,V_{n-1}\} $ is linearly dependent and thus $ (v_1,\dots,v_{n-1}) \in C_3^{-+*}(n-1) $. Conversely we have $ q $ choices for $ v_n $, every point in $ L_3 \backslash L_1 $ since $ L_1 = L_2 $. \smallskip\\
Put together we get
\[
c_3^{+-*}(n) = c_3(n-1) + (q-1)c_3^{-+*}(n-1) = c_3(n-1) + (q-1)c_3^{+-*}(n-1).\medskip
\]
Now we consider $ n \equiv 2 \mod 3 $. Let $ (v_1,\dots,v_n) \in C_3^{+-*}(n) $ or $ C_3^{--*}(n) $ or $ C_3^{--**}(n) $. Let $ (V_1,\dots,V_n) $ be some lift and let $ d_i := \det(V_i,V_{i+1},V_{i+2}) $. We now define
\[
W_1 := d_1 d_4 \dots d_{n-4} V_{n-1} - d_2 d_5 \dots d_{n-3} V_1
\]
and
\[
W_2 := d_2 d_5 \dots d_{n-3} V_n - d_3 d_6 \dots d_{n-2} V_2.
\]
We write $ E := \langle V_n, V_1 \rangle = \langle V_1, V_2 \rangle $ and consider the linear map
\begin{align*}
    \phi: E &\to E \\
    V &\mapsto d_2 d_5 \dots d_{n-3} V - d_3 d_6 \dots d_{n-5} \det(V_{n-2},V_{n-1},V) V_2.
\end{align*}
Note that the determinants in these definitions are all nonzero with the possible exception of $ \det(V_{n-2},V_{n-1},V) $. Moreover, we clearly have $ \phi(V_n) = W_2 $. Also let $ L_1 = v_{n-2} \vee v_{n-1} $ and $ L_2 = v_{n-1} \vee v_1 $ and $ L_3 = v_1 \vee v_2 $. \smallskip\\
\eqref{eq:recursion+-*2}: Let $ (v_1,\dots,v_n) \in C_3^{+-*}(n) $. Again, we have $ (v_1\dots,v_{n-1}) \in C_3^{s+}(n-1) $ with $ s \in \{+,-\} $. We also have $ \{W_2,V_1\} = \{\phi(V_n),V_1\} $ linearly dependent. Note that we can analyze the behavior of $ \phi $ from inputting $ V_1 $ and $ V_2 $. For $ V_2 $ we get
\[
\phi(V_2) = (d_2 d_5 \dots d_{n-3} - d_3 d_6 \dots d_{n-5} \det(V_{n-2},V_{n-1},V_2)) V_2 =: D V_2.
\]
By contrast, $ \phi(V_1) $ depends on $ s $. \smallskip\\
Case 1: $ s = + $. \\
Let $ (v_1,\dots,v_{n-1}) \in C_3(n-1) $ and $ v_n \in \pr^2(K) $. Let $ (V_1,\dots,V_n) $ be a lift. We need $ v_n \in L_3 \backslash (L_1 \cup L_2) $ with $ \{\phi(V_n),V_1\} $ linearly dependent. Note that $ v_n \in L_3 $ is equivalent to $ V_n \in E \backslash \{ 0 \} $. If $ D = 0 $, then $ \phi $ is not bijective and $ \Image(\phi) = \langle \phi(V_1) \rangle $ and $ \Ker(\phi) = \langle V_2 \rangle $. Since $ s = + $ we have $ \det(V_{n-2},V_{n-1},V_1) \neq 0 $, which implies that $ \phi(V_1) $ is not a multiple of $ V_1 $. In this case, $ \{\phi(V_n),V_1\} $ is only linearly dependent for $ \phi(V_n) = 0 $, i.e. for $ v_n = v_2 $. Hence $ v_n $ is uniquely determined. Moreover, $ \phi(V_2) = 0 $ implies $ \det(V_{n-2},V_{n-1},V_2) \neq 0 $, therefore $ v_2 \notin L_1 $. And we also have $ L_2 \cap L_3 = \{v_1\} $, hence $ v_2 \notin L_2 $. Consequently, $ (v_1,\dots,v_{n-1},v_2) \in C_3^{+-*}(n) $. \\
If $ D \neq 0 $, then $ \phi $ is bijective and there is exactly one subspace $ \langle V_n \rangle \subset E = \langle V_1, V_2 \rangle $ of dimension $ 1 $ with $ \{\phi(V_n), V_1\} $ linearly dependent. Thus $ v_n $ is again uniquely determined. $ v_n \in L_1 $ would imply $ \det(V_{n-2},V_{n-1},V_n) = 0 $, in which case $ \phi(V_n) $ would be a nonzero multiple of $ V_n $. But we know that $ \phi(V_n) $ is a multiple of $ V_1 $, so $ v_n = v_1 $ would follow. And since we already showed that $ \phi(V_1) $ is not a multiple of $ V_1 $, this cannot be the case and we have $ v_n \notin L_1 $. Since $ L_2 \cap L_3 = \{v_1\} $, we also have $ v_n \notin L_2 $. \\
Thus in either case there is exactly one choice of $ v_n $ that works. \smallskip\\
Case 2: $ s = - $. \\
We now have $ \det(V_{n-2},V_{n-1},V_1) = 0 $, thus $ \phi(V_1) $ is a nonzero multiple of $ V_1 $. If $ D \neq 0 $, then $ \phi $ is again bijective and $ v_n $ thus uniquely determined, but this time $ \{\phi(V_1), V_1\} $ is linearly dependent and thus $ v_n = v_1 $ is the unique solution. But then $ (v_1,\dots,v_n) \notin C_3^{+-*}(n) $, contradicting our assumption. Hence we must have $ D = 0 $, which implies $ (v_1,\dots,v_{n-1}) \in C_3^{-+*}(n-1) $. \\
Conversely, let $ (v_1,\dots,v_{n-1}) \in C_3^{-+*}(n-1) $ and $ v_n \in \pr^2(K) $. Let $ (V_1,\dots,V_n) $ be a lift. Then $ D = 0 $ and thus $ \phi(V_2) = 0 $ and $ \Image(\phi) = \langle \phi(V_1) \rangle = \langle V_1 \rangle $. Hence any $ v_n \in L_3 = \pr(E) $ will fulfill $ \{\phi(V_n),V_1\} $ linearly dependent. Consequently, any of the $ q $ elements of $ L_3 \backslash (L_1=L_2) $ is a viable choice for $ v_n $. \smallskip\\
In summary, we get
\[
c_3^{+-*}(n) = c_3(n-1) + q c_3^{-+*}(n-1) = c_3(n-1) + q c_3^{+-*}(n-1)
\]
using \eqref{eq:recursion+-*1}. \smallskip\\
\eqref{eq:recursion--*}: Like we showed in the proof of \eqref{eq:recursion--}, $ (v_1,\dots,v_{n-1}) \in C_3^{+-}(n-1) $ if $ v_{n-1} \neq v_1 $ and $ (v_1,\dots,v_{n-2}) \in C_3(n-2) $ if $ v_{n-1} = v_1 $. \\
Case 1: $ v_{n-1} \neq v_1 $. \\
Then $ W_1 \neq 0 $, hence $ \{W_1,V_n\} $ is linearly dependent only if $ \langle W_1 \rangle = \langle V_n \rangle $. $ v_n $ is uniquely determined by this condition. Then $ \{\phi(V_n),V_1\} $ linearly dependent implies $ \{\phi(W_1),V_1\} $ linearly dependent, thus
\begin{align*}  
\phi(W_1) = &d_2 d_5 \dots d_{n-3} W_1 - d_3 d_6 \dots d_{n-5} \det(V_{n-2},V_{n-1},W_1) V_2 \\
          = &d_2 d_5 \dots d_{n-3} d_1 d_4 \dots d_{n-4} V_{n-1} - (d_2 d_5 \dots d_{n-3})^2 V_1 \\
            &- d_3 d_6 \dots d_{n-5} d_1 d_4 \dots d_{n-4} \det(V_{n-2},V_{n-1},V_{n-1}) V_2 \\
            &+ d_3 d_6 \dots d_{n-5} d_2 d_5 \dots d_{n-3} \det(V_{n-2},V_{n-1},V_1) V_2
\end{align*}
is a multiple of $ V_1 $. Here $ \det(V_{n-2},V_{n-1},V_{n-1}) = 0 $ and we can divide by $ d_2 d_5 \dots d_{n-3} $. Then the term simplifies to
\begin{equation} \label{eq:phi(W1)}
\phi(W_1) = d_1 d_4 \dots d_{n-4} V_{n-1} - d_2 d_5 \dots d_{n-3} V_1 + d_3 d_6 \dots d_{n-5} \det(V_{n-2},V_{n-1},V_1) V_2.
\end{equation}
Clearly,
\[
d_1 d_4 \dots d_{n-4} V_{n-1} + d_3 d_6 \dots d_{n-5} \det(V_{n-2},V_{n-1},V_1) V_2
\]
is still a multiple of $ V_1 $. Define 
\begin{align*}
    \phi^\prime: E &\to E, \\
    V &\mapsto d_1 d_4 \dots d_{n-4} V + d_3 d_6 \dots d_{n-5} \det(V_{n-2},V,V_1) V_2.
\end{align*}
Then $ \phi^\prime(V_{n-1}) $ is a multiple of $ V_1 $ because of what we just showed. At the same time $ \phi^\prime(V_1) $ is a nonzero multiple of $ V_1 $ and
\begin{align*}
\phi^\prime(V_2) &= (d_1 d_4 \dots d_{n-4} + d_3 d_6 \dots d_{n-5} \det(V_{n-2},V_2,V_1)) V_2 \\
          &= (d_1 d_4 \dots d_{n-4} - d_3 d_6 \dots d_{n-5} \det(V_{n-2},V_1,V_2)) V_2 \\
          &=: D V_2
\end{align*}
If $ D \neq 0 $, then $ \phi^\prime $ is bijective and $ (\phi^\prime)^{-1}(\langle V_1 \rangle) = \langle V_1 \rangle $, which implies $ v_{n-1} = v_1 $, contradicting our assumption. Hence $ D = 0 $ and $ (v_1,\dots,v_{n-1}) \in C_3^{+-*}(n-1) $. \\
Conversely, let $ (v_1,\dots,v_{n-1}) \in C_3^{+-*}(n-1) $ and $ v_n = \pr(\langle W_1 \rangle) $, where $ W_1 $ is defined using $ (v_1,\dots,v_{n-1}) $. Then $ \{W_1,V_n\} $ is linearly dependent. We also have $ D = 0 $ and thus $ \phi^\prime $ is not bijective. Since $ \phi^\prime(V_1) $ is a nonzero multiple of $ V_1 $, we have $ \Image(\phi^\prime) = \langle V_1 \rangle $. In particular, it follows that $ \phi^\prime(V_{n-1}) $ is a multiple of $ V_1 $ and with \eqref{eq:phi(W1)} $ \phi(W_1) $ is also a multiple of $ V_1 $. Since $ V_n $ is a multiple of $ W_1 $ and $ \phi $ is linear, we then have $ \{W_2 = \phi(V_n), V_1\} $ linearly dependent. Finally, $ v_n \in L_2 = L_3 = \pr(E) $ and $ v_n \notin L_1 $ and $ v_n \neq v_1 $, since $ L_1 \cap L_2 = \{v_{n-1}\} $ and $ W_1 $ is neither a multiple of $ V_{n-1} $ nor a multiple of $ V_1 $. In conclusion, there are exactly $ c_3^{+-*}(n-1) $ elements in $ C_3^{--*}(n) $ that fall in the first case. \smallskip\\
Case 2: $ v_{n-1} = v_1 $. \\
W.l.o.g. we can assume $ V_{n-1} = V_1 $. Then $ W_1 $ is a multiple of $ V_1 $. It follows that $ W_1 = 0 $, because otherwise $ \{W_1,V_n\} $ linearly dependent would contradict $ v_n \neq v_1 $. Hence
\begin{equation} \label{eq:det1--*}
d_1 d_4 \dots d_{n-4} = d_2 d_5 \dots d_{n-6} \det(V_{n-3},V_{n-2},V_{n-1}) = d_2 d_5 \dots d_{n-6} \det(V_{n-3},V_{n-2},V_1).
\end{equation}
Now consider $ \phi $. We have $ \det(V_{n-2},V_{n-1},V_1) = 0 $ and thus $ \phi(V_1) $ is a nonzero multiple of $ V_1 $. We also know that $ W_2=\phi(V_n) $ is a multiple of $ V_1 $, therefore $ \phi $ bijective would imply $ v_n = v_1 $. Clearly, $ \phi $ is not bijective and $ \Image(\phi) = \langle V_1 \rangle $. Write
\[
\phi(V_2) = (d_2 d_5 \dots d_{n-3} - d_3 d_6 \dots d_{n-5} \det(V_{n-2},V_{n-1},V_2)) V_2 =: D V_2.
\]
$ \Image(\phi) = \langle V_1 \rangle $ now implies $ \phi(V_2) = 0 $, hence $ D = 0 $. Thus
\begin{align*}
&d_2 d_5 \dots d_{n-3} = d_3 d_6 \dots d_{n-5} \det(V_{n-2},V_{n-1},V_2) \\
\Leftrightarrow &d_2 d_5 \dots d_{n-6} \det(V_{n-3},V_{n-2},V_1) = d_3 d_6 \dots d_{n-5} \det(V_{n-2},V_1,V_2)
\end{align*}
Combined with \eqref{eq:det1--*} this proves $ (v_1,\dots,v_{n-2}) \in C_3^*(n-2) $. \\
Conversely, let $ (v_1,\dots,v_{n-2}) \in C_3^*(n-2) $, $ v_{n-1} = v_1 $ and $ v_n \in (L_2 = L_3) \backslash L_1 $. Since $ L_1 \cap L_2 = \{v_{n-1}\} = \{v_1\} $ there are $ q $ such choices and we automatically get $ v_n \neq v_1 $. $ \{v_{n-3},v_{n-2},v_{n-1}\} = \{v_{n-3},v_{n-2},v_1\} $ is also clearly independent. The determinant conditions that $ (v_1,\dots,v_{n-2}) $ fulfills together with $ v_{n-1} = v_1 $ imply $ W_1 = 0 $ and $ \phi(V_2) = 0 $ and thus $ \Image(\phi) = \langle \phi(V_1) \rangle = \langle V_1 \rangle $. Hence $ W_2 = \phi(V_n) \in \Image(\phi) $ is a multiple of $ V_1 $ and we see that $ \{W_1,V_n\} $ and $ \{W_2,V_1\} $ are linearly dependent. Then, $ (v_1,\dots,v_n) \in C_3^{--*}(n) $. \smallskip\\
In conclusion we have
\[
c_3^{--*}(n) = c_3^{+-*}(n-1) + q c_3^*(n-2). 
\]
\eqref{eq:recursion--**}: Let $ (v_1,\dots,v_n) \in C_3^{--**}(n) $. We once again have the cases $ v_{n-1} \neq v_1 $ with $ (v_1,\dots,v_{n-1}) \in C_3^{+-}(n-1) $ and $ v_{n-1} = v_1 $ with $ (v_1,\dots,v_{n-2}) \in C_3(n-2) $. \smallskip\\
Case 1: $ v_{n-1} \neq v_1 $. \\
Like in the proof of \eqref{eq:recursion--*}, Case $ 1 $, $ W_1 \neq 0 $ and $ v_n $ is again uniquely determined by $ \{W_1,V_n\} $ linearly dependent. This time however, we do not have another condition. \\
For the other direction, let $ (v_1,\dots,v_{n-1}) \in C_3^{+-}(n-1) $ and $ v_n = \pr(\langle W_1 \rangle) $. Then we clearly have $ \{ W_1, V_n\} $ linearly dependent and $ v_n \in (L_2 = L_3) \backslash L_1 $ and $ v_n \neq v_1 $, since $ W_1 $ is not a multiple of $ V_{n-1} $ or $ V_1 $. \smallskip\\
Case 2: $ v_{n-1} = v_1 $. \\
The same argument as for \eqref{eq:recursion--*}, Case 2 shows
\[
d_1 d_4 \dots d_{n-4} = d_2 d_5 \dots d_{n-6} \det(V_{n-3},V_{n-2},V_{n-1}) = d_2 d_5 \dots d_{n-6} \det(V_{n-3},V_{n-2},V_1).
\]
This implies $ (v_1,\dots,v_{n-2}) \in C_3^{**}(n-2) $. \\
Conversely, let $ (v_1,\dots,v_{n-2}) \in C_3^{**}(n-2), v_{n-1} = v_1 $ and $ v_n \in (L_2=L_3) \backslash L_1 $. There are $ q $ such choices for $ v_n $ and since we determinant condition in $ C_3^{**}(n-2) $ combined with $ v_{n-1} = v_1 $ still implies $ W_1 = 0 $, we have $ (v_1,\dots,v_n) \in C_3^{--**}(n) $ for each of these choices. \\
Putting both cases together, we get
\[
c_3^{--**}(n) = c_3^{+-}(n-1) + qc_3^{**}(n-1).
\]
Finally, consider $ n \equiv 1 \mod 3 $. \smallskip\\
\eqref{eq:recursion+-*1}:  First let $ (v_1,\dots,v_n) \in C_3^{+-*}(n) $. Like we showed in the proof of \eqref{eq:recursion+-}, we have $ (v_1,\dots,v_{n-1}) \in C_3(n-1) $ or $ C_3^{-+}(n-1) $. Let $ (V_1,\dots,V_n) $ be any lift and $ d_i := \det(V_i,V_{i+1},V_{i+2}) $. Then
\[
d_1 d_4 \dots d_{n-3} = d_3 d_6 \dots d_{n-4} \det(V_{n-1},V_1,V_2).
\]
Hence
\[
(v_1,\dots,v_{n-1}) \in C_3^{***}(n-1) := \{(v_1,\dots,v_{n-1}) \in C_3(n-1) \mid d^\prime_1 \dots d^\prime_{n-3} = d^\prime_3 \dots d^\prime_{n-1}\},
\]
respectively
\[
(v_1,\dots,v_{n-1}) \in C_3^{-+*}(n-1) := \{(v_1,\dots,v_{n-1}) \in C_3^{-+}(n-1) \mid d^\prime_1 \dots d^\prime_{n-3} = d^\prime_3 \dots d^\prime_{n-1}\},
\]
where $ d^\prime_i := \det(V_i,V_{i+1},V_{i+2}) $, but unlike in $ d_i $, the indices are considered modulo $ n-1 $. Conversely, the valid choices for $ v_n $ are the same as in the proof of \eqref{eq:recursion+-}, because we do not have any additional conditions for $ v_n $. Hence we have the recursion
\[
c_3^{+-*}(n) = (q-1)c_3^{***}(n-1) + qc_3^{-+*}(n-1).
\]
The map
\[
(v_1,\dots,v_{n-1}) \mapsto (v_{n-1},v_1,\dots,v_{n-2})
\]
yields bijections $ C_3^{***}(n-1) \to C_3^{**}(n-1) $ and $ C_3^{-+*}(n-1) \to C_3^{+-*}(n-1) $. Hence $ c_3^{***}(n-1) = c_3^{**}(n-1) $ and $ c_3^{-+*}(n-1) = c_3^{+-*}(n-1) $ and we get the recursion
\[
c_3^{+-*}(n) = (q-1)c_3^{**}(n-1) + q c_3^{+-*}(n-1).
\]
Now let $ (v_1,\dots,v_n) \in C_3^{-+*}(n) $. Note that
\[
(v_1,\dots,v_n) \mapsto (v_n,\dots,v_1)
\]
is a bijection $ C_3^{-+}(n) \to C_3^{+-}(n) $. By a symmetrical version of the argument in \eqref{eq:recursion+-}, we have $ (v_2,\dots,v_n) \in C_3(n-1) $ or $ C_3^{+-}(n-1) $. Let $ (V_1,\dots,V_n) $ be any lift and $ d_i := \det(V_i,V_{i+1},V_{i+2}) $. Then
\[
d_2 d_4 \dots d_{n-2} = d_3 d_6 \dots d_{n-4} \det(V_{n-1},V_n,V_2).
\]
This is equivalent to $ (v_2,\dots,v_n) \in C_3^{**}(n-1) $ respectively $ (v_2,\dots,v_{n-1}) \in C_3^{+-*}(n-1) $. Conversely, we need $ v_1 \in L_1 \backslash(L_2 \cup L_3) $ with $ L_1 := v_{n-1} \vee v_n $, $ L_2 := v_n \vee v_2 $ and $ L_3 := v_2 \vee v_3 $. Again by symmetry, this leaves $ (q-1) $ respectively $ q $ choices for $ v_1 $. Hence
\[
c_3^{-+*}(n) = (q-1)c_3^{**}(n-1) + qc_3^{+-*}(n-1),
\]
which is the same recursion as for $ c_3^{+-*}(n) $. Therefore $ c_3^{-+*}(n) = c_3^{+-*}(n) $.
\end{proof}

\begin{theorem} \label{thm:ck(n)divisibleBy3}
Let $ \lvert K \rvert = q < \infty $ and $ n \in 3\N_{>0} $. Then
\[
c_3^*(n) = \frac{q^{2n} + 3f(q)q^{\frac{4}{3}n} + (q^5-3q^4-2q^3+5q^2+9q+6)q^n + 3f(q)q^{\frac{2}{3}n+1} + q^3}{(q-1)^2}
\]
with
\[
f(q) = q^3-q^2-q-2.
\]
\end{theorem}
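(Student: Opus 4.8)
The plan is to regard the closed forms for $ c_3(n) $ and $ c_3^{+-}(n) $ supplied by Theorem \ref{thm:ck(n)relativelyPrime} (and its proof) as known inhomogeneous data, and to solve the remaining system for the starred quantities. A glance at Lemma \ref{lem:recursionsk3*} shows that $ c_3^*(n) $ and $ c_3^{**}(n) $ (defined for $ n\equiv 0 $), $ c_3^{+-*}(n) $ (defined in every residue class), and $ c_3^{--*}(n),c_3^{--**}(n) $ (defined for $ n\equiv 2 $) only feed into one another and into the already-known sequences; once the latter are substituted, these recursions form a closed linear system. Exactly as in the proof of Theorem \ref{thm:ck(n)relativelyPrime}, it then suffices to exhibit an explicit closed-form candidate for each starred sequence, to verify by substitution that the candidates satisfy all seven recursions of Lemma \ref{lem:recursionsk3*}, and finally to match a handful of small-$ n $ initial values.

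To discover the candidates I would unroll the recursions over a full period of three residue classes. Writing $ n=3m $ and composing the three rules that link residue $ 0 $ back to residue $ 0 $, the triple $ (c_3^*(3m),c_3^{**}(3m),c_3^{+-*}(3m)) $ satisfies a first-order linear recursion in $ m $ of the form $ X_m = M X_{m-1} + (\text{forcing}) $, where $ M $ is an explicit $ 3\times 3 $ matrix over $ \Z[q] $; for instance chaining \eqref{eq:recursion++*}, \eqref{eq:recursion--*} and \eqref{eq:recursion+-*2} produces the self-term $ q^3 c_3^*(3m-3) $. A direct computation shows that $ M $ is block-triangular with eigenvalues $ q^2 $, $ q^3 $ and $ q^4 $. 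Since $ m=n/3 $, the homogeneous modes of the system are therefore $ (q^2)^{m}=q^{2n/3} $, $ (q^3)^m=q^n $ and $ (q^4)^m=q^{4n/3} $, which is precisely the origin of the fractional exponents in the claimed formula. All forcing terms are $ \Z[q] $-combinations of $ q^{2n}=(q^6)^m $, $ q^n=(q^3)^m $ and constants, so the particular solution contributes the leading $ q^{2n} $ term (from the $ c_3(n-1)\sim q^{2n} $ forcing) together with further $ q^n $ and constant contributions.

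With closed-form candidates for all of $ c_3^*,c_3^{**},c_3^{+-*},c_3^{--*},c_3^{--**} $ in hand (each written piecewise in the residue of $ n \bmod 3 $), the verification is mechanical: substitute them, along with the known expressions for $ c_3 $ and $ c_3^{+-} $, into each recursion of Lemma \ref{lem:recursionsk3*} and check equality as Laurent polynomials in $ q $, treating $ q^{n/3} $ as a formal variable within each residue class. Reading off the $ c_3^*(3m) $ component yields the stated formula; the common factor $ 3f(q) $ multiplying both the $ q^{4n/3} $ and the $ q^{2n/3+1} $ terms reflects that the $ q^4 $- and $ q^2 $-eigenmodes enter $ c_3^* $ only through the coefficients $ 3q^2(q-1) $ and $ 3q^3 $ coupling $ c_3^{**} $ and $ c_3^{+-*} $ into $ c_3^* $.

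The hard part is pinning down the coefficients of the homogeneous modes, i.e.\ the polynomial $ f(q)=q^3-q^2-q-2 $ and its symmetric appearance on the $ q^{4n/3} $ and $ q^{2n/3+1} $ terms. These are governed by the eigenvectors of $ M $ together with the base values, and there is an additional subtlety: the eigenvalue $ q^3 $ coincides with the $ (q^3)^m $ frequency of the forcing, so one must check that the corresponding resonance does not produce a spurious $ n\,q^n $ term (it does not, because the forcing has no component along the $ q^3 $-eigendirection). Guessing the correct coupled ansatz that respects all of this is the genuine obstacle; once it is guessed, confirming the closed form is a lengthy but routine algebraic check.
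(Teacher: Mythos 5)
Your proposal follows the same overall route as the paper: the paper's proof consists of writing down explicit closed forms for the six auxiliary sequences ($c_3^{**}$, $c_3^{+-*}$ for $n\equiv 0$; $c_3^{+-*}$, $c_3^{--*}$, $c_3^{--**}$ for $n\equiv 2$; $c_3^{+-*}$ for $n\equiv 1$) alongside the claimed formula for $c_3^*$, asserting that one can verify they satisfy the recursions of Lemma \ref{lem:recursionsk3*} for $n\geq 4$, and anchoring at $n=3$, where $c_3^*(3)=c_3^{**}(3)=c_3(3)$ and $c_3^{+-*}(3)=0$. What you add is a derivation the paper omits entirely, and your structural claims are correct: chaining \eqref{eq:recursion++*}, \eqref{eq:recursion++**}, \eqref{eq:recursion+-*0} through the residue-$2$ recursions \eqref{eq:recursion+-*2}, \eqref{eq:recursion--*}, \eqref{eq:recursion--**} and the residue-$1$ recursion \eqref{eq:recursion+-*1} gives
\[
\begin{pmatrix}c_3^*\\ c_3^{**}\\ c_3^{+-*}\end{pmatrix}(3m)
= \begin{pmatrix} q^3 & 3q^2(q-1) & 3q^3\\ 0 & q^2(q-1)^2+q^3 & q^3(q-1)\\ 0 & q^2(q-1) & q^3\end{pmatrix}
\begin{pmatrix}c_3^*\\ c_3^{**}\\ c_3^{+-*}\end{pmatrix}(3m-3) + (\text{forcing}),
\]
and the lower block has trace $q^4+q^2$ and determinant $q^6$, so the eigenvalues are indeed $q^3,q^4,q^2$; your coupling coefficients $3q^2(q-1)$ and $3q^3$ are exactly the first row. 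Even your parenthetical resonance claim is verifiable: the left $q^3$-eigenvector is $(q-1,-3,0)$, and pairing it with the $q^n$-part of the forcing (extracted from Theorem \ref{thm:ck(n)relativelyPrime} and the $c_3^{+-}$ formulas) gives $2(q^2-1)+(q^2-1)-3q^2+3=0$, so no $n\,q^n$ term arises. The one real shortfall is that you never produce the candidate closed forms, which are the entire substance of the paper's proof; you defer this as "the genuine obstacle" of guessing a coupled ansatz, but by your own setup there is nothing to guess: the transfer matrix is diagonalizable with known eigenvalues, the forcing is an explicit combination of $(q^6)^m$, $(q^3)^m$ and constants, so the particular solution and the homogeneous coefficients are forced by linear algebra together with the single initial vector at $n=3$. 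Carrying out that (admittedly lengthy) computation — or equivalently listing the formulas as the paper does and performing the substitution check — is what separates your outline from a complete proof.
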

\begin{proof}
The following formulas and the formula in the statement of the Theorem count all the sets in the recursions in Lemma \ref{lem:recursionsk3*}. For $ n \equiv 0 \mod 3 $:
\begin{align*}
c_3^{**}(n) &= \frac{q^{2n} + f(q)q^{\frac{4}{3}n} + (2q^2+2q)q^n + f(q)q^{\frac{2}{3}n + 1} + q^3}{q-1}\\
\intertext{and}
c_3^{+-*}(n) &= \frac{q^{2n-1} + f(q)q^{\frac{4}{3}n-1} + (-q^2+1)q^n - f(q)q^{\frac{2}{3}n+1} - q^3}{q-1}.\\
\intertext{For $ n \equiv 2 \mod n $:}
c_3^{+-*}(n) &= \frac{q^{2n-1} + f(q)q^{\frac{4}{3}(n-2)+2} - f(q)q^{\frac{2}{3}(n-2)+2} - q^3}{q-1}\\
\intertext{and}
c_3^{--*}(n) &= \frac{q^{2n-2} + g(q)q^{\frac{4}{3}(n-2)+1} (q^5-3q^4-q^3+4q^2+8q+7)q^{n-1} + h(q)q^{\frac{2}{3}(n-2)+1} + q^3}{(q-1)^2}\\
\intertext{with}
g(q) &= q^4+q^3-3q^2-4q-4 \text{ and } h(q) = 2q^4-q^3-3q^2-5q-2,\\
\intertext{as well as}
c_3^{--**}(n) &= \frac{q^{2n-2} + f(q)q^{\frac{4}{3}(n-2)+1} + (q^3+q^2+q+1)q^{n-1} + f(q)q^{\frac{2}{3}(n-2)+2} + q^3}{q-1}.\\
\intertext{For $ n \equiv 1 \mod 3 $:}
c_3^{+-*}(n) &= \frac{q^{2n-1} + f(q)q^{\frac{4}{3}(n-1)+1} + (q^2-1)q^n - f(q)q^{\frac{2}{3}(n-1)+1} - q^3}{q-1}.\\
\intertext{For $ n = 3 $ the conditions}
\det(V_1,V_2,V_3) &= \det(V_2,V_3,V_1) = \det(V_3,V_1,V_2)\\
\intertext{are trivial, since the matrices here only differ by column permutations with even sign. Hence}
c_3^*(3) &= c_3^{**}(3) = c_3(3) = q^6 + 2q^5 + 2q^4 + q^3.\\
\end{align*}
We also have $ c_3^{+-}(3) = 0 $, because $ \{v_1,v_2,v_3\} $ independent is equivalent to $ \{v_3,v_1,v_2\} $ independent. This implies $ c_3^{+-*}(3) = 0 $. One can check that the formulas for $ c_3^*(n), c_3^{**}(n) $ and $ c_3^{+-*}(n) $ agree with this. For $ n \geq 4 $ one can verify that the given formulas obey the recursion in Lemma \ref{lem:recursionsk3*}.
\end{proof}

\begin{cor} \label{cor:k3divisibleBy3}
Let $ \lvert K \rvert = q < \infty $. Let $ w \geq 1 $ and $ n = w + 4 $. If $ \gcd(3,n) \neq 1 $, then the number of tame $ \SL_3 $-frieze patterns of width $ w $ is
\[
f_q(3,n) = \frac{q^{2n} + 3f(q)q^{\frac{4}{3}n} + (q^5-3q^4-2q^3+5q^2+9q+6)q^n + 3f(q)q^{\frac{2}{3}n+1} + q^3}{(q^2+q+1)(q+1)q^3(q-1)^2}
\]
with
\[
f(q) = q^3-q^2-q-2.
\]
\end{cor}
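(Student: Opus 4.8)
The plan is to deduce this directly from the two main ingredients already established: the reduction in Theorem \ref{thm:numberOfFriezes} and the explicit count of $C_3^*(n)$ in Theorem \ref{thm:ck(n)divisibleBy3}. Since $\gcd(3,n) \neq 1$ and $3$ is prime, we have $g := \gcd(3,n) = 3$, so we are in the second case of Theorem \ref{thm:numberOfFriezes}. With $k = 3$ the width is $w = n - k - 1 = n - 4$, matching the hypothesis $n = w + 4$, and the theorem tells us that the number of tame $\SL_3$-frieze patterns of width $w$ equals
\[
\frac{\lvert C_3^*(n) \rvert (q-1)^{g-1}}{\lvert \PGL(3,K) \rvert} = \frac{\lvert C_3^*(n) \rvert (q-1)^{2}}{\lvert \PGL(3,K) \rvert}.
\]

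First I would substitute $\lvert C_3^*(n) \rvert = c_3^*(n)$ from Theorem \ref{thm:ck(n)divisibleBy3}. Because that formula already carries a factor $(q-1)^2$ in its denominator, the factor $(q-1)^2 = (q-1)^{g-1}$ appearing in the numerator above cancels it exactly, leaving the numerator of $c_3^*(n)$ divided by $\lvert \PGL(3,K) \rvert$. It then remains to insert the order of the projective general linear group,
\[
\lvert \PGL(3,K) \rvert = (q^2+q+1)(q+1)q^3(q-1)^2,
\]
which is the same value used in Corollary \ref{cor:k3relativelyPrime}. This produces exactly the denominator in the statement, with $f(q) = q^3 - q^2 - q - 2$ carried over verbatim from Theorem \ref{thm:ck(n)divisibleBy3}.

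The substantive work has already been carried out in the proofs of Theorems \ref{thm:numberOfFriezes} and \ref{thm:ck(n)divisibleBy3}, so there is no genuine obstacle here beyond careful bookkeeping. The only point requiring attention is confirming that $g = 3$ rather than some smaller common divisor, so that the correct power $(q-1)^{g-1} = (q-1)^2$ appears and cancels cleanly against the denominator of $c_3^*(n)$; this is immediate since $3$ is prime and $3 \mid n$.
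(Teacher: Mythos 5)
Your proposal is correct and follows exactly the paper's own argument: identify $g=\gcd(3,n)=3$, apply case (2) of Theorem \ref{thm:numberOfFriezes}, substitute $c_3^*(n)$ from Theorem \ref{thm:ck(n)divisibleBy3}, and insert $\lvert \PGL(3,K) \rvert = (q^2+q+1)(q+1)q^3(q-1)^2$. The cancellation bookkeeping you spell out (the $(q-1)^{g-1}$ factor against the $(q-1)^2$ denominator of $c_3^*(n)$) is exactly what the paper leaves implicit.
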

\begin{proof}
We have
\[
\lvert \PGL(3,K) \rvert = (q^2+q+1)(q+1)q^3(q-1)^2
\]
and $ \gcd(3,n) \neq 1 $ implies $ \gcd(3,n) = 3 $. Then Theorem \ref{thm:numberOfFriezes} and Theorem \ref{thm:ck(n)divisibleBy3} prove the claim.
\end{proof}

\section{The case $ k = 4 $ and $ n $ odd}

Similar to the previous cases, we define certain sets that we will count alongside $ C_4(n) $:

\begin{defi}\label{def:k4}
Let $ s_1,s_2,s_3 \in \{+,-\} $ and
\[
I_{s_1,s_2,s_3} := [n-3] \cup \{n-3+j \mid j \in \{1,2,3\}, s_j = + \}.
\]
Then define
\begin{align*}
C_4^{s_1s_2s_3}(n) := \{ (v_1,\dots,v_n) \in (\pr^3(K))^n \mid &\{v_i,v_{i+1},v_{i+2},v_{i+3}\} \text{ independent } \Leftrightarrow i \in I_{s_1,s_2,s_3} \\
&\text{ and } \{v_i,v_{i+1},v_{i+2}\} \text{ independent for } i =n-1,n\}.
\intertext{In addition, we need to consider some sets where three consecutive points are not always independent. We denote this with a minus in brackets. Define}
C_4^{s_1(-)[s_2]}(n) := \{ (v_1,\dots,v_n) \in (\pr^3(K))^n \mid
&\{v_i,v_{i+1},v_{i+2},v_{i+3}\} \text{ independent for } i \in [n-3],\\
&\{v_{n-2},v_{n-1},v_n,v_1\} \text{ independent} \Leftrightarrow s_1 = +,\\
&\{v_n,v_1,v_2\} \text{ dependent,}\\
&\{v_{n-1},v_1,v_2,v_3\} \text{ independent} \Leftrightarrow s_2 = + \}
\intertext{and}
C_4^{(-)s_1[s_2]}(n) := \{ (v_1,\dots,v_n) \in (\pr^3(K))^n \mid
&\{v_i,v_{i+1},v_{i+2},v_{i+3}\} \text{ independent for } i \in [n-3],\\
&\{v_n,v_1,v_2,v_3\} \text{ independent} \Leftrightarrow s_1 = +,\\
&\{v_{n-1},v_n,v_1\} \text{ dependent,}\\
&\{v_{n-2},v_{n-1},v_n,v_2\} \text{ independent} \Leftrightarrow s_2 = +\}
\intertext{ as well as}
C_4^{(--)[+]}(n) := \{ (v_1,\dots,v_n) \in (\pr^3(K))^n \mid
&\{v_i,v_{i+1},v_{i+2},v_{i+3}\} \text{ independent for } i \in [n-3],\\
&\{v_{n-1},v_n,v_1\},\{v_n,v_1,v_2\} \text{ dependent and } v_n \neq v_1,\\
&\{v_{n-2},v_1,v_2,v_3\} \text{ independent}\},
\end{align*}
where $ s \in \{+,-\} $. Like before, we use a lower-case $ c $ to denote the cardinality of these sets.
\end{defi}
\begin{rem}
Clearly, $ C_4^{+++}(n) = C_4(n) $. We have $ c_4^{++-}(n) = c_4^{+-+}(n) = c_4^{-++}(n) $ and $ c_4^{+--}(n) = c_4^{--+}(n) $ via the bijection
\[
(v_1,\dots,v_n) \mapsto (v_2,\dots,v_n,v_1).
\]
We also have $ c_4^{s_1(-)[s_2]}(n) = c_4^{(-)s_1[s_2]}(n) $ via the bijection
\[
(v_1,\dots,v_n) \mapsto (v_n,\dots,v_1).
\]
\end{rem}
\begin{lem}\label{lem:recursionsk4}
The following recursions hold for $ n \geq 5 $:
\begin{align}
c_4(n) = &(q-1)^3 c_4(n-1) + 3 q (q-1)^2 c_4^{++-}(n-1) + 2 q^2 (q-1) c_4^{+--}(n-1)\label{eq:k4+++}\\
         &+ q^2 (q-1) c_4^{-+-}(n-1) + q^3 c_4^{---}(n-1),\nonumber\\
c_4^{++-}(n) = &(q-1)^2 c_4(n-1) + 2 q (q-1) c_4^{++-}(n-1) + q^2 c_4^{+--},\label{eq:k4++-}\\
c_4^{+--}(n) = &(q-1)^2 c_4^{++-}(n-1) + q(q-1) c_4^{-+-}(n-1)\label{eq:k4+--}\\
               &+ q(q-1) c_4^{+(-)[+]}(n-1) + q^2 c_4^{-(-)[+]}(n-1),\nonumber\\
c_4^{-+-}(n) = &(q-1)c_4(n-1) + q(q-1)c_4^{+(-)[+]}(n-1) + q^2c_4^{+(-)[-]}(n-1),\label{eq:k4-+-}\\
c_4^{---}(n) = &(q-1)^2c_4^{+--}(n-1) + q(q-1)c_4^{+(-)[-]}(n-1)\label{eq:k4---}\\
               &+ q(q-1)c_4^{-(-)[+]} + q^2c_4^{(--)[+]}(n-1),\nonumber\\
c_4^{+(-)[+]}(n) = &(q-1)c_4(n-1) + qc_4^{++-}(n-1),\label{eq:k4+(-)[+]}\\
c_4^{+(-)[-]}(n) = &(q-1)c^{++-}_4(n-1) + qc_4^{-+-}(n-1),\label{eq:k4+(-)[-]}\\
c_4^{-(-)[+]}(n) = &(q-1)c_4^{++-}(n-1) + q c_4^{+(-)[+]}(n-1),\label{eq:k4-(-)}\\
c_4^{(--)[+]}(n) = &(q-1) c_4^{+(-)[+]}(n-1) + q c_4(n-2).\label{eq:k4(--)}
\end{align}
\end{lem}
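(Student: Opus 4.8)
The plan is to establish every one of \eqref{eq:k4+++}--\eqref{eq:k4(--)} by the same ``delete the last point'' transfer argument used for Lemma~\ref{lem:recursionsk3}, now carried out in $\pr^3(K)$ with planes in place of the lines $L_i$ of the $k=3$ proof. Given a tuple $(v_1,\dots,v_n)$ in the set on the left, I would pass to the truncation $(v_1,\dots,v_{n-1})$ and first identify which of the previously defined length-$(n-1)$ sets it lies in. This classification uses only that independence of a $4$-tuple forces independence of each of its sub-triples, so that the ``interior'' tuples $\{v_i,\dots,v_{i+3}\}$ with $i\in[n-4]$ and the required $3$-consecutive triples transfer automatically, while the three wrap-around $4$-tuples $\{v_{n-3},v_{n-2},v_{n-1},v_1\}$, $\{v_{n-2},v_{n-1},v_1,v_2\}$, $\{v_{n-1},v_1,v_2,v_3\}$ record the flags of the truncation.

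Conversely, for a truncation of each admissible type I would count the points $v_n$ that rebuild an element of the target set. The admissible $v_n$ are governed by the four planes
\[
H_0=\langle v_{n-3},v_{n-2},v_{n-1}\rangle,\ H_1=\langle v_{n-2},v_{n-1},v_1\rangle,\ H_2=\langle v_{n-1},v_1,v_2\rangle,\ H_3=\langle v_1,v_2,v_3\rangle,
\]
none of which involves $v_n$: a flag $s_i=+$ asks $v_n$ to avoid the corresponding plane, while $s_i=-$ (or a ``$(-)$'' collinearity) confines $v_n$ to it. For \eqref{eq:k4+++}--\eqref{eq:k4---} this locus is $\pr^3(K)$ (or a forced plane) minus a union of the $H_i$; for the ``$(-)$'' recursions \eqref{eq:k4+(-)[+]}--\eqref{eq:k4(--)} the collinearity confines $v_n$ to the line $\langle v_1,v_2\rangle$ and the count is carried out there. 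The key simplification is that some wrap-around $4$-tuple of the truncation is always independent (unless all flags are $-$), so I may choose a basis of $K^4$ from four of the already-placed points --- $(V_{n-3},V_{n-2},V_{n-1},V_1)$, $(V_{n-2},V_{n-1},V_1,V_2)$ or $(V_{n-1},V_1,V_2,V_3)$ according to which flag is $+$. In such a basis each $H_i$ is a coordinate-type hyperplane, and the value of each flag dictates exactly which pairs of the $H_i$ coincide; the count of admissible $v_n$ then collapses to a product $q^a(q-1)^b$ obtained by counting nonzero or avoided coordinates and dividing by $q-1$. For example, in \eqref{eq:k4+++} the all-$+$ truncation yields $(q-1)^3$, a single sign-flip yields $q(q-1)^2$, a double flip yields $q^2(q-1)$, and the all-$-$ truncation merges all four planes into one, yielding $q^3$; when $v_n$ is confined to a plane the count reproduces the $\pr^2$ inclusion--exclusion already performed in Lemma~\ref{lem:recursionsk3}.

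The last ingredient is the degenerate splitting needed for \eqref{eq:k4(--)}. There both $\{v_{n-1},v_n,v_1\}$ and $\{v_n,v_1,v_2\}$ are collinear, forcing $v_{n-1},v_1,v_2$ onto one line; I would split according to whether $v_{n-1}=v_1$, exactly as the case $v_{n-1}=v_1$ was isolated in the proof of \eqref{eq:recursion--}. The branch $v_{n-1}=v_1$ collapses a repeated point and is precisely what produces the length-$(n-2)$ term $q\,c_4(n-2)$. Finally I would assemble each recursion by summing the extension counts over all admissible truncation types and grouping the symmetric types through the identities $c_4^{++-}=c_4^{+-+}=c_4^{-++}$, $c_4^{+--}=c_4^{--+}$ and $c_4^{s_1(-)[s_2]}=c_4^{(-)s_1[s_2]}$ recorded in the Remark; these collapses explain the multiplicities $3$ and $2$ in \eqref{eq:k4+++} and the combined terms in the other recursions.

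The step I expect to be the main obstacle is the flag-dependent incidence analysis of the four planes: for each of the eight sign patterns together with its collinear degenerations one must determine precisely which of $H_0,\dots,H_3$ merge --- and hence the correct exponents $a,b$ in $q^a(q-1)^b$ --- which forces a careful, uniform choice of basis and a verification that every ``guaranteed'' sub-triple is genuinely independent so that each $H_i$ is really a plane. In the ``$(-)$'' recursions the extra difficulty is checking that, along each collinear branch, the remaining independence requirements are automatically met by the forced position of $v_n$, so that no spurious configurations are counted and no admissible ones are lost.
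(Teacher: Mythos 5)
Your proposal follows essentially the same route as the paper's proof: truncate to $(v_1,\dots,v_{n-1})$, classify the truncation by the three wrap-around $4$-tuples, and count extensions $v_n$ via the incidence structure of the four planes (your $H_0,\dots,H_3$ are exactly the paper's $E_1,\dots,E_4$), including the same symmetry identities, the same $(q-1)^3,\,q(q-1)^2,\,q^2(q-1),\,q^3$ counts, the restriction to the line $v_1\vee v_2$ in the ``$(-)$'' cases, and the same split on $v_{n-1}=v_1$ producing the $q\,c_4(n-2)$ term in \eqref{eq:k4(--)}. The only cosmetic difference is that you phrase the plane-avoidance counts via a coordinate basis rather than the paper's direct inclusion--exclusion, and you correctly identify the genuine labor as the flag-dependent analysis of which planes coincide (e.g.\ $E_1=E_3$ forcing $E_1=E_2=E_3$), which is precisely what the paper's case analysis carries out.
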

\begin{proof}
Let $ E_1 := v_{n-3} \vee v_{n-2} \vee v_{n-1} $, $ E_2 := v_{n-2} \vee v_{n-1} \vee v_1 $, $ E_3 := v_{n-1} \vee v_1 \vee v_2 $ and $ E_4 := v_1 \vee v_2 \vee v_3 $. \smallskip\\
\eqref{eq:k4+++}: Let $ (v_1,\dots,v_n) \in C_4(n) $. Then $ \{v_{n-2},v_{n-1},v_n,v_1\} $ and $ \{v_{n-1},v_n,v_1,v_2\} $ are independent and thus $ \{v_{n-2},v_{n-1},v_1\} $ and $ \{v_{n-1},v_1,v_2\} $ are also independent. Hence
\[
(v_1,\dots,v_{n-1}) \in C_4^{s_1 s_2 s_3}(n-1)
\]
with $ s_1, s_2, s_3 \in \{+,-\} $. \\
For the other direction, let $ (v_1,\dots,v_{n-1}) \in C_4^{s_1s_2s_3}(n-1) $ and $ v_n \in \pr^3(K) $. Then $ (v_1,\dots,v_n) \in C_4(n) $ if and only if $ v_n \notin E_1 \cup E_2 \cup E_3 \cup E_4 $. Note that $ E_i \neq E_{i+1} $ is equivalent to $ s_i = + $ for $ i = 1,2,3 $. Moreover, if $ E_1 = E_3 $, then $ v_{n-3},v_{n-2},v_{n-1},v_1 $ and $ v_2 $ all lie in the same projective plane and hence $ E_1 = E_2 = E_3 $. Likewise, $ E_2 = E_4 $ implies $ E_2 = E_3 = E_4 $ and $ E_1 = E_4 $ implies that all four planes are the same. Thus there are $ m := \lvert \{ i \in [3] \mid s_i = + \} \rvert + 1 $ pairwise different projective planes in which $ v_n $ cannot be. The intersection of two different projective planes in $ \pr^3(K) $ is always a projective line. More specifically, we have $ E_1 \cap E_2 = v_{n-2} \vee v_{n-1} $, $ E_2 \cap E_3 = v_{n-1} \vee v_1 $ and $ E_3 \cap E_4 = v_1 \vee v_2 $ if the corresponding planes are not equal. Hence $ E_1 \cap E_2 \cap E_3 = \{v_{n-1}\} $ and $ E_2 \cap E_3 \cap E_4 = \{v_1\} $ if none of the intersecting planes are equal. The intersections $ E_1 \cap E_3 \cap E_4 $ and $ E_1 \cap E_2 \cap E_4 $ also contain only one point each (assuming the planes involved are not equal), because otherwise we would have $ v_1,v_2 \in E_1 $ respectively $ v_{n-2}, v_{n-1} \in E_4 $, which in turn would imply $ E_1 = E_3 $ respectively $ E_2 = E_4 $.  Finally, $ E_1 \cap E_2 \cap E_3 \cap E_4 = \emptyset $ if $ s_1 = s_2 = s_3 = + $. Thus we can calculate the number of choices for $ v_n $ depending on $ m $ using the inclusion-exclusion principle. For $ m = 4 $:
\begin{align*}
\lvert \pr^3(K) \backslash E_1 \cup E_2 \cup E_3 \cup E_4 \rvert &= q^3 + q^2 + q + 1 - 4 (q^2+q+1) + 6 (q+1) - 4 \cdot 1 + 0 \\
&= q^3 - 3 q^2 + 3 q - 1 = (q-1)^3.
\end{align*}
For $ m = 3 $:
\begin{align*}
\lvert \pr^3(K) \backslash E_1 \cup E_2 \cup E_3 \cup E_4 \rvert &= q^3 + q^2 + q + 1 - 3 (q^2+q+1) + 3 (q+1) - 1 \cdot 1 \\
&= q^3 - 2 q^2 + q = q(q-1)^2.
\end{align*}
For $ m = 2 $:
\begin{align*}
\lvert \pr^3(K) \backslash E_1 \cup E_2 \cup E_3 \cup E_4 \rvert &= q^3 + q^2 + q + 1 - 2 (q^2+q+1) + 1 (q+1) \\
&= q^3 - q^2 = q^2(q-1).
\end{align*}
For $ m = 1 $:
\begin{align*}
\lvert \pr^3(K) \backslash E_1 \cup E_2 \cup E_3 \cup E_4 \rvert = q^3 + q^2 + q + 1 - (q^2+q+1) = q^3.
\end{align*}
This and symmetry imply
\begin{align*}
c_4(n) = &(q-1)^3 c_4(n-1) + 3 q (q-1)^2 c_4^{++-}(n-1) + 2 q^2 (q-1) c_4^{+--}(n-1) \\
         &+ q^2 (q-1) c_4^{-+-}(n-1) + q^3 c_4^{---}(n-1).
\end{align*}
\eqref{eq:k4++-}: Now let $ (v_1,\dots,v_n) \in C_4^{++-}(n) $. Then $ v_n \vee v_1 \vee v_2 = v_1 \vee v_2 \vee v_3 $ and since $ \{v_{n-1},v_n,v_1,v_2\} $ is independent, it follows that $ \{v_{n-1},v_1,v_2,v_3\} $ is also independent. Then $ \{v_{n-1},v_1,v_2\} $ is independent as well. Moreover, $ \{v_{n-2},v_{n-1},v_n,v_1\} $ independent implies $ \{v_{n-2},v_{n-1},v_1\} $ independent. Hence $ (v_1,\dots,v_{n-1}) \in C_4^{s_1s_2+}(n-1) $ with $ s_1,s_2 \in \{+,-\} $. \\
Conversely, let $ (v_1,\dots,v_{n-1}) \in C_4^{s_1s_2+}(n-1) $. Then $ (v_1,\dots,v_n) \in C_4^{++-}(n) $ is equivalent to $ v_n \in E_4 \backslash (E_1 \cup E_2 \cup E_3) $. Like in the last case, two distinct of these four planes intersect in a line, while any three distinct intersect in a point and four distinct have an empty intersection. Unlike before, $ E_4 $ is always distinct from the other planes since $ \{v_{n-1},v_1,v_2,v_3\} $ is independent. We can now again count the choices for $ v_n $ with the inclusion-exclusion principle. If $ s_1 = s_2 = + $:
\begin{align*}
\lvert E_4 \backslash (E_1 \cup E_2 \cup E_3) \rvert &= q^2 + q + 1 - 3 (q+1) + 3 \cdot 1 - 0 \\
&= q^2 - 2 q + 1 = (q-1)^2.
\end{align*}
If $ s_1 \neq s_2 $ we have
\begin{align*}
\lvert E_4 \backslash (E_1 \cup E_2 \cup E_3) \rvert &= q^2 + q + 1 - 2 (q+1) + 1 \cdot 1 \\
&= q^2 - q = q(q-1).
\end{align*}
In the case $ s_1 = s_2 = - $ we get
\begin{align*}
\lvert E_4 \backslash (E_1 \cup E_2 \cup E_3) \rvert &= q^2 + q + 1 - (q + 1) = q^2.
\end{align*}
With $ c_4^{-++}(n-1) = c_4^{+-+}(n-1) = c_4^{++-}(n-1) $ and $ c_4^{--+}(n-1) = c_4^{+--}(n-1) $ this yields
\[
c_4^{++-}(n) = (q-1)^2 c_4(n-1) + 2 q (q-1) c_4^{++-}(n-1) + q^2 c_4^{+--}(n-1).
\]
\eqref{eq:k4+--}: Let $ (v_1,\dots,v_n) \in C_4^{+--}(n) $. Then $ v_{n-1} \vee v_n \vee v_1 = v_n \vee v_1 \vee v_2 = v_1 \vee v_2 \vee v_3 = E_4 $. In particular, $ \{v_{n-1},v_1,v_2,v_3\} $ is dependent. We also have $ \{v_{n-2},v_{n-1},v_n,v_1\} $ independent which implies $ \{v_{n-2},v_{n-1},v_1\} $ independent. We separate cases: \smallskip\\
Case 1: $ \{v_{n-1},v_1,v_2\} $ is independent. \\
$ v_{n-1} $, $ v_1 $ and $ v_2 $ all lie in $ E_4 $. $ \{v_{n-2},v_{n-1},v_n,v_1\} $ independent implies $ v_{n-2} \notin E_4 $, thus $ \{v_{n-2},v_{n-1},v_1,v_2\} $ is also independent. Hence $ (v_1,\dots,v_{n-1}) \in C_4^{s+-}(n-1) $ with $ s \in \{+,-\} $. \smallskip\\
Case 2: $ \{v_{n-1},v_1,v_2\} $ is dependent. \\
Then we still have $ v_{n-2} \notin v_{n-1} \vee v_n \vee v_1 = E_4 $. It follows that $ \{v_{n-2},v_1,v_2,v_3\} $ is independent and thus $ (v_1,\dots,v_{n-1}) \in C_4^{s(-)[+]} $ with $ s \in \{+,-\} $.\smallskip\\
Conversely, let $ (v_1,\dots,v_{n-1}) \in C_4^{s+-}(n-1) $. Let $ L = v_1 \vee v_2 $. Then $ E_2 \neq E_3 = E_4 $ and $ E_1 \neq E_2 $ is equivalent to $ s = + $. If $ E_1 \neq E_2 $, then also $ E_1 \neq E_3 $. We have $ (v_1,\dots,v_n) \in C_4^{+--}(n) $ if and only if $ v_n \in E_3 \backslash (E_1 \cup E_2 \cup L) $ (we need $ v_n \notin L $ since $ (v_1,\dots,v_n) \in C_4^{+--}(n) $ requires $ \{v_n,v_1,v_2\} $ independent). We have $ E_2 \cap E_3 = v_{n-1} \vee v_1 \neq L $, thus $ E_2 \cap E_3 \cap L = \{v_1\} $. We also have $ E_1 \cap E_2 = v_{n-2} \vee v_{n-1} $ if $ s = + $. Since $ \{v_{n-2},v_{n-1},v_1\} $ is independent the lines $ E_1 \cap E_3 $, $ E_2 \cap E_3 $ and $ L $ do not have a common intersection point if $ s = + $. Like we calculated in the proof of \eqref{eq:k4++-}, we then have $ (q-1)^2 $ possibilities for $ v_n $ if $ s = + $ and $ q(q-1) $ if $ s = - $. \\
Now let $ (v_1,\dots,v_{n-1}) \in C_4^{s(-)[+]}(n-1) $. Then $ \{v_{n-1},v_1,v_2\} $ is dependent and thus $ \{v_{n-1},v_n,v_1,v_2\} $ dependent is trivial. Accordingly, $ (v_1,\dots,v_n) \in C_4^{+--}(n) $ if and only if $ v_n \in E_4 \backslash (E_1 \cup E_2 \cup L) $. The condition $ \{v_{n-2},v_1,v_2,v_3\} $ independent guaranties $ E_2 \neq E_4 $. We also have $ E_2 \cap E_4 = v_{n-1} \vee v_1 = v_1 \vee v_2 = L $. $ E_1 \neq E_2 $ is equivalent to $ s = + $. Hence we have one or two forbidden lines on $ E_4 $, depending on $ s $, and get $ q^2 $ viable choices if $ s = - $ and $ q(q-1) $ viable choices if $ s = + $. In conclusion, this leads to
\[
c_4^{+--}(n) = (q-1)^2 c_4^{++-}(n-1) + q(q-1) c_4^{-+-}(n-1) + q(q-1) c_4^{+(-)[+]}(n-1) + q^2 c_4^{-(-)[+]}(n-1).
\]
\eqref{eq:k4-+-}: Let $ (v_1,\dots,v_n) \in C_4^{-+-}(n) $. Then $ v_{n-1} \notin v_n \vee v_1 \vee v_2 = v_1 \vee v_2 \vee v_3 $. Thus $ \{v_{n-1},v_1,v_2,v_3\} $ is independent. If $ \{v_{n-2},v_{n-1},v_1\} $ is dependent, then $ (v_1,\dots,v_{n-1}) \in C_4^{(-)+[s]}(n-1) $ with $ s \in \{+,-\} $ follows. Hence only the case $ \{v_{n-2},v_{n-1},v_1\} $ independent remains. Then  $ E := v_{n-2} \vee v_{n-1} \vee v_1 = v_{n-2} \vee v_{n-1} \vee v_n = v_{n-1} \vee v_n \vee v_1 $. The fact that $ \{v_{n-3},v_{n-2},v_{n-1},v_n\} $ and $ \{v_{n-1},v_n,v_1,v_2\} $ are independent implies $ v_{n-3},v_2 \notin E $. $ \{v_{n-3},v_{n-2},v_{n-1},v_1\} $ and $ \{v_{n-2},v_{n-1},v_1,v_2\} $ are thus independent and we have $ (v_1,\dots,v_{n-1}) \in C_4(n-1) $. \smallskip\\
Conversely, let $ (v_1,\dots,v_{n-1}) \in C_4(n-1) $. Then we have $ (v_1,\dots,v_n) \in C_4^{-+-}(n) $ if and only if $ v_n \in E_2 \cap E_4 \backslash (E_1 \cup E_3) $. Like we showed earlier, these planes are pairwise distinct and any three of them intersect in just one point while all four do not intersect in any point. Hence $ E_2 \cap E_4 $ is a projective line and $ E_1 $ and $ E_3 $ intersect this line in two different points. This leaves exactly $ q-1 $ valid choices $ v_n. $\\
Now let $ (v_1,\dots,v_{n-1}) \in C_4^{(-)+[s]}(n-1) $ with $ s \in \{+,-\} $. Then $ \{v_{n-2},v_{n-1},v_1 \} $ is dependent and thus also $ \{v_{n-2},v_{n-1},v_n,v_1\} $, regardless of how we choose $ v_n $. Therefore we only need $ v_n \in E_4 \backslash (E_1 \cup E_3) $. $ s = + $ implies $ \{v_{n-3},v_{n-2},v_{n-1}, v_2\} $ independent. Then $ v_2 \notin E_1 $ and in particular $ E_1 \neq E_3 $. On the other hand, we have $ v_1 \in v_{n-2} \vee v_{n-1} \subset E_1 $. If $ s = - $, then also $ v_2 \in E_1 $ and thus $ E_1 = E_3 $. At the same time $ \{v_{n-1},v_1,v_2,v_3\} $ independent ensures $ E_3 \neq E_4 $ and also $ E_1 \neq E_4 $. Thus we have $ q(q-1) $ possibilities for $ v_n $ if $ s = + $ and $ q^2 $ possibilities otherwise. In conclusion,
\[
c_4^{-+-}(n) = (q-1)c_4(n-1) + q(q-1)c_4^{+(-)[+]}(n-1) + q^2c_4^{+(-)[-]}(n-1).
\]
Here we use $ c_4^{(-)+[s]}(n-1) = c_4^{+(-)[s]}(n-1) $. \smallskip\\
\eqref{eq:k4---}: Let $ (v_1,\dots,v_n) \in C_4^{---}(n) $. It follows that $ v_{n-2} \vee v_{n-1} \vee v_n = v_{n-1} \vee v_n \vee v_1 = v_n \vee v_1 \vee v_2 = v_1 \vee v_2 \vee v_3 = E_4 $ are all the same plane. But $ v_{n-3} \notin E_4 $ since $ \{v_{n-3},v_{n-2},v_{n-1},v_n\} $ is independent. We separate cases based on whether $ M_1 = \{v_{n-2},v_{n-1},v_1\} $ and $ M_2 = \{v_{n-1},v_1,v_2\} $ are independent. \smallskip\\
Case 1: $ M_1 $ and $ M_2 $ are both independent. \\
Then $ \{v_{n-3}, v_{n-2}, v_{n-1}, v_1\} = \{v_{n-3}\} \cup M_1 $ is independent, whereas $ \{v_{n-2},v_{n-1},v_1,v_2\} $ and $ \{v_{n-1},v_1,v_2,v_3\} $ are clearly dependent since all their points lie in the plane $ E_2 = E_3 = E_4 $. Hence $ (v_1,\dots,v_{n-1}) \in C_4^{+--}(n-1) $. \smallskip\\
Case 2: $ M_1 $ is independent, $ M_2 $ is dependent. \\
$ \{v_{n-3},v_{n-2},v_{n-1},v_1\} $ is again independent. We also have $ \{v_{n-2},v_1,v_2,v_3\} \subset E_4 $ dependent. Hence $ (v_1,\dots,v_{n-1}) \in C_4^{+(-)[-]}(n-1) $. \smallskip\\
Case 3: $ M_1 $ is dependent, $ M_2 $ is independent. \\
Then $ \{v_{n-1},v_1,v_2,v_3\} $ is dependent and $ \{v_{n-3},v_{n-1},v_1,v_2\} = \{v_{n-3}\} \cup M_2 $ is independent. We know $ v_{n-2} \vee v_{n-1} = v_{n-1} \vee v_1 $ and it follows that $ \{v_{n-3},v_{n-2},v_{n-1},v_2\} $ is also independent. Thus $ (v_1,\dots,v_{n-1}) \in C_4^{(-)-[+]}(n-1) $. \smallskip\\
Case 4: $ M_1 $ and $ M_2 $ are both dependent. \\
Then $ \{v_{n-3},v_1,v_2,v_3\} $ is independent and thus $ (v_1,\dots,v_{n-1}) \in C_4^{(--)[+]}(n-1) $. \smallskip\\
Conversely, we now count the possible choices for $ v_n $ in all cases. \\
Case 1. \\
Let $ (v_1,\dots,v_{n-1}) \in C_4^{+--}(n-1) $. For $ (v_1,\dots,v_n) \in C_4^{---}(n) $ we require $ v_n \in E_2 = E_3 = E_4 $ and $ v_n \notin E_1 $. In addition, $ \{v_{n-1},v_n,v_1\} $ and $ \{v_n,v_1,v_2\} $ need to be independent, hence $ v_n $ must not be in $ v_{n-1} \vee v_1 $ or in $ v_1 \vee v_2 $. Together with $ E_1 \cap E_4 = v_{n-2} \vee v_{n-1} $ we have three forbidden lines on $ E_4 $. Since $ M_1 $ and $ M_2 $ are independent these lines are pairwise distinct and do not have a common intersection. Thus there are $ (q-1)^2 $ possibilities for $ v_n $ (see the proof of \eqref{eq:k4++-} for the calculation). \smallskip\\
Case 2. \\
Let $ (v_1,\dots,v_{n-1}) \in C_4^{+(-)[-]}(n-1) $. Again, we need $ v_n \in E_2, E_4 $ and $ v_n \notin E_1 $. But this time the condition $ \{v_{n-1},v_n,v_1,v_2\} $ dependent is trivial since $ M_2 $ is dependent. Also note that $ \{v_{n-2},v_1,v_2,v_3\} $ dependent ensures $ E_2 = E_4 $. We still require $ v_n \notin v_{n-1} \vee v_1, v_1 \vee v_2 $, but these two lines are now the same while $ E_1 \cap E_4 = v_{n-2} \vee v_{n-1} $ is still a different line. Hence there are two forbidden lines on $ E_4 $ and we have $ q(q-1) $ viable choices. \smallskip\\
Case 3. \\
Let $ (v_1,\dots,v_{n-1}) \in C_4^{(-)-[+]}(n-1) $. Then we need $ v_n \in E_3 = E_4 $ and $ v_n \notin E_1 $ whereas $ \{v_{n-2},v_{n-1},v_n,v_1\} $ dependent is trivial thanks to $ M_1 $ dependent. We also have the condition $ \{v_{n-3},v_{n-2},v_{n-1},v_2\} $ independent which guaranties $ E_1 \neq E_4 $. Additionally, we require $ v_n \notin v_{n-1} \vee v_1, v_1 \vee v_2 $ with the two lines being distinct, but we also have $ E_1 \cap E_4 = v_{n-2} \vee v_{n-1} = v_{n-1} \vee v_1 $. Hence there are two forbidden lines like in the last case and we have $ q(q-1) $ options for $ v_n $. \smallskip\\
Case 4. \\
Let $ (v_1,\dots,v_{n-1}) \in C_4^{(--)[+]}(n-1) $. The conditions $ \{v_{n-2},v_{n-1},v_n,v_1\} $ and $ \{v_{n-1},v_n,v_1,v_2\} $ dependent are both trivial in this case. We need $ v_n \in E_4 \backslash (E_1 \cup v_{n-1} \vee v_1 \cup v_1 \vee v_2) $. This is possible because our condition $ \{v_{n-3},v_1,v_2,v_3\} $ independent implies $ E_1 \neq E_4 $. Furthermore, we have $ E_1 \cap E_4 = v_{n-2} \vee v_{n-1} = v_{n-1} \vee v_1 = v_1 \vee v_2 $ so there only is one forbidden line on $ E_4 $. Hence we have $ q^2 $ suitable choices for $ v_n $. \\
In conclusion,
\begin{align*}
c_4^{---}(n) = &(q-1)^2c_4^{+--}(n-1) + q(q-1)c_4^{+(-)[-]}(n-1)\\
               &+ q(q-1)c_4^{-(-)[+]}(n-1) + q^2c_4^{(--)[+]}(n-1)
\end{align*}
where we used $ c_4^{(-)-[+]}(n-1) = c_4^{-(-)[+]}(n-1) $.\smallskip\\
\eqref{eq:k4+(-)[+]}, \eqref{eq:k4+(-)[-]}: Let $ (v_1,\dots,v_n) \in C_4^{+(-)[s]}(n) $ with $ s \in \{+,-\} $. Then $ \{v_{n-1},v_1,v_2,v_3\} $ is independent if and only if $ s = + $. Moreover, we have $ v_n \vee v_1 = v_1 \vee v_2 $ and $ \{v_{n-2},v_{n-1},v_n,v_1\} $ independent. If follows that $ \{v_{n-2},v_{n-1},v_1,v_2\} $ is independent. Hence $ (v_1,\dots,v_{n-1}) \in C_4^{s^\prime+s}(n-1) $ with $ s^\prime \in \{+,-\} $. \\
For the other direction, let $ (v_1,\dots,v_{n-1}) \in C_4^{s^\prime+s}(n-1) $. Then we automatically get that $ \{v_{n-1},v_1,v_2,v_3\} $ independent is equivalent to $ s = + $. We need $ \{v_n,v_1,v_2\} $ dependent, therefore $ v_n \in v_1 \vee v_2 $. Additionally, we require $ \{v_{n-3},v_{n-2},v_{n-1},v_n\} $ and $ \{v_{n-2},v_{n-1},v_n,v_1\} $ independent, hence $ v_n \notin E_1, E_2 $. We have $ \{v_{n-2},v_{n-1},v_1,v_2\} $ independent which implies $ v_2 \notin E_2 $ and that $ v_{n-2} \vee v_{n-1} $ and $ v_1  \vee v_2 $ do not lie in a common plane, thus both $ E_1 $ and $ E_2 $ only intersect $ v_1 \vee v_2 $ in one point each. Clearly, $ v_1 $ is the intersection point with $ E_2 $. We have $ E_1 \cap (v_1 \vee v_2) = \{v_1\} $ if and only if $ s^\prime = - $. Accordingly, there are $ q-1 $ choices for $ v_n $ if $ s^\prime = + $ and $ q $ choices otherwise. Using $ c_4^{-++}(n-1) = c_4^{++-}(n-1) $ we get the equation
\[
c_4^{+(-)[s]}(n) = (q-1)c^{++s}_4(n-1) + qc_4^{s+-}(n-1).
\]
\eqref{eq:k4-(-)}: Let $ (v_1,\dots,v_n) \in C_4^{-(-)[+]}(n) $. Then $ \{v_{n-1},v_1,v_2,v_3\} $ is independent. If $ \{v_{n-2},v_{n-1},v_1\} $ is independent, then $ v_{n-2} \vee v_{n-1} \vee v_1 = v_{n-2} \vee v_{n-1} \vee v_n $. Since $ \{v_{n-3},v_{n-2},v_{n-1},v_n\} $ is independent, $ \{v_{n-3},v_{n-2},v_{n-1},v_1\} $ is then also independent. At the same time we have $ v_{n-2}, v_2 \in v_{n-1} \vee v_n \vee v_1 $, hence $ \{v_{n-2},v_{n-1},v_1,v_2\} $ is dependent. Thus, $ (v_1,\dots,v_{n-1}) \in C_4^{+-+}(n-1) $. \\
Now let $ \{v_{n-2},v_{n-1},v_1\} $ be dependent. Note that $ v_{n-1} \vee v_n \vee v_1 = v_{n-2} \vee v_{n-1} \vee v_n $ and $ v_{n-3} \notin v_{n-2} \vee v_{n-1} \vee v_n $. Hence $ \{v_{n-3},v_{n-1},v_n,v_1 \} $ is independent. We have $ v_n \vee v_1 = v_1 \vee v_2 $, therefore $ \{v_{n-3},v_{n-1},v_1,v_2\} $ is also independent. Additionally, we have $ v_{n-2} \vee v_{n-1} = v_{n-1} \vee v_1 $, thus $ \{v_{n-3},v_{n-2},v_{n-1},v_2\} $ is independent as well. This implies $ (v_1,\dots,v_{n-1}) \in C_4^{(-)+[+]}(n-1) $. \\
Conversely, let $ (v_1,\dots,v_{n-1}) \in C_4^{+-+}(n-1) $ or $ C_4^{(-)+[+]}(n-1) $. If we want $ (v_1,\dots,v_n) \in C_4^{-(-)[+]} $, we require $ v_n \in v_1 \vee v_2 \backslash \{v_1\} $ so that $ \{v_n,v_1,v_2\} $ is dependent and $ v_n \neq v_1 $. Additionally, we require $ \{v_{n-3},v_{n-2},v_{n-1},v_n\} $ independent, $ \{v_{n-2},v_{n-1},v_n,v_1\} $ dependent, $ \{v_{n-1},v_n,v_1\} $ independent and $ \{v_{n-1},v_1,v_2,v_3\} $ independent. The last three conditions follow from the definition of $ C_4^{+-+}(n-1) $ respectively $ C_4^{(-)+[+]}(n-1) $, using $ v_n \vee v_1 = v_1 \vee v_2 $ for the second condition and $ v_{n-1} \notin v_1 \vee v_2 $ for the third condition. It only remains to check which elements of $ v_1 \vee v_2 \backslash \{v_1\} $ fulfill $ v_n \notin E_1 = v_{n-3} \vee v_{n-2} \vee v_{n-1} $. If $ (v_1,\dots,v_{n-1}) \in C_4^{+-+}(n-1) $, then $ \{v_{n-3},v_{n-2},v_{n-1},v_1\} $ is independent. Hence $ v_1 \vee v_2 \not \subset E_1 $ and the intersection between this line and plane is exactly one point. This point is clearly not $ v_1 $, hence we are left with $ q-1 $ viable choices for $ v_n $. \\
If $ (v_1,\dots,v_{n-1}) \in C_4^{(-)+[+]}(n-1) $, then $ \{v_{n-3},v_{n-2},v_{n-1},v_2\} $ is independent and thus again $ v_1 \vee v_2 \not\subset E_1 $. But $ \{v_{n-3},v_{n-2},v_{n-1},v_1\} $ is dependent and therefore the intersection point is $ v_1 $. Accordingly, we have $ q $ choices for $ v_n $. It follows that
\[
c_4^{-(-)[+]}(n) = (q-1)c_4^{++-}(n-1) + q c_4^{+(-)[+]}(n-1),
\]
where we use $ c_4^{+-+}(n-1) = c_4^{++-}(n-1) $ and $ c_4^{(-)+[+]}(n-1) = c_4^{+(-)[+]}(n-1) $. \smallskip\\
\eqref{eq:k4(--)}: Let $ (v_1,\dots,v_n) \in C_4^{(--)[+]}(n) $. Then $ v_{n-1} \vee v_n = v_n \vee v_1 = v_1 \vee v_2 $. If $ v_{n-1} \neq v_1 $, then $ v_{n-1} \vee v_1 = v_{n-1} \vee v_n $ and $ \{v_{n-3},v_{n-2},v_{n-1},v_1\} $ is independent. At the same time $ \{v_{n-1},v_1,v_2\} $ is dependent and $ \{v_{n-2},v_1,v_2,v_3\} $ is independent. Hence $ (v_1,\dots,v_{n-1}) \in C_4^{+(-)[+]}(n-1) $. \\
If $ v_{n-1} = v_1 $, then $ \{v_{n-4},v_{n-3},v_{n-2},v_1\} = \{v_{n-4},v_{n-3},v_{n-2},v_{n-1}\} $ is independent. Also $ \{v_{n-3},v_{n-2},v_1,v_2\} $ is independent since $ \{v_{n-3},v_{n-2},v_{n-1},v_n\} $ is independent and $ v_{n-1} \vee v_n = v_1 \vee v_2 $. Finally, $ \{v_{n-2},v_1,v_2,v_3\} $ is independent by definition of $ C_4^{(--)[+]}(n) $. It follows that $ (v_1,\dots,v_{n-2}) \in C_4(n-2) $.\\
Conversely, let $ (v_1,\dots,v_{n-1}) \in C_4^{+(-)[+]} $. Then $ (v_1,\dots,v_n) \in C_4^{(--)[+]}(n) $ requires that $ \{v_n,v_1,v_2\} $ is dependent and $ v_n \neq v_1 $, which is only true for $ v_n \in v_1 \vee v_2 \backslash \{v_1\} $. We also need $ \{v_{n-1},v_n,v_1\} $ dependent, but that follows from $ \{v_{n-1},v_1,v_2\} $ dependent and $ v_n \vee v_1 = v_1 \vee v_2 $. Another requirement is $ \{v_{n-3},v_{n-2},v_{n-1},v_n\} $ independent. We have $ \{v_{n-3},v_{n-2},v_{n-1},v_1\} $ independent, hence $ v_1 \vee v_2 \not\subset v_{n-3} \vee v_{n-2} \vee v_{n-1} $ and the sole intersection point is not $ v_1 $. The only requirement left is $ \{v_{n-2},v_1,v_2,v_3\} $, but that follows immediately from $ (v_1,\dots,v_n) \in C_4^{(--)[+]}(n) $. Thus there are exactly $ q-1 $ viable choices for $ v_n $.\\
Now let $ (v_1,\dots,v_{n-2}) \in C_4(n-2) $ and $ v_{n-1} := v_1 $. Then $ \{v_{n-1},v_n,v_1\} $ is automatically dependent while $ \{v_n,v_1,v_2\} $ dependent and $ v_n \neq v_1 $ require $ v_n \in v_1 \vee v_2 \backslash \{v_1\} $. We have $ \{v_{n-3},v_{n-2},v_1,v_2\} $ independent, and since $ v_1 \vee v_2 = v_{n-1} \vee v_n $ it follows that $ \{v_{n-3},v_{n-2},v_{n-1},v_n\} $ is independent. Finally, the requirement $ \{v_{n-2},v_1,v_2,v_3\} $ independent is a direct consequence of $ (v_1,\dots,v_{n-2}) \in C_4(n-2) $. Hence there are $ q $ options for $ v_n $ (and only one option for $ v_{n-1} $). This yields
\[
c_4^{(--)[+]}(n) = (q-1) c_4^{+(-)[+]}(n-1) + q c_4(n-2).
\]
\end{proof}
\begin{theorem} \label{thm:c4(n)}
Let $ K $ be a finite field with $ q $ elements and $ n \geq 4 $. Then
\[
c_4(n) = \begin{cases}
q^{3n} - (q^3+q^2+q)q^{2n} + (q^5+q^4+q^3)q^n - q^6, &\text{ if }\gcd(n,4) = 1,\\
q^{3n} - (q^3+q^2+q)q^{2n} + 2(q^4+q^2)q^{\frac{3}{2}n} \\
- (q^5+q^4+q^3)q^n + q^6, &\text{ if }\gcd(n,4) = 2,\\
q^{3n} + 3(q^3+q^2+q)q^{2n} + 2(q^4+q^2)q^{\frac{3}{2}n}\\
+ 3(q^5+q^4+q^3)q^n + q^6, &\text{ if }\gcd(n,4) = 4.
\end{cases}
\]
\end{theorem}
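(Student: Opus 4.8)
The plan is to solve the coupled linear system of recursions established in Lemma~\ref{lem:recursionsk4}, exactly as was done for $k=3$ in Theorems~\ref{thm:ck(n)relativelyPrime} and~\ref{thm:ck(n)divisibleBy3}. The nine sequences $c_4(n), c_4^{++-}(n), c_4^{+--}(n), c_4^{-+-}(n), c_4^{---}(n), c_4^{+(-)[+]}(n), c_4^{+(-)[-]}(n), c_4^{-(-)[+]}(n)$ and $c_4^{(--)[+]}(n)$ are determined by these recursions together with their initial values; all but \eqref{eq:k4(--)} express the value at $n$ through values at $n-1$, while \eqref{eq:k4(--)} also reaches back to $c_4(n-2)$. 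First I would assemble these into a single first-order linear recursion $x(n)=T\,x(n-1)$, where the state $x(n)$ collects the nine sequence values at index $n$ together with the extra coordinate $c_4(n-1)$ needed to absorb the second-order term in \eqref{eq:k4(--)}, and $T$ is a matrix over $\Z[q]$.

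The solution of such a system is a $\Z[q]$-linear combination of terms $\mu^{n}$ ranging over the eigenvalues $\mu$ of $T$ (with polynomial-in-$n$ corrections should any eigenvalue be repeated). The shape of the claimed answer already reveals the spectrum: the terms $q^{3n}, q^{2n}, q^{n}$ and the constant $q^{6}$ come from real eigenvalues $q^{3}, q^{2}, q$ and $1$, while the term $q^{3n/2}$ --- present only for even $n$, and appearing with a sign and coefficient depending on $n \bmod 4$ --- must come from the conjugate pair of roots of $\lambda^{2}+q^{3}=0$, namely $\pm i q^{3/2}$. Since $(iq^{3/2})^{n}+(-iq^{3/2})^{n}=2\cos(n\pi/2)\,q^{3n/2}$ vanishes for odd $n$ and equals $\pm 2q^{3n/2}$ for even $n$, this is exactly what forces the closed form to split into the three cases $\gcd(n,4)\in\{1,2,4\}$ rather than merely by parity.

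Concretely, I would follow the verification strategy of the $k=3$ theorems: write down explicit closed forms for all nine sequences --- each itself case-split according to $n \bmod 4$ --- and then check that they satisfy every one of the recursions \eqref{eq:k4+++}--\eqref{eq:k4(--)} for $n$ past the starting range. For each fixed residue of $n$ modulo $4$ this verification reduces to a finite collection of polynomial identities in $q$, which, while lengthy, are entirely mechanical. It then remains to pin the solution down by matching initial data: I would compute the base values directly, for instance $c_4(4)$ by counting cyclically generic $4$-tuples in $\pr^3(K)$, and note that the auxiliary sequences with forced dependence conditions (such as $c_4^{(--)[+]}$ or $c_4^{-(-)[+]}$) vanish or take small explicit values for the smallest admissible $n$, then confirm agreement with the proposed formulas.

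The main obstacle is not the verification but the coupling and size of the system: nine mutually dependent sequences, one of them second-order in $c_4$, all exhibiting genuine period-$4$ behaviour. The creative step is producing the correct closed forms for every auxiliary sequence simultaneously; the complex eigenvalues $\pm i q^{3/2}$ are what force a mod-$4$ rather than a mod-$2$ case distinction, and keeping the oscillating $q^{3n/2}$ contributions consistent across all nine sequences --- so that they cancel exactly in the odd case and combine with the stated signs in the two even cases --- is where the bookkeeping is most delicate. A generating-function variant (solving for $\sum_n c_4(n) z^n$ over $\Q(q)(z)$ and extracting coefficients by partial fractions, with poles at $z^{-1}\in\{q^3,q^2,q,1,\pm i q^{3/2}\}$) would yield the same result and could serve as an independent cross-check.
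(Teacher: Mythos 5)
Your proposal follows essentially the same route as the paper's proof: the paper likewise writes down explicit closed forms for $c_4(n)$ and all eight auxiliary sequences, each case-split by $n \bmod 4$, states that these satisfy every recursion of Lemma \ref{lem:recursionsk4} (a mechanical polynomial verification), and pins the solution down with initial data, namely the direct count $c_4(4) = (q^3+q^2+q+1)(q^3+q^2+q)(q^3+q^2)q^3$, the vanishing of all the constrained sequences at $n=4$, and $c_4(3)=0$. One minor caveat on your motivating spectral picture: the mod-$4$ dependence in the final answer is not carried by the pair $\pm iq^{3/2}$ alone, since the coefficients of $q^{2n}$, $q^{n}$ and the constant $q^{6}$ also change with $n \bmod 4$ (e.g.\ $-(q^3+q^2+q)$ versus $+3(q^3+q^2+q)$, and $\mp q^6$), so eigenvalues such as $\pm iq^{2}$, $-q^{2}$, $\pm iq$ and $-1$ must also appear in the system --- but this affects only the heuristic, not the verification argument itself.
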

\begin{proof}
We have the following formulas for the other sets in the recursions in Lemma \ref{lem:recursionsk4}:
\begin{align*}
c_4^{++-}(n) &= \begin{cases}
    q^{3n-1} - (q^3-q^2-q-2)q^{2n} - (q^4-q^3+q^2-q)q^{\frac{3}{2}n}\\
    - (2q^5+q^4+q^3-q^2)q^n - q^6, &n \equiv 0 \mod 4,\\
    q^{3n-1} + (q^3-1)q^{2n} - (q^2+1)q^{\frac{3(n+1)}{2}} -(q^5-q^2)q^n + q^6, &n \equiv 1 \mod 4,\\
    q^{3n-1} - (q^2+q+1)q^{2n} - (q^4-q^3+q^2-q)q^{\frac{3}{2}n}\\
    + (q^5+q^4+q^3)q^n - q^6, &n \equiv 2 \mod 4,\\
    q^{3n-1} - (q^2+1)q^{\frac{3(n+1)}{2}} + q^6, &n \equiv 3 \mod 4.
\end{cases}\\
c_4^{+--}(n) &= \begin{cases}
    q^{3n-2} - (q^3-1)q^{2n-1} - (q^3+q)q^{\frac{3}{2}n} + (q^5-q^2)q^{n} + q^6, \: &n \equiv 0 \mod 4,\\
    q^{3n-2} - (q^4+q^2+1)q^{2n-1} + (q^5-q^4+q^3-q^2)q^{\frac{3(n-1)}{2}}\\
    + (q^5+q^3+q)q^{n} - q^6, &n \equiv 1 \mod 4,\\
    q^{3n-2} + (q^4+q^3-q-1)q^{2n-1} - (q^3+q)q^{\frac{3}{2}n}\\
    - (q^5+q^4-q^2-q)q^{n} + q^6, &n \equiv 2 \mod 4,\\
    q^{3n-2} + (q^2+q+1)q^{2n-1} + (q^5-q^4+q^3-q^2)q^{\frac{3(n-1)}{2}}\\
    - (q^5+q^4+q^3)q^{n} - q^6, &n \equiv 3 \mod 4.
\end{cases}\\
c_4^{-+-}(n) &= \begin{cases}
    q^{3n-2} - (2q^3+q^2+q-1)q^{2n-1} + (q^2+1)^2q^{\frac{3}{2}n} \\
    + (q^5-q^4-q^3-2q^2)q^n + q^6, &n \equiv 0 \mod 4,\\
    q^{3n-2} + (2q^3+q^2+q-1)q^{2n-1} + (q^5-q^4\\
    +q^3-q^2)q^{\frac{3(n-1)}{2}} + (q^5-q^4-q^3-2q^2)q^{n} - q^6, &n \equiv 1 \mod 4,\\
    q^{3n-2} - (q^2+q+1)q^{2n-1} + (q^2+1)^2q^{\frac{3}{2}n}\\
    - (q^5+q^4+q^3)q^{n} + q^6, &n \equiv 2 \mod 4,\\
    q^{3n-2} + (q^2+q+1)q^{2n-1} + (q^5-q^4+q^3-q^2)q^{\frac{3(n-1)}{2}} \;\;\;\;\\
    - (q^5+q^4+q^3)q^{n} - q^6, &n \equiv 3 \mod 4.
\end{cases}\\
c_4^{---}(n) &= \begin{cases}
    q^{3n-3} - (q^3+q^2+q)q^{2n-2} + (q^4+q^3+q^2)q^n - q^6, &n \equiv 0 \mod 4,\\
    q^{3n-3} - (q^4+q^2+1)q^{2n-2} + 2(q^4+q^2)q^{\frac{3(n-1)}{2}}\\
    - (q^5+q^3+q)q^{n} + q^6, &n \equiv 1 \mod 4,\\
    q^{3n-3} - (q^5+q+1)q^{2n-2} + (q^5+q^4+1)q^{n} - q^6, &n \equiv 2 \mod 4,\\
    q^{3n-3} + (q^5+q^4+q^3+2q^2+2q+2)q^{2n-2} + 2(q^4\\
    +q^2)q^{\frac{3(n-1)}{2}} + (2q^5+2q^4+2q^3+q^2+q+1)q^{n} + q^6, \quad\,\,\, &n \equiv 3 \mod 4.
\end{cases}\\
c_4^{+(-)[+]}(n) &= \begin{cases}
    q^{3n-2} - (q^3-1)q^{2n-1} - (q^3+q)q^{\frac{3}{2}n} + (q^5-q^2)q^{n} + q^6,\; &n \equiv 0 \mod 4,\\
    q^{3n-2} + (2q^3+q^2+q-1)q^{2n-1} + (q^5-q^4\\
    +q^3-q^2)q^{\frac{3(n-1)}{2}}
    + (q^5-q^4-q^3-2q^2)q^{n} - q^6, &n \equiv 1 \mod 4,\\
    q^{3n-2} - (q^3+q)q^{\frac{3}{2}n} + q^6, &n \equiv 2 \mod 4,\\
    q^{3n-2} - (q^3+q^2+q)q^{2n-1} + (q^5-q^4+q^3-q^2)q^{\frac{3(n-1)}{2}}\\
    + (q^4+q^3+q^2)q^n - q^6, &n \equiv 3 \mod 4.
\end{cases}\\
c_4^{+(-)[-]}(n) &= \begin{cases}
    q^{3n-3} + (q^2+q+1)q^{2n-2} - (q^5+q^4+q^3)q^n - q^6, &n \equiv 0 \mod 4,\\
    q^{3n-3} - (q^4+q^2+1)q^{2n-2} + 2(q^4+q^2)q^{\frac{3(n-1)}{2}}\\
    - (q^5+q^3+q)q^n + q^6, &n \equiv 1 \mod 4,\\
    q^{3n-3} + (q^4+q^3+q^2)q^{2n-2} - (q^3+q^2+q)q^n - q^6,\quad\quad\: &n \equiv 2 \mod 4,\\
    q^{3n-3} - (q^3+q^2+q)q^{2n-2} + 2(q^4+q^2)q^{\frac{3(n-1)}{2}}\\
    - (q^4+q^3+q^2)q^n + q^6, &n \equiv 3 \mod 4.
\end{cases}\\
c_4^{-(-)[+]}(n) &= \begin{cases}
    q^{3n-3} - (q^3+q^2+q)q^{2n-2} + (q^4+q^3+q^2)q^n - q^6, &n \equiv 0 \mod 4,\\
    q^{3n-3} - (q^4-q^3-q+1)q^{2n-2} - (q^5-q^4+2q^3\\
    -q^2+q)q^{\frac{3(n-1)}{2}} - (q^5-q^4-q^2+q)q^n + q^6, &n \equiv 1 \mod 4,\\
    q^{3n-3} + (q^4+q^3+q^2)q^{2n-2} - (q^3+q^2+q)q^n - q^6, &n \equiv 2 \mod 4,\\
    q^{3n-3} - (q^3-1)q^{2n-2} - (q^5-q^4+2q^3-q^2+q)q^{\frac{3(n-1)}{2}} \;\;\\
    + (q^5-q^2)q^n + q^6, &n \equiv 3 \mod 4.
\end{cases}\\
c_4^{(--)[+]}(n) &= \begin{cases}
    q^{3n-4} - (q^4+q^3+q^2)q^{2n-3} + (q^4+2q^2+1)q^{\frac{3}{2}n-1} \\
    - (q^3+q^2+q)q^n + q^6, &n \equiv 0 \mod 4,\\
    q^{3n-4} - (q^4+q^2+1)q^{2n-3} - (q^4-q^3+q^2-q)q^{\frac{3(n-1)}{2}}\;\;\;\\
    + (q^5+q^3+q)q^n - q^6, &n \equiv 1 \mod 4,\\
    q^{3n-4} + (2q^4+2q^3+3q^2+q+1)q^{2n-3} + (q^4+2q^2\\
    +1)q^{\frac{3}{2}n-1} + (q^5+q^4+3q^3+2q^2+2q)q^n + q^6, &n \equiv 2 \mod 4,\\
    q^{3n-4} - (q^3+q^2+q)q^{2n-3} - (q^4-q^3+q^2-q)q^{\frac{3(n-1)}{2}} \\
    + (q^4+q^3+q^2)q^{n} - q^6, &n \equiv 3 \mod 4.
\end{cases}\\
\end{align*}
These formulas fulfill the recursions in Lemma \ref{lem:recursionsk4}. Now lets count $ C_4(4) $. For the first point $ v_1 $, we have $ \lvert \pr^3(K) \rvert $ choices. For the next point $ v_2 $ we can choose every point but $ v_1 $. For the third point $ v_3 $ every point outside of $ v_1 \vee v_2 $ is valid. And for the last point we can choose every point except for the ones in $ v_1 \vee v_2 \vee v_3 $. In conclusion, we have
\[
c_4(4) = (q^3+q^2+q+1)(q^3+q^2+q)(q^3+q^2)q^3.
\]
This agrees with the formula for $ c_4(4) $. We also have
\[
c_4^{s_1s_2s_3}(4) = c_4^{+(-)[s_1]}(4) = c_4^{-(-)[+]}(4) = c_4^{(--)[+]}(4) = 0
\]
for all $ s_1,s_2,s_3 \in \{+,-\} $ except if $ s_1 = s_2 = s_3 = + $. This is because if $ (v_1,v_2,v_3,v_4) $ belongs to any of the corresponding sets, we have $ \{v_1,v_2,v_3,v_4\} $ independent and then no collection of three or four of these points can be dependent. These starting values also agree with our formulas. Finally, we have $ c_4(3) = 0 $, because the condition $ \{v_1,v_2,v_3,v_1 \} $ independent is impossible. Again, this agrees with our formula.
\end{proof}

\begin{cor}\label{cor:k4}
Let $ \lvert K \rvert = q < \infty $. Let $ w \geq 1 $ and $ n = w + 5 $. If $ n $ is odd, then the number of tame $ \SL_4 $-frieze patterns of width $ w $ is
\begin{align*}
f_q(4,n) &= \frac{q^{3n} - (q^3+q^2+q)q^{2n} + (q^5+q^4+q^3)q^n - q^6}{(q^3+q^2+q+1)(q^2+q+1)(q+1)q^6(q-1)^3} \\
&= \frac{(q^{n-1}-1)(q^{n-2}-1)(q^{n-3}-1)}{(q^3+q^2+q+1)(q^2+q+1)(q+1)(q-1)^3}
\end{align*}
\end{cor}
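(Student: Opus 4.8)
The plan is to obtain this as an immediate consequence of the two main structural results proved above, together with a short group-order computation. First I would observe that since $ n $ is odd it is coprime to $ 4 $, so $ g := \gcd(4,n) = 1 $. This places us squarely in case (1) of Theorem \ref{thm:numberOfFriezes} with $ k = 4 $: the number of tame $ \SL_4 $-frieze patterns of width $ w = n - 5 $ is exactly $ \lvert C_4(n) \rvert / \lvert \PGL(4,K) \rvert $.

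Next I would evaluate the numerator. Because $ \gcd(n,4) = 1 $, the first case of Theorem \ref{thm:c4(n)} applies and gives
\[
\lvert C_4(n) \rvert = c_4(n) = q^{3n} - (q^3+q^2+q)q^{2n} + (q^5+q^4+q^3)q^n - q^6,
\]
which is precisely the numerator of the first displayed fraction in the statement. For the denominator I would compute $ \lvert \PGL(4,K) \rvert $. Starting from $ \lvert \GL(4,K) \rvert = \prod_{i=0}^{3}(q^4-q^i) = q^6(q^4-1)(q^3-1)(q^2-1)(q-1) $ and dividing by the order $ q-1 $ of the scalar centre, I obtain
\[
\lvert \PGL(4,K) \rvert = q^6(q^4-1)(q^3-1)(q^2-1) = (q^3+q^2+q+1)(q^2+q+1)(q+1)q^6(q-1)^3,
\]
using the factorisations $ q^4-1=(q-1)(q^3+q^2+q+1) $, $ q^3-1=(q-1)(q^2+q+1) $ and $ q^2-1=(q-1)(q+1) $. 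Substituting both expressions into the quotient from Theorem \ref{thm:numberOfFriezes} yields the first form of the answer.

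Finally, to pass to the factored second form I would verify the identity $ c_4(n) = q^6(q^{n-1}-1)(q^{n-2}-1)(q^{n-3}-1) $. Setting $ x = q^n $, the cubic $ x^3 - (q^3+q^2+q)x^2 + (q^5+q^4+q^3)x - q^6 $ has roots $ q, q^2, q^3 $, since their elementary symmetric functions are $ q^3+q^2+q $, $ q^5+q^4+q^3 $ and $ q^6 $, matching the coefficients; hence it factors as $ (x-q)(x-q^2)(x-q^3) = (q^n-q)(q^n-q^2)(q^n-q^3) = q^6(q^{n-1}-1)(q^{n-2}-1)(q^{n-3}-1) $. Cancelling the factor $ q^6 $ against the one in $ \lvert \PGL(4,K) \rvert $ then produces the second expression. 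There is no genuine obstacle in this corollary: all of the difficulty resides in Theorems \ref{thm:numberOfFriezes} and \ref{thm:c4(n)}, and what remains is only the parity observation $ \gcd(4,n)=1 $ and the routine factorisations described above.
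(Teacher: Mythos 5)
Your proposal is correct and follows exactly the paper's own route: the paper likewise deduces the corollary directly from Theorem \ref{thm:numberOfFriezes} (case $g=1$, since $n$ odd gives $\gcd(4,n)=1$), the first case of Theorem \ref{thm:c4(n)}, and the formula $\lvert \PGL(4,K) \rvert = (q^3+q^2+q+1)(q^2+q+1)(q+1)q^6(q-1)^3$. The only difference is that you spell out the group-order computation and the factorisation $(q^n-q)(q^n-q^2)(q^n-q^3)$ explicitly, which the paper leaves implicit.
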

\begin{proof}
This follows from Theorem \ref{thm:numberOfFriezes} and Theorem \ref{thm:c4(n)} and
\[
\lvert \PGL(4,K) \rvert = (q^3+q^2+q+1)(q^2+q+1)(q+1)q^6(q-1)^3.
\]
\end{proof}

\section{The case $ k = 4 $ and $ n $ even}

\subsection{$ \gcd(k,n) = 2 $}

To count tame $ \SL_4 $-frieze patterns with $ \gcd(4,n) = 2 $, we need to count the set $ C_4^*(n) $ for all $ n \geq 6 $ with $ n \equiv 2 \mod 4 $. Let $ (v_1,\dots,v_n) \in C_4(n) $. Choose a lift $ (V_1,\dots,V_n) $ to the underlying vector space $ K^4 $. Define $ d_i := \det(V_i,V_{i+1},V_{i+2},V_{i+3}) $ for $ i = 1, \dots n $ (with the indices being considered modulo $ n $). Recall that $ (v_1,\dots,v_n) \in C_4^*(n) $ if and only if
\[
d_1 d_3 \dots d_{n-1} = d_2 d_4 \dots d_n,
\]
as long as $ n \equiv 2 \mod 4 $ (there is no sign in the equation since $ \frac{k}{\gcd(k,n)} = 2 $ is even). Like in the previous sections, we will derive a system of recursion to solve for $ c_4^*(n) $. For this purpose, we once again need to define some new sets, variations of the sets in Definition \ref{def:k4}:

\begin{defi}
Let $ n \in \N $ be even. For all of the following sets, let $ (V_1,\dots,V_n) $ resp. $ (V_1,\dots,V_{n-1}) $ be an arbitrary lift to $ K^4 $ and let $ d_i := \det(V_i,V_{i+1},V_{i+2},V_{i+3}) $ for $ i \leq n-4 $. Let $ D_1 := d_1 d_3 \dots d_{n-5} $, $ D_2 := d_2 d_4 \dots d_{n-4} $ and $ D_2^\prime := d_2 d_4 \dots d_{n-6} $. Then define
\begin{alignat*}{3}
C_4^\#(n) := \{ &(v_1,\dots,v_n) \in C_4(n)\mid\\
&D_1 \det(V_{n-3},V_{n-2},V_{n-1},V_n) \det(V_{n-1},V_n,V_1,V_2)\\
&= D_2 \det(V_{n-2},V_{n-1},V_n,V_1) \det(V_n,V_1,V_2,V_3) \},\\
C_4^{+(-)[-]\#}(n) := \{ &(v_1,\dots,v_n) \in C_4^{+(-)[-]}(n) \mid\\
&D_1 \det(V_{n-3},V_{n-2},V_{n-1},V_n) \det(V_{n-2},V_{n-1},V_1,V_2)\\
                                                &= -D_2 \det(V_{n-2},V_{n-1},V_n,V_1) \det(V_{n-2},V_1,V_2,V_3)\},\\
C_4^{(--)[+]\#}(n) := \{ &(v_1,\dots,v_n) \in C_4^{(--)[+]}(n) \mid\\
&D_1 \det(V_{n-3},V_{n-2},V_{n-1},V_n) \det(V_{n-3},V_{n-2},V_1,V_2)\\
                                                &= -D_2 \det(V_{n-3},V_{n-2},V_n,V_1) \det(V_{n-2},V_1,V_2,V_3)\},\\
C_4^{-+-\#}(n-1) := \{ &(v_1,\dots,v_{n-1}) \in C_4^{-+-}(n-1) \mid\\
&D_1 \det(V_{n-3},V_{n-2},V_{n-1},V_2)\\
                                                &= D_2 \det(V_{n-2},V_1,V_2,V_3)\},\\
C_4^{---\#}(n-1) := \{ &(v_1,\dots,v_{n-1}) \in C_4^{---}(n-1) \mid\\
&D_1 \det(V_{n-4},V_{n-1},V_1,V_2)\\
                                                &= D_2^\prime \det(V_{n-4},V_{n-2},V_{n-1},V_1) \det(V_{n-4},V_1,V_2,V_3)\}.
\end{alignat*}
\end{defi}
All of these definitions are independent of the choice of lift. Note that $ C_4^\#(n) = C_4^*(n) $ if $ n \equiv 2 \mod 4 $. We can now develop our system of recursions:

\begin{lem}\label{lem:recursionsk42}
Let $ n \geq 6 $ be even. Then
\begin{align}
c_4^\#(n) = &(q-1)^2c_4(n-1) + 3q(q-1)c_4^{++-}(n-1) + 2q^2c_4^{+--}(n-1) \label{eq:+++2}\\
                     &+ (q-1)q^2c_4^{-+-\#}(n-1) +q^3c_4^{---\#}(n-1), \nonumber\\
c_4^{+(-)[-]\#}(n) = &c_4^{++-}(n-1) + qc_4^{-+-\#}(n-1)\label{eq:+(-)[-]2},\\
c_4^{(--)[+]\#}(n) = &c_4^{+(-)[+]}(n-1) + qc_4^\#(n-2)\label{eq:(--)[+]2},\\
c_4^{-+-\#}(n-1) = &(q-1)c_4^\#(n-2) + qc_4^{+(-)[+]}(n-2) + q^2c_4^{+(-)[-]\#}(n-2) \label{eq:-+-2},\\
c_4^{---\#}(n-1) = &(q-1)c_4^{+--}(n-2) + qc_4^{-(-)[+]}(n-2)\label{eq:---2}\\
                          &+ q(q-1)c_4^{+(-)[-]\#}(n-2) + q^2c_4^{(--)[+]\#}(n-2).\nonumber
\end{align}
\end{lem}
\begin{proof}
Whenever we consider a tuple $ (v_1,\dots,v_m) \in (\pr^3(K))^m $ in this proof for $ m = n $ or $ m = n-1 $, let $ (V_1,\dots,V_m) \in (K^4)^m $ be an arbitrary lift. Let $ d_i := \det(V_i,V_{i+1},V_{i+2},V_{i+3}) $, where $ i $ is considered modulo $ n $ (if $ m = n-1 $ we will only consider values of $ i $ up to $ n-4 $). Let $ D_1 := d_1 d_3 \dots d_{n-5} $, $ D_2 := d_2 d_4 \dots d_{n-4} $ and $ D_2^\prime := d_2 d_4 \dots d_{n-6} $. Also let $ E_1 := v_{m-3} \vee v_{m-2} \vee v_{m-1} $, $ E_2 := v_{m-2} \vee v_{m-1} \vee v_1 $, $ E_3 := v_{m-1} \vee v_1 \vee v_2 $ and $ E_4 := v_1 \vee v_2 \vee v_3 $.\smallskip\\
\eqref{eq:+++2}: Let $ (v_1,\dots,v_n) \in C_4^\# $. Then we have
\[
(v_1,\dots,v_{n-1}) \in C_4^{s_1s_2s_3}(n-1)
\]
with $ s_i \in \{+,-\} $ as in the proof of \eqref{eq:k4+++}. Additionally, we have
\begin{align}
  &D_1 \det(V_{n-3},V_{n-2},V_{n-1},V_n) \det(V_{n-1},V_n,V_1,V_2) \label{eq:double_determinant}\\
= &D_2 \det(V_{n-2},V_{n-1},V_n,V_1) \det(V_n,V_1,V_2,V_3).\nonumber
\end{align}
Let $ \lambda, \mu \in K^* $ and consider the maps
\[
\phi_\lambda : K^4 \to K, V \mapsto \det(V_{n-1},V,V_1,V_2) - \lambda \det(V,V_1,V_2,V_3)
\]
and
\[
\psi_\mu:K^4\to K, V \mapsto D_1 \det(V_{n-3},V_{n-2},V_{n-1},V) - \mu D_2 \det(V_{n-2},V_{n-1},V,V_1).
\]
Note that this is not independent of the choice of the lift $ (V_1,\dots,V_n) $, so we need to fix the lift for this part of the proof. Also note that both maps are linear and that all determinants in their definitions are nonzero, except possibly those containing $ V $. If $ V_n^\prime \in K^4 $ is a vector such that $ V_n^\prime \in \Ker(\phi_\lambda) \cap \Ker(\psi_{\lambda^{-1}}) $ for some $ \lambda \in K^* $, then we will have
\[
\det(V_{n-1},V_n^\prime,V_1,V_2) = \lambda \det(V_n^\prime,V_1,V_2,V_3)
\]
and
\[
D_1 \det(V_{n-3},V_{n-2},V_{n-1},V_n^\prime) = \lambda^{-1} D_2 \det(V_{n-2},V_{n-1},V_n^\prime,V_1).
\]
Multiplying these equations yields \eqref{eq:double_determinant} with $ V_n^\prime $ instead of $ V_n $. On the other hand, if $ V_n^\prime \in K^4 $ is a vector that solves \eqref{eq:double_determinant} in this sense and if both sides of that equation are nonzero, we can set
\[
\lambda := \frac{D_2 \det(V_{n-2},V_{n-1},V_n^\prime,V_1)}{D_1 \det(V_{n-3},V_{n-2},V_{n-1},V_n^\prime)}
\]
and
\[
\mu := \frac{\det(V_n^\prime,V_1,V_2,V_3)}{\det(V_{n-1},V_n^\prime,V_1,V_2)}.
\]
Then we have $ V_n^\prime \in \Ker(\phi_\lambda) \cap \Ker(\psi_\mu) $ and $ \lambda \mu = 1 $. Now let $ V_n^\prime $ be a solution of \eqref{eq:double_determinant} with both sides equal to zero. Then we have $ V_n^\prime \in \langle V_{n-3},V_{n-2},V_{n-1}\rangle $ or $ V_n^\prime \in \langle V_{n-1},V_1,V_2\rangle $ and we also have $ V_n^\prime \in \langle V_{n-2},V_{n-1},V_{1}\rangle $ or $ V_n^\prime \in \langle V_1,V_2,V_3\rangle $. Hence the solutions of \eqref{eq:double_determinant} with both sides nonzero are exactly the points in
\[
\bigcup_{\lambda \in K^*} \Ker(\phi_\lambda) \cap \Ker(\psi_{\lambda^{-1}}) \backslash S,
\]
where
\[S := \langle V_{n-3},V_{n-2},V_{n-1}\rangle \cup \langle V_{n-1},V_1,V_2\rangle \cup \langle V_{n-1},V_1,V_2\rangle \cup \langle V_1,V_2,V_3\rangle.
\]
Now let $ M_\lambda := \pr(\Ker(\phi_\lambda)) $, $ N_\mu := \pr(\Ker(\psi_\mu)) $. Then we have $ (v_1,\dots,v_{n-1},v_n^\prime) \in C_4^\#(n) $ if and only if $ v_n^\prime \in M_\lambda \cap N_{\lambda^{-1}} $ for some $ \lambda \in K^* $ and $ v_n^\prime \notin E_i $ for $ i = 1,2,3,4 $. We separate cases: \smallskip\\
Case 1: $ s_1 = s_3 = + $. \\
The image of $ \phi_\lambda $ is at most $ 1 $-dimensional, it then follows from the dimension formula that the kernel is at least $ 3 $-dimensional. Hence the kernel is either $ 3 $-dimensional or $ K^4 $. Since $ s_3 = + $, we have $ \det(V_{n-1},V_1,V_2,V_3) \neq 0 $, but $ \det(V_{n-1},V_{n-1},V_1,V_2) = 0 $. Thus $ \phi_\lambda(V_{n-1}) \neq 0 $ and $ \Ker(\phi_\lambda) \neq K^4 $. Then the kernel must be $ 3 $-dimensional and $ \dim(M_\lambda) = 2 $ (for all $ \lambda \in K^* $). Similarly, it follows from $ s_1 = + $ that $ \psi_\mu(V_1) \neq 0 $ and thus $ \dim(N_\mu) = 2 $ as well. Moreover, $ v_{n-1} \in N_\mu $, but $ v_{n-1} \notin M_\lambda $, hence $ M_\lambda \neq N_\mu $ and $ G_\lambda := M_\lambda \cap N_{\lambda^{-1}} $ is a projective line for all $ \lambda \in K^* $. \\
If $ v \in E_3 $ and $ V $ is a lift of $ v $, the first determinant product in $ \phi_\lambda(V) $ vanishes. Then, $ \phi_\lambda(V) $ will vanish if and only if the second determinant product also vanishes, which happens if and only if $ v \in E_4 $. The same argument also works the other way around. Hence
\[
E_3 \cap M_\lambda = E_4 \cap M_\lambda = E_3 \cap E_4 = v_1 \vee v_2.
\]
Similarly, we have
\[
E_1 \cap N_\mu = E_2 \cap N_\mu = E_1 \cap E_2 = v_{n-2} \vee v_{n-1}.
\]
It follows that
\[
E_i \cap G_\lambda = E_3 \cap E_4 \cap N_{\lambda^{-1}} = (v_1 \vee v_2) \cap N_{\lambda^{-1}}
\]
for $ i = 3,4 $ and
\[
E_i \cap G_\lambda = E_1 \cap E_2 \cap M_\lambda = (v_{n-2} \vee v_{n-1}) \cap M_\lambda
\]
for $ i = 1,2 $. Since $ v_1 \notin N_{\lambda^{-1}} $ and $ v_{n-1} \notin M_\lambda $, these intersections have only one element each. Therefore, there are at most two points on $ G_\lambda $ that also lie on one of the planes $ E_i $. If $ s_2 = - $, then $ E_2 = E_3 $ and $ E_1,E_2,E_3,E_4 $ all intersect $ G_\lambda $ in the same point. If $ s_2 = +, $ then
\[
E_2 \cap E_3 \cap G_\lambda = E_2 \cap N_{\lambda^{-1}} \cap E_3 \cap M_\lambda = E_1 \cap E_2 \cap E_3 \cap E_4 = \emptyset
\]
(the last equality was shown in the proof of \eqref{eq:k4+++}) and the two intersection points are different. Hence there are $ q-1 $ valid choices for $ v_n $ on $ G_\lambda $ if $ s_2 = + $ and $ q $ valid choices if $ s_2 = - $. \\
Now let $ \lambda_1, \lambda_2 \in K^* $ with $ \lambda_1 \neq \lambda_2 $. Then $ \phi_{\lambda_1} - \phi_{\lambda_2} $ is a nonzero multiple of
\[
V \mapsto \det(V,V_1,V_2,V_3).
\]
Hence $ M_{\lambda_1} \cap M_{\lambda_2} \subseteq E_4 $. In particular, the union
\[
\bigcup_{\lambda \in K^*} (G_\lambda \backslash (E_1 \cup E_2 \cup E_3 \cup E_4))
\]
is disjoint and has exactly $ (q-1)^2 $ elements if $ s_2 = + $ and $ q(q-1) $ elements if $ s_2 = - $. The number of elements of $ C_4^\#(n) $ falling in Case 1 is therefore
\begin{align*}
 &(q-1)^2c_4(n-1) + q(q-1)c_4^{+-+}(n-1) \\
=&(q-1)^2c_4(n-1) + q(q-1)c_4^{++-}(n-1)
\end{align*}
using symmetry. \smallskip\\
Case 2: $ s_1 = +, s_3 = - $. \\
Then $ E_3 = E_4 $ and $ E_4 \subseteq M_\lambda $, since both determinant products in $ \phi_\lambda(V) $ will vanish if $ v \in E_3 = E_4 $. If $ E_4 = M_\lambda $, then there are no valid choices for $ v_n $ with this $ \lambda $. Thus we only need to consider values of $ \lambda $ such that $ \phi_\lambda = 0 $. We have $ \phi_\lambda = 0 $ if and only if $ \phi_\lambda(V) = 0 $ for some $ V \notin \langle V_1,V_2,V_3 \rangle $. There is exactly one valid value $ \lambda^* $, which is given by
\[
\lambda^* := \frac{\det(V_{n-1},V,V_1,V_2)}{\det(V,V_1,V_2,V_3)}
\]
for any such a vector $ V $ and is independent of the choice of $ V $. Then $ M_{\lambda^*} = \pr^3(K) $ and thus $ M_{\lambda^*} \cap N_{(\lambda^*)^{-1}} = N_{(\lambda^*)^{-1}} $ and this is a projective hyperplane since $ s_1 = + $. We need to count $ N_{(\lambda^*)^{-1}} \backslash (E_1 \cup E_2 \cup E_3 \cup E_4) $. We still have
\[
E_i \cap N_{(\lambda^*)^{-1}} = E_1 \cap E_2 = v_{n-2} \vee v_{n-1}
\]
for $ i = 1,2 $. If $ s_2 = - $, then $ E_2 = E_3 = E_4 $ and all four planes intersect $ N_{(\lambda^*)^{-1}} $ in the same projective line. Hence there are $ q^2 $ suitable choices for $ v_n $. If $ s_2 = + $, then $ v_{n-2} \notin E_3 $. But we have $ v_{n-2} \in N_{(\lambda^*)^{-1}} $ and therefore $ E_3 \neq N_{(\lambda^*)^{-1}} $. Hence $ E_3 \cap N_{(\lambda^*)^{-1}} $ is a projective line, and it is not the same line as $ E_2 \cap N_{(\lambda^*)^{-1}} $. On the other hand $ E_3 \cap N_{(\lambda^*)^{-1}} = E_4 \cap N_{(\lambda^*)^{-1}}$ since $ E_3 = E_4 $. Accordingly, we need to exclude two different lines on $ N_{(\lambda^*)^{-1}} $ and $ (q-1)q $ options for $ v_n $ are left. The number of elements of $ C_4^\#(n) $ that fall in case 2 is thus
\[
(q-1)qc_4^{++-}(n-1) + q^2c_4^{+--}(n-1).
\]
Case 3: $ s_1 = - , s_3 = + $.\\
This is symmetrical to case 2; instead of a unique choice for $ \lambda $ we have a unique choice for $ \mu $, but the calculations are essentially the same. We again have $ (q-1)q $ or $ q^2 $ choices for $ v_n $, depending on $ s_2 $. The number of elements in case 3 is then
\begin{align*}
 &(q-1)qc_4^{-++}(n-1) + q^2c_4^{--+}(n-1) \\
=&(q-1)qc_4^{++-}(n-1) + q^2c_4^{+--}(n-1),
\end{align*}
again using symmetry. \smallskip\\
Case 4: $ s_1 = s_3 = - $, $ s_2 = + $. \\
Then there are unique values $ \lambda^* $ and $ \mu^* $ such that $ M_{\lambda^*} \neq E_4 $ and $ N_{\mu^*} \neq E_1 $. Since both are necessary conditions for finding a solution, it follows that we can only find a valid $ v_n $ if $ \lambda^* \mu^* = 1 $. In other words, there has to be a $ \lambda \in K^* $ such that $ \phi_\lambda = \psi_{\lambda^{-1}} = 0 $. Since $ E_3 = E_4 \subseteq M_\lambda $ and $ v_{n-2} \notin E_3 $, the condition $ \phi_\lambda = 0 $ is equivalent to $ \phi_\lambda(V_{n-2}) = 0 $. Hence we have
\begin{equation}\label{eq:phi0}
\det(V_{n-2},V_{n-1},V_1,V_2) = -\lambda \det(V_{n-2},V_1,V_2,V_3)
\end{equation}
for a specific $ \lambda $. Similarly, $ \psi_{\lambda^{-1}} = 0 $ is equivalent to $ \psi_{\lambda^{-1}}(V_2) = 0 $ and thus we must have
\begin{equation}\label{eq:psi0}
D_1 \det(V_{n-3},V_{n-2},V_{n-1},V_2) = -\lambda^{-1} D_2 \det(V_{n-2},V_{n-1},V_1,V_2)
\end{equation}
with the same $ \lambda $. Multiplying these equations and canceling out $ \det(V_{n-2},V_{n-1},V_1,V_2) $ yields
\begin{equation}\label{eq:+++to-+-}
D_1 \det(V_{n-3},V_{n-2},V_{n-1},V_2) = D_2 \det(V_{n-2},V_1,V_2,V_3).
\end{equation}
This means that $ (v_1,\dots,v_{n-1}) \in C_4^{-+-\#}(n-1) $. On the other hand, if \eqref{eq:+++to-+-} holds, then rearranging for $ \det(V_{n-2},V_1,V_2,V_3) $, substituting into \eqref{eq:phi0} and rearranging again yields \eqref{eq:psi0}.
Hence any element of $ C_4^{-+-\#}(n-1) $ will fulfill the necessary condition $ \phi_\lambda = \psi_{\lambda^{-1}} = 0 $ for some $ \lambda \in K^* $. Then $ M_\lambda = N_{\lambda^{-1}} = \pr^3(K) $ and the determinant condition becomes trivial. Hence we have the same number of choices for $ v_n $ as in the case $ s_1=s_3=- $, $ s_2 = + $ in the proof of \eqref{eq:k4+++}, namely $ (q-1)q^2 $. \smallskip\\
Case 5: $ s_1 = s_2 = s_3 = - $.\\
This is similar to the last case, except that now $ \phi_\lambda = \psi_{\lambda^{-1}} = 0 $ is equivalent to $ \phi_\lambda(V_{n-4}) = \psi_{\lambda^{-1}}(V_{n-4}) = 0 $, since $ v_{n-4} \notin E_1 = E_2 = E_3 = E_4 $. Multiplying the resulting equations and canceling out $ \det(V_{n-4},V_{n-3},V_{n-2},V_{n-1}) $ now results in
\begin{equation}\label{eq:+++to---}
D_1 \det(V_{n-4},V_{n-1},V_1,V_2) = D_2^\prime \det(V_{n-4},V_{n-2},V_{n-1},V_1) \det(V_{n-4},V_1,V_2,V_3).
\end{equation}
Hence $ (v_1,\dots,v_{n-1}) \in C_4^{---\#}(n-1) $. Just like before, \eqref{eq:+++to---} also implies $ \phi_\lambda = \psi_{\lambda^{-1}} = 0 $ for some $ \lambda \in K^* $, and then we have $ q^3 $ choices for $ v_n $ like in the case $ s_1 = s_2 = s_3 = - $ in the proof of \eqref{eq:k4+++}. \\
Putting together all cases, the desired recursion follows. \medskip\\
\eqref{eq:+(-)[-]2}: Let $ (v_1,\dots,v_n) \in C_4^{+(-)[-]\#}(n) $. Then
\[
(v_1,\dots,v_{n-1}) \in C_4^{s+-}(n-1).
\]
with $ s \in \{+,-\} $. Additionally, consider the map
\begin{align*}
\phi: K^4 \to K; V \mapsto &D_1 \det(V_{n-3},V_{n-2},V_{n-1},V) \det(V_{n-2},V_{n-1},V_1,V_2)\\
                            + &D_2 \det(V_{n-2},V_{n-1},V,V_1) \det(V_{n-2},V_1,V_2,V_3).
\end{align*}
Let $ H := \pr(\Ker \phi) $. Then the determinant condition can be rephrased as $ v_n \in H $.\\
Let $ s = + $, then $ \{v_{n-3},v_{n-2},v_{n-1},v_1\} $ is independent and $ \phi(V_1) \neq 0 $. Hence $ \dim H = 2 $. Moreover, we have $ v_1 \notin H $, but $ v_1 \in v_1 \vee v_2 $. Therefore, $ \{P\} := H \cap (v_1 \vee v_2) $ is a single projective point and since $ \{v_n,v_1,v_2\} $ has to be dependent, $ P $ is the only possible value for $ v_n $. $ P $ only dependents on $ (v_1,\dots,v_{n-1}) $, so there is at most one suitable choice of $ v_n $ for any $ (v_1,\dots,v_{n-1}) \in C_4^{++-}(n-1) $. On the other hand, we have $ E_1 \cap H = E_1 \cap E_2 = E_2 \cap H $ and thus
\begin{align*}
E_1 \cap \{P\} &= E_1 \cap H \cap (v_1 \vee v_2) = E_1 \cap E_2 \cap (v_1 \vee v_2) = E_1 \cap \{v_1\} = \emptyset\\
               &= E_2 \cap \{P\}.
\end{align*}
This shows that $ P $ will always work and that there is exactly one choice of $ v_n $ for all $ (v_1,\dots,v_{n-1}) \in C_4^{++-}(n-1) $. \\
Now let $ s = - $. Then $ E_1 = E_2 \subseteq H $; and since equality would imply $ v_n \in E_1 $ we must have $ H = \pr^3(K) $. This condition is equivalent to $ \phi(V_2) = 0 $, since $ \{v_{n-2},v_{n-1},v_1,v_2\} $ is independent. Hence the condition becomes
\[
D_1 \det(V_{n-3},V_{n-2},V_{n-1},V_2) = D_2 \det(V_{n-2},V_1,V_2,V_3)
\]
after canceling out $ \det(V_{n-2},V_{n-1},V_1,V_2) $. This is in turn equivalent to $ (v_1\dots,v_{n-1}) \in C_4^{-+-\#}(n-1) $. Then the determinant condition becomes trivial and we have $ q $ choices for $ v_n $ as in the proof of \eqref{eq:k4+(-)[-]}, Case $ s = - $. The claimed recursion follows.\medskip\\
\eqref{eq:(--)[+]2}: Let $ (v_1,\dots,v_n) \in C_4^{(--)[+]\#}(n) $. Then we have $ (v_1,\dots,v_{n-1}) \in C_4^{+(-)[+]}(n-1) $ if $ v_{n-1} \neq v_1 $ and $ (v_1,\dots,v_{n-2}) \in C_4(n-2) $ if $ v_{n-1} = v_1 $. We also have the determinant condition
\begin{align}
&D_1 \det(V_{n-3},V_{n-2},V_{n-1},V_n) \det(V_{n-3},V_{n-2},V_1,V_2)\nonumber\\
= &-D_2 \det(V_{n-3},V_{n-2},V_n,V_1) \det(V_{n-2},V_1,V_2,V_3),\label{eq:(--)condition}
\end{align}
which we can rewrite as $ v_n \in H $ if we set
\begin{align*}
\phi: K^4 \to K; V \mapsto &D_1 \det(V_{n-3},V_{n-2},V_{n-1},V) \det(V_{n-3},V_{n-2},V_1,V_2)\\
                         + &D_2 \det(V_{n-3},V_{n-2},V,V_1) \det(V_{n-2},V_1,V_2,V_3)
\end{align*}
and $ H := \pr(\Ker \phi) $. \\
Let $ v_{n-1} \neq v_1 $. In this case $ \{v_{n-3},v_{n-2},v_{n-1},v_1\} $ is independent and $ \phi(V_1) \neq 0 $. In particular, $ \dim H = 2 $. Also, $ v_1 \notin H $ implies that $ \{P\} := H \cap (v_1 \vee v_2) $ is a single projective point, and $ P $ is the only point that can fulfill all conditions on $ v_n $. On the other hand, we have
\[
E_1 \cap \{P\} = H \cap E_1 \cap (v_1 \vee v_2) = H \cap \{v_{n-1}\} = \emptyset.
\]
Moreover, we have $ \{v_{n-1},v_1,v_2\} $ dependent and $ P \in v_1 \vee v_2 $, hence $ \{v_{n-1},P,v_1\} $ is dependent; and $ v_1 \notin H $ implies $ P \neq v_1 $. Therefore, $ (v_1,\dots,v_{n-1},P) \in C_4^{(--)[+]\#}(n) $ for any $ (v_1,\dots,v_{n-1}) \in C_4^{+(-)[+]}(n-1) $. \\
Now let $ v_{n-1} = v_1 $. W.l.o.g. we can choose the same lift $ V_{n-1} = V_1 $ for both $ v_{n-1} $ and $ v_1 $. Then we have $\det(V_{n-3},V_{n-2},V_{n-1},V_n) = - \det(V_{n-3},V_{n-2},V_n,V_1) $. After substituting this into \eqref{eq:(--)condition}, canceling out and using $ V_{n-1} = V_1 $ again, we get the equivalent condition
\[
D_1 \det(V_{n-3},V_{n-2},V_1,V_2) = D_2^\prime \det(V_{n-4},V_{n-3},V_{n-2},V_1) \det(V_{n-2},V_1,V_2,V_3).
\]
This is exactly the condition for $ (v_1,\dots,v_{n-2}) \in C_4^\#(n-2) $. Moreover, we have $ q $ viable choices for $ v_n $ like in the proof of \eqref{eq:k4(--)} (and $ v_{n-1} $ is uniquely determined by $ v_{n-1} = v_1 $). Hence we get the desired recursion.\medskip\\
\eqref{eq:-+-2}: Let $ (v_1,\dots,v_{n-1}) \in C_4^{-+-\#}(n-1) $. Like in the proof of \eqref{eq:k4-+-} we have
\[
(v_1,\dots,v_{n-2}) \in C_4(n-2) \cup C_4^{(-)+[s]}(n-2)
\]
with $ s \in \{+,-\} $. The first case happens if $ \{v_{n-3},v_{n-2},v_1\} $ is independent. The latter two cases happen if this set is dependent. Consider the map
\begin{align*}
\phi: K^4 &\to K\\
V &\mapsto D_1 \det(V_{n-3},V_{n-2},V,V_2) - D_2^\prime \det(V_{n-2},V_1,V_2,V_3) \det(V_{n-4},V_{n-3},V_{n-2},V).
\end{align*}
Let $ H:= \pr(\Ker\phi) $. Then the determinant condition \eqref{eq:+++to-+-} required by $ C_4^{-+-\#}(n-1) $ is equivalent to $ v_{n-1} \in H $.\smallskip\\
Case 1: $ \{v_{n-3},v_{n-2},v_1\} $ is independent.\\
Then the suitable choices for $ v_{n-1} $ are exactly the elements of $ H \cap E_2 \cap E_4 \backslash (E_1 \cup E_3) $. Note that $ \phi(V_{n-3}) = \phi(V_{n-2}) = 0 $. If $ \phi(V_1) \neq 0 $, then $ E_2 \cap H = v_{n-3} \vee v_{n-2} \subseteq E_1 $. But then
\[
H \cap E_2 \cap E_4 \backslash (E_1 \cup E_3) = \emptyset
\]
and there are no values of $ v_{n-1} $ that fulfill all of our conditions. Hence we must have $ \phi(V_1) = 0 $ or equivalently
\[
D_1 \det(V_{n-3},V_{n-2},V_1,V_2) = D_2^\prime \det(V_{n-4},V_{n-3},V_{n-2},V_1) \det(V_{n-2},V_1,V_2,V_3).
\]
This is equivalent to $ (v_1,\dots,v_{n-2}) \in C_4^\#(n-2) $. On the other hand, if we have $ \phi(V_1) = 0 $, then $ E_2 \subseteq H $ and consequently
\[
H \cap E_2 \cap E_4 \backslash (E_1 \cup E_3) = E_2 \cap E_4 \backslash (E_1 \cup E_3).
\]
In the proof of \eqref{eq:k4-+-} we showed that this set has $ q-1 $ elements. \smallskip\\
Case 2: $ \{v_{n-3},v_{n-2},v_1\} $ dependent and $ s = + $.\\
$ s = + $ implies $ \{v_{n-4},v_{n-3},v_{n-2},v_2\} $ independent. Hence, $ \phi(V_2) \neq 0 $ and $ \dim H = 2 $. Moreover, since $ v_2 \notin H $, $ G := H \cap E_4 $ is a projective line and only elements of this line can be viable choices for $ v_{n-1} $. More precisely, the viable choices are exactly the elements of $ G \backslash (E_1 \cup E_3) $ (we do not need to consider $ E_2 $, since $ \{v_{n-3},v_{n-2},v_1\} $ dependent implies $ \{v_{n-3},v_{n-2},v_{n-1},v_1\} $ dependent regardless of $ v_{n-1} $).\\
We have $ E_1 \cap H = E_1 \cap (v_{n-3} \vee v_{n-2} \vee v_2) = v_{n-3} \vee v_{n-2} $ and thus
\[
E_1 \cap G = E_1 \cap H \cap E_4 = (v_{n-3} \vee v_{n-2}) \cap E_4 = \{v_1\}.
\]
Here we use $ v_{n-2} \notin E_4 $ and $ v_1 \in v_{n-3} \vee v_{n-2} $. We also have
\[
E_3 \cap G = E_3 \cap E_4 \cap H = (v_1 \vee v_2) \cap H = \{v_1\}.
\]
It follows that $ \lvert G \backslash (E_1 \cup E_3) \rvert = q $. \smallskip\\
Case 3: $ \{v_{n-3},v_{n-2},v_1\} $ dependent and $ s = - $.\\
$ s = - $ implies $ v_2 \in E_1 $ and thus $ E_1 = v_{n-3} \vee v_{n-2} \vee v_2 $ ($ \{v_{n-3},v_{n-2},v_2\} $ is independent since $ v_{n-3} \vee v_{n-2} = v_{n-2} \vee v_1 $ and $ \{v_{n-2},v_1,v_2,v_3\} $ is independent by definition of $ C_4^{(-)+[-]}(n-2) $). Thus we have $ E_1 \subseteq H $. If we had equality, then there would be no suitable choice for $ v_{n-1} $. Hence, only the the case $ H = \pr^3(K) $ --or equivalently $ \phi = 0 $-- needs to be considered. We have $ v_3 \notin  E_1 = v_{n-2} \vee v_1 \vee v_2 $, therefore $ \phi = 0 $ is equivalent to $ \phi(V_3) = 0 $, which is in turn equivalent to
\[
D_1 \det(V_{n-3},V_{n-2},V_2,V_3) = - D_2^\prime \det(V_{n-2},V_1,V_2,V_3) \det(V_{n-4},V_{n-3},V_{n-2},V_3).
\]
After applying the bijection
\[
C_4^{(-)+[-]}(n-2) \to C_4^{+(-)[-]}(n-2); (v_1,\dots,v_{n-2}) \mapsto (v_{n-2},\dots,v_1)
\]
the equation becomes
\[
D_1 \det(V_{n-4},V_{n-3},V_1,V_2) = - D_2^\prime \det(V_{n-4},V_{n-3},V_{n-2},V_1) \det(V_{n-4},V_1,V_2,V_3).
\]
This is just the determinant condition for $ C_4^{+(-)[-]\#}(n-2) $. Hence the elements in the image of $ C_4^{+(-)[-]\#}(n-2) $ under the bijection are the only elements in $ C_4^{(-)+[-]}(n-2) $ that can be extended to elements of $ C_4^{-+-\#}(n-1) $. On the other hand, $ (v_{n-2},\dots,v_1) \in C_4^{+(-)[-]\#}(n-1) $ implies $ \phi = 0 $ and thus the determinant condition on $ C_4^{-+-\#}(n-1) $ becomes trivial. Then, we have $ q^2 $ choices for $ v_{n-1} $ just like in the proof of \eqref{eq:k4-+-}. Combining all cases now yields \eqref{eq:-+-2}. \medskip\\
\eqref{eq:---2}: Let $ (v_1,\dots,v_{n-1}) \in C_4^{---\#}(n-1) $. Then
\[
(v_1,\dots,v_{n-2}) \in C_4^{+--}(n-2) \cup C_4^{+(-)[-]}(n-2) \cup C_4^{(-)-[+]}(n-2) \cup C_4^{(--)[+]}(n-2).
\]
Let $ M_1 := \{v_{n-3},v_{n-2},v_1\} $ and $ M_2 := \{v_{n-2},v_1,v_2\} $. Also, consider the map
\[
\phi: K^4 \to K; V \mapsto D_1 \det(V_{n-4},V, V_1,V_2) - D_2^\prime \det(V_{n-4},V_{n-2},V,V_1) \det(V_{n-4},V_1,V_2,V_3)
\]
and set $ H := \pr(\Ker \phi) $. Then for $ (v_1,\dots,v_{n-1}) \in C_4^{---}(n-1) $ we have $ (v_1,\dots,v_{n-1}) \in C_4^{---\#}(n-1) $ if and only if $ v_{n-1} \in H $. Additionally, we require $ v_{n-1} \in E_4 $, but $ v_{n-1} $ must not lie in $ E_1 $, $ v_{n-2} \vee v_1 $ or $ v_1 \vee v_2 $.\\
If $ M_2 $ is independent, then $ \{v_{n-4},v_{n-2},v_1,v_2\} $ is independent since $ v_{n-4} \notin E_4 = v_{n-2} \vee v_1 \vee v_2 $. It follows that $ \phi(V_2) \neq 0 $, hence $ \dim H = 2 $.\\
If $ M_2 $ is dependent, then $ v_{n-4} \vee v_{n-2} \vee v_1 = v_{n-4} \vee v_1 \vee v_2 \subseteq H $. If we had equality, then we would have $ v_{n-1} \in (v_{n-4} \vee v_1 \vee v_2) \cap E_4 = v_1 \vee v_2 $, which is not allowed. Hence, $ H = \pr^3(K) $.\smallskip\\
Case 1: $ M_2 $ independent.\\
Then, $ \dim H = 2 $ and $ v_2 \notin H $, but $ v_2 \in E_4 $. Thus $ H \neq E_4 $ and $ G := H \cap E_4 $ is a projective line. Also,
\[
E_1 \cap G = E_1 \cap E_4 \cap H = (v_{n-3} \vee v_{n-2}) \cap H. 
\]
Since $ v_{n-2} \notin H $, this intersection is a single point. If $ M_1 $ is dependent, the intersection is exactly $ \{v_1\} $, otherwise it is a different point. We also have
\[
(v_{n-2} \vee v_1) \cap G = (v_{n-2} \vee v_1) \cap H = \{v_1\}
\]
and
\[
(v_1 \vee v_2) \cap G = (v_1 \vee v_2) \cap H = \{v_1\}.
\]
Hence we have to exclude $ 1 $ or $ 2 $ points on $ G $, depending on $ M_1 $, to fulfill all conditions and we get $ q-1 $ resp. $ q $ values for $ v_{n-1} $. Accordingly, the number of elements of $ C_4^{---\#}(n-1) $ falling into Case 1 is
\[
(q-1)c_4^{+--}(n-2) + qc_4^{-(-)[+]}(n-2),
\]
using the symmetry $ c_4^{(-)-[+]}(n-2) = c_4^{-(-)[+]}(n-2) $. \smallskip\\
Case 2: $ M_2 $ dependent, $ M_1 $ independent. Then $ (v_1,\dots,v_{n-2}) $ has to be such that $ \phi = 0 $. Since $ v_{n-4} \vee v_{n-2} \vee v_1 \subset H $ and $ v_{n-3} \notin v_{n-4} \vee v_{n-2} \vee v_1 $, $ \phi = 0 $ is equivalent to $ \phi(V_{n-3}) = 0 $, which is in turn equivalent to
\[
D_1 \det(V_{n-4},V_{n-3},V_1,V_2) = - D_2^\prime \det(V_{n-4},V_{n-3},V_{n-2},V_1) \det(V_{n-4},V_1,V_2,V_3).
\]
This is exactly the condition for $ (v_1,\dots,v_{n-2}) \in C_4^{+(-)[-]\#}(n-2) $. The determinant condition then becomes trivial and thus we have the same number of viable choices for $ v_{n-1} $ as in the proof of \eqref{eq:k4---} Case 2, namely $ (q-1)q $. \smallskip\\
Case 3: $ M_1 $ and $ M_2 $ are both dependent.\\
This is similar to the previous case, except we use $ V_{n-5} $ to test $ \phi = 0 $ (since $ v_{n-5} \notin E_1 = v_{n-4} \vee v_{n-2} \vee v_1 \subset H $). The resulting condition is then
\[
D_1 \det(V_{n-5},V_{n-4},V_1,V_2) = - D_2^\prime \det(V_{n-5},V_{n-4},V_{n-2},V_1) \det(V_{n-4},V_1,V_2,V_3),
\]
or equivalently $ (v_1,\dots,v_{n-2}) \in C_4^{(--)[+]\#}(n-2) $. Like in the proof of \eqref{eq:k4---} Case 4, we have $ q^2 $ choices for $ v_{n-1} $.\\
The claimed recursion now follows from the three cases.
\end{proof}

Now we can calculate $ c_4^\#(n) $.

\begin{theorem}\label{thm:c4(n)gcd2}
Let $ n \geq 6 $ be even, but not divisible by $ 4 $. Then $ c_4^*(n) = c_4^\#(n) $ is given by
\begin{align*}
(q-1)c_4^*(n) = q^{3n} &+ (q^5-q^4-q^3-4q^2-2q-2)q^{2n}\\
&- (2q^5-2q^4-2q^3-6q^2-4q-4)q^{\frac{3}{2}n+1}\\
&+ (q^5-q^4-q^3-4q^2-2q-2)q^{n+2} + q^6.
\end{align*}
\end{theorem}

\begin{proof}
If $ n \geq 4 $ is divisible by $ 4 $, then
\begin{align*}
(q-1)c_4^\#(n) = q^{3n} &+ (q^5-q^4+3q^3+2q-2)q^{2n} + (2q^5-2q^4+2q^3-6q^2-4)q^{\frac{3}{2}n+1}\\
&+ (q^5-q^4+3q^3+2q-2)q^{n+2} + q^6.
\end{align*}
Together with the formula given in the theorem we now have an explicit formula for $ c_4^\#(n) $ for all even $ n \geq 4 $. An explicit formula for $ c_4^{+(-)[-]\#}(n) $ is given by
\begin{align*}
(q-1)c_4^{+(-)[-]\#}(n) = q^{3n-3} &+ (q^5-q^4-2q^2-1)q^{2n-2}\\
&- (q^6-2q^5+q^4-3q^3+2q^2-q+2)q^{\frac{3}{2}n-1}\\
&- (q^5+q^3-2q^2-q-2)q^{n+1} - q^6
\end{align*}
if $ n $ is divisible by $ 4 $ and
\begin{align*}
(q-1)c_4^{+(-)[-]\#}(n) =q^{3n-3} &+ (q^5+q^3-2q^2-q-2)q^{2n-2}\\
&+ (q^6-2q^5+q^4-3q^3+2q^2-q+2)q^{\frac{3}{2}n-1}\\
&- (q^5-q^4-2q^2-1)q^{n+1} - q^6
\end{align*}
otherwise. Note that the determinant condition in the definition of $ C_4^\#(n) $ is trivial in the case $ n = 4 $. Therefore, we have $ c^\#_4(4) = c_4(4) $. With \eqref{eq:(--)[+]2} and the explicit formula for $ c_4^{+(-)[+]}(n) $ from the previous section we can easily calculate an explicit formula for $ c_4^{(--)[+]\#}(n) $ for $ n \geq 6 $ even. For $ n = 4 $ we also have $ C_4^{+(-)[-]\#}(4) \subseteq C_4^{+(-)[-]}(4) = \emptyset $ and  $ C_4^{(--)[+]\#}(4) \subseteq C_4^{(--)[+]}(4) = \emptyset $ which implies $ c_4^{(--)[+]\#}(4) = c_4^{+(-)[-]\#}(4) = 0 $. Then, \eqref{eq:-+-2} and \eqref{eq:---2} can be used to derive explicit formulas for $ c_4^{-+-\#}(n) $ and $ c_4^{---\#}(n) $ for all odd $ n \geq 5 $. It can then be verified that these formulas and the formulas from the previous section fulfill the recursions in Lemma \ref{lem:recursionsk42}. Hence the given formulas must be the correct solutions.
\end{proof}

\begin{cor}\label{cor:k4gcd2}
Let $ n \geq 6 $ with $ \gcd(4,n) = 2 $ and $ \lvert K \rvert = q $. Then the number of tame $ \SL_4 $-frieze patterns over $ K $ with width $ n - 5 $ is
\begin{align*}
f_q(4,n) = \frac{q^{3n} + f(q)q^{2n} - g(q)q^{\frac{3}{2}n+1} + f(q)q^{n+2} + q^6}{(q^3+q^2+q+1)(q^2+q+1)(q+1)q^6(q-1)^3},
\end{align*}
where
\begin{align*}
f(q) &:= q^5-q^4-q^3-4q^2-2q-2,\\
g(q) &:= 2q^5-2q^4-2q^3-6q^2-4q-4.
\end{align*}
\end{cor}
\begin{proof}
This is the combination of Theorem \ref{thm:c4(n)gcd2} and Theorem \ref{thm:numberOfFriezes} together with
\[
\lvert \PGL(4,K) \rvert = (q^3+q^2+q+1)(q^2+q+1)(q+1)q^6(q-1)^3.
\]
\end{proof}

\subsection{$ \gcd(4,n) = 4 $}
After the previous subsection, it only remains to count tame $ \SL_4 $-frieze patterns with period $ n $ where $ 4 \mid n $. Like before, we need to count the corresponding set $ C_4^*(n) $. Choose a lift $ (V_1,\dots,V_n) $ for an element $ (v_1,\dots,v_n) \in C_4(n) $ and set $ d_i := \det(V_i,V_{i+1},V_{i+2},V_{i+3}) $ (with the indices considered modulo $ n $). Then $ (v_1,\dots,v_n) \in C_4^*(n) $ is equivalent to
\[
d_1 d_5 \dots d_{n-3} = -d_2 d_6 \dots d_{n-2} = d_3 d_7 \dots d_{n-1} = -d_4 d_8 \dots d_n.
\]
Here we have negative signs since $ k $ is even and $ \frac{k}{\gcd(k,n)} = 1 $ is odd. By setting $ D_i := d_i d_{i+4} \dots d_{n-8+i} $ for $ i \in \{1,2,3,4\} $, we can also abbreviate this to
\[
D_1 d_{n-3} = -D_2 d_{n-2} = D_3 d_{n-1} = -D_4 d_n.
\]
Since $ C_4^*(n) $ has $ 3 $ determinant equations, our recursions will lead us to all kind of different sets with $ 3 $, $ 2 $, or $ 1 $ determinant equation. Moreover, some of the resulting sets will have a different sign in their equations. The following definition covers all the variation of $ C_4(n) $ that need to be considered:
\begin{defi}
Let $ n \in 4 \N $. Let $ M \subset \{1,2,3,4\} $ be nonempty. Let $ M_1,M_2 \subset \{1,2,3,4\} $ be nonempty disjoint subsets. For $ (v_1,\dots,v_n) \in C_4(n) $ let $ (V_1,\dots,V_n) $ be an arbitrary lift and define $ d_i $ and $ D_i $ like above. Define 
\begin{enumerate}
    \item[(i)] $ C_4^{M+}(n) := \{(v_1,\dots,v_n) \in C_4(n) \mid D_i d_{n-4+i} = D_j d_{n-4+j} \text{ for } i,j \in M\} $,
    \item[(ii)] $ C_4^{M-}(n) := \{(v_1,\dots,v_n) \in C_4(n) \mid D_i d_{n-4+i} = (-1)^{j-i}D_j d_{n-4+j} \text{ for } i,j \in M\} $,
    \item[(iii)] $ C_4^{M_1|M_2\pm}(n) := C_4^{M_1\pm}(n) \cap C_4^{M_2\pm}(n) $.
\end{enumerate}
Denote the cardinalities of these sets with $ c_4^{M\pm}(n) $ resp. $ c_4^{M_1|M_2\pm}(n) $. If $ M = \{1,3\} $ or $ M = \{2,4\} $ we omit the sign since it does not make a difference then. We will usually abbreviate this notation by omitting the brackets and commas in $ M $ resp. in $ M_1 $ and $ M_2 $. For instance, we write $ C_4^{12+}(n) $ instead of $ c_4^{\{1,2\}+}(n) $.
\end{defi}

\begin{rem}
These definitions are independent of the choice of $ (V_1,\dots,V_n) $. We clearly have $ C_4^{1234-}(n) = C_4^*(n) $. Also note that $ c_4^{12\pm}(n) = c_4^{23\pm}(n) = c_4^{34\pm}(n) = c_4^{14\pm}(n) $ via the map
\[
(v_1,\dots,v_n) \to (v_2,\dots,v_n,v_1).
\]
Similarly, we have $ c_4^{123\pm}(n) = c_4^{234\pm}(n) = c_4^{134\pm}(n) = c_4^{124\pm}(n) $, $ c_4^{12|34\pm}(n) = c_4^{14|23\pm}(n) $ and $ c_4^{13}(n) = c_4^{24}(n) $ via the same map.
\end{rem}

In addition, we will need versions of the other sets from Definition \ref{def:k4} with variations of the conditions $ D_i d_{n-4+1} = D_j d_{n-4+j} $. For now, we restrict ourselves to sets with only one such condition. Our first goal will be to derive a set of recursions for $ c_4^{23\pm}(n) $. The next definition will give the new sets we have to define for that purpose. After we have found a set of recursions for $ c_4^{23\pm}(n) $, we will focus on $ c_4^{13}(n), $ then $ c_4^{234\pm}(n) $, then $ c_4^{12|34\pm}(n) $ and finally $ c_4^{1234\pm}(n) $.

\begin{defi}
Let $ n \in 4\N $. For any tuple $ (v_1,\dots,v_m) \in (\pr^3(K))^m $ with $ m \in \{n,n-1,n-2,n-3\} $ involved in the definition of a set assume that $ (V_1,\dots,V_m) $ is an arbitrary lift to $ (K^4)^m $. We set $ d_i := \det(V_i,V_{i+1},V_{i+2},V_{i+3}) $ for $ i \in [n] $ as well as $ D_j := d_j d_{j+4} \dots d_{n-8+j} $ and $ D_j^\prime := d_j d_{j+4} \dots d_{n-12+j} $ for $ j \in \{1,2,3,4\} $. Let $ s, s^\prime \in \{+,-\} $. Then define
\begin{align*}
C_4^{++-Ms}(n) := \{&(v_1,\dots,v_n) \in C_4^{++-}(n) \mid\\
&D_i d_{n-4+i} = (s)^{j-i}D_j d_{n-4+j} \text{ for } i,j \in M\}\\
\intertext{with $ M \subseteq \{1,2,3\} $. Also define}
C_4^{+--12s}(n) := \{&(v_1,\dots,v_n) \in C_4^{+--}(n) \mid D_1 d_{n-3} = s D_2 d_{n-2}\},\\
C_4^{+(-)[s^\prime]12s}(n) := \{&(v_1,\dots,v_n) \in C_4^{+(-)[s^\prime]}(n) \mid D_1 d_{n-3} = s D_2 d_{n-2}\},\\
C_4^{++-34s}(n-1) := \{&(v_1,\dots,v_{n-1}) \in C_4^{++-}(n-1) \mid\\
&D_3 \det(V_{n-2},V_{n-1},V_1,V_2) = s D_4 \det(V_{n-2},V_1,V_2,V_3)\},\\
C_4^{+--23s}(n-1) := \{&(v_1,\dots,v_{n-1}) \in C_4^{+--}(n-1) \mid\\
&D_2 \det(V_{n-3},V_{n-2},V_{n-1},V_1) = s D_3 \det(V_{n-3},V_{n-1},V_1,V_2)\},\\
C_4^{-+-12s}(n-1) := \{&(v_1,\dots,v_{n-1}) \in C_4^{-+-}(n-1) \mid\\
&D_1 \det(V_{n-3},V_{n-2},V_{n-1},V_2) = s D_2 \det(V_{n-2},V_{n-1},V_1,V_2)\},\\
C_4^{---23s}(n-1) := \{&(v_1,\dots,v_{n-1}) \in C_4^{---}(n-1) \mid\\
&D_2 \det(V_{n-4},V_{n-2},V_{n-1},V_1) = s D_3 \det(V_{n-4},V_{n-1},V_1,V_2)\},\\
C_4^{-(-)[+]12s}(n-1) := \{&(v_1,\dots,v_{n-1}) \in C_4^{-(-)[+]}(n-1) \mid\\
&D_1 \det(V_{n-3},V_{n-2},V_{n-1},V_3) = s D_2 \det(V_{n-2},V_{n-1},V_1,V_3)\},\\
C_4^{+--14s}(n-2) := \{&(v_1,\dots,v_{n-2}) \in C_4^{+--}(n-2) \mid D_1 = s D_4^\prime \det(V_{n-4},V_1,V_2,V_3)\},\\
C_4^{+(-)[s^\prime]23s}(n-2) := \{&(v_1,\dots,v_{n-2}) \in C_4^{+(-)[s^\prime]}(n-2) \mid\\
&D_2 \det(V_{n-4},V_{n-3},V_{n-2},V_1) = s D_3 \det(V_{n-4},V_{n-3},V_1,V_2)\},\\
C_4^{-(-)[+]23s}(n-2) := \{&(v_1,\dots,v_{n-2}) \in C_4^{-(-)[+]}(n-2) \mid\\
&D_2 \det(V_{n-5},V_{n-3},V_{n-2},V_1) = s D_3 \det(V_{n-5},V_{n-3},V_1,V_2)\},\\
C_4^{(--)[+]23s}(n-2) := \{&(v_1,\dots,v_{n-2}) \in C_4^{(--)[+]}(n-2) \mid\\
&D_2 \det(V_{n-5},V_{n-4},V_{n-2},V_1) = s D_3 \det(V_{n-5},V_{n-4},V_1,V_2)\},\\
C_4^{s^\prime+-14s}(n-3) := \{&(v_1,\dots,v_{n-3} \in C_4^{s^\prime+-}(n-3) \mid D_1 = s D_4^\prime \det(V_{n-4},V_1,V_2,V_3)\},\\
C_4^{s^\prime(-)[+]14s}(n-3) := \{&(v_1,\dots,v_{n-3} \in C_4^{s^\prime(-)[+]}(n-3) \mid\\
&D_1 = s D_4^\prime \det(V_{n-4},V_1,V_2,V_3)\},\\
C_4^{+(-)[+]34s}(n-3) := \{&(v_1,\dots,v_{n-3} \in C_4^{+(-)[+]}(n-3) \mid\\
&D_3^\prime \det(V_{n-5},V_{n-4},V_1,V_2) = sD_4^\prime \det(V_{n-4},V_1,V_2,V_3)\}.
\end{align*}
Like before, we denote the cardinalities of these sets with a lower-case $ c $.
\end{defi}

Before we begin deriving recursions, we will verify that two of the sets defined above can be bijectively mapped onto each other and thus have the same cardinalities.

\begin{lem}\label{lem:++-12++-23}
Let $ n \in 4\N $. Then
\[
c_4^{++-12\pm}(n) = c_4^{++-23\pm}(n).
\]
\end{lem}
\begin{proof}
The determinant condition in $ C_4^{++-12\pm}(n) $ is
\[
D_1 d_{n-3} = \pm D_2 d_{n-2}.
\]
Under the bijection
\[
C_4^{++-}(n) \to C_4^{-++}(n); (v_1,\dots,v_n) \mapsto (v_n,\dots,v_1)
\]
this equation transforms to
\[
D_1 d_{n-3} = \pm D_4 d_{n}.
\]
Now applying the bijection
\[
C_4^{-++}(n) \to C_4^{++-}(n); (v_1,\dots,v_n) \mapsto (v_3,\dots,v_n,v_1,v_2)
\]
yields
\[
D_3 d_{n-1} = \pm D_2 d_{n-2}.
\]
This is equivalent to the determinant condition in $ C_4^{++-23\pm}(n) $.
\end{proof}

Now we can develop a system of recursions for $ c_4^{23\pm}(n) $:

\begin{lem}\label{lem:recursionsk423}
Let $ n \geq 8 $ be divisible by $ 4 $.
\begin{align}
c_4^{23\pm}(n) = &(q-1)^2c_4(n-1) + 2q(q-1)c_4^{++-}(n-1) + q^2c_4^{-+-}(n-1)\label{eq:+++23}\\
&+ q(q-1)^2c_4^{++-34\mp}(n-1) + 2q^2(q-1)c_4^{+--23\mp}(n-1)\nonumber\\
&+ q^3c_4^{---23\mp}(n-1),\nonumber\\
c_4^{++-23\pm}(n) = &(q-1)c_4(n-1) + qc_4^{++-}(n-1)\label{eq:++-23}\\
&+ q(q-1)c_4^{++-34\mp}(n-1) + q^2c_4^{+--23\mp}(n-1),\nonumber\\
c_4^{+--12\pm}(n) = &(q-1)c_4^{++-}(n-1) + qc_4^{+(-)[+]}(n-1)\label{eq:+--12}\\
&q(q-1)c_4^{-+-12\mp}(n-1) + q^2c_4^{-(-)[+]12\mp}(n-1),\nonumber\\
c_4^{+(-)[+]12\pm}(n) = &c_4(n-1) + qc_4^{++-34\mp}(n-1),\label{eq:+(-)[+]12}\\
c_4^{+(-)[-]12\pm}(n) = &c_4^{++-}(n-1) + qc_4^{-+-12\mp}(n-1),\label{eq:+(-)[-]12}\\
c_4^{++-34\pm}(n-1) = &(q-1)c_4(n-2) + 2qc_4^{++-}(n-2) + q^2c_4^{+--14\pm}(n-2),\label{eq:++-34}\\
c_4^{+--23\pm}(n-1) = &(q-1)c_4^{++-}(n-2) + qc_4^{-+-}(n-2)\label{eq:+--23}\\
&+ q(q-1)c_4^{+(-)[+]23\mp}(n-2) + q^2c_4^{-(-)[+]23\mp}(n-2)\nonumber,\\
c_4^{-+-12\pm}(n-1) = &c_4(n-2) + q(q-1)c_4^{+(-)[+]23\mp}(n-2)\label{eq:-+-12}\\
&+ q^2c_4^{+(-)[-]23\mp}(n-2),\nonumber\\
c_4^{---23\pm}(n-1) = &(q-1)c_4^{+--}(n-2) + qc_4^{-(-)[+]}(n-2)\label{eq:---23}\\
&+ q(q-1)c_4^{+(-)[-]23\mp}(n-2) + q^2c_4^{(--)[+]23\mp}(n-2),\nonumber\\
c_4^{-(-)[+]12\pm}(n-1) = &c_4^{++-}(n-2) + qc_4^{+(-)[+]23\mp}(n-2),\label{eq:-(-)[+]12}\\
c_4^{+--14\pm}(n-2) = &(q-1)^2c_4^{++-14\pm}(n-3) + q(q-1)c_4^{-+-14\pm}(n-3)\label{eq:+--14}\\
&+ q(q-1)c_4^{+(-)[+]14\pm}(n-3) + q^2c_4^{-(-)[+]14\pm}(n-3),\nonumber\\
c_4^{+(-)[+]23\pm}(n-2) = &c_4(n-3) + qc_4^{++-14\mp}(n-3),\label{eq:+(-)[+]23}\\
c_4^{+(-)[-]23\pm}(n-2) = &c_4^{++-}(n-3) + qc_4^{-+-14\mp}(n-3),\label{eq:+(-)[-]23}\\
c_4^{-(-)[+]23\pm}(n-2) = &c_4^{++-}(n-3) + qc_4^{+(-)[+]14\mp}(n-3),\label{eq:-(-)[+]23}\\
c_4^{(--)[+]23\pm}(n-2) = &c_4^{+(-)[+]}(n-3) + qc_4^{23\mp}(n-4),\label{eq:(--)[+]23}\\
c_4^{++-14\pm}(n-3) = &(q-1)^2c_4^{23\pm}(n-4) + 2q(q-1)c_4^{++-23\pm}(n-4)\label{eq:++-14}\\
&+ q^2c_4^{+--12\pm}(n-4),\nonumber\\
c_4^{-+-14\pm}(n-3) = &(q-1)c_4^{23\pm}(n-4) + q(q-1)c_4^{+(-)[+]12\pm}(n-4)\label{eq:-+-14}\\
&+ q^2c_4^{+(-)[-]12\pm}(n-4),\nonumber\\
c_4^{+(-)[+]14\pm}(n-3) = &(q-1)c_4^{23\pm}(n-4) +qc_4^{++-23\pm}(n-4),\label{eq:+(-)[+]14}\\
c_4^{+(-)[+]34\pm}(n-3) = &c_4^{+(-)[+]14\pm}(n-3),\label{eq:+(-)[+]34}\\
c_4^{-(-)[+]14\pm}(n-3) = &(q-1)c_4^{++-23\pm}(n-4) + qc_4^{+(-)[+]12\pm}(n-4).\label{eq:-(-)[+]14}
\end{align}
\end{lem}
\begin{proof}
For every tuple $ (v_1,\dots,v_m) \in (\pr^3(K))^m $ in this proof with $ m \in \{n,n-1,n-2,n-3\} $, let $ (V_1,\dots,V_m) \in (K^4)^m $ be an arbitrary lift. Let $ d_i := \det(V_i,V_{i+1},V_{i+2},V_{i+3}) $, where $ i $ is considered modulo $ n $ (if $ m < n $ we will only consider values of $ i $ up to $ m-3 $). Let $ D_j := d_j d_{j+4} \dots d_{n-8+j} $ and $ D_j := d_j d_{j+4} \dots d_{n-12+j} $ for $ j \in \{1,2,3,4\} $. Also let $ E_1 := v_{m-3} \vee v_{m-2} \vee v_{m-1} $, $ E_2 := v_{m-2} \vee v_{m-1} \vee v_1 $, $ E_3 := v_{m-1} \vee v_1 \vee v_2 $ and $ E_4 := v_1 \vee v_2 \vee v_3 $.\smallskip\\
\eqref{eq:+++23}: Let $ (v_1,\dots,v_n) \in C_4^{23\pm}(n) $. Then
\[
(v_1,\dots,v_{n-1}) \in C_4^{s_1s_2s_3}(n-1)
\]
with $ s_i \in \{+,-\} $. Consider the map
\[
\phi: K^4 \to K; V \mapsto D_2 \det(V_{n-2},V_{n-1},V,V_1) \mp D_3 \det(V_{n-1},V,V_1,V_2)
\]
and set $ H := \pr(\Ker \phi) $. Then $ v_n \in H $ is equivalent to the determinant condition on $ C_4^{23\pm}(n) $.\\
If $ s_2 = + $, then $ E_2 \neq E_3 $ and $ \det(V_{n-2},V_{n-1},V_2,V_1) \neq 0 $, but $ \det(V_{n-1},V_2,V_1,V_2) = 0 $. Therefore, $ \phi(V_2) \neq 0 $ and $ \dim H = 2 $. \\
If $ s_2 = - $, then $ E_2 = E_3 \subseteq H $. If these sets were equal, we would have $ v_n \in E_2 = E_3 $ and $ \{v_{n-2},v_{n-1},v_n,v_1\} $ would be dependent in contradiction to the definition of $ C_4^{23\pm}(n) $. Hence, $ H = \pr^3(K) $ or equivalently $ \phi = 0 $. \\
We will first focus on the case $ s_2 = + $. For a given $ (v_1,\dots,v_{n-1}) \in C_4^{s_1+s_3}(n-1) $ the choices for $ v_n $ that lead to $ (v_1,\dots,v_n) \in C_4^{23\pm}(n) $ are exactly the elements of $ H \backslash (E_1 \cup E_2 \cup E_3 \cup E_4) $, so we need to count this set. We have
\begin{align*}
E_2 \cap H &= E_2 \cap E_3 = v_{n-1} \vee v_1\\
           &= E_3 \cap H,
\end{align*}
because if one of the determinants with $ V $ in the definition of $ \phi $ vanishes, then $ \phi(V) = 0 $ if and only if the other one also vanishes. We also have $ v_{n-2},v_2 \notin H $ and thus $ E_1 \neq H \neq E_4 $. In particular, $ E_1 \cap H $ and $ E_4 \cap H $ are projective lines, just like $ E_2 \cap H = E_3 \cap H $. \\
If $ s_1 = - $, then $ E_1 = E_2 $ and thus $ E_1 \cap H = E_2 \cap H $. Similarly, $ s_3 = - $ implies $ E_3 \cap H = E_4 \cap H $. \\
If $ s_1 = + $, then $ v_1 \notin E_1 $, but $ v_1 \in E_2 \cap H $. Hence $ E_1 \cap H \neq E_2 \cap H $. An analogous argument with $ v_{n-1} $ instead of $ v_1 $ shows $ E_3 \cap H \neq E_4 \cap H $ if $ s_3 = + $. Also note that
\[
E_1 \cap E_2 \cap E_4 \cap H = E_1 \cap E_2 \cap E_3 \cap E_4 = \emptyset
\]
if $ s_1 = s_3 = + $ (the last equality was shown in the proof of \eqref{eq:k4+++}). Hence the three lines $ E_1 \cap H $, $ E_2 \cap H $ and $ E_4 \cap H $ do not have a common intersection point if they are pairwise distinct. It follows that $ H \backslash (E_1 \cup E_2 \cup E_3 \cup E_4) $ has $ (q-1)^2 $ elements if $ s_1 = s_3 = + $, $ q(q-1) $ elements if $ s_1 = + $ and $ s_3 = - $ or vice versa, and $ q^2 $ points if $ s_1 = s_3 = - $. In conclusion, the number of points of $ C_4^{23\pm}(n) $ with $ s_2 = + $ is
\[
(q-1)^2c_4(n-1) + 2q(q-1)c_4^{++-}(n-1) + q^2c_4^{-+-}(n-1),
\]
where we used the symmetry $ c_4^{-++}(n-1) = c_4^{++-}(n-1) $. \\
Now consider the case $ s_2 = - $. We then have the necessary condition $ \phi = 0 $, which is a condition on $ (v_1,\dots,v_{n-1}) $.\\
If $ s_1 = s_3 = + $, we have $ V_{n-3} \notin E_2 = E_3 \subset H $ and therefore $ \phi = 0 $ is equivalent to $ \phi(V_{n-3}) = 0 $, which is in turn equivalent to
\begin{equation}\label{eq:detCon+++s1+}
D_2 \det(V_{n-3},V_{n-2},V_{n-1},V_1) = \mp D_3 \det(V_{n-3},V_{n-1},V_1,V_2).
\end{equation}
Then we use the bijection
\[
C_4^{+-+}(n-1) \to C_4^{++-}(n-1); (v_1,\dots,v_{n-1}) \mapsto (v_2,\dots,v_{n-1},v_1)
\]
and \eqref{eq:detCon+++s1+} becomes
\[
D_3 \det(V_{n-2},V_{n-1},V_1,V_2) = \mp D_4 \det(V_{n-2},V_1,V_2,V_3),
\]
which is exactly the determinant condition of $ C_4^{++-34\mp}(n-1) $.\\
If $ s_1 = + $ and $ s_3 = -$, $ \phi = 0 $ is still equivalent to \eqref{eq:detCon+++s1+}, and this is immediately the determinant condition on $ C_4^{+--23\mp}(n-1) $.\\
If $ s_1 = - $ and $ s_3 = + $, then we have $ v_3 \notin E_3 = E_2 \subset H $ and we can use $ V_3 $ to test $ \phi = 0 $ instead; the condition is then equivalent to
\begin{equation}\label{eq:detCon+++s1-s3+}
D_2 \det(V_{n-2},V_{n-1},V_1,V_3) = \mp D_3 \det(V_{n-1},V_1,V_2,V_3).
\end{equation}
We use the bijection
\[
C_4^{--+}(n-1) \to C_4^{+--}(n-1); (v_1,\dots,v_{n-1}) \mapsto (v_{n-1},\dots,v_1)
\]
and \eqref{eq:detCon+++s1-s3+} becomes
\[
D_3 \det(V_{n-3},V_{n-1},V_1,V_2) = \mp D_2 \det(V_{n-3},V_{n-2},V_{n-1},V_1).
\]
This is again equivalent to the determinant condition of $ C_4^{+--23\mp}(n-1) $.\\
If $ s_1 = s_3 = - $, then $ v_{n-4} \notin E_1 = E_2 = E_3 \subset H $ and we can test $ \phi = 0 $ with $ V_{n-4} $, which yields
\begin{equation}\label{eq:detCon+++s1-}
D_2 \det(V_{n-4},V_{n-2},V_{n-1},V_1) = \mp D_3 \det(V_{n-4},V_{n-1},V_1,V_2).
\end{equation}
as an equivalent condition, and this is the determinant condition for $ (v_1,\dots,v_{n-1}) \in C_4^{---23\mp}(n-1) $. \\
In all these cases, the determinant condition becomes trivial due to $ \phi = 0 $ and thus the number of viable choices for $ v_n $ is the same as in the corresponding cases in the proof of \eqref{eq:k4+++}, namely $ q(q-1)^2 $ for $ s_1 = s_3 = + $ and $ q^2(q-1) $ for $ s_1 \neq s_3 $ and $ q^3 $ for $ s_1 = s_3 = - $. The claimed recursion follows. \medskip\\
\eqref{eq:++-23}: Let $ (v_1,\dots,v_n) \in C_4^{++-23\pm}(n) $. Then
\[
(v_1,\dots,v_{n-1}) \in C_4^{s_1s_2+}(n-1)
\]
with $ s_i \in \{+,-\} $. Consider again the map
\[
\phi: K^4 \to K; V \mapsto D_2 \det(V_{n-2},V_{n-1},V,V_1) \mp D_3 \det(V_{n-1},V,V_1,V_2)
\]
and let $ H := \pr(\Ker \phi) $. We see that $ v_n \in H $ is equivalent to our determinant condition.\\
Let $ s_2 = + $. Then $ \phi(V_2) \neq 0 $ and $ \dim H = 2 $. Let $ G := H \cap E_4 $. We have $ \dim G = 1 $ since $ v_2 \notin H $. The valid choices for $ v_n $ are the elements of $ G \backslash (E_1 \cup E_2 \cup E_3) $. We have $ E_2 \cap H = E_2 \cap E_3 = v_{n-1} \vee v_1 = E_3 \cap H $ and thus
\begin{align*}
E_2 \cap G &= E_2 \cap H \cap E_4 = (v_{n-1} \vee v_1) \cap E_4 = \{v_1\}\\
&= E_3 \cap G.
\end{align*}
If $ s_1 = + $, then $ v_1 \notin E_1 $. In particular, $ G \not\subset E_1 $ and therefore $ \lvert E_1 \cap G \rvert = 1 $ and $ E_1 \cap G \neq \{v_1\} $. On the other hand, if $ s_1 = - $, then $ E_1 = E_2 $ and thus $ E_1 \cap G = E_2 \cap G = \{v_1\} $. Hence we have $ q-1 $ resp. $ q $ values for $ v_n $ depending on $ s_1 $ and accordingly, the number of elements of $ C_4^{++-23\pm}(n) $ with $ s_2 = + $ is
\[
(q-1)c_4(n-1) + qc_4^{++-}(n-1)
\]
using symmetry.\\
Now let $ s_2 = - $. Then $ E_2 = E_3 \subseteq H$ and $ v_n \in H \backslash E_2 $ implies that $ E_2 = E_3 $ is a proper subset. Then $ H $ must be the whole projective space and $ \phi = 0 $. Therefore, $ \phi = 0 $ is a necessary condition on $ (v_1,\dots,v_{n-1}) $. On the other hand, if $ \phi = 0 $ the determinant condition becomes trivial and the remaining conditions are the same as in the proof of \eqref{eq:k4++-}. We then have the same number of choices for $ v_n $ as we had there. Specifically, there are $ q(q-1) $ choices if $ s_1 = + $ and $ q^2 $ choices if $ s_1 = - $.\\
If $ s_1 = + $, we have $ v_{n-3} \notin E_2 = E_3 $ and thus $ \phi = 0 $ is equivalent to $ \phi(V_{n-3}) $, which is in turn equivalent to
\[
D_2 \det(V_{n-3},V_{n-2},V_{n-1},V_1) = \mp D_3 \det(V_{n-3},V_{n-1},V_1,V_2).
\]
After applying the bijection
\[
C_4^{+-+}(n-1) \to C_4^{++-}(n-1); (v_1,\dots,v_{n-1}) \mapsto (v_2,\dots,v_{n-1},v_1),
\]
the condition becomes
\[
D_3 \det(V_{n-2},V_{n-1},V_1,V_2) = \mp D_4 \det(V_{n-2},V_1,V_2,V_3).
\]
This is exactly the determinant condition on $ C_4^{++-34\mp}(n-1) $.\\
If $ s_1 = - $, we use $ V_3 $ to test $ \phi = 0 $ instead. This results in the condition
\[
D_2 \det(V_{n-2},V_{n-1},V_1,V_3) = \mp D_3 \det(V_{n-1},V_1,V_2,V_3).
\]
This time we use the bijection
\[
C_4^{--+}(n-1) \to C_4^{+--}(n-1); (v_1,\dots,v_{n-1}) \mapsto (v_{n-1},\dots,v_1)
\]
and the condition becomes
\[
D_3 \det(V_{n-3},V_{n-1},V_1,V_2) = \mp D_2 \det(V_{n-3},V_{n-2},V_{n-1},V_1).
\]
This equivalent to the determinant condition of $ C_4^{+--23\mp}(n-1) $. The claimed recursion follows.\medskip\\
\eqref{eq:+--12}: Let $ (v_1,\dots,v_{n-1}) \in C_4^{+--12\pm}(n) $. Then
\[
(v_1,\dots,v_{n-1}) \in C_4^{s+-}(n-1) \cup C_4^{s(-)[+]}(n-1).
\]
Consider the map
\[
\phi: K^4 \to K; V \mapsto D_1 \det(V_{n-3},V_{n-2},V_{n-1},V) \mp D_2 \det(V_{n-2},V_{n-1},V,V_1)
\]
and let $ H := \pr(\Ker \phi) $. The suitable choices for $ v_n $ are then exactly the elements of $ H \cap E_4 \backslash (E_1 \cup E_2 \cup (v_1 \vee v_2)) $.\\
First let $ s = + $. Then $ \phi(V_1) \neq 0 $ and $ \dim H = 2 $. Also, $ G := H \cap E_4 $ is a projective line since $ v_1 \in E_4 \backslash H $. $ v_1 \notin H $ also implies $ \lvert (v_1 \vee v_2) \cap G \rvert = 1 $ (note that the lines $ G $ and $ v_1 \vee v_2 $ are both contained in the projective plane $ E_4 $ and therefore have at least one point in common). Furthermore,
\begin{align*}
E_1 \cap G &= E_1 \cap H \cap E_4 = E_1 \cap E_2 \cap E_4 = (v_{n-2} \vee v_{n-1}) \cap E_4 = \{v_{n-1}\}\\
&= E_2 \cap G.
\end{align*}
If $ \{v_{n-1},v_1,v_2\} $ is independent, then $ v_{n-1} \notin v_1 \vee v_2 $ and in particular, $ E_1 \cap G = E_2 \cap G \neq (v_1 \vee v_2) \cap G $. On the other hand if $ \{v_{n-1},v_1,v_2\} $ is dependent, then $ v_{n-1} \in v_1 \vee v_2 $; and since we also have $ v_{n-1} \in G $, it follows that $ (v_1 \vee v_2) \cap G = \{v_{n-1}\} $. Hence we have $ (q-1) $ resp. $ q $ options for $ v_n $ depending on whether $ \{v_{n-1},v_1,v_2\} $ is independent. Therefore the number of elements of $ C_4^{+--12\pm}(n) $ with $ s = + $ is
\[
(q-1)c_4^{++-}(n-1) + qc_4^{+(-)[+]}(n-1).
\]
Now let $ s = - $. Then $ E_1 = E_2 \subseteq H $ and the subset is proper, because otherwise we would have $ v_n \in E_1 $ or $ v_n \notin H $. Hence we must have $ H = \pr^3(K) $ and the set of choices for $ v_n $ becomes $ E_4 \backslash (E_1 \cup E_2 \cup (v_1 \vee v_2)) $, the same as in the proof of \eqref{eq:k4+--}, case $ s = - $. Accordingly, we have $ q(q-1) $ resp. $ q^2 $ choices for $ v_n $ depending on whether or not $ \{v_{n-1},v_1,v_2\} $ is independent. This is assuming that $ (v_1,\dots,v_{n-1}) $ fulfills the necessary condition $ H = \pr^3(K) $ (or equivalently $ \phi = 0 $), otherwise there are no choices for $ v_n $ that meet all requirements.\\
First we consider $ \{v_{n-1},v_1,v_2\} $ independent. In this case we use $ v_2 \notin E_2 = E_1 $, which means that we can test $ \phi = 0 $ with $ \phi(V_2) = 0 $. This results in the equivalent condition
\[
D_1 \det(V_{n-3},V_{n-2},V_{n-1},V_2) = \mp D_2 \det(V_{n-2},V_{n-1},V_1,V_2).
\]
This is also the condition for $ (v_1,\dots,v_{n-1}) \in C_4^{-+-12\mp}(n-1) $.\\
Now let $ \{v_{n-1},v_1,v_2\} $ be dependent. Then $ v_2 \in E_2 = E_1 $, but $ v_3 \notin E_2 = E_1 $ since $ \{v_{n-2},v_1,v_2,v_3\} $ is independent. Hence we can test $ \phi $ with $ V_3 $. $ \phi = 0 $ is therefore equivalent to
\[
D_1 \det(V_{n-3},V_{n-2},V_{n-1},V_3) = \mp D_2 \det(V_{n-2},V_{n-1},V_1,V_3),
\]
which is also the condition for $ (v_1,\dots,v_{n-1}) \in C_4^{-(-)[+]12\mp}(n-1) $. This concludes the proof of \eqref{eq:+--12}.\medskip\\
\eqref{eq:+(-)[+]12}, \eqref{eq:+(-)[-]12}: Let $ (v_1,\dots,v_n) \in C_4^{+(-)[s]12\pm}(n) $. Then
\[
(v_1,\dots,v_{n-1}) \in C_4^{s^\prime+s}(n-1)
\]
with $ s^\prime \in \{+,-\} $. Consider the map
\[
\phi: K^4 \to K; V \mapsto D_1 \det(V_{n-3},V_{n-2},V_{n-1},V) \mp D_2 \det(V_{n-2},V_{n-1},V,V_1)
\]
and let $ H := \pr(\Ker \phi) $. We have $ v_n \in H $ if and only if the determinant condition in $ C_4^{+(-)[s]12\pm}(n) $ holds.\\
Let $ s^\prime = + $. Then $ \phi(V_1) \neq 0 $ and hence $ \dim H = 2 $. $ v_1 \notin H $ also implies that $ \{P\} := H \cap (v_1 \vee v_2) $ is a single point. Only this point can be a valid choice for $ v_n $, and it is a valid choice if $ P \notin E_1, E_2 $. We have
\begin{align*}
E_1 \cap \{P\} &= E_1 \cap H \cap (v_1 \vee v_2) = (v_{n-2} \vee v_{n-1}) \cap (v_1 \vee v_2) = \emptyset\\
&= E_2 \cap \{P\}
\end{align*}
where we used $ E_1 \cap H = E_1 \cap E_2 = v_{n-2} \vee v_{n-1} = E_2 \cap H $. It follows that the number of elements of $ C_4^{+(-)[s]12\pm}(n) $ with $ s^\prime = + $ is exactly $ c_4^{++s}(n-1) $.\\
Now let $ s^\prime = - $. Then $ E_1 = E_2 \subseteq H $ and we cannot have equality. Hence $ \phi = 0 $ is a necessary condition, and since $ v_2 \notin E_2 $ this condition is equivalent to $ \phi(V_2) = 0 $. Therefore we can write the condition as
\[
D_1 \det(V_{n-3},V_{n-2},V_{n-1},V_2) = \mp D_2 \det(V_{n-2},V_{n-1},V_1,V_2).
\]
If $ s = - $ this is immediately the condition for $ (v_1,\dots,v_{n-1}) \in C_4^{-+-12\mp}(n-1) $. If $ s = + $ we need to apply the bijection
\[
C_4^{-++}(n-1) \to C_4^{++-}(n-1); (v_1,\dots,v_{n-1}) \mapsto (v_{n-1},\dots,v_1)
\]
and the condition becomes
\[
D_4 \det(V_{n-2},V_1,V_2,V_3) = \mp D_3 \det(V_{n-2},V_{n-1},V_1,V_2).
\]
This is equivalent to the determinant condition in $ C_4^{++-34\mp}(n-1) $. In either case we have $ q $ choices for $ v_n $ like in the proof of \eqref{eq:k4+(-)[+]}/\eqref{eq:k4+(-)[-]}. Thus the claimed recursions hold.\medskip\\
\eqref{eq:++-34}: Let $ (v_1,\dots,v_{n-1}) \in C_4^{++-34\pm}(n-1) $. Then
\[
(v_1,\dots,v_{n-2}) \in C_4^{s_1s_2+}(n-2)
\]
with $ s_i \in \{+,-\} $. Consider
\begin{align*}
\phi: K^4 &\to K\\
V &\mapsto D_3 \det(V_{n-2},V,V_1,V_2) \mp D_4^\prime \det(V_{n-4},V_{n-3},V_{n-2},V) \det(V_{n-2},V_1,V_2,V_3)
\end{align*}
and set $ H := \pr(\Ker \phi) $. The determinant condition on $ C_4^{++-34\pm}(n-1) $ is fulfilled if and only if $ v_{n-1} \in H $.\\
First assume that $ s_1 $ and $ s_2 $ are not both $ - $. If $ s_1 = + $, then $ v_1 \notin E_1 $ and $ \phi(V_1) \neq 0 $. If $ s_1 = - $ and $ s_2 = + $, then $ v_2 \notin E_2 = E_1 $. Hence $ \phi(V_2) \neq 0 $. In either case we have $ \dim H = 2 $. We also need to choose $ v_{n-1} $ from $ E_4 $, so only elements of $ G := H \cap E_4 $ need to be considered. Since $ v_1 \notin H $ or $ v_2 \notin H $ it follows that $ H \neq E_4 $ and $ \dim G = 1 $.  Furthermore, the fact that $ H $ does not contain both $ v_1 $ and $ v_2 $ also means that
\[
\lvert E_3 \cap G \rvert = \lvert H \cap E_3 \cap E_4 \rvert = \lvert H \cap (v_1 \vee v_2) \rvert = 1.
\]
Additionally, we have $ E_1 \cap H = E_3 \cap H = E_1 \cap E_3 $ and $ E_1 \cap G = E_3 \cap G $ follows. If $ s_1 = - $ or $ s_2 = - $, then $ E_1 = E_2 $ or $ E_2 = E_3 $ and accordingly, $ E_1 \cap G = E_2 \cap G = E_3 \cap G $. On the other hand, if $ s_1 = s_2 = + $, then $ \dim E_2 \cap E_4 = 1 $, since $ v_2 \notin E_2 $, but $ v_2 \in E_4 $. At the same time we have $ v_1 \in E_2 \cap E_4 $, but $ v_1 \notin H $. Hence we still have $ \lvert E_2 \cap G \rvert = 1 $. But $ E_1 \cap G \neq E_2 \cap G $ since
\[
E_1 \cap E_2 \cap G = E_1 \cap H \cap E_2 \cap E_4 = E_1 \cap E_3 \cap E_2 \cap E_4 = \emptyset.
\]
This means that there are $ q-1 $ choice for $ v_{n-1} $ if $ s_1 = s_2 = + $ and $ q $ choices if $ s_1 \neq s_2 $. So the  number of elements of $ C_4^{++-34\pm}(n-1) $ with $ s_1 = + $ or $ s_2 = + $ is
\[
(q-1)c_4(n-2) + 2qc_4^{++-}(n-2),
\]
where we used $ c_4^{-++}(n-2) = c_4^{+-+}(n-2) = c_4^{++-}(n-2) $.
\\ 
If $ s_1 = s_2 = - $, then $ E_1 = E_2 = E_3 \subseteq H $ and we cannot have equality since $ v_{n-1} \in H \backslash E_1 $. Hence $ \phi = 0 $. $ \{v_{n-2},v_1,v_2,v_3\} $ is independent, therefore we have $ \phi = 0 $ if and only if $ \phi(V_3) = 0 $. After rearranging and canceling out $ \det(V_{n-2},V_1,V_2,V_3) $ this becomes
\[
D_3 = \pm D_4^\prime \det(V_{n-4},V_{n-3},V_{n-2},V_3).
\]
We now apply the bijection
\[
C_4^{--+}(n-2) \to C_4^{+--}(n-2); (v_1,\dots,v_{n-2}) \mapsto (v_{n-2},\dots,v_1)
\]
and the determinant condition becomes
\[
D_1 = \pm D_4^\prime \det(V_{n-4},V_1,V_2,V_3),
\]
exactly as in the definition of $ C_4^{+--14\pm}(n-2) $. We again have the same number of choices as in the corresponding case \eqref{eq:k4++-} of Lemma \ref{lem:recursionsk4}, namely $ q^2 $. The claim follows.\medskip\\
\eqref{eq:+--23}: Let $ (v_1,\dots,v_{n-1}) \in C_4^{+--23\pm}(n-1) $. Then
\[
(v_1,\dots,v_{n-2}) \in C_4^{s+-}(n-2) \cup C_4^{s(-)[+]}(n-2)
\]
with $ s \in \{+,-\} $. Define
\[
\phi: K^4 \to K; V \mapsto D_2 \det(V_{n-3},V_{n-2},V,V_1) \mp D_3 \det(V_{n-3},V,V_1,V_2)
\]
and $ H := \pr(\Ker \phi) $. Then the determinant condition in $ C_4^{+--23\pm}(n-1) $ becomes $ v_{n-1} \in H $, and the valid choices for $v_{n-1} $ are exactly the elements of $ H \cap E_4 \backslash (E_1 \cup E_2 \cup (v_1 \vee v_2)) $. \\
Let $ \{v_{n-2},v_1,v_2\} $ be independent. Then $ \{v_{n-3},v_{n-2},v_1,v_2\} $ is independent and $ \phi(V_2) \neq 0 $  and thus $ \dim H = 2 $ follows. Moreover, $ v_2 \notin H $ implies $ H \neq E_4 $ and $ G := H \cap E_4 $ is a projective line. Additionally,
\[
E_2 \cap G = E_2 \cap E_4 \cap H = E_2 \cap E_3 \cap H = (v_{n-2} \vee v_1) \cap H = \{v_1\}
\]
and
\[
(v_1 \vee v_2) \cap G = (v_1 \vee v_2) \cap H = \{v_1\},
\]
where we used $ E_3 = E_4 $ and $ v_1 \vee v_2 \subset E_4 $. If $ s = - $, then $ E_1 = E_2 $ and $ E_1 \cap G = E_2 \cap G = \{v_1\} $ as well. If $ s = + $, then $ v_1 \notin E_1 $. Combined with $ v_1 \in G $ this implies $ \lvert E_1 \cap G \rvert = 1 $ and $ E_1 \cap G \neq \{v_1\} $. Hence we have $ q-1 $ options for $ v_{n-1} $ if $ s = + $ and $ q $ if $ s = - $. This implies that there are
\[
(q-1)c_4^{++-}(n-2) + qc_4^{-+-}(n-2)
\]
elements with $ \{v_{n-2},v_1,v_2\} $ independent.\\
Now let $ \{v_{n-2},v_1,v_2\} $ be dependent. Then $ E_2 = v_{n-3} \vee v_1 \vee v_2 \subseteq H $ and once again we cannot have equality. We can test $ \phi = 0 $ with $ V_{n-4} $ if $ s = + $ and with $ V_{n-5} $ if $ s = - $, since $ v_{n-4} \notin E_2 $ resp. $ v_{n-5} \notin E_1 = E_2 $ depending on $ s $. In the former case the condition then becomes
\[
D_2 \det(V_{n-4},V_{n-3},V_{n-2},V_1) = \mp D_3 \det(V_{n-4},V_{n-3},V_1,V_2),
\]
in the latter case it becomes
\[
D_2 \det(V_{n-5},V_{n-3},V_{n-2},V_1) = \mp D_3 \det(V_{n-5},V_{n-3},V_1,V_2).
\]
These are exactly the conditions for $ (v_1,\dots,v_{n-2}) \in C_4^{s(-)[+]23\mp}(n-2) $ and since $ \phi = 0 $ implies that the determinant condition becomes trivial, we have $ q(q-1) $ resp. $ q^2 $ choices for $ v_{n-1} $ just like in the proof of \eqref{eq:k4+--}. Hence the stated recursion holds.\medskip\\
\eqref{eq:-+-12}: Let $ (v_1,\dots,v_{n-1}) \in C_4^{-+-12\pm}(n-1) $. Then
\[
(v_1,\dots,v_{n-2}) \in C_4(n-2) \cup C_4^{(-)+[s]}(n-2)
\]
with $ s \in \{+,-\} $. Let
\[
\phi: K^4 \to K; V \mapsto D_1 \det(V_{n-3},V_{n-2},V,V_2) \mp D_2 \det(V_{n-2},V,V_1,V_2)
\]
and $ H := \pr(\Ker \phi) $. Clearly, $ v_{n-1} \in H $ is equivalent to the determinant condition.\\
We start with the case $ \{v_{n-3},v_{n-2},v_1\} $ independent. Then $ (v_1,\dots,v_{n-2}) \in C_4(n-2) $ and in particular, $ \{v_{n-3},v_{n-2},v_1,v_2\} $ is independent. Therefore $ \phi(V_1) \neq 0 $ and $ \dim H = 2 $. Additionally, $ E_2 \neq E_4 $ and $ v_1 \in E_2 \cap E_4 \backslash H $. Hence $ E_2 \cap E_4 \not\subset H $ and $ \{P\} := E_2 \cap E_4 \cap H $ is a single point. We must have $ v_{n-1} = P $ since $ (v_1,\dots,v_{n-1}) \in C_4^{-+-}(n-1) $ requires $ v_{n-1} \in E_2 \cap E_4 $ and our determinant condition requires $ v_{n-1} \in H $. On the other hand we need $ v_{n-1} \notin E_1, E_3 $. We have
\[
E_1 \cap \{P\} = E_1 \cap E_2 \cap E_4 \cap H = (v_{n-3} \vee v_{n-2}) \cap H \cap E_4 = \{v_{n-2}\} \cap E_4 = \emptyset
\]
and likewise
\[
E_3 \cap \{P\} = E_2 \cap E_3 \cap E_4 \cap H = \{v_1\} \cap H = \emptyset.
\]
Hence the choice $ v_{n-1} = P $ will always work and we have a bijection between the elements $ (v_1,\dots,v_{n-1}) \in C_4^{-+-12\pm}(n-1) $ with $ \{v_{n-3},v_{n-2},v_1\} $ independent and the elements of $ C_4(n-2) $.\\
Now let $ \{v_{n-3},v_{n-2},v_1\} $ be dependent. Then $ v_{n-3} \vee v_{n-2} \vee v_2 = v_{n-2} \vee v_1 \vee v_2 = E_3 \subseteq H $. We cannot have equality, thus $ \phi = 0 $, and this is equivalent to the determinant condition. We can test $ \phi = 0 $ with $ V_3 $, since $ v_3 \notin E_3 $. This yields
\[
D_1 \det(V_{n-3},V_{n-2},V_2,V_3) = \mp D_2 \det(V_{n-2},V_1,V_2,V_3).
\]
Now we apply the bijection
\[
C_4^{(-)+[s]}(n-2) \to C_4^{+(-)[s]}; (v_1,\dots,v_{n-2}) \mapsto (v_{n-2},\dots,v_1)
\]
and the condition becomes
\[
D_3 \det(V_{n-4},V_{n-3},V_1,V_2) = \mp D_2 \det(V_{n-4},V_{n-3},V_{n-2},V_1).
\]
This is exactly the determinant condition on $ C_4^{+(-)[s]23\mp}(n-2) $. The number of values for $ v_{n-1} $ is $ q(q-1) $ resp. $ q^2 $ depending on $ s $, since the situation is the same as in the proof of \eqref{eq:k4-+-}. The claimed recursion follows.
\medskip\\
\eqref{eq:---23}: Let $ (v_1,\dots,v_{n-1}) \in C_4^{---23\pm}(n-1) $. Then
\[
(v_1,\dots,v_{n-2}) \in C_4^{+--}(n-2) \cup C_4^{+(-)[-]}(n-2) \cup C_4^{(-)-[+]}(n-2) \cup C_4^{(--)[+]}(n-2).
\]
Let
\[
\phi: K^4 \to K; V \mapsto D_2 \det(V_{n-4},V_{n-2},V,V_1) \mp D_3 \det(V_{n-4},V,V_1,V_2)
\]
and $ H := \pr(\Ker \phi) $. Then $ v_{n-1} \in H $ is equivalent to $ (v_1,\dots,v_{n-1}) $ fulfilling the determinant condition on $ C_4^{---23\pm}(n-1) $. Let $ M_1 := \{v_{n-3},v_{n-2},v_1\} $ and $ M_2 := \{v_{n-2},v_1,v_2\} $.\\
If $ M_2 $ is dependent, then $ v_{n-4} \vee v_{n-2} \vee v_1 = v_{n-4} \vee v_1 \vee v_2 \subseteq H $. If equality holds we must have
\[
v_{n-1} \in H \cap E_4 = (v_{n-4} \vee v_1 \vee v_2) \cap E_4 = v_1 \vee v_2,
\]
which contradicts $ \{v_{n-1},v_1,v_2\} $ independent. Hence $ H = \pr^3(K) $ and $ \phi = 0 $.\\
If $ M_2 $ is independent, then $ v_{n-2} \vee v_1 \vee v_2 = E_4 $, and since $ v_{n-4} \notin v_{n-3} \vee v_{n-2} \vee v_{n-1} = E_2 = E_4 $ we have $ \{v_{n-4},v_{n-2},v_1,v_2\} $ independent. It follows that $ \phi(V_2) \neq 0 $ and $ \dim H = 2 $.\\
Case 1: $ M_2 $ is independent.\\
Let $ G := H \cap E_4 $. Since $ v_2 \in E_4 \backslash H $ it follows that $ \dim G = 1 $. Then we have
\[
(v_{n-2} \vee v_1) \cap G = (v_{n-2} \vee v_1) \cap H = \{v_1\}
\]
and
\[
(v_1 \vee v_2) \cap G = (v_1 \vee v_2) \cap H = \{v_1\},
\]
since $ v_{n-2},v_1,v_2 \in E_4 $. If $ M_1 $ is dependent, then $ E_1 \cap E_4 = v_{n-2} \vee v_1 $ and thus
\[
E_1 \cap G = E_1 \cap E_4 \cap H = (v_{n-2} \vee v_1) \cap H = \{v_1\}.
\]
If $ M_1 $ is independent, then $ v_1 \notin E_1 $ and in particular $ v_1 \notin E_1 \cap G $. This implies $ \lvert E_1 \cap G \rvert = 1 $. The permissible values for $ v_{n-1} $ are exactly the elements of $ G \backslash (E_1 \cup (v_{n-2} \vee v_1) \cup (v_1 \vee v_2)) $, hence there are $ q-1 $ resp. $ q $ choices depending on $ M_1 $. The number of elements of $ C_4^{---23\pm}(n-1) $ with $ M_2 $ independent is therefore
\[
(q-1)c_4^{+--}(n-2) + qc_4^{-(-)[+]}(n-2),
\]
where we used $ c_4^{(-)-[+]}(n-2) = c_4^{-(-)[+]}(n-2) $.\smallskip\\
Case 2: $ M_2 $ dependent and $ M_1 $ independent.\\
We have $ \{v_{n-4},v_{n-3},v_{n-2},v_2\} $ independent and thus the necessary condition $ \phi = 0 $ is equivalent to $ \phi(V_{n-3}) = 0 $. Then the condition becomes
\[
D_2 \det(V_{n-4},V_{n-3},V_{n-2},V_1) = \mp D_3 \det(V_{n-4},V_{n-3},V_1,V_2),
\]
hence we have $ (v_1,\dots,v_{n-2}) \in C_4^{+(-)[-]23\mp}(n-2) $. On the other hand, $ (v_1,\dots,v_{n-2}) \in C_4^{+(-)[-]23\mp}(n-2) $ implies $ \phi = 0 $ and the determinant condition on $ C_4^{---23\pm}(n-1) $ becomes trivial. Hence we have the same number of choices for $ v_{n-1} $ as in Case 2 in the proof of \eqref{eq:k4---}, which is to say $ q(q-1) $.\smallskip\\
Case 3: $ M_1 $ and $ M_2 $ are both dependent.\\
This is almost the same as the previous case, except that we test $ \phi = 0 $ with $ V_{n-5} $ (since $ v_{n-5} \notin v_{n-4} \vee v_{n-3} \vee v_{n-2} = v_{n-4} \vee v_1 \vee v_2 \subset H $) and get
\[
D_2 \det(V_{n-5},V_{n-4},V_{n-2},V_1) = \mp D_3 \det(V_{n-5},V_{n-4},V_1,V_2),
\]
which is the determinant condition on $ C_4^{(--)[+]23\mp}(n-2) $. We now have $ q^2 $ choices for $ v_{n-1} $ as in case 4 of the proof of \eqref{eq:k4---}. The desired formula follows.\medskip\\
\eqref{eq:-(-)[+]12}: Let $ (v_1,\dots,v_{n-1}) \in C_4^{-(-)[+]12\pm}(n-1) $. Then
\[
(v_1,\dots,v_{n-2}) \in C_4^{+-+}(n-2) \cup C_4^{(-)+[+]}(n-1).
\]
Consider the map
\[
\phi: K^4 \to K; V \mapsto D_1 \det(V_{n-3},V_{n-2},V,V_3) \mp D_2 \det(V_{n-2},V,V_1,V_3)
\]
and $ H := \pr(\Ker \phi) $. The determinant condition in $ C_4^{-(-)[+]12\pm}(n-1) $ is fulfilled if and only if $ v_{n-1} \in H $.\\
First let $ \{v_{n-3},v_{n-2},v_1\} $ be independent and thus $ (v_1,\dots,v_{n-2}) \in C_4^{+-+}(n-2) $. Then $ \{v_{n-2},v_1,v_2,v_3\} $ is independent and we also have $ v_{n-3} \vee v_{n-2} \vee v_1 = v_{n-2} \vee v_1 \vee v_2 $. As a result, $ \{v_{n-3},v_{n-2},v_1,v_3\} $ is independent and hence $ \phi(V_1) \neq 0 $. It follows that $ \dim H = 2 $, and since $ v_1 \notin H $, $ \{P\} := H \cap (v_1 \vee v_2) $ must be a single point. On the other hand we have $ v_1 \vee v_2 \subset E_2 = E_3 $ and thus $ P \in E_2 $. It follows that
\[
E_1 \cap \{P\} = E_1 \cap E_2 \cap H \cap (v_1 \vee v_2) = (v_{n-3} \vee v_{n-2}) \cap H \cap (v_1 \vee v_2) = \{v_{n-2}\} \cap (v_1 \vee v_2) = \emptyset.
\]
Since the viable choices for $ v_{n-1} $ are the elements of $ H \cap (v_1 \vee v_2) \backslash (E_1 \cup \{v_1\}) $, we then have exactly one such choice and the number of elements with $ \{v_{n-3},v_{n-2},v_1\} $ independent is $ c_4^{+-+}(n-2) = c_4^{++-}(n-2) $.\\
Now let $ \{v_{n-3},v_{n-2},v_1\} $ be dependent. Then we have $ v_1 \in H $. If $ \phi(V_2) \neq 0 $, then $ H \cap (v_1 \vee v_2) = \{v_1\} $. But this implies $ v_{n-1} = v_1 $, which is not allowed. Hence we must have $ \phi(V_2) = 0 $. On the other hand, $ \phi(V_2) = 0 $ leads to $ v_1 \vee v_2 \subset H $ and therefore any of the $ q $ choices for $ v_{n-1} $ we had in the proof of \eqref{eq:k4-(-)}, Case $ \{v_{n-3},v_{n-2},v_1\} $ dependent will still work here. $ \phi(V_2) = 0 $ is equivalent to
\[
D_1 \det(V_{n-3},V_{n-2},V_2,V_3) = \mp D_2 \det(V_{n-2},V_1,V_2,V_3).
\]
After using the bijection
\[
C_4^{(-)+[+]}(n-2) \to C_4^{+(-)[+]}(n-2); (v_1,\dots,v_{n-2}) \mapsto (v_{n-2},\dots,v_1),
\]
the condition turns into
\[
D_3 \det(V_{n-4},V_{n-3},V_1,V_2) = \mp D_2 \det(V_{n-4},V_{n-3},V_{n-2},V_1),
\]
which is equivalent to the determinant condition on $ C_4^{+(-)[+]23\mp}(n-2) $. This proves the claim.\medskip\\
\eqref{eq:+--14}: The determinant condition in $ C_4^{+--14\pm}(n-2) $ is
\[
D_1 = \pm D_4^\prime \det(V_{n-4},V_1,V_2,V_3),
\]
which does not depend on $ v_{n-2} $. Therefore, $ (v_1,\dots,v_{n-3}) $ will fulfill the same condition. Accordingly, we can use the same recursion as \eqref{eq:k4+--}, except that all the sets whose cardinality is on the right hand side inherit the determinant condition. This means that these sets become $ C_4^{++-14\pm}(n-3) $, $ C_4^{-+-14\pm}(n-3) $, $ C_4^{+(-)[+]14\pm}(n-3) $ and $ C_4^{-(-)[+]14\pm}(n-3) $. The claim follows.\medskip\\
\eqref{eq:+(-)[+]23}, \eqref{eq:+(-)[-]23}: Let $ (v_1,\dots,v_{n-2}) \in C_4^{+(-)[s]23\pm}(n-2) $ with $ s \in \{+,-\} $. Then
\[
(v_1,\dots,v_{n-3}) \in C_4^{s^\prime+s}(n-3)
\]
with $ s^\prime \in \{+,-\} $. Consider the map
\begin{align*}
\phi: K^4 &\to K\\
V &\mapsto D_2 \det(V_{n-4},V_{n-3},V,V_1) \mp D_3^\prime \det(V_{n-5},V_{n-4},V_{n-3},V) \det(V_{n-4},V_{n-3},V_1,V_2)
\end{align*}
and let $ H := \pr(\Ker \phi) $. Then the determinant condition in $ C_4^{+(-)[s]23\pm}(n-2) $ is fulfilled by a tuple $ (v_1,\dots,v_{n-2}) $ if and only if $ v_{n-2} \in H $.\\
If $ s^\prime = + $, then $ \phi(V_1) \neq 0 $ and $ \dim H = 2 $. Additionally, we must have $ v_{n-2} \in v_1 \vee v_2 $, and $ v_1 \notin H $ then implies that $ \{P\} := H \cap (v_1 \vee v_2) $ is a single point. Hence there is only one possible choice $ v_{n-2} = P $ for any $ (v_1,\dots,v_{n-3}) \in C_4^{++s}(n-3) $. On the other hand, we have
\begin{align*}
E_1 \cap H \cap (v_1 \vee v_2) &= E_1 \cap E_2 \cap (v_1 \vee v_2) = E_1 \cap \{v_1\} = \emptyset\\
&= E_2 \cap H \cap (v_1 \vee v_2),
\end{align*}
where we used $ E_1 \cap H = E_1 \cap E_2 = E_2 \cap H $. This guaranties that the one possible choice $ P $ will always work, and we have exactly $ c_4^{++s}(n-3) $ elements with $ s^\prime = + $.\\
Now let $ s^\prime = - $. Then $ E_1 = E_2 \subseteq H $, and once again we cannot have equality. Hence $ (v_1,\dots,v_{n-3})$ must be such that $ \phi = 0 $; and requiring this is equivalent to the determinant condition on $ C_4^{+(-)[s]23\pm}(n-2) $. We have $ v_2 \notin E_2 $, thus we can use $ V_2 $ to test $ \phi = 0 $. This results in
\[
D_2 = \mp D_3^\prime \det(V_{n-5},V_{n-4},V_{n-3},V_2)
\]
after canceling out $ \det(V_{n-4},V_{n-3},V_1,V_2) $. By using the bijection
\[
C_4^{-+s}(n-3) \to C_4^{s+-}(n-3); (v_1,\dots,v_{n-3}) \mapsto (v_{n-3},\dots,v_1),
\]
our determinant condition becomes
\[
D_1 = \mp D_4^\prime \det(V_{n-4},V_1,V_2,V_3).
\]
This is the determinant condition on $ C_4^{s+-14\mp}(n-3) $, and we have $ q $ choices for $ v_{n-2} $ just like in the case $ s^\prime = - $ of the proof of \eqref{eq:k4+(-)[+]}/\eqref{eq:k4+(-)[-]}. The claimed recursions follow.\medskip\\
\eqref{eq:-(-)[+]23}: Let $ (v_1,\dots,v_{n-2}) \in C_4^{-(-)[+]23\pm}(n-2) $. Then
\[
(v_1,\dots,v_{n-3}) \in C_4^{+-+}(n-3) \cup C_4^{(-)+[+]}(n-3).
\]
Let
\begin{align*}
\phi: K^4 &\to K\\
V &\mapsto D_2 \det(V_{n-5},V_{n-3},V,V_1) \mp D_3^\prime \det(V_{n-5},V_{n-4},V_{n-3},V) \det(V_{n-5},V_{n-3},V_1,V_2)
\end{align*}
and $ H := \pr(\Ker \phi) $. Like before, $ v_{n-2} \in H $ is equivalent to the determinant condition in $ C_4^{-(-)[+]23\pm}(n-2) $.\\
Let $ \{v_{n-4},v_{n-3},v_1\} $ be independent. $ \{v_{n-5},v_{n-4},v_{n-3},v_1\} $ is also independent then and thus $ \phi(V_1) \neq 0 $ and $ \dim H = 2 $. Moreover, since $ v_1 \notin H $, $ \{P\} := H \cap (v_1 \vee v_2) $ is a single point, and only this point can be a suitable choice for $ v_{n-2} $. We have $ E_1 \cap H = E_1 \cap(v_{n-5} \vee v_{n-3} \vee v_1) = v_{n-5} \vee v_{n-3} $, hence
\[
E_1 \cap \{P\} = E_1 \cap H \cap (v_1 \vee v_2) = (v_{n-5} \vee v_{n-3}) \cap (v_1 \vee v_2) = \emptyset,
\]
where we used $ v_{n-5} \notin E_2 = E_3 = v_{n-3} \vee v_1 \vee v_2 $. Moreover, since $ v_1 \vee v_2 \subset E_3 = E_2 $ we also have $ P \in E_2 $. Additionally,
\[
(v_{n-3} \vee v_1) \cap \{P\} = (v_{n-3} \vee v_1) \cap H \cap (v_1 \vee v_2) = \{v_{n-3}\} \cap (v_1 \vee v_2) = \emptyset.
\]
It follows that $ (v_1,\dots,v_{n-3},P) \in C_4^{-(-)[+]23\pm}(n-2) $ for any $ (v_1,\dots,v_{n-3}) \in C_4^{+-+}(n-3) $. The number of elements of $ C_4^{-(-)[+]23\pm}(n-2) $ with $ \{v_{n-4},v_{n-3},v_1\} $ independent is therefore $ c_4^{+-+}(n-3) = c_4^{++-}(n-3) $.\\
Now let $ \{v_{n-4},v_{n-3},v_1\} $ be dependent. Then $ E_1 = v_{n-5} \vee v_{n-3} \vee v_1 \subseteq H $ and we cannot have equality. Hence $ \phi = 0 $ is a necessary condition on $ (v_1,\dots.v_{n-3}) $ and by testing with $ V_2 $ (since $ \{v_{n-5},v_{n-4},v_{n-3},v_2\} $ is independent) we see that it is equivalent to
\[
D_2 = \mp D_3^\prime \det(V_{n-5},V_{n-4},V_{n-3},V_2)
\]
after canceling out $ \det(V_{n-5},V_{n-3},V_1,V_2) $. Now apply the bijection
\[
C_4^{(-)+[+]}(n-3) \to C_4^{+(-)[+]}(n-3); (v_1,\dots,v_{n-3}) \mapsto (v_{n-3},\dots,v_1),
\]
and the condition transforms to
\[
D_1 = \mp D_4^\prime \det(V_{n-4},V_1,V_2,V_3).
\]
This is exactly the condition on $ C_4^{+(-)[+]14\mp}(n-3) $. Since $ \phi = 0 $, the determinant condition on $ C_4^{-(-)[+]23\pm}(n-2) $ then becomes trivial. Accordingly, we have $ q $ possibilities for $ v_{n-2} $ like in the proof of \eqref{eq:k4-(-)}. The claim holds.\medskip\\
\eqref{eq:(--)[+]23}: Let $ (v_1,\dots,v_{n-2}) \in C_4^{(--)[+]23\pm}(n-2) $. Then $ (v_1,\dots,v_{n-3}) \in C_4^{+(-)[+]}(n-3) $ if $ v_{n-3} \neq v_1 $ and $ (v_1,\dots,v_{n-4}) \in C_4(n-4) $ if $ v_{n-3} = v_1 $. We have the determinant condition
\begin{align}
&D_2 \det(V_{n-5},V_{n-4},V_{n-2},V_1)\label{eq:detCon(--)23}\\
= &\pm D_3^\prime \det(V_{n-5},V_{n-4},V_{n-3},V_{n-2})\det(V_{n-5},V_{n-4},V_1,V_2).\nonumber
\end{align}
Now consider the case $ v_{n-3} \neq v_1 $. We define the map
\begin{align*}
\phi: K^4 &\to K\\
V &\mapsto D_2 \det(V_{n-5},V_{n-4},V,V_1) \mp D_3^\prime \det(V_{n-5},V_{n-4},V_{n-3},V) \det(V_{n-5},V_{n-4},V_1,V_2)
\end{align*}
and let $ H := \pr(\Ker \phi) $. Like before, $ v_{n-2} \in H $ if and only if $ (v_1,\dots,v_{n-2}) $ fulfills the determinant condition. We have $ \{v_{n-5},v_{n-4},v_{n-3},v_1\} $ independent and thus $ \phi(V_1) \neq 0 $, $ \dim H = 2 $ follows. Then $ \{P\} := H \cap (v_1 \vee v_2) $ is a single point and only this point may satisfy all conditions on $ v_{n-2} $. By definition of $ P $, we have $ P \in v_1 \vee v_2 = v_{n-3} \vee v_1 $. Additionally, $ P $ fulfills the determinant condition, and since $ v_1 \notin H $ we also have $ P \neq v_1 $. Moreover,
\[
E_1 \cap \{P\} = E_1 \cap H \cap (v_1 \vee v_2) = (v_{n-5} \vee v_{n-4}) \cap (v_1 \vee v_2) = (v_{n-5} \vee v_{n-4}) \cap (v_{n-3} \vee v_1) = \emptyset.
\]
Hence, $ (v_1,\dots,v_{n-3},P) \in C_4^{(--)[+]23\pm}(n-2) $ will in fact hold for all $ (v_1,\dots,v_{n-3}) \in C_4^{+(-)[+]}(n-3) $.\\
Now consider the case $ v_{n-3} = v_1 $. We can assume w.l.o.g. that $ V_{n-3} = V_1 $. Then $ \det(V_{n-5},V_{n-4},V_{n-2},V_1) = - \det(V_{n-5},V_{n-4},V_{n-3},V_{n-2}) $, and both sides are nonzero. After canceling this out of \eqref{eq:detCon(--)23}, the condition turns into
\[
D_2^\prime \det(V_{n-6},V_{n-5},V_{n-4},V_1) = \mp D_3^\prime \det(V_{n-5},V_{n-4},V_1,V_2),
\]
where we used $ V_{n-3} = V_1 $ once more. This is the determinant condition in $ C_4^{23\mp}(n-4) $. We have $ q $ choices for $ v_{n-2} $ as in the proof of \eqref{eq:k4(--)} and obviously there is only one choice for $ v_{n-3} $, namely $ v_1 $. This concludes the proof of the recursion.\medskip\\
Now consider the recursions using $ n-3 $ with the determinant condition
\begin{equation}\label{eq:stayingCondition}
D_1 = D_4^\prime \det(V_{n-4},V_1,V_2,V_3),
\end{equation}
namely \eqref{eq:++-14}, \eqref{eq:-+-14}, \eqref{eq:+(-)[+]14} and \eqref{eq:-(-)[+]14}. These are similar to \eqref{eq:+--14} in that the determinant condition does not depend on $ v_{n-3} $. Hence we get the recursions \eqref{eq:k4++-}, \eqref{eq:k4-+-}, \eqref{eq:k4+(-)[+]} and \eqref{eq:k4-(-)} except for the additional determinant requirement in the sets involved. Let $ (v_1,\dots,v_{n-3}) $ be an element from one of the corresponding sets. For \eqref{eq:++-14} we have
\[
(v_1,\dots,v_{n-4}) \in C_4^{s_1s_2+}(n-4)
\]
with $ s_i \in \{+,-\} $. If $ s_1 = s_2 = + $, then we have $ (v_1,\dots,v_{n-4}) \in C_4^{14\pm}(n-4) $ and by symmetry $ c_4^{14\pm}(n-4) = c_4^{23\pm}(n-4) $.\\
If $ s_1 = - $ and $ s_2 = + $ we use the bijection
\begin{equation}\label{eq:-++to++-}
C_4^{-++}(n-4) \to C_4^{++-}(n-4); (v_1,\dots,v_{n-4}) \mapsto (v_3,\dots,v_{n-4},v_1,v_2).
\end{equation}
That way \eqref{eq:stayingCondition} becomes the determinant condition on $ C_4^{++-23\pm}(n-4) $.
If $  s_2 = - $ we instead use the bijection
\begin{equation}\label{eq:s-+to+s-}
C_4^{s_1-+}(n-4) \to C_4^{+s_1-}(n-4); (v_1,\dots,v_{n-4}) \mapsto (v_2,\dots,v_{n-4},v_1)
\end{equation}
and \eqref{eq:stayingCondition} turns into the determinant condition on $ C_4^{+s_1-12\pm}(n-4) $. By Lemma \ref{lem:++-12++-23} we have $ c_4^{++-12\pm}(n-4) = c_4^{++-23\pm}(n-4) $. \eqref{eq:++-14} follows.\\
For \eqref{eq:-+-14} we have
\[
(v_1,\dots,v_{n-4}) \in C_4(n-4) \cup C_4^{(-)+[s]}(n-4)
\]
with $ s \in \{+,-\} $. The bijection
\begin{equation}\label{eq:(-)+[s]to+(-)[s]}
C_4^{(-)+[s]}(n-4) \to C_4^{+(-)[s]}; (v_1,\dots,v_{n-4}) \mapsto (v_2,\dots,v_{n-4},v_1)
\end{equation}
turns \eqref{eq:stayingCondition} into the determinant condition on $ C_4^{+(-)[s]12\pm}(n-4) $. And we again use $ c_4^{14\pm}(n-4) = c_4^{23\pm}(n-4) $. This yields \eqref{eq:-+-14}.\\
For \eqref{eq:+(-)[+]14} we have
\[
(v_1,\dots,v_{n-4}) \in C_4^{s++}(n-4)
\]
with $ s \in \{+,-\} $. If $ s = - $ we use \eqref{eq:-++to++-} again and \eqref{eq:stayingCondition} becomes the determinant condition on $ C_4^{++-23\pm}(n-4) $. If $ s = + $ we use $ c_4^{14\pm}(n-4) = c_4^{23\pm}(n-4) $. \\
Finally, for \eqref{eq:-(-)[+]14} we have
\[
(v_1,\dots,v_{n-4}) \in C_4^{+-+}(n-4) \cup C_4^{(-)+[+]}(n-4).
\]
By using \eqref{eq:s-+to+s-} and \eqref{eq:(-)+[s]to+(-)[s]} once more we get \eqref{eq:-(-)[+]14}.\medskip\\
\eqref{eq:+(-)[+]34}: This is again similar to the previous cases, except that the determinant condition is
\[
D_3^\prime \det(V_{n-5},V_{n-4},V_1,V_2) = \pm D_4^\prime \det(V_{n-4},V_1,V_2,V_3).
\]
This still does not depend on $ v_{n-3} $ and thus the determinant condition can again be passed on to the resulting sets. For $ (v_1,\dots,v_{n-3}) \in C_4^{+(-)[+]34\pm}(n-3) $ we have
\[
(v_1,\dots,v_{n-4}) \in C_4^{s++}(n-4)
\]
with $ s \in \{+,-\} $. In the case $ s = - $ we can again use \eqref{eq:-++to++-} and the determinant condition becomes
\[
D_1 = \pm D_2^\prime \det(V_{n-6},V_{n-5},V_{n-4},V_1).
\]
This is precisely the determinant condition for $ C_4^{++-12\pm}(n-4) $. In the case $ s = + $ we get $ C_4^{34\pm}(n-4) $. Since $ c_4^{++-12\pm}(n-4) = c_4^{++-23\pm}(n-4) $ and $ c_4^{34\pm}(n-4) = c_4^{23\pm}(n-4) $ this yields the recursion
\[
c_4^{+(-)[+]34\pm}(n-3) = (q-1)c_4^{23\pm}(n-4) + qc_4^{++-23\pm}(n-4).
\]
Since $ c_4^{+(-)[+]14\pm}(n-3) $ obeys the same recursion \eqref{eq:+(-)[+]14}, we get $ c_4^{+(-)[+]34\pm}(n-3) = c_4^{+(-)[+]14\pm}(n-3) $ as claimed.
\end{proof}

Next we will consider the set $ C_4^{13}(n) $ with $ n $ divisible by $ 4 $. Like before, counting these sets requires us to define a couple of new sets. Recall that we already defined the sets $ C_4^{13}(n) $ and $ C_4^{++-13}(n) $.

\begin{defi}
Let $ n \geq 4 $ be divisible by $ 4 $. For any tuple $ (v_1,\dots,v_m) \in (\pr^3(K))^m $ with $ m \in \{n,n-1,n-2,n-3\} $ that appears in the definitions of the following sets let $ (V_1,\dots,V_m) \in (K^4)^m $ be an arbitrary lift. Define $ d_i := \det(V_i,V_{i+1},V_{i+2},V_{i+3}) $, where the indices are considered modulo $ n $. Let $ D_j := d_j d_{j+4} \dots d_{n-8+j} $ and $ D_j^\prime := d_j d_{j+4} \dots d_{n-12+j} $ with $ j \in \{1,2,3,4\} $. Then set
\begin{align*}
C_4^{-+-13}(n) := \{&(v_1,\dots,v_n) \in C_4^{-+-}(n) \mid D_1 d_{n-3} = D_3 d_{n-1}\},\\
C_4^{+--24}(n-1) := \{&(v_1,\dots,v_{n-1}) \in C_4^{+--}(n-1) \mid D_2 \det(V_{n-3},V_{n-2},V_{n-1},V_1)\\
&= D_4 \det(V_{n-3},V_1,V_2,V_3)\},\\
C_4^{---13}(n-1) := \{&(v_1,\dots,v_{n-1}) \in C_4^{---}(n-1)\mid D_1 \det(V_{n-4},V_{n-3},V_{n-2},V_{n-1})\\
&= D_3 \det(V_{n-4},V_{n-1},V_1,V_2)\},\\
C_4^{+(-)[-]24}(n-1) := \{&(v_1,\dots,v_{n-1}) \in C_4^{+(-)[-]}(n-1) \mid D_2 \det(V_{n-3},V_{n-2},V_{n-1},V_1)\\
&= D_4 \det(V_{n-3},V_1,V_2,V_3)\},\\
C_4^{-+-24}(n-2) := \{&(v_1,\dots,v_{n-2}) \in C_4^{-+-}(n-2) \mid D_2 \det(V_{n-3},V_{n-2},V_1,V_2)\\
&= - D_4^\prime \det(V_{n-4},V_{n-3},V_{n-2},V_2) \det(V_{n-3},V_1,V_2,V_3)\},\\
C_4^{-(-)[+]24}(n-2) := \{&(v_1,\dots,v_{n-2}) \in C_4^{-(-)[+]}(n-2) \mid D_2 \det(V_{n-3},V_{n-2},V_1,V_3)\\
&=- D_4^\prime \det(V_{n-4},V_{n-3},V_{n-2},V_3) \det(V_{n-3},V_1,V_2,V_3)\},\\
C_4^{(--)[+]13}(n-2) := \{&(v_1,\dots,v_{n-2}) \in C_4^{(--)[+]}(n-2) \mid\\
& D_1 = D_3^\prime \det(V_{n-5},V_{n-4},V_1,V_2)\},\\
C_4^{+(-)[s]13}(n-3) := \{&(v_1,\dots,v_{n-3}) \in C_4^{+(-)[s]}(n-3) \mid\\
&D_1 = D_3^\prime \det(V_{n-5},V_{n-4},V_1,V_2)\}
\end{align*}
with $ s \in \{+,-\} $. Denote the cardinalities of these sets with a lower-case $ c $.
\end{defi}

These definitions are independent of the choice of lift. We can calculate their cardinalities over $ \F_q $ with the following recursions:

\begin{lem}\label{lem:recursionsk413}
Let $ n \geq 8 $ be divisible by $ 4 $. Then
\begin{align}
c_4^{13}(n) = &(q-1)^2c_4(n-1) + 3q(q-1)c_4^{++-}(n-1) + q^2c_4^{+--}(n-1)\label{eq:+++13}\\
&+ q^2c_4^{-+-}(n-1) + q^2(q-1)c_4^{+--24}(n-1)\nonumber\\
&+ q^3c_4^{---13}(n-1),\nonumber\\
c_4^{++-13}(n) = &(q-1)c_4(n-1) + 2qc_4^{++-}(n-1) + q^2c_4^{+--24}(n-1),\label{eq:++-13}\\
c_4^{-+-13}(n) = &c_4(n-1) + qc_4^{+(-)[+]}(n-1) + q^2c_4^{+(-)[-]24}(n-1),\label{eq:-+-13}\\
c_4^{+--24}(n-1) = &(q-1)c_4^{++-}(n-2) +qc_4^{+(-)[+]}(n-2)\label{eq:+--24}\\
&+ q(q-1)c_4^{-+-24}(n-2) + q^2c_4^{-(-)[+]24}(n-2),\nonumber\\
c_4^{---13}(n-1) = &(q-1)c_4^{+--}(n-2) + qc_4^{+(-)[-]}(n-2) + qc_4^{-(-)[+]}(n-2)\label{eq:---13}\\
&+ q^2c_4^{(--)[+]13}(n-2),\nonumber\\
c_4^{+(-)[-]24}(n-1) = &c_4^{++-}(n-2) + qc_4^{-+-24}(n-2),\label{eq:+(-)[-]24}\\
c_4^{-+-24}(n-2) = &c_4(n-3) + q(q-1)c_4^{+(-)[+]13}(n-3)\label{eq:-+-24}\\
&+ q^2c_4^{+(-)[-]13}(n-3),\nonumber\\
c_4^{-(-)[+]24}(n-2) = &c_4^{++-}(n-3) + qc_4^{+(-)[+]13}(n-3),\label{eq:-(-)[+]24}\\
c_4^{(--)[+]13}(n-2) = &(q-1)c_4^{+(-)[+]13}(n-3) + qc_4^{13}(n-4),\label{eq:(--)[+]13}\\
c_4^{+(-)[+]13}(n-3) = &(q-1)c_4^{13}(n-4) + qc_4^{++-13}(n-4),\label{eq:+(-)[+]13}\\
c_4^{+(-)[-]13}(n-3) = &(q-1)c_4^{++-13}(n-4) + qc_4^{-+-13}(n-4).\label{eq:+(-)[-]13}
\end{align}
\end{lem}

\begin{proof}
Like in the last definition, for any tuple $ (v_1,\dots,v_m) \in (\pr^3(K))^m $ with $ m \in \{n,n-1,n-2,n-3\} $ that appears in this proof let $ (V_1,\dots,V_m) \in (K^4)^m $ be an arbitrary lift. Define $ d_i := \det(V_i,V_{i+1},V_{i+2},V_{i+3}) $, where the indices are considered modulo $ n $. Let $ D_j := d_j d_{j+4} \dots d_{n-8+j} $ and $ D_j^\prime := d_j d_{j+4} \dots d_{n-12+j} $ with $ j \in \{1,2,3,4\} $. Also let $ E_1 := v_{m-3} \vee v_{m-2} \vee v_{m-1} $, $ E_2 := v_{m-2} \vee v_{m-1} \vee v_1 $, $ E_3 := v_{m-1} \vee v_1 \vee v_2 $, and $ E_4 := v_1 \vee v_2 \vee v_3 $. For most of the recursions we will consider a linear map $ \phi: K^4 \to K $ with the definition varying based on the recursion. We set $ H := \pr(\Ker \phi) $. This will help us to detect whether the determinant condition of the corresponding set is fulfilled.\smallskip\\
\eqref{eq:+++13}: Let $ (v_1,\dots,v_n) \in C_4^{13}(n) $. Then
\[
(v_1,\dots,v_{n-1}) \in C_4^{s_1s_2s_3}(n-1)
\]
with $ s_i \in \{+,-\} $. Define
\[
\phi: K^4 \to K; V \mapsto D_1 \det(V_{n-3},V_{n-2},V_{n-1},V) - D_3 \det(V_{n-1},V,V_1,V_2).
\]
Then for any $ (v_1,\dots,v_n) \in (\pr^3(K))^n $ the determinant condition $ D_1 d_{n-3} = D_3 d_{n-1} $ of $ C_4^{13}(n) $ is equivalent to $ v_n \in H $.\\
On the other hand, if we choose any $ (v_1,\dots,v_{n-1}) \in C_4^{s_1s_2s_3}(n-1) $, we know from the proof of \eqref{eq:k4+++} that $ (v_1,\dots,v_n) \in C_4(n) $ if and only if $ v_n \in \pr^3(K) \backslash (E_1 \cup E_2 \cup E_3 \cup E_4) $. It follows that we have $ (v_1,\dots,v_n) \in C_4^{13}(n) $ if and only if $ v_n \in H \backslash (E_1 \cup E_2 \cup E_3 \cup E_4) $.\smallskip\\
Case 1: $ s_1 = + $ or $ s_2 = + $.\\
If $ s_1 = + $, then $ \det(V_{n-3},V_{n-2},V_{n-1},V_1) \neq 0 $ and it follows that $ \phi(V_1) \neq 0 $. Hence, $ \dim H = 2 $. If $ s_1 = - $ we must have $ s_2 = + $. Then $ E_1 = E_2 $, but $ v_2 \notin E_2 = E_1 $ and therefore $ \det(V_{n-3},V_{n-2},V_{n-1},V_2) \neq 0 $. Thus, $ \phi(V_2) \neq 0 $ and again $ \dim H = 2 $. So we will always have $ \dim H = 2 $ in Case $ 1 $.\\
If $ v \in E_1 $ and $ V $ is a lift of $ v $, then one of the determinant products in the definition of $ \phi(V) $ vanishes. Accordingly, we have $ v \in H $ if and only if the other determinant product also vanishes, that is if $ v \in E_3 $. This argument also works the other way around. Hence, $ E_1 \cap H = E_1 \cap E_3 = E_3 \cap H $. Moreover, $ s_1 = + $ or $ s_2 = + $ guaranties $ E_1 \neq E_3 $, thus we have $ \dim E_1 \cap E_3 = 1 $. If $ s_1 = - $ or $ s_2 = - $, then $ E_1 = E_2 $ or $ E_2 = E_3 $ and consequently, $ E_1 \cap H = E_2 \cap H = E_3 \cap H $. On the other hand, if $ s_1 = s_2 = + $, then
\[
E_1 \cap E_2 \cap H = (v_{n-2} \vee v_{n-1}) \cap H = \{v_{n-1}\}.
\]
In particular, $ E_2 \cap H $ is then a projective line and $ E_2 \cap H \neq E_1 \cap H = E_3 \cap H $.\\
If $ s_4 = - $, then $ E_3 = E_4 $ and thus $ E_3 \cap H = E_4 \cap H $. If $ s_4 = + $, then
\[
E_4 \cap E_3 \cap H = (v_1 \vee v_2) \cap H,
\]
and since we have $ v_1 \notin H $ or $ v_2 \notin H $, this is just a single point. Hence, $ E_4 \cap H $ is also a projective line and $ E_4 \cap H \neq E_3 \cap H $. If $ s_1 = s_2 = s_3 = + $ also note that
\[
E_2 \cap E_4 \cap E_3 \cap H = E_2 \cap E_4 \cap E_1 \cap E_3 = \emptyset
\]
(as we showed in the proof of \eqref{eq:k4+++}). Hence the three lines $ E_2 \cap H $, $ E_3 \cap H $ and $ E_4 \cap H $ do not have a common intersection in this case.
\\
In conclusion, the hyperplanes $ E_1, E_2, E_3, E_4 $ intersect $ H $ in one, two or three different lines depending on $ s_1, s_2 $ and $ s_3 $: If $ s_1 = s_2 = s_3 = + $ there are three lines, if $ s_1 \neq s_2 $ and $ s_3 = + $, or if $ s_1 = s_2 = + $ and $ s_3 = - $ there are two lines, and if $ s_1 \neq s_2 $ and $ s_3 = - $ there is only one line. Since the three lines do not have a common intersection point in the first case, this corresponds to $ (q-1)^2 $ resp. $ q(q-1) $ resp. $ q^2 $ elements in $ H \backslash (E_1 \cup E_2 \cup E_3 \cup E_4) $. The number of elements of $ C_4^{13}(n) $ with $ s_1 = + $ or $ s_2 = + $ are thus
\[
(q-1)^2c_4(n-1) + 3q(q-1)c_4^{++-}(n-1) + q^2c_4^{+--}(n-1) + q^2c_4^{-+-}(n-1),
\]
where we used $ c_4^{-++}(n-1) = c_4^{+-+}(n-1) = c_4^{++-}(n-1) $.\smallskip\\
Case 2: $ s_1 = s_2 = - $.\\
Then $ E_1 = E_3 \subseteq H $. Since $ v_n \in H \backslash E_1 $, $ E_1 = E_3 $ must be a proper subset and this is only possible if $ H = \pr^3(K) $ or equivalently, if $ \phi = 0 $. Hence we have a necessary condition $ \phi = 0 $ for $ (v_1,\dots,v_{n-1}) \in C_4^{--s_3}(n-1) $; if this condition is not fulfilled, there will be no suitable choices for $ v_n $ at all. Hence we can restrict ourselves to only consider values for $ (v_1,\dots,v_{n-1}) $ that fulfill this condition.\\
We have $ v_{n-4} \notin E_1 $ and it follows that $ \phi = 0 $ if and only if $ \phi(V_{n-4}) = 0 $. We can rewrite this condition as
\[
D_1 \det(V_{n-4},V_{n-3},V_{n-2},V_{n-1}) = D_3 \det(V_{n-4},V_{n-1},V_1,V_2).
\]
If $ s_3 = - $ this is immediately the condition for $ (v_1,\dots,v_{n-1}) \in C_4^{---13}(n-1) $. If $ s_3 = + $ we need to use the bijection
\[
C_4^{--+}(n-1) \to C_4^{+--}(n-1); (v_1,\dots,v_{n-1}) \mapsto (v_2,\dots,v_{n-1},v_1).
\]
The condition then becomes
\[
D_2 \det(V_{n-3},V_{n-2},V_{n-1},V_1) = D_4 \det(V_{n-3},V_1,V_2,V_3),
\]
which is exactly the determinant condition for $ C_4^{+--24}(n-1) $. In either case the determinant condition has become trivial and we thus have the same number of choices for $ v_n $ as in the corresponding cases of the proof of \eqref{eq:k4+++}, which is to say $ q^2(q-1) $ resp. $ q^3 $. The claim follows.\medskip\\
\eqref{eq:++-13}: Let $ (v_1,\dots,v_n) \in C_4^{++-13}(n) $. Then
\[
(v_1,\dots,v_{n-1}) \in C_4^{s_1s_2+}(n-1)
\]
with $ s_i \in \{+,-\} $. Define the linear map
\[
\phi: K^4 \to K; V \mapsto D_1 \det(V_{n-3},V_{n-2},V_{n-1},V) - D_3 \det(V_{n-1},V,V_1,V_2).
\]
The determinant condition $ D_1 d_{n-3} = D_3 d_{n-1} $ of $ C_4^{++-13}(n) $ is fulfilled if and only if $ v_n \in H $.\\
On the other hand, for any $ (v_1,\dots,v_{n-1}) \in C_4^{s_1s_2+}(n-1) $, the suitable choices for $ v_n $ are the elements of $ H \cap E_4 \backslash (E_1 \cup E_2 \cup E_3) $.\smallskip\\
Case 1: $ s_1 = + $ or $ s_2 = + $.\\
If $ s_1 = + $, then $ \phi(V_1) \neq 0 $. Hence, $ \dim H = 2 $. If $ s_1 = - $ but $ s_2 = + $, then $ \phi(V_2) \neq 0 $ and again $ \dim H = 2 $. So we will always have $ \dim H = 2 $ in Case 1. Moreover, $ v_1 \notin H $ or $ v_2 \notin H $ implies $ H \neq E_4 $ and thus $ G := H \cap E_4 $ is a projective line.\\
Like in the proof of the previous recursion, we have $ E_1 \cap H = E_1 \cap E_3 = E_3 \cap H $. Hence,
\begin{align*}
E_1 \cap G &= E_1 \cap H \cap E_4 = E_1 \cap E_3 \cap E_4 = E_1 \cap (v_1 \vee v_2)\\
&= E_3 \cap G,
\end{align*}
and $ \lvert E_1 \cap (v_1 \vee v_2) \rvert = 1 $ since $ v_1 \notin E_1 $ if $ s_1 = + $ and $ v_2 \notin E_1 $ if $ s_1 = - $ and $ s_2 = + $.\\
If $ s_1 \neq s_2 $, then $ E_1 = E_2 $ or $ E_2 = E_3 $ and consequently, $ E_1 \cap G = E_2 \cap G = E_3 \cap G $. On the other hand if $ s_1 = s_2 = + $, then
\[
E_1 \cap E_2 \cap G = (v_{n-2} \vee v_{n-1}) \cap H \cap E_4 = \{v_{n-1}\} \cap E_4 = \emptyset.
\]
In particular, $ \lvert E_2 \cap G \rvert = 1 $ and $ E_2 \cap G \neq E_1 \cap G = E_3 \cap G $. Hence, $ G \backslash (E_1 \cup E_2 \cup E_3) $ has $ q-1 $ possible values for $ v_n $ if $ s_1 = s_2 = + $ and $ q $ otherwise. The number of elements of $ C_4^{++-13}(n) $ with $ s_1 = + $ or $ s_2 = + $ are thus
\[
(q-1)c_4(n-1) + 2qc_4^{++-}(n-1),
\]
using $ c_4^{-++}(n-1) = c_4^{+-+}(n-1) = c_4^{++-}(n-1) $.\smallskip\\
Case 2: $ s_1 = s_2 = - $.\\
Then $ E_1 = E_3 \subseteq H $. We cannot have equality, therefore $ (v_1,\dots,v_{n-1}) \in C_4^{--+}(n-1) $ must fulfill the condition $ \phi = 0 $. We have $ v_{n-4} \notin E_1 $ and thus $ \phi = 0 $ if and only if $ \phi(V_{n-4}) = 0 $. This yields the condition
\[
D_1 \det(V_{n-4},V_{n-3},V_{n-2},V_{n-1}) = D_3 \det(V_{n-4},V_{n-1},V_1,V_2).
\]
We need to use the bijection
\[
C_4^{--+}(n-1) \to C_4^{+--}(n-1); (v_1,\dots,v_{n-1}) \mapsto (v_2,\dots,v_{n-1},v_1).
\]
The condition then turns into
\[
D_2 \det(V_{n-3},V_{n-2},V_{n-1},V_1) = D_4 \det(V_{n-3},V_1,V_2,V_3),
\]
which is exactly the determinant condition for $ C_4^{+--24}(n-1) $. The determinant condition has become trivial and the number of choices for $ v_n $ is the same as in the proof of \eqref{eq:k4+++}, namely $ q^2 $. The claim is proved.\medskip\\
\eqref{eq:-+-13}: Let $ (v_1,\dots,v_n) \in C_4^{-+-13}(n) $. Then
\[
(v_1,\dots,v_{n-1}) \in C_4(n-1) \cup C_4^{(-)+[s]}(n-1)
\]
with $ s \in \{+,-\} $. Let
\[
\phi: K^4 \to K; V \mapsto D_1 \det(V_{n-3},V_{n-2},V_{n-1},V) - D_3 \det(V_{n-1},V,V_1,V_2).
\]
Once again, the determinant condition in $ C_4^{-+-13}(n) $ is fulfilled if and only if $ v_n \in H $.\\
Case 1: $ \{v_{n-2},v_{n-1},v_1\} $ independent.\\
This implies $ (v_1,\dots,v_{n-1}) \in C_4(n-1) $ and in particular, $ \phi(V_1) \neq 0 $. Moreover, $ v_1 \notin H $ and $ E_2 \cap E_4 $ is a projective line containing $ v_1 $. Then $ \{P\} := H \cap E_2 \cap E_4 $ must be a single point. On the other hand, we have $ E_1 \cap H = E_1 \cap E_3 = E_3 \cap H $. Thus
\begin{align*}
E_1 \cap \{P\} &= E_1 \cap H \cap E_2 \cap E_4 = E_1 \cap E_3 \cap E_2 \cap E_4 = \emptyset\\
&= E_3 \cap \{P\}.
\end{align*}
It follows that $ \{P\} = H \cap E_2 \cap E_4 \backslash (E_1 \cup E_3) $ and thus there is exactly one value of $ v_n $ for any $ (v_1,\dots,v_{n-1}) \in C_4(n-1) $ such that $ (v_1,\dots,v_n) \in C_4^{-+-13}(n) $ and it follows that there are $ c_4(n-1) $ elements of $ C_4^{-+-13}(n) $ that fall in Case 1.\\
Case 2: $ \{v_{n-2},v_{n-1},v_1\} $ dependent and $ s = + $.\\
$ s = + $ implies $ \{v_{n-3},v_{n-2},v_{n-1},v_2\} $ independent, hence $ \phi(V_2) \neq 0 $ and $ \dim H = 2 $. $ v_2 \notin H $ means that $ G := H \cap E_4 $ is a projective line. The valid choices of $ v_n $ for a given $ (v_1,\dots,v_{n-1}) \in C_4^{(-)+[+]}(n-1) $ are the elements of $ G \backslash (E_1 \cup E_3) $ (we do not need to consider $ E_2 $ since $ \{v_{n-2},v_{n-1},v_1\} $ dependent makes the condition $ \{v_{n-2},v_{n-1},v_n,v_1\} $ dependent trivial). We still have $ E_1 \cap H = E_1 \cap E_3 = E_3 \cap H $ and thus
\begin{align*}
E_1 \cap G &= E_1 \cap H \cap E_4 = E_1 \cap E_3 \cap E_4 = E_1 \cap (v_1 \vee v_2) = \{v_1\}\\
&= E_3 \cap G.
\end{align*}
Hence there are $ q $ valid choices for $ v_n $ and the number of elements falling in Case 2 is $ qc_4^{+(-)[+]}(n-1) $ (using $ c_4^{(-)+[+]}(n-1) = c_4^{+(-)[+]}(n-1) $).\\
Case 3: $ \{v_{n-2},v_{n-1},v_1\} $ dependent and $ s = - $.\\
$ \{v_{n-2},v_{n-1},v_1\} $ dependent implies $ v_1 \in E_1 $ and $ s = - $ implies $ v_2 \in E_1 $. It follows that $ E_1 = E_3 \subseteq H $. We cannot have equality and therefore we get $ \phi = 0 $ as a necessary condition on $ (v_1,\dots,v_{n-1}) $. We have $ v_3 \notin E_3 $ since $ \{v_{n-1},v_1,v_2,v_3\} $ is independent and thus $ \phi = 0 $ is equivalent to $ \phi(V_3) = 0 $, which we can also rewrite as
\[
D_1 \det(V_{n-3},V_{n-2},V_{n-1},V_3) = D_3 \det(V_{n-1},V_1,V_2,V_3).
\]
Now we use the bijection
\[
C_4^{(-)+[-]}(n-1) \to C_4^{+(-)[-]}; (v_1,\dots,v_{n-1}) \mapsto (v_{n-1},\dots,v_1),
\]
which converts our condition to
\[
D_4 \det(V_{n-3},V_1,V_2,V_3) = D_2 \det(V_{n-3},V_{n-2},V_{n-1},V_1).
\]
As this is the determinant condition in $ C_4^{+(-)[-]24}(n-1) $ and as there are $ q^2 $ options for $ v_n $ like in the proof of \eqref{eq:k4-+-}, the claimed recursion follows.
\medskip\\
\eqref{eq:+--24}: Let $ (v_1,\dots,v_{n-1}) \in C_4^{+--24}(n-1) $. Then
\[
(v_1,\dots,v_{n-2}) \in C_4^{s+-}(n-2) \cup C_4^{s(-)[+]}(n-2)
\]
with $ s \in \{+,-\} $. Consider the map
\begin{align*}
\phi: K^4 &\to K\\
V &\mapsto D_2 \det(V_{n-3},V_{n-2},V,V_1) - D_4^\prime \det(V_{n-4},V_{n-3},V_{n-2},V) \det(V_{n-3},V_1,V_2,V_3).
\end{align*}
The determinant condition in $ C_4^{+--24}(n-1) $ is fulfilled if and only if $ v_{n-1} \in H $.\\
We start with the case $ s = + $. Then $ \phi(V_1) \neq 0 $ and $ \dim H = 2 $. Moreover, $ v_1 \notin H $ implies that $ G := H \cap E_4 $ is a projective line. The suitable choices for $ v_{n-1} $ are the elements of $ G \backslash (E_1 \cup E_2 \cup (v_1 \vee v_2)) $. We also have $ E_1 \cap H = E_1 \cap E_2 = v_{n-3} \vee v_{n-2} = E_2 \cap H $. If $ (v_1,\dots,v_{n-2}) \in C_4^{++-}(n-2) $, then $ v_{n-3} \notin E_3 = E_4 $ and $ v_{n-2} \in E_3 = E_4 $. If $ (v_1,\dots,v_{n-2}) \in C_4^{+(-)[+]}(n-2) $, then the $ [+] $ ensures $ v_{n-3} \notin E_4 $ and the $ (-) $ ensures $ v_{n-2} \in v_1 \vee v_2 \subset E_4 $. In either case, it follows that
\[
E_1 \cap G = E_2 \cap G = (v_{n-3} \vee v_{n-2}) \cap E_4 = \{v_{n-2}\}.
\]
$ v_1 \notin H $ implies $ v_1 \notin G $, hence $ \lvert G \cap (v_1 \vee v_2) \rvert = 1 $ (the intersection cannot be empty since both lines are contained in $ E_4 $). If $ \{v_{n-2},v_1,v_2\} $ is independent, then $ G \cap (v_1 \vee v_2) \neq \{v_{n-2}\} $, otherwise $ G \cap (v_1 \vee v_2) = \{v_{n-2}\} $. Accordingly, we have $ q-1 $ resp. $ q $ choices for $ v_{n-1} $ depending on whether $ \{v_{n-2},v_1,v_2\} $ is independent, and the number of elements of $ C_4^{+--24}(n-1) $ with $ s = + $ is
\[
(q-1)c_4^{++-}(n-2) + qc_4^{+(-)[+]}(n-2).
\]
Now let $ s = - $. Then $ E_1 = E_2 \subseteq H $ and we cannot have equality. Hence, $ \phi = 0 $. If $ \{v_{n-2},v_1,v_2\} $ is independent, $ v_2 \notin E_2 $ and $ \phi = 0 $ is equivalent to $ \phi(V_2) = 0 $. Then the necessary condition $ \phi = 0 $ turns into
\[
D_2 \det(V_{n-3},V_{n-2},V_1,V_2) = - D_4^\prime \det(V_{n-4},V_{n-3},V_{n-2},V_2) \det(V_{n-3},V_1,V_2,V_3),
\]
which is exactly the condition for $ (v_1,\dots,v_{n-2}) \in C_4^{-+-24}(n-2) $.\\
On the other hand, if $ \{v_{n-2},v_1,v_2\} $ is dependent, then $ v_2 \in E_2 $, but $ v_3 \notin E_1 = E_2 $. Hence we can test $ \phi = 0 $ with $ V_3 $ and get the condition
\[
D_2 \det(V_{n-3},V_{n-2},V_1,V_3) = - D_4^\prime \det(V_{n-4},V_{n-3},V_{n-2},V_3) \det(V_{n-3},V_1,V_2,V_3).
\]
This is the condition for $ (v_1,\dots,v_{n-2}) \in C_4^{-(-)[+]24}(n-2) $. In either case, the determinant condition becomes trivial and like in the proof of \eqref{eq:k4+--} we have $ q(q-1) $ resp. $ q^2 $ choices for $ v_{n-1} $. Thus the recursion holds.\medskip\\
\eqref{eq:---13}: Let $ (v_1,\dots,v_{n-1}) \in C_4^{---13}(n-1) $. Then
\[
(v_1,\dots,v_{n-2}) \in C_4^{+--}(n-2) \cup C_4^{+(-)[-]}(n-2) \cup C_4^{(-)-[+]}(n-2) \cup C_4^{(--)[+]}(n-2).
\]
Let $ M_1 := \{v_{n-3},v_{n-2},v_1\} $ and $ M_2 := \{v_{n-2},v_1,v_2\} $. Also let
\[
\phi: K^4 \to K; V \mapsto D_1 \det(V_{n-4},V_{n-3},V_{n-2},V) - D_3 \det(V_{n-4},V,V_1,V_2).
\]
As usual, the determinant condition on $ C_4^{---13}(n-1) $ is equivalent to $ v_{n-1} \in H $.\\
Case 1: $ M_1 $ independent or $ M_2 $ independent.\\
If $ M_1 $ is independent, then $ \{v_{n-4},v_{n-3},v_{n-2},v_1\} $ is independent and thus $ \phi(V_1) \neq 0 $. If $ M_1 $ is dependent but $ M_2 $ is independent, then $ \{v_{n-4},v_{n-3},v_{n-2},v_2\} $ is independent and thus $ \phi(V_2) \neq 0 $. In either case we have $ \dim H = 2 $ and $ H \neq E_4 $. Therefore, $ G := H \cap E_4 $ is a projective line. The viable choices for $ v_{n-1} $ are exactly the elements of $ G \backslash (E_1 \cup (v_{n-2} \vee v_1) \cup (v_1 \vee v_2)) $. We have $ E_1 \cap H = E_1 \cap (v_{n-4} \vee v_1 \vee v_2) $; it follows that
\begin{align*}
E_1 \cap G &= E_1 \cap H \cap E_4 = E_1 \cap (v_{n-4} \vee v_1 \vee v_2) \cap E_4 = E_1 \cap (v_1 \vee v_2)\\
&= E_1 \cap G \cap (v_1 \vee v_2) = (v_1 \vee v_2) \cap G.
\end{align*}
This intersection is a single point since $ v_1 \vee v_2 \not\subset G $.\\
If $ M_2 $ is dependent, then $ v_{n-2} \vee v_1 = v_1 \vee v_2 $ and in particular $ (v_{n-2} \vee v_1) \cap G = (v_1 \vee v_2) \cap G $. Otherwise we have
\[
(v_{n-2} \vee v_1) \cap (v_1 \vee v_2) \cap G = \{v_1\} \cap G.
\]
This is nonempty if and only if $ M_1 $ is dependent. Hence we have $ q-1 $ choices for $ v_{n-1} $ if $ M_1 $ and $ M_2 $ are both independent and $ q $ choices otherwise. This shows that there are
\[
(q-1)c_4^{+--}(n-2) + qc_4^{+(-)[-]}(n-2) + qc_4^{-(-)[+]}(n-2)
\]
elements of $ C_4^{---13}(n-1) $ falling into Case 1. Here we used $ c_4^{(-)-[+]}(n-2) = c_4^{-(-)[+]}(n-2) $.\\
Case 2: $ M_1 $ and $ M_2 $ are both dependent.\\
Then $ E_1 = v_{n-4} \vee v_1 \vee v_2 \subseteq H $, and since $ v_{n-1} \in H \backslash E_1 $ we cannot have equality. Hence $ \phi = 0 $; and this necessary condition is equivalent to $ \phi(V_{n-5}) = 0 $, because $ v_{n-5} \notin E_1 $. After canceling out $ d_{n-5} $, the condition becomes
\[
D_1 = D_3^\prime \det(V_{n-5},V_{n-4},V_1,V_2).
\]
This is also the condition for $ (v_1,\dots,v_{n-2}) \in C_4^{(--)[+]13}(n-2) $. As the determinant condition becomes trivial, there are $ q^2 $ choices for $ v_{n-1} $ like in the proof of \eqref{eq:k4---}. The claimed recursion follows.\medskip\\
\eqref{eq:+(-)[-]24}: Let $ (v_1,\dots,v_{n-1}) \in C_4^{+(-)[-]24}(n-1) $. Then
\[
(v_1,\dots,v_{n-2}) \in C_4^{s+-}(n-2).
\]
with $ s \in \{+,-\} $. Define the map
\begin{align*}
\phi: K^4 &\to K\\
V &\mapsto D_2\det(V_{n-3},V_{n-2},V,V_1) - D_4^\prime \det(V_{n-4},V_{n-3},V_{n-2},V) \det(V_{n-3},V_1,V_2,V_3).
\end{align*}
We have $ v_{n-1} \in H $ if and only if the determinant condition in $ C_4^{+(-)[-]24}(n-1) $ is fulfilled.\\
Let $ s = + $. Consequently, $ \phi(V_1) \neq 0 $ and $ \dim H = 2 $. The intersection $ \{P\} := H \cap (v_1 \vee v_2) $ then contains only one point. We have $ E_1 \cap H = E_1 \cap E_2 = E_2 \cap H $ and thus
\begin{align*}
E_1 \cap \{P\} &= E_1 \cap H \cap (v_1 \vee v_2) = E_1 \cap E_2 \cap (v_1 \vee v_2) = (v_{n-3} \vee v_{n-2}) \cap (v_1 \vee v_2) = \emptyset\\
&= E_2 \cap \{P\},
\end{align*}
since $ \{v_{n-3},v_{n-2},v_1,v_2\} $ is independent. This proves that $ v_{n-1} = P $ meets all requirements for $ (v_1,\dots,v_{n-1}) \in C_4^{+(-)[-]24}(n-1) $ for all $ (v_1,\dots,v_{n-2}) \in C_4^{++-}(n-2) $. Hence there are $ c_4^{++-}(n-2) $ elements with $ s = + $.\\
Now let $ s = - $. Then $ E_1 = E_2 \subseteq H $ and equality is impossible. As usual, we get the necessary condition $ \phi = 0 $ for $ (v_1,\dots,v_{n-2}) $ and using $ v_2 \notin E_2 $ this condition is fulfilled if and only if $ \phi(V_2) = 0 $. This is in turn equivalent to
\[
D_2 \det(V_{n-3},V_{n-2},V_1,V_2) = - D_4^\prime \det(V_{n-4},V_{n-3},V_{n-2},V_2) \det(V_{n-3},V_1,V_2,V_3),
\]
which is the condition for $ (v_1,\dots,v_{n-2}) \in C_4^{-+-24}(n-2) $. We have $ q $ choices for $ v_{n-2} $ as in the proof of \eqref{eq:k4+(-)[-]}, because the determinant condition becomes trivial. The claim follows.
\medskip\\
\eqref{eq:-+-24}: Let $ (v_1,\dots,v_{n-2}) \in C_4^{-+-24}(n-2) $. Then
\[
(v_1,\dots,v_{n-3}) \in C_4(n-3) \cup C_4^{(-)+[s]}(n-3)
\]
with $ s \in \{+,-\} $. Define the map
\[
\phi: K^4 \to K; V \mapsto D_2 \det(V_{n-3},V,V_1,V_2) + D_4^\prime \det(V_{n-4},V_{n-3},V,V_2)\det(V_{n-3},V_1,V_2,V_3).
\]
Clearly we have $ v_{n-2} \in H $ for $ v_{n-2} \in \pr^3(K) $ if and only if the determinant condition in $ C_4^{-+-24}(n-2) $ is fulfilled.\\
First assume $ \{v_{n-4},v_{n-3},v_1\} $ independent. Then $ (v_1,\dots,v_{n-3}) \in C_4(n-3) $. In particular, $ \{v_{n-4},v_{n-3},v_1,v_2\} $ is independent and therefore $ \phi(V_1) \neq 0 $. It follows that $ v_1 \notin H $. Since $ E_2 \cap E_4 $ is a projective line ($ E_2 \neq E_4 $) with $ v_1 \in E_2 \cap E_4 $, the intersection $ \{P\} := H \cap E_2 \cap E_4 $ is only a single point. Accordingly, there is at most one choice of $ v_{n-2} $ for any $ (v_1,\dots,v_{n-3}) \in C_4(n-3) $ with $ (v_1,\dots,v_{n-2}) \in C_4^{-+-24}(n-2) $. On the other hand, $ P $ is a viable choice if $ P \notin E_1, E_3 $. We have
\[
E_3 \cap \{P\} = E_2 \cap H \cap E_3 \cap E_4 = E_2 \cap H \cap (v_1 \vee v_2) = E_2 \cap \{v_2\} = \emptyset
\]
and
\[
E_1 \cap \{P\} = E_1 \cap E_2 \cap H \cap E_4 = (v_{n-4} \vee v_{n-3}) \cap H \cap E_4 = \{v_{n-3}\} \cap E_4 = \emptyset.
\]
Thus there are exactly $ c_4(n-3) $ elements of $ C_4^{-+-24}(n-2) $ with $ \{v_{n-4},v_{n-3},v_1\} $ independent.\\
Now let $ \{v_{n-4},v_{n-3},v_1\} $ be dependent. Then $ E_3 = v_{n-3} \vee v_1 \vee v_2 = v_{n-4} \vee v_{n-3} \vee v_2 \subseteq H $. Equality is prevented by $ v_{n-2} \in H \backslash E_3 $, hence we must have $ \phi = 0 $ as a necessary condition on $ (v_1,\dots,v_{n-3}) $. As $ v_3 \notin E_3 $, this condition is equivalent to $ \phi(V_3) = 0 $. This is in turn equivalent to
\[
D_2 = D_4^\prime \det(V_{n-4},V_{n-3},V_2,V_3)
\]
after canceling out $ \det(V_{n-3},V_1,V_2,V_3) $. Now we apply the bijection
\[
C_4^{(-)+[s]}(n-3) \to C_4^{+(-)[s]}(n-3); (v_1,\dots,v_{n-3}) \mapsto (v_{n-3},\dots,v_1)
\]
and our condition becomes
\[
D_1 = D_3^\prime \det(V_{n-5},V_{n-4},V_1,V_2).
\]
This is also the determinant condition on $ C_4^{+(-)[s]13}(n-3) $. Moreover, the determinant condition on $ C_4^{-+-24}(n-2) $ becomes trivial because of $ \phi = 0 $ and like in the proof of \eqref{eq:k4-+-} we have $ q(q-1) $ resp. $ q^2 $ choices for $ v_{n-2} $ depending on $ s $. This concludes the proof \eqref{eq:-+-24}.\medskip\\
\eqref{eq:-(-)[+]24}: Let $ (v_1,\dots,v_{n-2}) \in C_4^{-(-)[+]24}(n-2) $. Then
\[
(v_1,\dots,v_{n-3}) \in C_4^{+-+}(n-3) \cup C_4^{(-)+[+]}(n-3).
\]
Let
\[
\phi: K^4 \to K; V \mapsto D_2 \det(V_{n-3},V,V_1,V_3) + D_4^\prime \det(V_{n-4},V_{n-3},V,V_3) \det(V_{n-3},V_1,V_2,V_3).
\]
Then $ v_{n-2} \in H $ is equivalent to the determinant condition on $ C_4^{-(-)[+]24}(n-2) $.\\
Let $ \{v_{n-4},v_{n-3},v_1\} $ be independent. In this case we have $ \{v_{n-3},v_1,v_2,v_3\} $ independent and $ v_{n-4} \vee v_{n-3} \vee v_1 = v_{n-3} \vee v_1 \vee v_2 $. This implies $ \{v_{n-4},v_{n-3},v_1,v_3\} $ independent, hence $ \phi(V_1) \neq 0 $ and $ \dim H = 2 $. Moreover, since $ v_1 \notin H $, $ \{P\} := H \cap (v_1 \vee v_2) $ is a single point. $ P $ is the only projective point that could be a suitable choice for $ v_{n-2} $. In order for $ v_{n-2} = P $ to actually work, we need $ P \notin E_1 $, $ P \in E_2 $, and $ P \notin v_{n-3} \vee v_1 $. We have $ P \in v_1 \vee v_2 \subset E_3 = E_2 $, hence $ P \in E_2 $. Moreover, since $ \{v_{n-4},v_{n-3},v_1,v_2\} $ is dependent, $ (v_{n-4} \vee v_{n-3}) \cap (v_1 \vee v_2) \neq \emptyset $. $ E_1 \cap (v_1 \vee v_2) $ is only one point, because $ v_1 \notin E_1 $. Then $ v_{n-4} \vee v_{n-3} \subset E_1 $ implies $ E_1 \cap (v_1 \vee v_2) = (v_{n-4} \vee v_{n-3}) \cap (v_1 \vee v_2) $. Thus
\[
E_1 \cap \{P\} = E_1 \cap (v_1 \vee v_2) \cap H = (v_{n-4} \vee v_{n-3}) \cap (v_1 \vee v_2) \cap H = (v_1 \vee v_2) \cap \{v_{n-3}\} = \emptyset.
\]
And
\[
(v_{n-3} \vee v_1) \cap \{P\} = (v_{n-3} \vee v_1) \cap (v_1 \vee v_2) \cap H = \{v_1\} \cap H = \emptyset.
\]
hence $ P $ is indeed a suitable choice and we have exactly $ c_4^{+-+}(n-3) = c_4^{++-}(n-3) $ elements falling into the case $ \{v_{n-4},v_{n-3},v_1\} $ independent.\\
Now let $ \{v_{n-4},v_{n-3},v_1\} $ be dependent. Then $ v_{n-3} \vee v_1 \vee v_3 = v_{n-4} \vee v_{n-3} \vee v_3 \subseteq H $. We have $ \{v_{n-3},v_1,v_2,v_3\} $ independent (because of the $ [+] $), which implies $ (v_1 \vee v_2) \cap (v_{n-3} \vee v_1 \vee v_3) = \{v_1\} $. Since $ v_{n-2} \in H \cap (v_1 \vee v_2) \backslash \{v_1\} $, we must have $ \phi = 0 $. This condition is equivalent to $ \phi(V_2) = 0 $, which we can rewrite as
\[
D_2 = D_4^\prime \det(V_{n-4},V_{n-3},V_2,V_3)
\]
after canceling out $ \det(V_{n-3},V_1,V_2,V_3) $. Now we apply the bijection
\[
C_4^{(-)+[+]}(n-3) \to C_4^{+(-)[+]}(n-3); (v_1,\dots,v_{n-3}) \mapsto (v_{n-3},\dots,v_1).
\]
The condition becomes
\[
D_1 = D_3^\prime \det(V_{n-5},V_{n-4},V_1,V_2),
\]
which is the determinant condition for $ C_4^{+(-)[+]13}(n-3) $. The number of choices for $ v_{n-2} $ is $ q $ just like in the proof of \eqref{eq:k4-(-)}. The claim follows.\medskip\\
Finally, consider \eqref{eq:(--)[+]13}, \eqref{eq:+(-)[+]13} and \eqref{eq:+(-)[-]13}. The corresponding sets are $ C_4^{(--)[+]13}(n-2) $ and $ C_4^{+(-)[s]}(n-3) $ with $ s \in \{+,-\} $. Let $ (v_1,\dots,v_{n-2}) \in C_4^{(--)[+]13} $ resp. $ (v_1,\dots,v_{n-3}) \in C_4^{+(-)[s]13}(n-3) $. All of these sets have the same determinant condition, namely
\begin{equation}\label{eq:13detCondition}
D_1 = D_3^\prime \det(V_{n-5},V_{n-4},V_1,V_2).
\end{equation}
This equation does not depend on $ v_{n-2} $ or $ v_{n-3} $, hence we have the same recursions as \eqref{eq:k4(--)}/\eqref{eq:k4+(-)[+]}/\eqref{eq:k4+(-)[-]}, except that the sets on the right hand side inherit the condition \eqref{eq:13detCondition}. The only case that requires special attention is \eqref{eq:+(-)[+]13}, where we need to use that the bijection
\[
C_4^{-++}(n-4) \to C_4^{++-}(n-4); (v_1,\dots,v_{n-4}) \mapsto (v_3,\dots,v_{n-4},v_1,v_2)
\]
transforms the condition \eqref{eq:13detCondition} into itself.
\end{proof}

Next we will consider sets sets with two determinant conditions, specifically $ C_4^{234\pm}(n) $ and some related sets.

\begin{defi}
Let $ n \geq 4 $ be divisible by 4. For any tuple $ (v_1,\dots,v_m) \in (\pr^3(K))^m $ with $ m \in \{n,n-1,n-2,n-3\} $ in the definitions of the following sets let $ (V_1,\dots,V_m) \in (K^4)^m $ be an arbitrary lift. Define $ d_i := \det(V_i,V_{i+1},V_{i+2},V_{i+3}) $ with the indices considered modulo $ n $. Let $ D_j^\prime := d_j d_{j+4} \dots d_{n-12+j} $ and $ D_ j := D_j^\prime d_{n-8+j} $ for $ j \in \{1,2,3,4\} $. Let $ s \in \{+,-\} $ Then define
\begin{align*}
C_4^{234s}(n) := \{&(v_1,\dots,v_n) \in C_4(n) \mid D_2d_{n-2} = sD_3 d_{n-1} = D_4 d_n\},\\
C_4^{++-123s}(n) := \{&(v_1,\dots,v_n) \in C_4^{++-}(n) \mid D_1 d_{n-3} = sD_2 d_{n-2} = D_3d_{n-1}\},\\
C_4^{+--234s}(n-1) := \{&(v_1,\dots,v_{n-1}) \in C_4^{+--}(n-1) \mid D_2 \det(V_{n-3},V_{n-2},V_{n-1},V_1)\\
&=sD_3\det(V_{n-3},V_{n-1},V_1,V_2) = D_4 \det(V_{n-3},V_1,V_2,V_3)\},\\
C_4^{---234s}(n-1) := \{&(v_1,\dots,v_{n-1}) \in C_4^{---}(n-1) \mid D_2 \det(V_{n-4},V_{n-2},V_{n-1},V_1)\\
&= sD_3 \det(V_{n-4},V_{n-1},V_1,V_2) = D_4 \det(V_{n-4},V_1,V_2,V_3)\},\\
C_4^{-(-)[+]234s}(n-2) := \{&(v_1,\dots,v_{n-2}) \in C_4^{-(-)[+]}(n-2) \mid D_2 \det(V_{n-5},V_{n-3},V_{n-2},V_1)\\
&= s D_3 \det(V_{n-5},V_{n-3},V_1,V_2) = -D_4^\prime d_{n-5}\det(V_{n-3},V_1,V_2,V_3)\},\\
C_4^{(--)[+]234s}(n-2) := \{&(v_1,\dots,v_{n-2}) \in C_4^{(--)[+]}(n-2) \mid D_2 \det(V_{n-5},V_{n-4},V_{n-2},V_1)\\
&= sD_3 \det(V_{n-5},V_{n-4},V_1,V_2) = - D_4^\prime d_{n-5} \det(V_{n-4},V_1,V_2,V_3)\},\\
C_4^{+(-)[+]134s}(n-3) := \{&(v_1,\dots,v_{n-3}) \in C_4^{+(-)[+]}(n-3) \mid D_1 = D_3^\prime \det(V_{n-5},V_{n-4},V_1,V_2)\\
&= sD_4^\prime \det(V_{n-4},V_1,V_2,V_3)\}.
\end{align*}
\end{defi}

\begin{lem}\label{lem:recursionsk4234}
Let $ n \geq 8 $ be divisible by $ 4 $.
\begin{align}
c_4^{234\pm}(n) = &(q-1)c_4(n-1) + qc_4^{++-}(n-1)\label{eq:+++234}\\
&+ 2q(q-1)c_4^{++-34\mp}(n-1) + q^2c_4^{-+-12\mp}(n-1)\nonumber\\
&+ q^2c_4^{+--23\mp}(n-1) + q^2(q-1)c_4^{+--234\mp}(n-1)\nonumber\\
&+ q^3c_4^{---234\mp}(n-1),\nonumber\\
c_4^{++-123\pm}(n) = &c_4(n-1) + 2qc_4^{++-34\mp}(n-1) + q^2c_4^{+--234\mp}(n-1),\label{eq:++-123}\\
c_4^{+--234}(n-1) = &c_4^{++-}(n-2) + qc_4^{-+-24}(n-2) + qc_4^{+(-)[+]23\mp}(n-2)\label{eq:+--234}\\
&+q^2c_4^{-(-)[+]234\mp}(n-2),\nonumber\\
c_4^{---234\pm}(n-1) = &c_4^{+--}(n-2) + qc_4^{+(-)[-]23\mp}(n-2)\label{eq:---234}\\
&+ qc_4^{-(-)[+]24}(n-2) + q^2c_4^{(--)[+]234\mp}(n-2),\nonumber\\
c_4^{-(-)[+]234\pm}(n-2) = &c_4^{++-14\mp}(n-3) + qc_4^{+(-)[+]134\mp}(n-3),\label{eq:-(-)[+]234}\\
c_4^{(--)[+]234\pm}(n-2) = &c_4^{+(-)[+]14\mp}(n-3) + qc_4^{234\mp}(n-4),\label{eq:(--)[+]234}\\
c_4^{+(-)[+]134\pm}(n-3) = &(q-1)c_4^{234\pm}(n-4) + qc_4^{++-123\pm}(n-4).\label{eq:+(-)[+]134}
\end{align}
\end{lem}

\begin{proof}
For any tuple $ (v_1,\dots,v_m) \in (\pr^3(K))^m $ of projective points with $ m \in \{n,n-1,n-2,m-3\} $ that we consider in this proof let $ (V_1,\dots,V_m) \in (K^4)^m $ be an arbitrary lift. We set $ d_i := \det(V_i,V_{i+1},V_{i+2},V_{i+3}) $ for $ i \leq n $ (the indices are considered modulo $ n $) and $ D_j := d_j d_{j+4} \dots d_{n-8+j} $ and $ D_j^\prime := d_j d_{j+4} \dots d_{n-12+j} $ for $ j \in \{1,2,3,4\} $. Also let $ E_1 := v_{m-3} \vee v_{m-2} \vee v_{m-1} $, $ E_2 := v_{m-2} \vee v_{m-1} \vee v_1 $, $ E_3 := v_{m-1} \vee v_1 \vee v_2 $ and $ E_4 := v_1 \vee v_2 \vee v_3 $. For most of the recursions we consider two linear maps $ \phi_1, \phi_2: K^4 \to K $ and their kernels $ H_i := \pr(\Ker \phi_i) $. The definition of $ \phi_i $ will be dependent of the choice of lift, but $ H_i $ will not be. These maps will have the property that the determinant conditions of the sets we are considering are fulfilled if and only if $ v_m \in H_1 \cap H_2 $. \smallskip\\
\eqref{eq:+++234}: Let $ (v_1,\dots,v_n) \in C_4^{234\pm}(n) $. Then
\[
(v_1,\dots,v_{n-1}) \in C_4^{s_1s_2s_3}(n-1)
\]
with $ s_i \in \{+,-\} $. We define
\[
\phi_1: K^4 \to K; V \mapsto D_2 \det(V_{n-2},V_{n-1},V,V_1) \mp D_3 \det(V_{n-1},V,V_1,V_2)
\]
and
\[
\phi_2: K^4 \to K; V \mapsto D_3 \det(V_{n-1},V,V_1,V_2) \mp D_4 \det(V,V_1,V_2,V_3).
\]
For a given $ (v_1,\dots,v_{n-1}) \in C_4^{s_1s_2s_3}(n-1) $ we have $ (v_1,\dots,v_n) \in C_4^{234\pm}(n) $ if and only if $ v_n \in H_1 \cap H_2 \backslash (E_1 \cup E_2 \cup E_3 \cup E_4) $.\\
Case 1: $ s_2 = s_3 = + $.\\
Then $ \det(V_{n-2},V_{n-1},V_1,V_2) \neq 0 $ and $ \det(V_{n-1},V_1,V_2,V_3) \neq 0 $ and it follows that $ \phi_1(V_2) \neq 0 $ and $ \phi_2(V_3) \neq 0 $. Hence $ \dim H_1 = \dim H_2 = 2 $. Let $ G := H_1 \cap H_2 $. Since $ v_2 \in H_2 \backslash H_1 $ we have $ \dim G = 1 $. Also note that $ E_2 \cap H_1 = E_2 \cap E_3 = E_3 \cap H_1 $ and likewise $ E_3 \cap H_2 = E_3 \cap E_4 = E_4 \cap H_2 $. Then
\begin{align*}
E_2 \cap G &= E_2 \cap H_1 \cap H_2 = E_2 \cap E_3 \cap E_4 = \{v_1\}\\
&= E_3 \cap G = E_4 \cap G.
\end{align*}
If $ s_1 = + $, then $ v_1 \notin E_1 $. Combined with $ v_1 \in G $ this implies that $ E_1 \cap G $ is a single point different from $ v_1 $. If $ s_1 = - $ we have $ E_1 = E_2 $ and thus $ E_1 \cap G = E_2 \cap G = \{v_1\} $. Therefore there are $ q-1 $ resp. $ q $ valid choices for $ v_n $ and the number of elements in Case 1 is
\[
(q-1)c_4(n-1) + qc_4^{++-}(n-1)
\]
using $ c_4^{-++}(n-1) = c_4^{++-}(n-1) $.\\
Case 2: $ s_2 = +, s_3 = - $.\\
We have $ E_3 = E_4 \subseteq H_2 $ in this case and $ v_n \in H_2 \backslash E_3 $ shows that $ H_2 = \pr^3(K) $ or equivalently $ \phi_2 = 0 $. At the same time we still have $ \phi_1(V_2) \neq 0 $ and thus $ \dim H_1 = 2 $. Since $ v_{n-2} \notin E_3 $, the necessary condition $ \phi_2 = 0 $ is equivalent to
\[
D_3 \det(V_{n-2},V_{n-1},V_1,V_2) = \mp D_4 \det(V_{n-2},V_1,V_2,V_3).
\]
If $ s_1 = + $ this is immediately the condition for $ (v_1,\dots,v_{n-1}) \in C_4^{++-34\mp}(n-1) $. If $ s_1 = - $ we use the bijection
\[
C_4^{-+-}(n-1) \to C_4^{-+-}(n-1); (v_1,\dots,v_{n-1}) \mapsto (v_{n-1},\dots,v_1),
\]
and our condition becomes
\[
D_2 \det(V_{n-2},V_{n-1},V_1,V_2) = \mp D_1 \det(V_{n-3},V_{n-2},V_{n-1},V_2).
\]
This is equivalent to the determinant condition for $ C_4^{-+-12\mp}(n-1) $. The definition of $ H_1 $ is the same as the definition of $ H $ in the proof of \eqref{eq:+++23}, hence the number of choices for $ v_n $ is also the same as in the case $ s_2 = + $ and $ s_3 = - $ there: $ q(q-1) $ choices if $ s_1 = + $ and $ q^2 $ if $ s_1 = - $. Accordingly, Case 2 contributes
\[
q(q-1)c_4^{++-34\mp}(n-1) + q^2c_4^{-+-12\mp}(n-1)
\]
elements.\\
Case 3: $ s_2 = -, s_3 = + $.\\
Then $ E_2 = E_3 \subseteq H_1 $ and since $ E_2 = H_1 $ is not possible we must have $ \phi_1 = 0 $. On the other hand, $ \phi_2(V_3) \neq 0 $ and thus $ \dim H_2 = 2 $. We still have
\begin{align*}
E_3 \cap H_2 &= E_3 \cap E_4 = v_1 \vee v_2\\
&= E_4 \cap H_2
\end{align*}
and $ E_2 \cap H_2 = E_3 \cap H_2 $. If $ s_1 = - $, then also $ E_1 = E_2 $ and thus $ E_1 \cap H_2 = E_2 \cap H_2 $. If $ s_1 = + $ then $ v_1 \notin E_1 $. In this case $ E_1 \cap H_2 $ is a projective line in $ H_2 $ different from $ v_1 \vee v_2 $. Hence we get $ q(q-1) $ choices for $ v_n $ if $ s_1 = + $ and $ q^2 $ choices if $ s_1 = - $, provided that $ (v_1,\dots,v_{n-1}) $ fulfills the necessary condition $ \phi_1 = 0 $. This condition is the same as the condition $ \phi = 0 $ in the proof of \eqref{eq:+++23}. Like we showed in that proof, the condition is equivalent to $ (v_2,\dots,v_{n-1},v_1) \in C_4^{++-34\mp}(n-1) $ if $ s_1 = + $ and equivalent to $ (v_{n-1},\dots,v_1) \in C_4^{+--23\mp}(n-1) $ if $ s_1 = - $. Hence there are
\[
q(q-1)c_4^{++-34\mp}(n-1) + q^2c_4^{+--23\mp}(n-1)
\]
elements in Case 3.\\
Case 4: $ s_1 = +, s_2 = s_3 = - $.\\
We then have $ E_2 = E_3 \subseteq H_1 $ and $ E_3 = E_4 \subseteq H_2 $ and since we cannot have equality in either case we must have $ \phi_1 = \phi_2 = 0 $. These conditions are equivalent to $ \phi_1(V_{n-3}) = \phi_2(V_{n-3}) = 0 $ as $ v_{n-3} \notin E_2 = E_3 = E_4 $. This can be rewritten as
\begin{align*}
D_2 \det(V_{n-3},V_{n-2},V_{n-1},V_1) &= \mp D_3 \det(V_{n-3},V_{n-1},V_1,V_2)\\
&= D_4 \det(V_{n-3},V_1,V_2,V_3).
\end{align*}
This is exactly the condition for $ (v_1,\dots,v_{n-1}) \in C_4^{+--234\mp}(n-1) $. The number of choices for $ v_n $ is the same as in the proof of \eqref{eq:k4+++}, namely $ q^2(q-1) $, since both determinant conditions become trivial.\\
Case 5: $ s_1 = s_2 = s_3 = - $.\\
This is similar to Case 4, but we test $ \phi_1 = \phi_2 = 0 $ with $ V_{n-4} $ instead of $ V_{n-3} $, since $ v_{n-4} \notin E_1 = E_2 = E_3 = E_4 \subset H_1, H_2 $. Then our condition $ \phi_1 = \phi_2 = 0 $ is equivalent to
\begin{align*}
D_2 \det(V_{n-4},V_{n-2},V_{n-1},V_1) &= \mp D_3 \det(V_{n-4},V_{n-1},V_1,V_2)\\
&= D_4 \det(V_{n-4},V_1,V_2,V_3).
\end{align*}
This is the condition for $ (v_1,\dots,v_{n-1}) \in C_4^{---234\mp}(n-1) $. The number of values for $ v_n $ we can choose is then $ q^3 $ like in the proof of \eqref{eq:k4+++}. The claimed recursion follows from putting together the cases.\medskip\\
\eqref{eq:++-123}: Let $ (v_1,\dots,v_n) \in C_4^{++-123\pm}(n) $. Then
\[
(v_1,\dots,v_{n-1}) \in C_4^{s_1s_2+}(n-1)
\]
with $ s_i \in \{+,-\} $. Consider the maps
\[
\phi_1: V \mapsto D_1 \det(V_{n-3},V_{n-2},V_{n-1},V) \mp D_2 \det(V_{n-2},V_{n-1},V,V_1)
\]
and
\[
\phi_2: V \mapsto D_2 \det(V_{n-2},V_{n-1},V,V_1) \mp D_3 \det(V_{n-1},V,V_1,V_2).
\]
If $ s_i = + $ for $ i = 1 $ or $ i = 2 $, then $ \phi_i(V_i) \neq 0 $ and thus $ \dim H_i = 2 $. If $ s_i = - $, then $ E_i = E_{i+1} \subseteq H_i $ and $ v_n \in H_i \backslash E_i $ implies $ H_i = \pr^3(K) $ or equivalently $ \phi_i = 0 $. For a given $ (v_1,\dots,v_{n-1}) \in C_4^{s_1s_2+}(n-1) $ we have $ (v_1,\dots,v_n) \in C_4^{++-123\pm}(n) $ if and only if $ v_n \in H_1 \cap H_2 \cap E_4 \backslash (E_1 \cup E_2 \cup E_3) $.\\
Case 1: $ s_1 = s_2 = + $.\\
In this case $ H_1 $ and $ H_2 $ are both projective planes, and $ H_1 \neq H_2 $ since $ v_1 \in H_2 $ and $ v_1 \notin H_1 $. Hence $ H_1 \cap H_2 $ is a projective line. Moreover, $ v_{n-1} \in H_1 \cap H_2 $, but $ v_{n-1} \notin E_4 $. Accordingly, $ \{P\} := H_1 \cap H_2 \cap E_4 $ is just a single point. On the other hand, we have $ E_i \cap H_i = E_i \cap E_{i+1} = E_{i+1} \cap H_i $ for $ i = 1,2 $ and thus
\begin{align*}
E_1 \cap \{P\} &= E_1 \cap H_1 \cap H_2 \cap E_4 = E_1 \cap E_2 \cap E_3 \cap E_4 = \emptyset\\
&= E_2 \cap \{P\}\\
&= E_3 \cap \{P\}.
\end{align*}
Therefore, the choice $ v_n = P $ will work for every $ (v_1,\dots,v_{n-1}) \in C_4(n-1) $ and the number of elements of $ C_4^{++-123\pm}(n) $ with $ s_1 = s_2 = + $ is exactly $ c_4(n-1) $.\\
Case 2: $ s_1 = + $, $ s_2 = - $.\\
Let $ G := H_1 \cap E_4 $. $ v_{n-1} \in H_1 \backslash E_4 $ implies $ \dim G = 1 $. We also have
\begin{align*}
E_1 \cap G = E_1 \cap H_1 \cap E_4 &= E_1 \cap E_2 \cap E_4\\
&=E_2 \cap G\\
&=E_3 \cap G.
\end{align*}
These intersections are all a single point since $ \lvert E_1 \cap E_2 \cap E_4 \rvert = 1 $ if $ s_1 = + $ ( $ E_1 \neq E_2 $ and $ v_{n-1} \in E_1 \cap E_2 \backslash E_4 $). Hence there are $ q $ valid choices for $ v_n $. The map $ \phi_2 $ is the same as the map $ \phi $ in the proof of \eqref{eq:++-23}, and like in the case $ s_1 = + $, $ s_2 = - $ there $ \phi_2 = 0 $ is equivalent to $ (v_2,\dots,v_{n-1},v_1) \in C_4^{++-34\mp}(n-1) $. Hence we have $ qc_4^{++-34\mp}(n-1) $ elements falling in Case 2.\\
Case 3: $ s_1 = - $, $ s_2 = + $.\\
Let $ G := H_2 \cap E_4 $. We have $ \dim G = 1 $, because $ v_{n-1} \in H_2 \backslash E_4 $. We also have
\begin{align*}
E_3 \cap G &= E_2 \cap E_3 \cap E_4 = \{v_1\}\\
&=E_2 \cap G\\
&=E_1 \cap G.
\end{align*}
Accordingly, there are again $ q $ valid choices for $ v_n $. The map $ \phi_1 $ is the same as the map $ \phi $ in the proof of \eqref{eq:+(-)[+]12}/\eqref{eq:+(-)[-]12}, and like in the case $ s^\prime = - $ and $ s = + $ in that proof the condition $ \phi_1 = 0 $ is equivalent to $ (v_{n-1},\dots,v_1) \in C_4^{++-34\mp}(n-1) $. Therefore, the number of elements in Case 3 is also $ q c_4^{++-34\mp}(n-1) $.\\
Case 4: $ s_1 = s_2 = - $.\\
In this case we have the necessary condition $ \phi_1 = \phi_2 = 0 $. This is equivalent to $ \phi_1(V_{n-4}) = \phi_2(V_{n-4}) = 0 $, because $ v_{n-4} \notin E_1 = E_2 = E_3 \subset H_1, H_2 $. We can then rephrase our condition as
\begin{align*}
D_1 \det(V_{n-4},V_{n-3},V_{n-2},V_{n-1}) &= \mp D_2 \det(V_{n-4},V_{n-2},V_{n-1},V_1)\\
&= D_3 \det(V_{n-4},V_{n-1},V_1,V_2).
\end{align*}
After applying the bijection
\[
C_4^{--+}(n-1) \to C_4^{+--}(n-1); (v_1,\dots,v_{n-1}) \mapsto (v_2,\dots,v_{n-1},v_1),
\]
the condition turn into
\begin{align*}
D_2 \det(V_{n-3},V_{n-2},V_{n-1},V_1) &= \mp D_3 \det(V_{n-3},V_{n-1},V_1,V_2)\\
&= D_4 \det(V_{n-3},V_1,V_2,V_3).
\end{align*}
This is the determinant condition in $ C_4^{+--234\mp}(n-1) $. In this situation the determinant conditions in $ C_4^{++-123\pm}(n) $ have become trivial due to $ H_1 = H_2 = \pr^3(K) $ and like in the proof of \eqref{eq:k4++-}, Case $ s_1 = s_2 = - $ there are $ q^2 $ valid choices for $ v_n $. The claim follows.
\medskip\\
\eqref{eq:+--234}: Let $ (v_1,\dots,v_{n-1}) \in C_4^{+--234\pm}(n-1) $. Then
\[
(v_1,\dots,v_{n-2}) \in C_4^{s+-}(n-2) \cup C_4^{s(-)[+]}(n-2)
\]
with $ s \in \{+,-\} $. Define
\[
\phi_1: V \mapsto D_2 \det(V_{n-3},V_{n-2},V,V_1) \mp D_3 \det(V_{n-3},V,V_1,V_2)
\]
and
\[
\phi_2: V \mapsto D_2 \det(V_{n-3},V_{n-2},V,V_1) - D_4^\prime \det(V_{n-4},V_{n-3},V_{n-2},V)\det(V_{n-3},V_1,V_2,V_3)
\]
Then the suitable values for $ v_{n-1} $ are the elements of $ H_1 \cap H_2 \cap E_4 \backslash (E_1 \cup E_2 \cup (v_1 \vee v_2)) $. If $ s = + $, then $ \phi_2(V_1) \neq 0 $ and $ \dim H_2 = 2 $. If $ s = - $ then $ E_1 = E_2 \subseteq H_2 $ and it follows that $ \phi_2 = 0 $. If $ \{v_{n-2},v_1,v_2\} $ is independent, then $ \{v_{n-3},v_{n-2},v_1,v_2\} $ is independent. Thus $ \phi_1(V_2) \neq 0 $ and $ \dim H_2 = 2 $. If $ \{v_{n-2},v_1,v_2\} $ is dependent, then $ E_2 = v_{n-3} \vee v_{n-2} \vee v_1 = v_{n-3} \vee v_1 \vee v_2 \subseteq H_1 $ and $ \phi_1 = 0 .$\\
Case 1: $ \{v_{n-2},v_1,v_2\} $ independent and $ s = + $.\\
We have $ v_1 \in H_1 \backslash H_2 $ and therefore $ H_1 \cap H_2 $ is a projective line. We also have $ v_{n-3} \in H_1 \cap H_2 $ and $ v_{n-3} \notin E_3 = E_4 $, hence $ \{P\} := H_1 \cap H_2 \cap E_4 $ is a single point. Furthermore, $ E_1 \cap H_2 = E_1 \cap E_2 = v_{n-3} \vee v_{n-2} = E_2 \cap H_2 $ and thus
\begin{align*}
E_1 \cap \{P\} &= E_1 \cap H_2 \cap H_1 \cap E_4 = (v_{n-3} \vee v_{n-2}) \cap H_1 \cap E_4 = \{v_{n-3}\} \cap E_4 = \emptyset\\
&= E_2 \cap \{P\}.
\end{align*}
Additionally,
\[
(v_1 \vee v_2) \cap \{P\} = (v_1 \vee v_2) \cap H_1 \cap H_2 \cap E_4 = \{v_1\} \cap H_2 = \emptyset.
\]
Hence there is exactly one viable choice for $ v_{n-1} $ and we have $ c_4^{++-}(n-2) $ elements in Case 1.\\
Case 2: $ \{v_{n-2},v_1,v_2\} $ independent and $ s = - $.\\
In this case we have $ \dim H_1 = 2 $ and $ \phi_2 = 0 $. $ H_1 $ is identical to $ H $ in the proof of \eqref{eq:+--23} and just like there we have $ q $ choices for $ v_{n-1} $ (as long as $ \phi_2 = 0 $ is fulfilled). The map $ \phi_2 $ is the same as the map $ \phi $ in the proof of \eqref{eq:+--24}, and as we showed there $ \phi_2 = 0 $ is equivalent to $ (v_1,\dots,v_{n-2}) \in C_4^{-+-24}(n-2) $. Hence we have $ q c_4^{-+-24}(n-2) $ elements of $ C_4^{+--234\pm}(n-1) $ that fall in Case 2.\\
Case 3: $ \{v_{n-2},v_1,v_2\} $ dependent and $ s = + $.\\
Accordingly, we have $ \phi_1 = 0 $ and $ \dim H_2 = 2 $. As we already used, the map $ \phi_1 $ is the same as the map $ \phi $ in the proof of \eqref{eq:+--23}, and as we proved there $ \phi_1 = 0 $ is equivalent to $ (v_1,\dots,v_{n-2}) \in C_4^{+(-)[+]23\mp}(n-2) $. $ H_2 $ is also still the same as $ H $ in the proof of \eqref{eq:+--24} and once again we have $ q $ choices for $ v_{n-1} $. Correspondingly, we have $ qc_4^{+(-)[+]23\mp}(n-2) $ elements in Case 3.\\
Case 4: $ \{v_{n-2},v_1,v_2\} $ dependent and $ s = - $.\\
Then $ \phi_1 = \phi_2 = 0 $. Since $ v_{n-5} \notin E_1 = E_2 \subset H_1,H_2 $, this is equivalent to $ \phi_1(V_{n-5}) = \phi_2(V_{n-5}) = 0 $, which is in turn equivalent to
\begin{align*}
D_2 \det(V_{n-5},V_{n-3},V_{n-2},V_1) &= \mp D_3 \det(V_{n-5},V_{n-3},V_1,V_2)\\
&= - D_4^\prime d_{n-5} \det(V_{n-3},V_1,V_2,V_3).
\end{align*}
This is equivalent to $ (v_1,\dots,v_{n-2}) \in C_4^{-(-)[+]234\mp}(n-2) $. Finally note that there are $ q^2 $ choices for $ v_{n-1} $ like in the proof of \eqref{eq:k4+--}. The claim is proved.\medskip\\
\eqref{eq:---234}: Let $ (v_1,\dots,v_{n-1}) \in C_4^{---234\pm}(n-1) $. Then
\[
(v_1,\dots,v_{n-2}) \in C_4^{+--}(n-2) \cup C_4^{+(-)[-]}(n-2) \cup C_4^{(-)-[+]}(n-2) \cup C_4^{(--)[+]}(n-2).
\]
Define
\[
\phi_1: V \mapsto D_2 \det(V_{n-4},V_{n-2},V,V_1) - D_4^\prime \det(V_{n-4},V_{n-3},V_{n-2},V) \det(V_{n-4},V_1,V_2,V_3)
\]
and
\[
\phi_2: V \mapsto D_2 \det(V_{n-4},V_{n-2},V,V_1) \mp D_3 \det(V_{n-4},V,V_1,V_2).
\]
The valid choices for $ v_{n-1} $ are exactly the elements of $ H_1 \cap H_2 \cap E_4 \backslash (E_1 \cup (v_{n-2} \vee v_1) \cup (v_1 \vee v_2)) $. Let $ M_1 := \{v_{n-3},v_{n-2},v_1\} $ and $ M_2 := \{v_{n-2},v_1,v_2\} $. If $ M_1 $ is independent, then $ \{v_{n-4},v_{n-3},v_{n-2},v_1\} $ is independent and $ \phi_1(V_1) \neq 0 $ and $ \dim H_1 = 2 $. If $ M_1 $ is dependent, then $ E_1 = v_{n-4} \vee v_{n-3} \vee v_{n-2} = v_{n-4} \vee v_{n-2} \vee v_1 \subseteq H_1 $ and since we cannot have equality $ \phi_1 = 0 $ follows. If $ M_2 $ is independent, then $ \{v_{n-4},v_{n-2},v_1,v_2\} $ is independent, $ \phi_2(V_2) \neq 0 $ and $ \dim H_2 = 2 $. If $ M_2 $ is dependent, then $ v_{n-4} \vee v_{n-2} \vee v_1 = v_{n-4} \vee v_1 \vee v_2 \subseteq H_2 $. If we had equality, $ H_2 \cap E_4 = v_1 \vee v_2 $ and $ v_{n-1} \in v_1 \vee v_2 $ would follow, which cannot be the case. Hence $ \phi_2 = 0 $.\\
Case 1: $ M_1 $ and $ M_2 $ are both independent.
Then $ v_1 \in H_2 \backslash H_1 $ and thus $ H_1 \cap H_2 $ is a projective line. Moreover, $ v_{n-4} \in H_1 \cap H_2 $, but $ v_{n-4} \notin E_4 $. Therefore, $ \{P\} := H_1 \cap H_2 \cap E_4 $ is only a single point. We have $ E_1 \cap H_1 = E_1 \cap (v_{n-4} \vee v_{n-2} \vee v_1) = v_{n-4} \vee v_{n-2} $ and thus
\[
E_1 \cap \{P\} = E_1 \cap H_1 \cap H_2 \cap E_4 = (v_{n-4} \vee v_{n-2}) \cap H_2 \cap E_4 = \{v_{n-4}\} \cap E_4 = \emptyset.
\]
We also have
\[
(v_{n-2} \vee v_1) \cap \{P\} = (v_{n-2} \vee v_1) \cap H_1 \cap H_2 = \{v_{n-2}\} \cap H_2 = \emptyset
\]
and
\[
(v_1 \vee v_2) \cap \{P\} = (v_1 \vee v_2) \cap H_2 \cap H_1 = \{v_1\} \cap H_1 = \emptyset.
\]
This implies that $ v_{n-1} = P $ will work for every $ (v_1,\dots,v_{n-2}) \in C_4^{+--}(n-2) $ and thus there are exactly $ c_4^{+--}(n-2) $ elements falling in Case 1.\\
Case 2: $ M_1 $ independent, $ M_2 $ dependent.\\
The map $ \phi_2 $ is the same as the map $ \phi $ in the proof of \eqref{eq:---23} and accordingly the condition $ \phi_2 = 0 $ is equivalent to $ (v_1,\dots,v_{n-2}) \in C_4^{+(-)[-]23\mp}(n-2) $ as we showed in that proof. $ G := H_1 \cap E_4 $ is a projective line since $ v_1 \in E_4 \backslash H_1 $. We also have
\[
E_1 \cap G = (v_{n-4} \vee v_{n-2}) \cap E_4 = \{v_{n-2}\}
\]
and
\[
(v_{n-2} \vee v_1) \cap G = \{v_{n-2}\} \cap E_4 = \{v_{n-2}\}
\]
as well as
\[
(v_1 \vee v_2) \cap G = \{v_{n-2}\} \cap E_4 = \{v_{n-2}\}.
\]
It follows that $ \lvert H_1 \cap E_4 \backslash (E_1 \cup (v_{n-2} \vee v_1) \cup (v_1 \vee v_2)) \rvert = q $. Hence there are $ qc_4^{+(-)[-]23\mp}(n-2) $ elements in Case 2.\\
Case 3: $ M_1 $ dependent, $ M_2 $ independent.\\
$ H_2 $ is the same as $ H $ in the proof of \eqref{eq:---23} and just like there we have $ q $ choices for $ v_{n-1} $ (as long as the necessary condition $ \phi_1 = 0 $ is met). We have $ v_2 \notin E_1 \subseteq H_1 $ because of the $ [+] $, hence $ \phi_1 = 0 $ is equivalent to $ \phi_1(V_2) = 0 $. We can rewrite this as
\[
D_2 \det(V_{n-4},V_{n-2},V_1,V_2) = - D_4^\prime \det(V_{n-4},V_{n-3},V_{n-2},V_2) \det(V_{n-4},V_1,V_2,V_3).
\]
After using the bijection
\[
C_4^{(-)-[+]}(n-2) \to C_4^{-(-)[+]}(n-2); (v_1,\dots,v_{n-2}) \mapsto (v_{n-2},\dots,v_1)
\]
this turns into
\[
D_2 \det(V_{n-3},V_{n-2},V_1,V_3) = - D_4^\prime \det(V_{n-3},V_1,V_2,V_3) \det(V_{n-4},V_{n-3},V_{n-2},V_3),
\]
which is the determinant condition in $ C_4^{-(-)[+]24}(n-2) $. Consequently, the number of elements in Case 3 is $ qc_4^{-(-)[+]24}(n-2) $.\\
Case 4: $ M_1 $ and $ M_2 $ are both dependent.\\
Then $ \phi_1 = \phi_2 = 0 $ and $ E_1 = v_{n-4} \vee v_{n-2} \vee v_1 = v_{n-4} \vee v_1 \vee v_2 \subseteq H_1, H_2 $. Since $ v_{n-5} \notin E_1 $ the necessary condition $ \phi_1 = \phi_2 = 0 $ is equivalent to $ \phi_1(V_{n-5}) = \phi_2(V_{n-5}) = 0 $. Therefore, we get the equivalent condition
\begin{align*}
D_2 \det(V_{n-5},V_{n-4},V_{n-2},V_1) &= \mp D_3 \det(V_{n-5},V_{n-4},V_1,V_2)\\
&= - D_4^\prime d_{n-5} \det(V_{n-4},V_1,V_2,V_3).
\end{align*}
This is the condition for $ (v_1,\dots,v_{n-2}) \in C_4^{(--)[+]234\mp}(n-2) $. The number of options for $ v_{n-1} $ is the same as in the proof of \eqref{eq:k4---}, which is to say $ q^2 $. The claim follows.\medskip\\
\eqref{eq:-(-)[+]234}: Let $ (v_1,\dots,v_{n-2}) \in C_4^{-(-)[+]234\pm}(n-2) $. Then
\[
(v_1,\dots,v_{n-3}) \in C_4^{+-+}(n-3) \cup C_4^{(-)+[+]}(n-3).
\]
Let
\[
\phi: V \mapsto D_2 \det(V_{n-5},V_{n-3},V,V_1) \mp D_3^\prime \det(V_{n-5},V_{n-4},V_{n-3},V) \det(V_{n-5},V_{n-3},V_1,V_2).
\]
Let $ H := \pr(\Ker \phi) $. The first determinant equation is then equivalent to $ v_{n-2} \in H $. After canceling out $ d_{n-5} $, the other determinant equation becomes
\begin{equation}\label{eq:otherCondition-(-)[+]234}
D_3^\prime \det(V_{n-5},V_{n-3},V_1,V_2) = \mp D_4^\prime \det(V_{n-3},V_1,V_2,V_3),
\end{equation}
which does not depend on $ v_{n-2} $ and will still be fulfilled by $ (v_1,\dots,v_{n-3}) $.\\
First let $ \{v_{n-4},v_{n-3},v_1\} $ be independent. Then $ \{v_{n-5},v_{n-4},v_{n-3},v_1\} $ is independent and $ \phi(V_1) \neq 0 $, hence $ \dim H = 2 $. $ v_{n-2} $ can only be chosen from $ H \cap (v_1 \vee v_2) $, and since $ v_1 \notin H $ this is a single point $ \{P\} := H \cap (v_1 \vee v_2) $. On the other hand, we have $ E_1 \cap H = (v_{n-5} \vee v_{n-4} \vee v_{n-3}) \cap (v_{n-5} \vee v_{n-3} \vee v_1) = v_{n-5} \vee v_{n-3} $, Hence
\[
E_1 \cap \{P\} = E_1 \cap H \cap (v_1 \vee v_2) = (v_{n-5} \vee v_{n-3}) \cap (v_1 \vee v_2) = \emptyset,
\]
where we use that $ \{v_{n-5},v_{n-3},v_1,v_2\} $ is independent (since $ v_{n-4} \vee v_{n-3} \vee v_1 = v_{n-3} \vee v_1 \vee v_2 $ and $ \{v_{n-5},v_{n-4},v_{n-3},v_1\} $ is independent). We also have
\[
(v_{n-3} \vee v_1) \cap \{P\} = (v_{n-3} \vee v_1) \cap (v_1 \vee v_2) \cap H = \{v_1\} \cap H = \emptyset
\]
and $ v_1 \vee v_2 \subset E_3 = E_2 $ implies $ P \in E_2 $. Hence the choice $ v_{n-2} = P $ will always work. Additionally, the bijection
\begin{equation}\label{eq:bijection-(-)[+]234}
C_4^{+-+}(n-3) \to C_4^{++-}(n-3); (v_1,\dots,v_{n-3}) \mapsto (v_2,\dots,v_{n-3},v_1)
\end{equation}
will map the condition \eqref{eq:otherCondition-(-)[+]234} to
\begin{equation}\label{eq:otherConditionTransformed-(-)[+]234}
D_4^\prime \det(V_{n-4},V_1,V_2,V_3) = \mp D_1.
\end{equation}
This is equivalent to the determinant condition in $ C_4^{++-14\mp}(n-3) $. Hence, there are $ c_4^{++-14\mp}(n-3) $ elements of $ C_4^{-(-)[+]234\pm}(n-2) $ with $ \{v_{n-4},v_{n-3},v_1\} $ independent.\\
Now let $ \{v_{n-4},v_{n-3},v_1\} $ be dependent. Then $ E_1 = v_{n-5} \vee v_{n-3} \vee v_1 \subseteq H $ and we must not have equality. $ \phi = 0 $ follows and this is equivalent to $ \phi(V_{n-6}) = 0 $ since $ v_{n-6} \notin E_1 $. After canceling out $ d_{n-6} $, this condition becomes
\[
D_2^\prime \det(V_{n-6},V_{n-5},V_{n-3},V_1) = \mp D_3^\prime \det(V_{n-5},V_{n-3},V_1,V_2).
\]
Note that \eqref{eq:bijection-(-)[+]234} is also a bijection $ C_4^{(-)+[+]}(n-3) \to C_4^{+(-)[+]}(n-3) $. Applying this turns our condition into
\[
D_3^\prime \det(V_{n-5},V_{n-4},V_1,V_2) = \mp D_4^\prime \det(V_{n-4},V_1,V_2,V_3).
\]
Meanwhile \eqref{eq:otherCondition-(-)[+]234} still becomes \eqref{eq:otherConditionTransformed-(-)[+]234}. These two conditions are precisely the determinant conditions of $ C_4^{+(-)[+]134\mp}(n-3) $. We have $ q $ choices for $ v_{n-2} $ just like in the proof of \eqref{eq:k4-(-)}. This proves the recursion.\medskip\\
\eqref{eq:(--)[+]234}: After canceling out $ d_{n-5} $, one of the determinant conditions turns into
\begin{equation}\label{eq:otherCondition(--)[+]234}
D_3^\prime \det(V_{n-5},V_{n-4},V_1,V_2) = \mp D_4^\prime \det(V_{n-4},V_1,V_2,V_3).
\end{equation}
This does not depend on $ v_{n-2} $ and $ v_{n-3} $ and will simply be inherited by $ (v_1,\dots,v_{n-3}) $ and $ (v_1,\dots,v_{n-4}) $. The other determinant condition
\[
D_2 \det(V_{n-5},V_{n-4},V_{n-2},V_1) = \pm D_3 \det(V_{n-5},V_{n-4},V_1,V_2)
\]
is also the determinant condition in $ C_4^{(--)[+]23\pm}(n-2) $. Therefore, we can use the recursion \eqref{eq:(--)[+]23}, except that the sets on the right hand side additionally have the condition \eqref{eq:otherCondition(--)[+]234}. The sets on the right hand side of \eqref{eq:(--)[+]23} are $ C_4^{+(-)[+]}(n-3) $ and $ C_4^{23\mp}(n-4) $. For the former, adding the condition \eqref{eq:otherCondition(--)[+]234} yields the set $ C_4^{+(-)[+]34\mp}(n-3) $. For the latter it yields the set $ C_4^{234\mp}(n-4) $. Combined with $ c_4^{+(-)[+]34\mp}(n-3) = c_4^{+(-)[+]14\mp}(n-3) $ \eqref{eq:+(-)[+]34} the claimed recursion follows.\medskip\\
\eqref{eq:+(-)[+]134}: This is the same as \eqref{eq:k4+(-)[+]}, except that all sets have the additional condition
\[
D_1 = D_3^\prime \det(V_{n-5},V_{n-4},V_1,V_2) = \pm D_4^\prime \det(V_{n-4},V_1,V_2,V_3),
\]
that does not depend on $ v_{n-3} $. In the proof of \eqref{eq:k4+(-)[+]} we first got the set $ C_4(n-4) $. Adding the determinant condition turns this into $ C_4^{134\pm}(n-4) $. Recall that $ c_4^{134\pm}(n-4) = c_4^{234\pm}(n-4)$. The second set we got in the proof of \eqref{eq:k4+(-)[+]} was $ C_4^{-++}(n-4) $. We can use the map $ (v_1,\dots,v_{n-4}) \mapsto (v_3,\dots,v_{n-4},v_1,v_2) $ to get to $ C_4^{++-}(n-4) $. This turns the determinant condition into
\[
D_3^\prime \det(V_{n-5},V_{n-4},V_1,V_2) = D_1 = \pm D_2^\prime \det(V_{n-6},V_{n-5},V_{n-4},V_1),
\]
which is the determinant condition for $ C_4^{++-123\pm}(n-4) $. This concludes the proof.
\end{proof}

Now we will consider $ C_4^{12|34\pm}(n) $ and define the following related sets:

\begin{defi}
Let $ n \geq 4 $ be divisible by 4. For any tuple $ (v_1,\dots,v_m) \in (\pr^3(K))^m $ with $ m \in \{n,n-1,n-2,n-3\} $ in the definitions of the following sets let $ (V_1,\dots,V_m) \in (K^4)^m $ be an arbitrary lift. Define $ d_i := \det(V_i,V_{i+1},V_{i+2},V_{i+3}) $ with the indices considered modulo $ n $. Let $ D_j^\prime := d_j d_{j+4} \dots d_{n-12+j} $ and $ D_ j := D_j^\prime d_{n-8+j} $ for $ j \in \{1,2,3,4\} $. Let $ s \in \{+,-\} $. Then define
\begin{align*}
C_4^{12|34s}(n) := \{&(v_1,\dots,v_n) \in C_4(n) \mid D_1d_{n-3} = sD_2d_{n-2} \text{ and } D_3 d_{n-1} = sD_4 d_n\},\\
C_4^{+(-)[-]12|34s}(n) := \{&(v_1,\dots,v_n) \in C_4^{+(-)[-]}(n) \mid D_1 d_{n-3} = sD_2 d_{n-2} \text{ and}\\
&D_3 \det(V_{n-2},V_{n-1},V_1,V_2) = -sD_4\det(V_{n-2},V_1,V_2,V_3)\},\\
C_4^{-+-12|34s}(n-1) := \{&(v_1,\dots,v_{n-1}) \in C_4^{-+-}(n-1) \mid\\
&D_1 \det(V_{n-3},V_{n-2},V_{n-1},V_2) = sD_2\det(V_{n-2},V_{n-1},V_1,V_2) \text{ and }\\
&D_3 \det(V_{n-2},V_{n-1},V_1,V_2) = sD_4 \det(V_{n-2},V_1,V_2,V_3)\},\\
C_4^{---12|34s}(n-1) := \{&(v_1,\dots,v_{n-1}) \in C_4^{---}(n-1) \mid\\
&D_1 d_{n-4} = sD_2 \det(V_{n-4},V_{n-2},V_{n-1},V_1) \text{ and}\\
&D_3 \det(V_{n-4},V_{n-1},V_1,V_2) = sD_4\det(V_{n-4},V_1,V_2,V_3)\},\\
C_4^{+(-)[-]14|23s}(n-2) := \{&(v_1,\dots,v_{n-2}) \in C_4^{+(-)[-]}(n-2) \mid\\
&D_1 = -s D_4^\prime \det(V_{n-4},V_1,V_2,V_3) \text{ and}\\
&D_2\det(V_{n-4},V_{n-3},V_{n-2},V_1) = sD_3 \det(V_{n-4},V_{n-3},V_1,V_2)\},\\
C_4^{(--)[+]12|34s}(n-2) := \{&(v_1,\dots,v_{n-2}) \in C_4^{(--)[+]}(n-2) \mid\\
&D_1 d_{n-5} = sD_2 \det(V_{n-5},V_{n-4},V_{n-2},V_1) \text{ and}\\
&D_3^\prime \det(V_{n-5},V_{n-4},V_1,V_2) = -sD_4^\prime \det(V_{n-4},V_1,V_2,V_3)\},\\
C_4^{-+-14|23s}(n-3) := \{&(v_1,\dots,v_{n-3}) \in C_4^{-+-}(n-3) \mid\\
&D_1 = s D_4^\prime \det(V_{n-4},V_1,V_2,V_3),\\
&D_2 = sD_3^\prime\det(V_{n-5},V_{n-4},V_{n-3},V_2)\}.
\end{align*}
\end{defi}

Once again, we derive a system of recursions:

\begin{lem}\label{lem:recursionsk412|34}
Let $ n \geq 8 $ be divisible by $ 4 $. Then
\begin{align}
    c_4^{12|34\pm}(n) = &(q-1)c_4(n-1) + qc_4^{++-}(n-1)\label{eq:+++12|34}\\
    &+2q(q-1)c_4^{++-34\mp}(n-1) +2q^2c_4^{+--23\mp}(n-1)\nonumber\\
    &+q^2(q-1)c_4^{-+-12|34\mp}(n-1)+q^3c_4^{---12|34\mp}(n-1),\nonumber\\
    c_4^{+(-)[-]12|34\pm}(n) = &c_4^{++-34\mp}(n-1) + qc_4^{-+-12|34\mp}(n-1),\label{eq:+(-)[-]12|34}\\
    c_4^{-+-12|34\pm}(n-1) = &c_4^\#(n-2) + qc_4^{+(-)[+]23\mp}(n-2)+ q^2c_4^{+(-)[-]14|23\mp}(n-2),\label{eq:-+-12|34}\\
    c_4^{---12|34\pm}(n-1) = &c_4^{+--}(n-2) + qc_4^{+(-)[-]\#}(n-2)\label{eq:---12|34}\\
    &+qc_4^{-(-)[+]23\mp}(n-2) + q^2c_4^{(--)12|34\mp}(n-2),\nonumber\\
    c_4^{+(-)[-]14|23\pm}(n-2) = &c_4^{++-14\mp}(n-3) + qc_4^{-+-14|23\mp}(n-3),\label{eq:+(-)[-]14|23}\\
    c_4^{(--)[+]12|34\pm}(n-2) = &c_4^{+(-)[+]14\mp}(n-3) + qc_4^{12|34\mp}(n-4),\label{eq:(--)[+]12|34}\\
    c_4^{-+-14|23\pm}(n-3) = &(q-1)c_4^{12|34\pm}(n-4) + qc_4^{+(-)[+]12\pm}(n-4)\label{eq:-+-14|23}\\
    &+ q^2c_4^{+(-)[-]12|34\pm}(n-4).\nonumber
\end{align}
\end{lem}
\begin{proof}
For any tuple $ (v_1,\dots,v_m) \in (\pr^3(K))^m $ of projective points with $ m \in \{n,n-1,n-2,m-3\} $ that we consider in this proof let $ (V_1,\dots,V_m) \in (K^4)^m $ be an arbitrary lift. We set $ d_i := \det(V_i,V_{i+1},V_{i+2},V_{i+3}) $ for $ i \leq n $ (the indices are considered modulo $ n $) and $ D_j := d_j d_{j+4} \dots d_{n-8+j} $ and $ D_j^\prime := d_j d_{j+4} \dots d_{n-12+j} $ for $ j \in \{1,2,3,4\} $. Also let $ E_1 := v_{m-3} \vee v_{m-2} \vee v_{m-1} $, $ E_2 := v_{m-2} \vee v_{m-1} \vee v_1 $, $ E_3 := v_{m-1} \vee v_1 \vee v_2 $ and $ E_4 := v_1 \vee v_2 \vee v_3 $. For most of the recursions we consider a linear map $ \phi: K^4 \to K $ or two linear maps $ \phi_i: K^4 \to K $ and their kernels $ H := \Ker(\phi) $ resp. $ H_i := \pr(\Ker \phi_i) $. The definition of $ H $ resp. $ H_i $ will be independent of the choice of lift. \smallskip\\
\eqref{eq:+++12|34}: Let $ (v_1,\dots,v_n) \in C_4^{12|34\pm}(n) $. Then
\[
(v_1,\dots,v_{n-1}) \in C_4^{s_1s_2s_3}(n-1) 
\]
with $ s_i \in \{+,-\} $. Let
\[
\phi_1: K^4 \to K; V \mapsto D_1 \det(V_{n-3},V_{n-2},V_{n-1},V) \mp D_2 \det(V_{n-2},V_{n-1},V,V_1)
\]
and
\[
\phi_3: K^4 \to K; V \mapsto D_3 \det(V_{n-1},V,V_1,V_2) \mp D_4 \det(V,V_1,V_2,V_3).
\]
If $ s_1 = + $, then $ \phi_1(V_1) \neq 0 $ and thus $ \dim H_1 = 2 $. Likewise, $ s_3 = + $ implies $ \phi_3(V_{n-1}) \neq 0 $ and $ \dim H_3 = 2 $. If $ s_i = - $, then $ E_i = E_{i+1} \subseteq H_i $ and equality is impossible since $ v_n \in H_i \backslash E_i $. Hence $ H_i = \pr^3(K) $ and $ \phi_i = 0 $ in this case.\smallskip\\
Case 1: $ s_1 = s_3 = + $.\\
Let $ G := H_1 \cap H_3 $. We have $ E_1 \cap H_1 = E_1 \cap E_2 = v_{n-2} \vee v_{n-1} = E_2 \cap H_1 $. Similarly, we have $ E_3 \cap H_3 = E_4 \cap H_3 = E_3 \cap E_4 = v_1 \vee v_2 $. Hence
\[
E_1 \cap G = E_2 \cap G = (v_{n-2} \vee v_{n-1}) \cap H_3
\]
and
\[
E_3 \cap G = E_4 \cap G = (v_1 \vee v_2) \cap H_1.
\]
In particular, both intersections are single points since $ v_{n-1} \notin H_3 $ and $ v_1 \notin H_1 $. This also implies $ \dim G = 1 $. If $ s_2 = - $, then $ E_2 = E_3 $ and both intersection points are the same. If $ s_2 = + $, then
\[
E_1 \cap E_3 \cap H_1 \cap H_3 = E_1 \cap E_2 \cap E_3 \cap E_4 = \emptyset
\]
and the points are different. Hence $ G \backslash (E_1 \cup E_2 \cup E_3 \cup E_4) $ has $ q-1 $ elements if $ s_2 = + $ and $ q $ elements if $ s_2 = - $. For an arbitrary $ (v_1,\dots,v_{n-1}) \in C_4^{+s_2+}(n-1) $ these elements are exactly the valid choices for $ v_n $. Hence the number of elements of $ C_4^{12|34\pm}(n) $ in Case 1 is
\[
(q-1)c_4(n-1) + qc_4^{++-}(n-1),
\]
where we use $ c_4^{+-+}(n-1) = c_4^{++-}(n-1) $.\medskip\\
Case 2: $ s_1 = + $ and $ s_3 = - $.\\
Then we still have $ E_1 \cap H_1 = E_2 \cap H_1 = E_1 \cap E_2 = v_{n-2} \vee v_{n-1} $. If $ s_2 = - $ we have $ E_2 = E_3 = E_4 $ and thus $ E_1 $, $ E_2 $, $ E_3 $ and $ E_4 $ all intersect $ H_1 $ in the same projective line. If $ s_2 = + $, then $ v_{n-1} \notin E_3 = E_4 $. In particular, $ E_3 \cap H_1 = E_4 \cap H_1 $ is a projective line different from $ v_{n-2} \vee v_{n-1} $. Hence $ H_1 \backslash (E_1 \cup E_2 \cup E_3 \cup E_4) $ has $ q(q-1) $ elements if $ s_2 = + $ and $ q^2 $ elements if $ s_2 = - $. These are exactly the valid choices for $ v_n $, as long as $ (v_1,\dots,v_{n-1}) $ meets the necessary condition $ \phi_3 = 0 $.\\
If $ s_2 = + $ this is the case if and only if $ \phi_3(V_{n-2}) = 0 $ since $ v_{n-2} \notin E_3 = E_4 $ then. We can rewrite this condition as
\[
D_3 \det(V_{n-2},V_{n-1},V_1,V_2) = \mp D_4 \det(V_{n-2},V_1,V_2,V_3).
\]
This is equivalent to $ (v_1,\dots,v_{n-1}) \in C_4^{34\mp}(n-1) $.\\
If $ s_2 = - $, then we instead use the fact that $ v_4 \notin E_3 = E_4 $ and that accordingly, $ \phi_3 = 0 $ is equivalent to $ \phi_3(V_4) = 0 $. This is in turn equivalent to
\[
D_3 \det(V_{n-1},V_1,V_2,V_4) = \mp D_4 d_1.
\]
Now we first use the bijection
\[
C_4^{+--}(n-1) \to C_4^{--+}(n-1); (v_1,\dots,v_{n-1}) \mapsto (v_{n-1},\dots,v_1)
\]
and then the bijection
\[
C_4^{--+}(n-1) \to C_4^{+--}(n-1); (v_1,\dots,v_{n-1}) \mapsto (v_2,\dots,v_{n-1},v_1).
\]
The determinant condition first becomes
\[
D_2 \det(V_{n-4},V_{n-2},V_{n-1},V_1) = \mp D_1 d_{n-4}
\]
and then
\[
D_3 \det(V_{n-3},V_{n-1},V_1,V_2) = \mp D_2 \det(V_{n-3},V_{n-2},V_{n-1},V_1).
\]
This is equivalent to the determinant condition in $ C_4^{+--23\mp}(n-1) $. Hence the number of elements falling in Case 2 is
\[
q(q-1)c_4^{++-34\mp}(n-1) + q^2c_4^{+--23\mp}(n-1).
\]
Case 3: $ s_1 = - $ and $ s_3 = + $.\\
The bijection $ C_4(n) \to C_4(n); (v_1,\dots,v_{n-1},v_n) \mapsto (v_{n-1},\dots,v_1,v_n) $ restricts to a bijection between the set of elements of $ C_4^{12|34\pm}(n) $ that fall under Case 2 and the set of elements that fall under Case 3. Hence we have the same number of elements in both cases.\medskip\\
Case 4: $ s_1 = s_3 = - $.\\
Let $ s_2 = + $. The condition $ \phi_1 = 0 $ is equivalent to $ \phi_1(V_2) = 0 $ since $ v_2 \notin E_1 = E_2 $. This can be rewritten as
\[
D_1 \det(V_{n-3},V_{n-2},V_{n-1},V_2) = \mp D_2 \det(V_{n-2},V_{n-1},V_1,V_2).
\]
The condition $ \phi_3 = 0 $ is equivalent to $ \phi_3(V_{n-2}) = 0 $, which is in turn equivalent to
\[
D_3 \det(V_{n-2},V_{n-1},V_1,V_2) = \mp D_4 \det(V_{n-2},V_1,V_2,V_3).
\]
Now let $ s_2 = - $. We can use $ V_{n-4} $ to test both $ \phi_1 = 0 $ and $ \phi_3 = 0 $ and this results in the conditions
\[
D_1 d_{n-4} = \mp D_2 \det(V_{n-4},V_{n-2},V_{n-1},V_1).
\]
and
\[
D_3 \det(V_{n-4},V_{n-1},V_1,V_2) = \mp D_4 \det(V_{n-4},V_1,V_2,V_3).
\]
Instead, we could also use $ V_4 $ to test these conditions. In particular, the first determinant equation is also equivalent to
\[
D_1 \det(V_{n-3},V_{n-2},V_{n-1},V_4) = \mp D_2 \det(V_{n-2},V_{n-1},V_1,V_4).
\]
We will use this fact later.\\
In either case, the two respective determinant equations are equivalent to $ (v_1,\dots,v_{n-1}) \in C_4^{-s_2-12|34\mp}(n-1) $. The suitable choices for $ v_n $ are exactly the elements of $ \pr^3(K) \backslash (E_1 \cup E_2 \cup E_3 \cup E_4) $. In the proof of \eqref{eq:k4+++} we showed that this set has $ q^2(q-1) $ elements if $ s_2 = + $ and $ q^3 $ elements if $ s_2 = - $ (as long as $ s_1 = s_3 = - $). Hence the number of elements that fall under Case 4 is
\[
q^2(q-1)c_4^{-+-12|34\mp}(n-1) + q^3c_4^{---12|34\mp}(n-1).
\]
All cases put together prove \eqref{eq:+++12|34}. \medskip\\
\eqref{eq:+(-)[-]12|34}: Let $ (v_1,\dots,v_n) \in C_4^{+(-)[-]12|34\pm}(n) $. Then
\[
(v_1,\dots,v_{n-1}) \in C_4^{s+-}(n-1)
\]
with $ s \in \{+,-\} $. One of the determinant conditions is 
\begin{equation}\label{eq:+(-)[-]12|34detCon}
D_3 \det(V_{n-2},V_{n-1},V_1,V_2) = \mp D_4 \det(V_{n-2},V_1,V_2,V_3),
\end{equation}
which does not depend on $ v_n $. For the other determinant equation, consider
\[
\phi: K^4 \to K; V \mapsto D_1 \det(V_{n-3},V_{n-2},V_{n-1},V) \mp D_2 \det(V_{n-2},V_{n-1},V,V_1)
\]
For a given $ (v_1,\dots,v_{n-1}) \in C_4^{s+-}(n-1) $, the other determinant equation is fulfilled if and only if $ v_n \in H $.\\
Case 1: $ s = + $.
We have $ \phi(V_1) \neq 0 $. Accordingly, $ \{P\} := H \cap (v_1 \vee v_2) $ is a single point. Additionally,
\begin{align*}
E_1 \cap \{P\} &= E_1 \cap H \cap (v_1 \vee v_2) = E_1 \cap E_2 \cap (v_1 \vee v_2) = (v_{n-2} \vee v_{n-1}) \cap (v_1 \vee v_2) = \emptyset\\
&= E_2 \cap \{P\}.
\end{align*}
Hence $ v_n = P $ is a viable choice as long as \eqref{eq:+(-)[-]12|34detCon} is fulfilled and it is the only such choice. \eqref{eq:+(-)[-]12|34detCon} is equivalent to $ (v_1,\dots,v_{n-1}) \in C_4^{++-34\mp}(n-1) $. Accordingly, there are $ c_4^{++-34\mp}(n-1) $ elements in Case 1.\smallskip\\
Case 2: $ s = - $.\\
In this case $ E_1 = E_2 \subseteq H $. Then $ v_n \in H \backslash E_1 $ implies $ H = \pr^3(K) $ and $ \phi = 0 $. The latter is equivalent to $ \phi(V_2) = 0 $ since $ v_2 \notin E_2 = E_1 $. We can now write our condition as
\[
D_1 \det(V_{n-3},V_{n-2},V_{n-1},V_2) = \mp D_2 \det(V_{n-2},V_{n-1},V_1,V_2).
\]
Together with \eqref{eq:+(-)[-]12|34detCon} this is the condition for $ (v_1,\dots,v_{n-1}) \in C_4^{-+-12|34\mp}(n-1) $. The number of choices for $ v_n $ is $ q $ like in the proof of \eqref{eq:k4+(-)[-]}. The claimed recursion follows.\medskip\\
\eqref{eq:-+-12|34}: Let $ (v_1,\dots,v_{n-1}) \in C_4^{-+-12|34\pm}(n-1) $. Then
\[
(v_1,\dots,v_{n-2}) \in C_4(n-2) \cup C_4^{(-)+[s]}(n-2)
\]
with $ s \in \{+,-\} $. The determinant conditions that $ (v_1,\dots,v_{n-1}) $ fulfills are
\begin{equation}\label{eq:-+-12|34detCon1}
D_1 \det(V_{n-3},V_{n-2},V_{n-1},V_2) = \pm D_2 \det(V_{n-2},V_{n-1},V_1,V_2).
\end{equation}
and
\begin{equation}\label{eq:-+-12|34detCon2}
D_3 \det(V_{n-2},V_{n-1},V_1,V_2) = \pm D_4 \det(V_{n-2},V_1,V_2,V_3).
\end{equation}
Multiplying \eqref{eq:-+-12|34detCon1} and \eqref{eq:-+-12|34detCon2} and canceling out $ \det(V_{n-2},V_{n-1},V_1,V_2) $ yields
\[
D_1 D_3 \det(V_{n-3},V_{n-2},V_{n-1},V_2) = D_2 D_4 \det(V_{n-2},V_1,V_2,V_3).
\]
Consider the maps
\[
\phi_1: K^4 \to K; V \mapsto D_1 \det(V_{n-3},V_{n-2},V,V_2) \mp D_2 \det(V_{n-2},V,V_1,V_2)
\]
and
\begin{align*}
\phi_2 : K^4 \to K; V \mapsto &D_1 D_3 \det(V_{n-3},V_{n-2},V,V_2)\\
- &D_2 D_4^\prime \det(V_{n-4},V_{n-3},V_{n-2},V) \det(V_{n-2},V_1,V_2,V_3).
\end{align*}
We have $ v_{n-1} \in H_1 \cap H_2 $ if and only if \eqref{eq:-+-12|34detCon1} and \eqref{eq:-+-12|34detCon2} are both fulfilled.\\
Case 1: $ \{v_{n-3},v_{n-2},v_1\} $ is independent.\\
For a given $ (v_1,\dots,v_{n-2}) \in C_4(n-2) $ we have $ (v_1,\dots,v_{n-1}) \in C_4^{-+-12|34\pm}(n-1) $ if and only if $ v_{n-1} \in H_1 \cap H_2 \cap E_2 \cap E_4 \backslash (E_1 \cup E_3) $. We have $ \phi_1(V_1) \neq 0 $ and thus $ \dim H_1 = 2 $. Clearly, $ v_{n-3} \vee v_{n-2} \subset H_2 $. If $ v_1 \notin H_2 $, then $ H_2 \cap E_2 = v_{n-3} \vee v_{n-2} $. In this case we have
\[
H_1 \cap H_2 \cap E_2 \cap E_4 = H_1 \cap (v_{n-3} \vee v_{n-2}) \cap E_4 = \{v_{n-2}\} \cap E_4 = \emptyset,
\]
contradicting $ v_{n-1} \in H_1 \cap H_2 \cap E_2 \cap E_4 $. Therefore, $ v_1 \in H_2 $ is a necessary condition. This condition is equivalent to $ \phi_2(V_1) = 0 $, which we can also write as
\[
D_1 D_3 \det(V_{n-3},V_{n-2},V_1,V_2) = D_2 D_4^\prime \det(V_{n-4},V_{n-3},V_{n-2},V_1) \det(V_{n-2},V_1,V_2,V_3).
\]
This equation is equivalent to $ (v_1,\dots,v_{n-2}) \in C_4^{\#}(n-2) $.\\
On the other hand, if this condition is fulfilled we have $ v_1 \in H_2 $ and thus $ E_2 \subseteq H_2 $. It follows that $ H_1 \cap H_2 \cap E_2 \cap E_4 = H_1 \cap E_2 \cap E_4 $. Note that $ E_2 \cap E_4 $ is a projective line with $ v_1 \in E_2 \cap E_4 $, while $ v_1 \notin H_1 $. Accordingly, $ \lvert H_1 \cap E_2 \cap E_4 \rvert = 1 $. We also have
\[
E_1 \cap H_1 \cap E_2 \cap E_4 = (v_{n-3} \vee v_{n-2}) \cap H_1 \cap E_4 = \{v_{n-2}\} \cap E_4 = \emptyset.
\]
Additionally,
\[
E_3 \cap H_1 \cap E_2 \cap E_4 = H_1 \cap E_2 \cap E_3 \cap E_4 = H_1 \cap \{v_1\} = \emptyset.
\]
Hence, $ \lvert H_1 \cap H_2 \cap E_2 \cap E_4 \backslash (E_1 \cup E_3) \rvert = 1 $ as long as the necessary condition is fulfilled. Consequently, there is exactly one valid choice for $ v_{n-1} $ and the number of elements in Case 1 is $ c_4^{\#}(n-2) $. \smallskip\\
Now let $ \{v_{n-3},v_{n-2},v_1\} $ be dependent. Then $ v_{n-3} \vee v_{n-2} \vee v_2 = E_3 \subseteq H_1 $ and we must have $ \phi_1 = 0 $. Consider the map
\begin{align*}
\phi_3: K^4 &\to K\\
V &\mapsto D_3 \det(V_{n-2},V,V_1,V_2) \mp D_4^\prime \det(V_{n-4},V_{n-3},V_{n-2},V) \det(V_{n-2},V_1,V_2,V_3)
\end{align*}
instead of $ \phi_2 $. \eqref{eq:-+-12|34detCon2} is now equivalent to $ v_{n-1} \in H_3 $ while $ \eqref{eq:-+-12|34detCon1} $ is trivial thanks to $ \phi_1 = 0 $.\\
Case 2: $ s = + $.\\
Then $ \phi_3(V_2) \neq 0 $ and $ H_3 = \pr(\Ker \phi_3) $ is a projective plane. $ G := H_3 \cap E_4 $ is a projective line, because $ v_2 \notin H_3 $ and thus $ H_3 \neq E_4 $. We have $ E_1 = v_{n-4} \vee v_{n-3} \vee v_{n-2} = v_{n-4} \vee v_{n-2} \vee v_1 $ and thus
\[
E_1 \cap H_3 \cap E_4 = (v_{n-2} \vee v_1) \cap E_4 = \{v_1\}.
\]
Additionally,
\[
E_3 \cap H_3 \cap E_4 = H_3 \cap (v_1 \vee v_2) = \{v_1\}.
\]
Hence $ H_3 \cap E_4 \backslash (E_1 \cup E_3) $ has $ q $ elements, and these are exactly the viable options for $ v_{n-1} $, as long as $ \phi_1 = 0 $. The latter condition is equivalent to $ \phi_1(V_3) = 0 $ since $ v_3 \notin E_3 $. We get the condition
\[
D_1 \det(V_{n-3},V_{n-2},V_2,V_3) = \mp D_2 \det(V_{n-2},V_1,V_2,V_3).
\]
After applying the bijection
\begin{equation}\label{eq:-+-12|34bijection}
C_4^{(-)+[s]}(n-2) \to C_4^{+(-)[s]}(n-2); (v_1,\dots,v_{n-2}) \mapsto (v_{n-2},\dots,v_1)
\end{equation}
the condition becomes
\begin{equation}\label{eq:-+-12|34detCon3}
D_3 \det(V_{n-4},V_{n-3},V_1,V_2) = \mp D_2 \det(V_{n-4},V_{n-3},V_{n-2},V_1).
\end{equation}
This is equivalent to the determinant condition in $ C_4^{+(-)[+]23\mp}(n-2) $.\smallskip\\
Case 3: $ s = - $.\\
We still have the necessary condition $ \phi_1 = 0 $ and after using the bijection it is still equivalent to \eqref{eq:-+-12|34detCon3}. In addition, we now have $ E_1 = E_3 \subseteq H_3 $ and equality is once again not possible. Hence $ \phi_3 = 0 $ and we can also test this with $ V_3 $, obtaining the condition
\[
D_3 = \pm D_4^\prime \det(V_{n-4},V_{n-3},V_{n-2},V_3),
\]
where we canceled out $ \det(V_{n-2},V_1,V_2,V_3) $. The bijection \eqref{eq:-+-12|34bijection} transforms this equation into
\[
D_1 = \pm D_4^\prime \det(V_{n-4},V_1,V_2,V_3).
\]
This plus \eqref{eq:-+-12|34detCon3} are the determinant conditions in $ C_4^{+(-)[-]14|23\mp}(n-2) $. The number of choices for $ v_{n-1} $ is $ q^2 $ like in the proof of \eqref{eq:k4-+-}. The claim follows.\medskip\\
\eqref{eq:---12|34}: Let $ (v_1,\dots,v_{n-1}) \in C_4^{---12|34\pm}(n-1) $. Then
\[
(v_1,\dots,v_{n-2}) \in C_4^{+--}(n-2) \cup C_4^{+(-)[-]}(n-2) \cup C_4^{(-)-[+]}(n-2) \cup C_4^{(--)[+]}(n-2).
\]
The determinant conditions are
\begin{equation}\label{eq:---12|34detCon1}
D_1 d_{n-4} = \pm D_2 \det(V_{n-4},V_{n-2},V_{n-1},V_1)
\end{equation}
and
\[
D_3 \det(V_{n-4},V_{n-1},V_1,V_2) = \pm D_4 \det(V_{n-4},V_1,V_2,V_3).
\]
Multiplying both equations and canceling out $ d_{n-4} $ yields
\[
D_1 D_3 \det(V_{n-4},V_{n-1},V_1,V_2) = D_2 D_4^\prime \det(V_{n-4},V_{n-2},V_{n-1},V_1) \det(V_{n-4},V_1,V_2,V_3).
\]
Now consider the maps
\[
\phi_1: K^4 \to K; V \mapsto D_1 \det(V_{n-4},V_{n-3},V_{n-2},V) \mp D_2 \det(V_{n-4},V_{n-2},V,V_1)
\]
and
\begin{align*}
\phi_2: K^4 &\to K\\
V &\mapsto D_1 D_3 \det(V_{n-4},V,V_1,V_2) - D_2 D_4^\prime \det(V_{n-4},V_{n-2},V,V_1) \det(V_{n-4},V_1,V_2,V_3).
\end{align*}
The two determinant equations are then equivalent to $ v_{n-1} \in H_1 \cap H_2 $. Let $ M_1 := \{v_{n-3},v_{n-2},v_1\} $ and $ M_2 := \{v_{n-2},v_1,v_2\} $. If $ M_1 $ is independent, then $ \{v_{n-4},v_{n-3},v_{n-2},v_1\} $ is independent and $ \phi_1(V_1) \neq 0 $. In particular, $ \dim H_1 = 2 $. If $ M_1 $ is dependent, then $ E_1 = v_{n-4} \vee v_{n-2} \vee v_1 \subseteq H_1 $, which implies $ \phi_1 = 0 $. $ M_2 $ independent implies $ \phi_2(V_2) \neq 0 $ and thus $ \dim H_2 = 2 $. If $ M_2 $ is dependent, we have $ v_{n-4} \vee v_{n-2} \vee v_1 = v_{n-4} \vee v_1 \vee v_2 \subseteq H_2 $ and equality would imply $ v_{n-1} \in H_2 \cap E_4 = v_1 \vee v_2 $. Hence $ \phi_2 = 0 $.\smallskip\\
Case 1: $ M_1 $ and $ M_2 $ are both independent.\\
In this case we have
\[
E_1 \cap H_1 \cap H_2 \cap E_4 = (v_{n-4} \vee v_{n-2}) \cap H_2 \cap E_4 = \{v_{n-4}\} \cap E_4 = \emptyset
\]
as well as
\[
(v_{n-2} \vee v_1) \cap H_1 \cap H_2 \cap E_4 = \{v_{n-2}\} \cap H_2 = \emptyset
\]
and
\[
(v_1 \vee v_2) \cap H_1 \cap H_2 \cap E_4 = \{v_1\} \cap H_1 = \emptyset.
\]
These equations also imply $ \lvert H_1 \cap H_2 \cap E_4 \rvert = 1 $. Hence there is exactly $ 1 $ suitable choice of $ v_{n-1} $ for any given $ (v_1,\dots,v_{n-2}) \in C_4^{+--}(n-2) $. The number of elements in Case 1 is thus $ c_4^{+--}(n-2) $.\smallskip\\
Case 2: $ M_1 $ is independent and $ M_2 $ dependent.\\
Let $ G := H_1 \cap E_4 $. We have
\[
E_1 \cap G = (v_{n-4} \vee v_{n-2}) \cap E_4 = \{v_{n-2}\}
\]
and
\begin{align*}
(v_{n-2} \vee v_1) \cap G &= \{v_{n-2}\} \cap E_4 = \{v_{n-2}\}\\
&= (v_1 \vee v_2) \cap G.
\end{align*}
Hence there are $ q $ choices for $ v_{n-1} $ in the projective line $ G $, as long as $ \phi_2 = 0 $ is fulfilled. The latter condition is equivalent to $ \phi_2(V_{n-3}) = 0 $. Thus the necessary condition is
\[
D_1 D_3 \det(V_{n-4},V_{n-3},V_1,V_2) = -D_2 D_4^\prime \det(V_{n-4},V_{n-3},V_{n-2},V_1) \det(V_{n-4},V_1,V_2,V_3),
\]
or equivalently, $ (v_1,\dots,v_{n-2}) \in C_4^{+(-)[-]\#}(n-2) $. Accordingly, there are $ q c_4^{+(-)[-]\#}(n-2) $ elements in this case.\smallskip\\
Case 3: $ M_1 $ is dependent and $ M_2 $ independent.\\
As we showed during the proof of \eqref{eq:+++12|34} Case $ s_1 = s_2 = s_3 = - $, the first determinant condition in $ C_4^{---12|34\pm}(n-1) $ is equivalent to
\[
D_1 \det(V_{n-3},V_{n-2},V_{n-1},V_4) = \pm D_2 \det(V_{n-2},V_{n-1},V_1,V_4).
\]
Hence the map $ \phi_1 $ can be replaced with
\[
\phi_1^\prime: K^4 \to K; V \mapsto D_1 \det(V_{n-3},V_{n-2},V,V_4) \mp D_2 \det(V_{n-2},V,V_1,V_4).
\]
Since $ M_1 $ is dependent we have $ v_{n-3} \vee v_{n-2} \vee v_4 = v_{n-2} \vee v_1 \vee v_4 \subseteq H_1^\prime := \pr(\Ker \phi_1^\prime) $. If we had equality, then $ v_{n-1} \in H_1^\prime \cap E_4 = v_1 \vee v_2 $. This cannot be the case and so we must have $ \phi_1^\prime = 0 $ as a necessary condition for the first determinant equation. On the other hand $ \phi_1^\prime = 0 $ causes the first determinant equation to become trivial. We also have $ v_2 \notin v_{n-2} \vee v_1 \vee v_4 $, hence the first determinant equation is equivalent to $ \phi_1^\prime(V_2) = 0 $, which can also be written as
\[
D_1 \det(V_{n-3},V_{n-2},V_2,V_4) = \mp D_2 \det(V_{n-2},V_1,V_2,V_4).
\]
After applying the bijection
\[
C_4^{(-)-[+]}(n-2) \to C_4^{-(-)[+]}(n-2); (v_1,\dots,v_{n-2}) \mapsto (v_{n-2},\dots,v_1)
\]
the condition becomes
\[
D_3 \det(V_{n-5},V_{n-3},V_1,V_2) = \mp D_2 \det(V_{n-5},V_{n-3},V_{n-2},V_1).
\]
This is equivalent to the determinant condition for $ C_4^{-(-)[+]23\mp}(n-2) $.\\
Let $ G := H_2 \cap E_4 $. We need to count $ G \backslash (E_1 \cup (v_{n-2} \vee v_1) \cup (v_1 \vee v_2)) $. We have $ \phi_2(V_{n-2}) \neq 0 $ and thus $ v_{n-2} \notin H_2 $. It follows that
\[
E_1 \cap G = (v_{n-4} \vee v_1) \cap E_4 = \{v_1\}.
\]
Moreover,
\begin{align*}
    (v_{n-2} \vee v_1) \cap G &= \{v_1\}\\
    &= (v_1 \vee v_2) \cap G.
\end{align*}
In particular, $ \dim G = 1 $ and the number of choices for $ v_{n-1} $ is $ q $ if $ \phi_1^\prime = 0 $. In conclusion, the number of elements in Case 3 is $ q c_4^{-(-)[+]23\mp}(n-2) $.\smallskip\\
Case 4: $ M_1 $ and $ M_2 $ are both dependent.\\
There are now two necessary conditions, namely $ \phi_1 = 0 $ and $ \phi_2 = 0 $. We can test both with $ V_{n-5} $, which results in the conditions
\begin{equation}\label{eq:---12|34detCon2}
D_1 d_{n-5} = \mp D_2 \det(V_{n-5},V_{n-4},V_{n-2},V_1)
\end{equation}
and
\[
D_1 D_3 \det(V_{n-5},V_{n-4},V_1,V_2) = - D_2 D_4^\prime \det(V_{n-5},V_{n-4},V_{n-2},V_1) \det(V_{n-4},V_1,V_2,V_3).
\]
By substituting \eqref{eq:---12|34detCon2} into the second equation and then canceling out $ D_1 $ and $ d_{n-5} $, it becomes
\[
D_3^\prime \det(V_{n-5},V_{n-4},V_1,V_2) = \pm D_4^\prime \det(V_{n-4},V_1,V_2,V_3).
\]
This together with \eqref{eq:---12|34detCon2} are exactly the determinant conditions for $ (v_1,\dots,v_{n-2}) \in C_4^{(--)[+]12|34\mp}(n-2) $. As long as these necessary conditions are fulfilled there are $ q^2 $ choices for $ v_{n-1} $ just like in the proof of \eqref{eq:k4---}.\medskip\\
\eqref{eq:+(-)[-]14|23}: This recursion is similar to \eqref{eq:+(-)[-]23}, except that there is the additional determinant condition
\begin{equation}\label{eq:+(-)[-]14|23detCon}
D_1 = \mp D_4^\prime \det(V_{n-4},V_1,V_2,V_3).
\end{equation}
Since this condition does not depend on $ v_{n-3} $, the situation is essentially the same as in the proof of \eqref{eq:+(-)[-]23}. Let $ (v_1,\dots,v_{n-2}) \in C_4^{+(-)[-]14|23\pm}(n-2) $. Then
\[
(v_1,\dots,v_{n-3}) \in C_4^{s+-}(n-3)
\]
with $ s \in \{+,-\} $. If $ s = + $, the argument is exactly the same as in the proof of \eqref{eq:+(-)[-]23}, except that we get $ c_4^{++-14\mp}(n-3) $ such elements instead of $ c_4^{++-}(n-3) $. In the case $ s = - $ we do not use the bijection that was utilized in that case of the proof of \eqref{eq:+(-)[-]23}. Rather, we directly get the two determinant conditions
\[
D_2 = \mp D_3^\prime \det(V_{n-5},V_{n-4},V_{n-3},V_2)
\]
and \eqref{eq:+(-)[-]14|23detCon}. This is equivalent to $ (v_1,\dots,v_{n-3}) \in C_4^{-+-14|23\mp}(n-3) $. The number of values for $ v_{n-2} $ is $ q $ like in the proof of \eqref{eq:k4+(-)[-]}. The claim follows.\medskip\\
\eqref{eq:(--)[+]12|34}: Let $ (v_1,\dots,v_{n-2}) \in C_4^{(--)[+]12|34\pm}(n-2) $. Then $ (v_1,\dots,v_{n-3}) \in C_4^{+(-)[+]}(n-3) $ if $ v_{n-3} \neq v_1 $ and $ (v_1,\dots,v_{n-4}) \in C_4(n-4) $ if $ v_{n-3} = v_1 $. Define
\[
\phi: K^4 \to K; V \mapsto D_1 \det(V_{n-5},V_{n-4},V_{n-3},V) \mp D_2 \det(V_{n-5},V_{n-4},V,V_1).
\]
One of the determinant equations in $ C_4^{(--)[+]12|34\pm}(n-2) $ is equivalent to $ v_{n-2} \in H $. The other one,
\begin{equation}\label{eq:(--)[+]12|34detCon}
D_3^\prime \det(V_{n-5},V_{n-4},V_1,V_2) = \mp D_4^\prime \det(V_{n-4},V_1,V_2,V_3)
\end{equation}
does not depend on $ v_{n-2} $ or $ v_{n-3} $. If $ v_{n-3} \neq v_1 $, then $ \{v_{n-5},v_{n-4},v_{n-3},v_1\} $ is independent and $ \phi(V_1) \neq 0 $. In particular, $ \dim H = 2 $. Moreover, $ \{P\} := H \cap (v_1 \vee v_2) $ is then a single point. We have
\[
E_1 \cap \{P\} = E_1 \cap H \cap (v_1 \vee v_2) = (v_{n-5} \vee v_{n-4}) \cap (v_1 \vee v_2) = (v_{n-5} \vee v_{n-4}) \cap (v_{n-3} \vee v_1) = \emptyset.
\]
$ P \in v_{n-3} \vee v_1 = v_1 \vee v_2 $ and $ P \neq v_1 $ are clearly fulfilled. Hence there is exactly one suitable value of $ v_{n-2} $ for each $ (v_1,\dots,v_{n-3}) \in C_4^{+(-)[+]34\mp}(n-3) $, namely $ v_{n-2} = P $. By \eqref{eq:+(-)[+]34}, we have $ c_4^{+(-)[+]34\mp}(n-3) = c_4^{+(-)[+]14\mp}(n-3) $. Hence the number of elements in the case $ v_{n-3} \neq v_1 $ is exactly $ c_4^{+(-)[+]14\mp}(n-3) $.\\
If $ v_{n-3} = v_1 $, then $ E_1 = v_{n-5} \vee v_{n-4} \vee v_1 \subseteq H $ and equality is prevented by the existence of $ v_{n-2} \in H \backslash E_1 $. Hence $ H = \pr^3(K) $ and $ \phi = 0 $. This condition is equivalent to $ \phi(V_{n-6}) = 0 $, which we can rephrase as
\[
D_1 = \mp D_2^\prime \det(V_{n-6},V_{n-5},V_{n-4},V_1).
\]
Here, we canceled out $ d_{n-6} $. This and \eqref{eq:(--)[+]12|34detCon} are the conditions for $ (v_1,\dots,v_{n-4}) \in C_4^{12|34\mp}(n-4) $. The number of viable choice for $ v_{n-2} $ is $ q $ just like in the proof of \eqref{eq:k4(--)}, while $ v_{n-3} $ is uniquely determined by $ v_{n-3} = v_1 $. The recursion follows.\medskip\\
\eqref{eq:-+-14|23}: Let $ (v_1,\dots,v_{n-3}) \in C_4^{-+-14|23\pm}(n-3) $. Then
\[
(v_1,\dots,v_{n-4}) \in C_4(n-4) \cup C_4^{(-)+[s]}(n-4)
\]
with $ s \in \{+,-\} $. One determinant condition is
\begin{equation}\label{eq:-+-14|23detCon}
D_1 = \pm D_4^\prime \det(V_{n-4},V_1,V_2,V_3),
\end{equation}
which does not depend on $ v_{n-3} $. For the other determinant condition consider
\[
\phi: K^4 \to K; V \mapsto D_2^\prime \det(V_{n-6},V_{n-5},V_{n-4},V) \mp D_3^\prime \det(V_{n-5},V_{n-4},V,V_2).
\]
The second condition is equivalent to $ v_{n-3} \in H $.\\
Case 1: $ \{v_{n-5},v_{n-4},v_1\} $ is independent.\\
Then $ \dim E_2 = 2 $ and $ v_{n-3} \in E_2 $. We also clearly have $ v_{n-5}, v_{n-4} \in H $. Hence $ v_{n-3} \in H $ is equivalent to $ E_2 \subseteq H $. At the same time $ E_2 = v_{n-5} \vee v_{n-4} \vee v_1 $. Therefore, $ E_2 \subseteq H $ is also equivalent to $ v_1 \in H $. Thus the second determinant condition can be equivalently rewritten as
\[
D_2^\prime \det(V_{n-6},V_{n-5},V_{n-4},V_1) = \pm D_3^\prime \det(V_{n-5},V_{n-4},V_1,V_2).
\]
It follows that $ (v_1,\dots,v_{n-4}) \in C_4^{14|23\pm}(n-4) $. On the other hand, $ (v_1,\dots,v_{n-4}) \in C_4^{14|23\pm}(n-4) $ implies that both determinant conditions in $ C_4^{-+-14|23\pm}(n-3) $ are fulfilled for any $ v_{n-3} \in E_2 $. Hence the valid choices for $ v_{n-3} $ are exactly the elements of $ E_2 \cap E_4 \backslash (E_1 \cup E_3) $. We showed in the proof of \eqref{eq:k4-+-} that there are $ q-1 $ such choices. Since $ c_4^{14|23\pm}(n-4) = c_4^{12|34\pm}(n-4) $ the number of elements in Case 1 is $ (q-1)c_4^{12|34\pm}(n-4) $.\smallskip\\
Case 2: $ \{v_{n-5},v_{n-4},v_1\} $ is dependent and $ s = + $.\\
Let $ G := H \cap E_4 $. We have $ \{v_{n-6},v_{n-5},v_{n-4},v_2\} $ independent and $ \phi(V_2) \neq 0 $. In particular, $ \dim H = 2 $ and $ \dim G = 1 $. The valid choices for $ v_{n-3} $ are the elements of $ G \backslash (E_1 \cup E_3) $. We have
\[
E_1 \cap G = (v_{n-5} \vee v_{n-4}) \cap E_4 = \{v_1\}. 
\]
Here we used the fact that $ v_1 \in v_{n-5} \vee v_{n-4} $ and that $ v_{n-4} \notin E_4 $. We also have
\[
E_3 \cap G = (v_1 \vee v_2) \cap H = \{v_1\}.
\]
Accordingly, there are $ q $ suitable options for $ v_{n-3} $. After using the bijection
\[
C_4^{(-)+[+]}(n-4) \to C_4^{+(-)[+]}(n-4); (v_1,\dots,v_{n-4}) \mapsto (v_2,\dots,v_{n-4},v_1)
\]
\eqref{eq:-+-14|23detCon} becomes
\[
D_2^\prime \det(V_{n-6},V_{n-5},V_{n-4},V_1) = \pm D_1.
\]
This is the determinant condition for $ C_4^{+(-)[+]12\pm}(n-4) $. Hence there are $ qc_4^{+(-)[+]12\pm}(n-4) $ elements in Case 2.\smallskip\\
Case 3: $ \{v_{n-5},v_{n-4},v_1\} $ is dependent and $ s = - $.\\
$ s = - $ implies $ E_1 = v_{n-5} \vee v_{n-4} \vee v_2 \subseteq H $ and thus we must have $ \phi = 0 $. This is equivalent to $ \phi(V_3) = 0 $, which we can rewrite as
\[
D_2^\prime \det(V_{n-6},V_{n-5},V_{n-4},V_3) = \mp D_3^\prime(V_{n-5},V_{n-4},V_2,V_3).
\]
After applying the bijection
\[
C_4^{(-)+[-]}(n-4) \to C_4^{+(-)[-]}(n-4); (v_1,\dots,v_{n-4}) \mapsto (v_{n-4},\dots,v_1)
\]
this equation turns into
\[
D_4^\prime \det(V_{n-6},V_1,V_2,V_3) = \mp D_3^\prime \det(V_{n-6},V_{n-5},V_1,V_2).
\]
At the same time, \eqref{eq:-+-14|23detCon} turns into
\[
D_1 = \pm D_2^\prime \det(V_{n-6},V_{n-5},V_{n-4},V_1).
\]
These are equivalent to the determinant conditions in $ C_4^{+(-)[-]12|34\pm}(n-4) $. We have $ q^2 $ choices for $ v_{n-3} $ just like in the proof of \eqref{eq:k4-+-}. The three cases put together prove \eqref{eq:-+-14|23}.
  
\end{proof}

Finally, we can consider sets with $ 3 $ determinant conditions.

\begin{defi}
Let $ n \geq 4 $ be divisible by 4. For any tuple $ (v_1,\dots,v_m) \in (\pr^3(K))^m $ with $ m \in \{n,n-1,n-2,n-3\} $ in the definitions of the following sets let $ (V_1,\dots,V_m) \in (K^4)^m $ be an arbitrary lift. Define $ d_i := \det(V_i,V_{i+1},V_{i+2},V_{i+3}) $ with the indices considered modulo $ n $. Let $ D_j^\prime := d_j d_{j+4} \dots d_{n-12+j} $ and $ D_ j := D_j^\prime d_{n-8+j} $ for $ j \in \{1,2,3,4\} $. Let $ s \in \{+,-\} $ Then define
\begin{align*}
C_4^{1234s}(n) := \{&(v_1,\dots,v_n) \in C_4(n) \mid D_1 d_{n-3} = sD_2d_{n-2} = D_3 d_{n-1} = sD_4 d_n\},\\
C_4^{---1234s}(n-1) := \{&(v_1,\dots,v_{n-1}) \in C_4^{---}(n-1) \mid\\
&D_1 d_{n-4} = sD_2 \det(V_{n-4},V_{n-2},V_{n-1},V_1)\\
&= D_3 \det(V_{n-4},V_{n-1},V_1,V_2) = sD_4 \det(V_{n-4},V_1,V_2,V_3)\},\\
C_4^{(--)[+]1234s}(n-2) := \{&(v_1,\dots,v_{n-2}) \in C_4^{(--)[+]}(n-2) \mid\\
&D_1 d_{n-5} = sD_2 \det(V_{n-5},V_{n-4},V_{n-2},V_1)\\
&= D_3 \det(V_{n-5},V_{n-4},V_1,V_2) = -s D_4^\prime d_{n-5} \det(V_{n-4},V_1,V_2,V_3)\}.\\
\end{align*}
\end{defi}

\begin{lem}\label{lem:recursionsk41234}
Let $ n \geq 8 $ be divisible by $ 4 $. Then the following recursions hold:
\begin{align}
c_4^{1234\pm}(n) = &c_4(n-1) + 3qc_4^{++-34\mp}(n-1) + 2q^2c_4^{+--234\mp}(n-1)\label{eq:+++1234}\\
&+ q^2c_4^{-+-12|34\mp}(n-1) + q^3c_4^{---1234\mp}(n-1),\nonumber\\
c_4^{---1234\pm}(n-1) = &c_4^{+--14\pm}(n-2) + qc_4^{+(-)[-]14|23\mp}(n-2)\label{eq:---1234}\\
&+ qc_4^{-(-)[+]234\mp}(n-2) + q^2c_4^{(--)[+]1234\mp}(n-2),\nonumber\\
c_4^{(--)[+]1234\pm}(n-2) = &c_4^{+(-)[+]134\mp}(n-3) + qc_4^{1234\mp}(n-4).\label{eq:(--)[+]1234}
\end{align}
\end{lem}
\begin{proof}
For any tuple $ (v_1,\dots,v_m) \in (\pr^3(K))^m $ of projective points with $ m \in \{n,n-1,n-2\} $ that we consider in this proof let $ (V_1,\dots,V_m) \in (K^4)^m $ be an arbitrary lift. We set $ d_i := \det(V_i,V_{i+1},V_{i+2},V_{i+3}) $ for $ i \leq n $ (the indices are considered modulo $ n $) and $ D_j := d_j d_{j+4} \dots d_{n-8+j} $ and $ D_j^\prime := d_j d_{j+4} \dots d_{n-12+j} $ for $ j \in \{1,2,3,4\} $. Also let $ E_1 := v_{m-3} \vee v_{m-2} \vee v_{m-1} $, $ E_2 := v_{m-2} \vee v_{m-1} \vee v_1 $, $ E_3 := v_{m-1} \vee v_1 \vee v_2 $ and $ E_4 := v_1 \vee v_2 \vee v_3 $. For two of the recursions we consider linear maps $ \phi_i: K^4 \to K $ and their kernels $ H_i := \pr(\Ker \phi_i) $. The definition of $ H_i $ will not depend on the choice of lift. \smallskip\\
\eqref{eq:+++1234}: Let $ (v_1,\dots,v_n) \in C_4^{1234\pm}(n) $. We have
\[
(v_1,\dots,v_{n-1}) \in C_4^{s_1s_2s_3}(n-1)
\]
with $ s_i \in \{+,-\} $. Consider the maps
\begin{align*}
&\phi_1: K^4 \to K; V \mapsto D_1 \det(V_{n-3},V_{n-2},V_{n-1},V) \mp D_2 \det(V_{n-2},V_{n-1},V,V_1),\\
&\phi_2: K^4 \to K; V \mapsto D_2 \det(V_{n-2},V_{n-1},V,V_1) \mp D_3 \det(V_{n-1},V,V_1,V_2),\\
\intertext{and}
&\phi_3: K^4 \to K; V \mapsto D_3 \det(V_{n-1},V,V_1,V_2) \mp D_4 \det(V,V_1,V_2,V_3).
\end{align*}
For any $ (v_1,\dots,v_{n-1}) \in C_4^{s_1s_2s_3}(n-1) $ and any $ v_n \in \pr^3(K) $ the three determinant equations in the definition of $ C_4^{1234\pm}(n) $ are fulfilled if and only if $ v_n \in H_1 \cap H_2 \cap H_3 $. It follows that $ (v_1,\dots,v_n) \in C_4^{1234\pm}(n) $ is equivalent to $ v_n \in H_1 \cap H_2 \cap H_3 \backslash (E_1 \cup E_2 \cup E_3 \cup E_4) $. \\
If $ s_i = + $ for $ i = 1,2,3 $, then $ \phi_i(V_i) \neq 0 $ and in particular, $ \dim H_i = 2 $. If $ s_i = - $, then $ E_i = E_{i+1} \subseteq H_i $. In this case, $ v_n \in H_i \backslash E_i $ implies $ H_i = \pr^3(K) $ or equivalently $ \phi_i = 0 $. Also note that $ E_i \cap H_i = E_i \cap E_{i+1} = E_{i+1} \cap H_i $ for $ i = 1,2,3 $.\smallskip\\
Case 1: $ s_1 = s_2 = s_3 = +$.\\
$ H_1 \cap H_2 $ is a projective line since $ v_{n-2} \in H_1 \backslash H_2 $. Moreover, $ v_{n-1} \in H_1 \cap H_2 $, but $ v_{n-1} \notin H_3 $. Hence $ \{P\} := H_1 \cap H_2 \cap H_3 $ is a single point, and only this point can be a suitable choice for $ v_n $. On the other hand, we have
\[
E_j \cap \{P\} = E_j \cap H_1 \cap H_2 \cap H_3 = E_1 \cap E_2 \cap E_3 \cap E_4 = \emptyset
\]
for $ j = 1,2,3,4 $ (see the proof of \eqref{eq:k4+++} for the last equality). Consequently, there are exactly $ c_4(n-1) $ elements of $ C_4^{1234\pm}(n) $ with $ s_1 = s_2 = s_3 = + $.\smallskip\\
Case 2: $ s_1 = s_2 = + $, $ s_3 = - $.\\
With the same argument as in Case 1 we see that $ G := H_1 \cap H_2 $ is a projective line. We have
\[
E_j \cap G = E_1 \cap E_2 \cap E_3 = \{v_{n-1}\}
\]
for $ j = 1,2,3,4 $ (for $ j = 4 $ we use $ E_3 = E_4) $. Hence there are $ q $ viable choices for $ v_n $ as long as the necessary condition $ \phi_3 = 0 $ is fulfilled. The latter condition is equivalent to $ \phi_3(V_{n-2}) = 0 $ since $ v_{n-2} \notin E_3 = E_4 $. Therefore, the condition can be rephrased as
\begin{equation}\label{eq:+++1234detCon}
D_3 \det(V_{n-2}, V_{n-1},V_1,V_2) = \mp D_4 \det(V_{n-2},V_1,V_2,V_3).
\end{equation}
This is equivalent to $ (v_1,\dots,v_{n-1}) \in C_4^{++-34\mp}(n-1) $. It follows that there are exactly $ qc_4^{++-34\mp}(n-1) $ elements in Case 2.\smallskip\\
Case 3: $ s_1 = s_3 = + $, $ s_2 = - $.\\
Let $ G:= H_1 \cap H_3 $. $ v_1 \in H_3 \backslash H_1 $, hence $ \dim G = 1 $. Since $ E_2 = E_3 $ we have
\[
E_j \cap H_1 \cap H_3 = E_1 \cap E_2 \cap E_3 \cap E_4 = E_1 \cap E_2 \cap E_4
\]
for $ j = 1,2,3,4 $. We showed in the proof of \eqref{eq:k4+++} that $ E_1 $, $ E_2 $ and $ E_4 $ intersect in a single point. Hence there are $ q $ points on $ G $ not in $ E_1 \cup E_2 \cup E_3 \cup E_4 $ that work for our purposes. The final condition $ \phi_2 = 0 $ can be tested with $ V_{n-3} $, resulting in the equation
\[
D_2 \det(V_{n-3},V_{n-2},V_{n-1},V_1) = \mp D_3 \det(V_{n-3},V_{n-1},V_1,V_2).
\]
After applying the bijection
\begin{equation}\label{eq:+++1234bijection}
C_4^{s_1s_2+}(n-1) \to C_4^{+s_1s_2}(n-1); (v_1,\dots,v_{n-1}) \mapsto (v_2,\dots,v_{n-1},v_1)
\end{equation}
our condition turns into \eqref{eq:+++1234detCon}. Hence we also have $ qc_4^{++-34\mp}(n-1) $ elements in the current case.\smallskip\\
Case 4: $ s_1 = - $, $ s_2 = s_3 = + $.\\
Let $ G := H_2 \cap H_3 $. Like we showed in Case 1, $ \dim G = 1 $. Additionally,
\[
E_j \cap G = E_2 \cap E_3 \cap E_4 = \{v_1\}
\]
for $ j = 1,2,3,4 $. The necessary condition $ \phi_1 = 0 $ can be tested with $ V_{n-4} $, resulting in
\[
D_1 d_{n-4} = \mp D_2 \det(V_{n-4},V_{n-2},V_{n-1},V_1).
\]
After using \eqref{eq:+++1234bijection} twice, this equation becomes \eqref{eq:+++1234detCon}. In conclusion, Cases 2, 3 and 4 combined yield $ 3qc_4^{++-34\mp}(n-1) $ elements.\smallskip\\
Case 5: $ s_1 = + $, $ s_2 = s_3 = - $:\\
We have
\begin{align*}
E_1 \cap H_1 &= E_1 \cap E_2 = v_{n-2} \vee v_{n-1}\\
&= E_2 \cap H_1
\end{align*}
and $ E_2 = E_3 = E_4 $. Hence all points on $ H_1 $ except for one projective line are suitable picks for $ v_n $ and hence we have $ q^2 $ options. The necessary conditions $ \phi_2 = \phi_3 = 0 $ can be tested with $ V_{n-3} $, resulting in the equivalent equations
\begin{align}
D_2 \det(V_{n-3},V_{n-2},V_{n-1},V_1) &= \mp D_3 \det(V_{n-3},V_{n-1},V_1,V_2)\label{eq:+++1234detCon2}\\
&=D_4 \det(V_{n-3},V_1,V_2,V_3).\nonumber
\end{align}
This is true if and only if $ (v_1,\dots,v_{n-1}) \in C_4^{+--234\mp}(n-1) $. Hence the number of elements in Case 5 is $ q^2c_4^{+--234\mp}(n-1) $.\smallskip\\
Case 6: $ s_1 = s_2 = - $, $ s_3 = + $.\\
Similar to the last case we have
\[
E_j \cap H_3 = E_3 \cap E_4 = v_1 \vee v_2
\]
for $ j = 1,2,3,4 $ and there are again $ q^2 $ valid choices for $ v_n $. We can test $ \phi_1 = \phi_2 = 0 $ with $ V_{n-4} $, resulting in
\begin{align*}
D_1 d_{n-4} &= \mp D_2 \det(V_{n-4},V_{n-2},V_{n-1},V_1)\\
&= D_3 \det(V_{n-4},V_{n-1},V_1,V_2).
\end{align*}
After applying the bijection \eqref{eq:+++1234bijection}, this becomes \eqref{eq:+++1234detCon2}. Hence Case 6 also yields $ q^2 c_4^{+--234\mp}(n-1) $ elements.\smallskip\\
Case 7: $ s_1 = s_3 = - $, $ s_2 = + $.\\
Now we have
\[
E_j \cap H_2 = E_2 \cap E_3 = v_{n-1} \vee v_1
\]
for $ j = 1,2,3,4 $ since $ E_1 = E_2 $ and $ E_3 = E_4 $. Consequently there are $ q^2 $ suitable values for $ v_n $. We test $ \phi_1 = 0 $ with $ V_2 $ and $ \phi_3 = 0 $ with $ V_{n-2} $. This results in the equations
\[
D_1 \det(V_{n-3},V_{n-2},V_{n-1},V_2) = \mp D_2 \det(V_{n-2},V_{n-1},V_1,V_2)
\]
and
\[
D_3 \det(V_{n-2},V_{n-1},V_1,V_2) = \mp D_4 \det(V_{n-2},V_1,V_2,V_3).
\]
These are exactly the conditions for $ (v_1,\dots,v_{n-1}) \in C_4^{-+-12|34\mp}(n-1) $. Hence Case 7 accounts for $ q^2 c_4^{-+-12|34\mp}(n-1) $ elements.\smallskip\\
Case 8: $ s_1 = s_2 = s_3 = - $.\\
The determinant conditions now only depend on $ (v_1,\dots,v_{n-1}) $. As long as they are fulfilled, we will have $ q^3 $ choices for $ v_n $ just like in the proof of \eqref{eq:k4+++}. We can test $ \phi_1 = \phi_2 = \phi_3 = 0 $ with $ V_{n-4} $ and get
\begin{align*}
D_1 d_{n-4} &= \mp D_2 \det(V_{n-4},V_{n-2},V_{n-1},V_1)\\
&= D_3 \det(V_{n-4},V_{n-1},V_1,V_2)\\
&= \mp D_4 \det(V_{n-4},V_1,V_2,V_3).
\end{align*}
This is exactly the condition for $ (v_1,\dots,v_{n-1}) \in C_4^{---1234\mp}(n-1) $ and the claimed recursion is proved. Note that we could have tested with $ V_4 $ instead. In particular, the first equation is equivalent to
\[
D_1 \det(V_{n-3},V_{n-2},V_{n-1},V_4) = \mp D_2 \det(V_{n-2},V_{n-1},V_1,V_4).
\]
We will use this in the proof of the next recursion.\medskip\\
\eqref{eq:---1234}: Let $ (v_1,\dots,v_{n-1}) \in C_4^{---1234\pm}(n-1) $. Then
\[
(v_1,\dots,v_{n-2}) \in C_4^{+--}(n-2) \cup C_4^{+(-)[-]}(n-2) \cup C_4^{(-)-[+]}(n-2) \cup C_4^{(--)[+]}(n-2).
\]
Note that one of the required determinant equations is
\[
D_1 d_{n-4} = \pm D_4 \det(V_{n-4},V_1,V_2,V_3),
\]
which can be simplified to
\begin{equation}\label{eq:---1234detCon}
D_1 = \pm D_4^\prime \det(V_{n-4},V_1,V_2,V_3).
\end{equation}
This condition no longer depends on $ v_{n-1} $. For the other two required equations, consider the maps
\begin{align*}
&\phi_1: K^4 \to K; V \mapsto D_1 \det(V_{n-4},V_{n-3},V_{n-2},V) \mp D_2 \det(V_{n-4},V_{n-2},V,V_1)\\
\intertext{and}
&\phi_2: K^4 \to K; V \mapsto D_2 \det(V_{n-4},V_{n-2},V,V_1) \mp D_3 \det(V_{n-4},V,V_1,V_2).
\end{align*}
The determinant equations in $ C_4^{---1234\pm}(n-1) $ are fulfilled if and only if $ v_{n-1} \in H_1 \cap H_2 $ and \eqref{eq:---1234detCon} holds.\\
Let $ M_1 := \{v_{n-3},v_{n-2},v_1\} $ and $ M_2 := \{v_{n-2},v_1,v_2\} $. If $ M_i $ is independent for $ i \in \{1,2\} $, then $ E_4 = \pr(\langle M_i \rangle) $ and $ \phi_i(V_i) \neq 0 $. In particular, $ \dim H_i = 2 $. If $ M_i $ is dependent, then $ v_{n-4} \vee v_{n-2} \vee v_1 \subseteq H_i $. If we have equality, then $ H_i \cap E_4 = v_{n-2} \vee v_1 $ and since we must have $ v_{n-1} \in H_1 \cap H_2 \cap E_4 $ it follows that $ \{v_{n-2},v_{n-1},v_1\} $ is dependent. This contradicts our assumptions (recall that the definition of $ C_4^{---}(n-1) $ requires $ \{v_{n-2},v_{n-1},v_1\} $ and $ \{v_{n-1},v_1,v_2\} $ to be independent). Hence we must have $ H_i = \pr^3(K) $ or equivalently $ \phi_i = 0 $ if $ M_i $ is dependent. The viable choices for $ v_{n-1} $ for a given $ (v_1,\dots,v_{n-2}) $ are exactly the elements of $ H_1 \cap H_2 \cap E_4 \backslash (E_1 \cup (v_{n-2} \vee v_1) \cup (v_1 \vee v_2)) $.\smallskip\\
Case 1: $ M_1 $ and $ M_2 $ are both independent.\\
We have $ v_1 \in H_2 \backslash H_1 $, hence $ \dim H_1 \cap H_2 = 1 $. Moreover, $ v_{n-4} \in H_1 \cap H_2 \backslash E_4 $ implies that $ \{P\} := H_1 \cap H_2 \cap E_4 $ is a single point. On the other hand,
\[
E_1 \cap \{P\} = (v_{n-4} \vee v_{n-2}) \cap H_2 \cap E_4 = \{v_{n-4}\} \cap E_4 = \emptyset.
\]
Additionally,
\[
(v_{n-2} \vee v_1) \cap \{P\} = \{v_{n-2}\} \cap H_2 \cap E_4 = \emptyset
\]
and
\[
(v_1 \vee v_2) \cap \{P\} = \{v_1\} \cap H_1 \cap E_4 = \emptyset.
\]
Therefore the one value $ v_{n-1} = P $ actually works. \eqref{eq:---1234detCon} is equivalent to $ (v_1,\dots,v_{n-2}) \in C_4^{+--14\pm}(n-2) $. The number of elements in Case 1 is accordingly $ c_4^{+--14\pm}(n-2) $.\smallskip\\
Case 2: $ M_1 $ is independent, $ M_2 $ is dependent.\\
Let $ G := H_1 \cap E_4 $. We still have $ v_{n-4} \in H_1 \backslash E_4 $ and hence $ \dim G = 1 $. We also have
\[
E_1 \cap G = (v_{n-4} \vee v_{n-2}) \cap E_4 = \{v_{n-2}\}
\]
and
\begin{align*}
(v_{n-2} \vee v_1) \cap G &= \{v_{n-2}\}\\
&= (v_1 \vee v_2) \cap G.
\end{align*}
Thus there are $ q $ suitable choices for $ v_{n-1} $. The condition $ \phi_2 = 0 $ is equivalent to $ \phi_2(V_{n-3}) = 0 $, which is in turn equivalent to
\[
D_2 \det(V_{n-4},V_{n-3},V_{n-2},V_1) = \mp D_3 \det(V_{n-4},V_{n-3},V_1,V_2).
\]
Together with \eqref{eq:---1234detCon} these are the condition for $ (v_1,\dots,v_{n-2}) \in C_4^{+(-)[-]14|23\mp}(n-2) $. Hence we get $ q c_4^{+(-)[-]14|23\mp}(n-2) $ elements in the current case.\smallskip\\
Case 3: $ M_1 $ is dependent, $ M_2 $ is independent.\\
Let $ G:= H_2 \cap E_4$. $ v_{n-4} \in H_2 \backslash E_4 $ implies $ \dim G = 1 $. Additionally,
\begin{align*}
E_1 \cap G &= (v_{n-2} \vee v_1) \cap H_2 = \{v_1\}\\
&=  (v_{n-2} \vee v_1) \cap G
\end{align*}
and
\[
(v_1 \vee v_2) \cap G = \{v_1\}.
\]
Hence there are $ q $ choices for $ v_{n-1} $ if the other determinant conditions are fulfilled. Note that the first determinant condition
\[
D_1 d_{n-4} = \pm D_2 \det(V_{n-4},V_{n-2},V_{n-1},V_1)
\]
is equivalent to
\[
D_1 \det(V_{n-3},V_{n-2},V_{n-1},V_4) = \pm D_2 \det(V_{n-2},V_{n-1},V_1,V_4)
\]
as we showed in the proof of \eqref{eq:+++1234}. Hence we can replace $ \phi_1 $ by
\[
\phi_1^\prime: K^4 \to K; V \mapsto D_1 \det(V_{n-3},V_{n-2},V,V_4) \mp D_2 \det(V_{n-2},V,V_1,V_4).
\]
Let $ H_1^\prime := \pr(\Ker \phi_1^\prime) $. Then $ v_{n-3} \vee v_{n-2} \vee v_4 = v_{n-2} \vee v_1 \vee v_4 \subseteq H_1^\prime $, and in the case of equality $ H_1^\prime \cap E_4 = v_{n-3} \vee v_{n-2} $ follows, which is impossible. Hence $ \phi_1^\prime = 0 $ is again a necessary condition and it is equivalent to $ \phi_1^\prime(V_2) = 0 $. Accordingly, we can rewrite the condition as
\[
D_1 \det(V_{n-3},V_{n-2},V_2,V_4) = \mp D_2 \det(V_{n-2},V_1,V_2,V_4).
\]
Through the bijection
\begin{equation}\label{eq:---1234bijection}
C_4^{(-)-[+]}(n-2) \to C_4^{-(-)[+]}(n-2); (v_1,\dots,v_{n-2}) \mapsto (v_{n-2},\dots,v_1)
\end{equation}
our condition becomes
\begin{equation}\label{eq:---1234detCon2}
D_3 \det(V_{n-5},V_{n-3},V_1,V_2) = \mp D_2 \det(V_{n-5},V_{n-3},V_{n-2},V_1),
\end{equation}
which is equivalent to the first determinant condition in $ C_4^{-(-)[+]234\mp}(n-2) $.\\
Now observe our other determinant condition \eqref{eq:---1234detCon} and define the map
\[
\psi_1: K^4 \to K; V \mapsto \det(V_1,V_2,V,V_4) D_1^* \mp D_4^\prime \det(V_{n-4},V_1,V_2,V),
\]
where $ D_1^* := d_5 \dots d_{n-7} $. It is clear that \eqref{eq:---1234detCon} is equivalent to $ \psi_1(V_3) = 0 $. It is also clear that $ V_1,V_2 \in \Ker(\psi_1) $. Moreover, $ v_1 \vee v_2 \vee v_3 = v_{n-2} \vee v_1 \vee v_2 $. Hence \eqref{eq:---1234detCon} is also equivalent to $ \psi_1(V_{n-2}) = 0 $, which can be rewritten as
\begin{equation}\label{eq:---1234detCon3}
\det(V_{n-2},V_1,V_2,V_4) D_1^* = \pm D_4^\prime \det(V_{n-4},V_{n-2},V_1,V_2).
\end{equation}
Now define
\[
\psi_2: K^4 \to K; V \mapsto \det(V_{n-2},V,V_2,V_4) D_1^* \mp D_4^\prime \det(V_{n-4},V_{n-2},V,V_2).
\]
\eqref{eq:---1234detCon3} is equivalent to $ \psi_2(V_1) = 0 $. Since $ v_{n-2} \vee v_1 \vee v_2 = v_{n-3} \vee v_{n-2} \vee v_2 $ and $ V_{n-2},V_2 \in \Ker \psi_2 $, \eqref{eq:---1234detCon3} is also equivalent to $ \psi_2(V_{n-3}) = 0 $, which is in turn equivalent to
\[
D_1^* \det(V_{n-3},V_{n-2},V_2,V_4) = \pm D_4^\prime \det(V_{n-4},V_{n-3},V_{n-2},V_2).
\]
Under \eqref{eq:---1234bijection} this transforms into
\[
D_3^\prime \det(V_{n-5},V_{n-3},V_1,V_2) = \pm D_4^\prime \det(V_{n-3},V_1,V_2,V_3).
\]
After multiplying with $ \mp d_{n-5} $, this becomes the second determinant equation in the definition of $ C_4^{-(-)[+]234\mp}(n-2) $. Hence the number of elements in Case 3 is $ q c_4^{-(-)[+]234\mp}(n-2) $.\smallskip\\
Case 4: $ M_1 $ and $ M_2 $ dependent.\\
We can test $ \phi_1 = \phi_2 = 0 $ with $ V_{n-5} $, resulting in the condition
\begin{align*}
D_1 d_{n-5} &= \mp D_2 \det(V_{n-5},V_{n-4},V_{n-2},V_1)\\
&= D_3 \det(V_{n-5},V_{n-4},V_1,V_2).
\end{align*}
Together with \eqref{eq:---1234detCon}, this is equivalent to $ (v_1,\dots,v_{n-2}) \in C_4^{(--)[+]1234\mp}(n-2) $. The number of choices for $ v_{n-2} $ is $ q^2 $ just like in the proof of \eqref{eq:k4---}. Hence the recursion is proved.\medskip\\
\eqref{eq:(--)[+]1234}: This recursion is similar to \eqref{eq:(--)[+]23}, except that we have the additional determinant equations
\begin{align*}
D_1 &= D_3^\prime \det(V_{n-5},V_{n-4},V_1,V_2)\\
&= \mp D_4^\prime \det(V_{n-4},V_1,V_2,V_3)
\end{align*}
(after canceling out $ d_{n-5} $), which do not depend on $ v_{n-2} $ or $ v_{n-3} $. Recall that \eqref{eq:(--)[+]23} is the recursion
\[
c_4^{(--)[+]23\pm}(n-2) = c_4^{+(-)[+]}(n-3) + qc_4^{23\mp}(n-4).
\]
After adding the additional determinant equations to the sets on the right hand side, the summands turn into $ c_4^{+(-)[+]134\mp}(n-3) $ and $ qc_4^{1234\mp}(n-4) $. This yields the claimed recursion.
\end{proof}

We will now calculate the cardinalities of the sets we defined. Since the computation is very long and tedious, we will not cover all the steps of the calculation and instead give the results. One can easily check that the solutions do in fact satisfy the recursions and the starting values, for instance with a computer. We will also not give solutions for all the sets, but we will give sufficient solutions that the cardinalities of all the remaining sets can easily be derived through the recursions.

\begin{theorem}\label{thm:c4(n)gcd4}
Let $ n \in \N $ be divisible by $ 4 $. Let $ \lvert K \rvert = q $.
Set
\[
s := \begin{cases}
    + &\text{if }\charac(K) \neq 2 \text{ and } 8 \mid n,\\
    - &\text{otherwise}.
\end{cases}
\]
Then
\begin{align*}
(q-1)^3 c_4^*(n) = q^{3n} &+ 4f_1^s(q)q^{\frac{9}{4}n} + f_2(q)q^{2n} + 6f_3^s(q)q^{\frac{7}{4}n} + f_4^s(q)q^{\frac{3}{2}n}\\
&+ 6f_3^s(q)q^{\frac{5}{4}n+1} + f_2(q)q^{n+2} + 4f_1^s(q)q^{\frac{3}{4}n+3} + q^6
\end{align*}
with
\begin{align*}
f_1^+(q) &:= -q^3-q^2-q-1,\\
f_2(q) &:= 3q^5-3q^4+3q^3-6q^2-6,\\
f_3^+(q) &:= -q^6+q^5+3q^4+6q^3+7q^2+5q+6,\\
f_4^+(q) &:= 3q^7+6q^6-26q^5-41q^4-73q^3-56q^2-44q-9,
\intertext{and}
f_1^-(q) &:= q^4-q^3-q^2-q-2,\\
f_3^-(q) &:= q^7-3q^6-q^5+5q^3+9q^2+7q+6,\\
f_4^-(q) &:= q^9-6q^8+4q^7+29q^6+3q^5-20q^4-74q^3-79q^2-74q-24.
\end{align*}
\end{theorem}

\begin{proof}
First we solve the system of recursions in Lemma \ref{lem:recursionsk423}. Let $ s \in \{+,-\} $ and set
\[
\varphi(s,n) := \begin{cases}
    + &\text{if } \charac(K) \neq 2 \text{ and } s = - \text{ and } 8 \mid n,\\
    + &\text{if } \charac(K) \neq 2 \text{ and } s = + \text{ and } 8 \nmid n,\\
    - &\text{otherwise}.
\end{cases}
\]
For any $ n \geq 4 $ divisible by $ 4 $ the following equations hold:
\begin{align*}
(q-1)c_4^{23s}(n) = q^{3n} &+ f_1^{\varphi(s,n)}(q)q^{\frac{9}{4}n} + 3g_1(q)q^{2n+1} + 2g_2^{\varphi(s,n)}(q)q^{\frac{3}{2}n+1}\\
&+ 3g_1(q)q^{n+3} + f_1^{\varphi(s,n)}(q)q^{\frac{3}{4}n+3} + q^6,\\
(q-1)c_4^{++-23s}(n) = q^{3n-1} &+ f_1^{\varphi(s,n)}(q)q^{\frac{9}{4}n-1} + g_3(q)q^{2n} + g_4^{\varphi(s,n)}(q)q^{\frac{3}{2}n}\\
&+ g_5(q)q^{n+2} -f_1^{\varphi(s,n)}(q)q^{\frac{3}{4}n+3} - q^6,\\
(q-1)c_4^{+--12s}(n) = q^{3n-2} &+ f_1^{\varphi(s,n)}(q)q^{\frac{9}{4}n-2} - g_6(q)q^{2n-1} -g_2^{\varphi(s,n)}(q)q^{\frac{3}{2}n}\\
&+ g_6(q)q^{n+2} + f_1^{\varphi(s,n)}(q)q^{\frac{3}{4}n+3} + q^6,\\
(q-1)c_4^{+(-)[-]12s}(n) = q^{3n-3} &+ f_1^{\varphi(s,n)}(q)q^{\frac{9}{4}n-3} + g_1(q)q^{2n-2}\\
&- g_1(q)q^{n+3} - f_1^{\varphi(s,n)}q^{\frac{3}{4}n+3} - q^6,
\end{align*}
and $ c_4^{+(-)[+]12s}(n) = c_4^{+--12s}(n) $. Here we define
\begin{align*}
g_1(q) &:= q^2 + q + 1,\\
g^+_2(q) &:= -q^4 - q^3 - 2q^2 - q - 1,\\
g^-_2(q) &:= q^5 - q^3 -2q^2 -2q -2,\\
g_3(q) &:= -q^3+q^2+q+2,\\
g^+_4(q) &:= q^5+q^3-q^2-1,\\
g^-_4(q) &:= -q^6+q^5+q^4+q^3-2,\\
g_5(q) &:= -2q^3-q^2-q+1,\\
g_6(q) &:= q^3-1,
\end{align*}
With these explicit formulas, the other explicit formulas we already know and the recursions from Lemma \ref{lem:recursionsk423}, we can derive explicit formulas for all the other cardinalities mentioned in that lemma. Then it can be checked that these formulas do indeed satisfy the recursions in the Lemma. \\
Next we consider the recursions in Lemma \ref{lem:recursionsk413}. Some of the solutions for $ n \geq 4 $ divisible by $ 4 $ are
\begin{align*}
c_4^{13}(n) &= c_4^\#(n),\\
(q-1)c_4^{++-13}(n) &= q^{3n-1} + h_1(q)q^{2n-1} + h_2(q)q^{\frac{3}{2}n} + h_3(q)q^{n+2} - q^6,\\
(q-1)c_4^{-+-13}(n) &= q^{3n-2} + h_4q^{2n-2} + h_5(q)q^{\frac{3}{2}n} + h_6(q)q^{n+2} + q^6,
\end{align*}
with
\begin{align*}
h_1(q) &:= q^5-2q^4+q^3-2q^2+q-2,\\
h_2(q) &:= -q^6+2q^5-2q^4+4q^3-3q^2+2q-2,\\
h_3(q) &:= -q^5+q^4-2q^3+2q^2+3,\\
h_4(q) &:= q^5-3q^4-q^3-4q^2-2,\\
h_5(q) &:= -2q^5+3q^4+8q^2+2q+5,\\
h_6(q) &:= q^5-q^4+q^3-4q^2-2q-4.
\end{align*}
Like before, we can derive explicit formulas for the other sets in the lemma with these recursions, and then check that the recursions hold.\\
Next up are Lemma \ref{lem:recursionsk4234} and Lemma \ref{lem:recursionsk412|34}. Let $ 4 \in 4 \N $. We have
\begin{align*}
(q-1)^2c_4^{234s}(n) = q^{3n} &+ 2f_1^{\varphi(s,n)}(q)q^{\frac{9}{4}n} + i_1(q)q^{2n} + f_3^{\varphi(s,n)}(q)q^{\frac{7}{4}n} + i_2^{\varphi(s,n)}(q)q^{\frac{3}{2}n+1}\\
&+ f_3^{\varphi(s,n)}(q)q^{\frac{5}{4}n+1} + i_1(q)q^{n+2} + 2f_1^{\varphi(s,n)}(q)q^{\frac{3}{4}n+3} + q^6
\end{align*}
and $ c_4^{12|34s}(n) = c_4^{234s}(n) $ with
\begin{align*}
i_1(q) &:= q^5-q^4+3q^3+2q-2\\
i_2^+(q) &:= 2q^5-6q^4-6q^3-14q^2-8q-8\\
i_2^-(q) &:= 6q^5-2q^4-6q^3-14q^2-12q-12.
\end{align*}
This is enough to calculate the cardinalities of the other sets in Lemma \ref{lem:recursionsk4234}: First we calculate $ c_4^{(--)[+]234\pm}(n) $, then $ c_4^{---234\pm}(n) $, then $ c_4^{+--234\pm}(n) $ (using \eqref{eq:+++234}) and then the other sets follow (here we no longer assume $ n $ divisible by $ 4 $, instead we assume that $ n $ has the appropriate residue modulo $ 4 $). Similarly, all cardinalities in Lemma \ref{lem:recursionsk412|34} can be calculated from $ c_4^{12|34\pm}(n) $. Then it can be verified that the given formulas fulfill the recursions.\\
The explicit formula for $ c_4^{1234s}(n) $ with $ n $ divisible by $ 4 $ is
\begin{align*}
(q-1)^3 c_4^{1234s}(n) = q^{3n} &+ 4f_1^{\varphi(s,n)}(q)q^{\frac{9}{4}n} + f_2(q)q^{2n} + 6f_3^{\varphi(s,n)}(q)q^{\frac{7}{4}n} + f_4^{\varphi(s,n)}(q)q^{\frac{3}{2}n}\\
&+ 6f_3^{\varphi(s,n)}(q)q^{\frac{5}{4}n+1} + f_2(q)q^{n+2} + 4f_1^{\varphi(s,n)}(q)q^{\frac{3}{4}n+3} + q^6.
\end{align*}
Again, this is enough to calculate explicit formulas for the sets in Lemma \ref{lem:recursionsk41234} and these fulfill the recursions in that lemma. The formula in the theorem arises as a special case since $ c_4^*(n) = c_4^{1234-}(n) $.\\
The only thing that remains is to check the starting values. If $ s = - $, then all determinant conditions that appear in the relevant sets are trivial in the case $ n = 4 $, hence we have
\[
c_4^{1234-}(4) = c_4^{12|34-}(4) = c_4^{234-}(4) = c_4^{23-}(4) = c_4^{13}(4) = c_4(4).
\]
If $ s = + $, then the corresponding determinant conditions are impossible to fulfill and the corresponding sets are empty for $ n = 4 $, except if $ \charac(K) = 2 $. In the latter case, the determinant conditions are the same for $ s = + $ and $ s = - $. All other sets in this proof are empty in the case $ n = 4 $, since even the corresponding sets without determinant conditions are empty for $ n = 4 $. One can check that these starting values agree with the given formulas.
\end{proof}

\begin{rem}
Our calculations show $ c_4^{13}(n) = c_4^\#(n) $ and $ c_4^{234s}(n) = c_4^{12|34s}(n) $. This implies that bijections exist between the corresponding sets. However, it is unclear if there are nice, canonical bijections. If such bijections were found, parts of the proof could be simplified.
\end{rem}

\begin{cor}\label{cor:k4gcd4}
Let $ n \in \N $ be divisible by $ 4 $. Let $ \lvert K \rvert = q $.
Set
\[
s := \begin{cases}
    + &\text{if }\charac(K) \neq 2 \text{ and } 8 \mid n,\\
    - &\text{otherwise}.
\end{cases}
\]
Then
\begin{align*}
f_q(4,n) = \frac{f(q)}{(q^3+q^2+q+1)(q^2+q+1)(q+1)q^6(q-1)^3}
\end{align*}
is the number of tame $ \SL_4 $-frieze patterns of width $ n - k - 1 $ over $ K $, where
\begin{align*}
f(q) = q^{3n} &+ 4f_1^s(q)q^{\frac{9}{4}n} + f_2(q)q^{2n} + 6f_3^s(q)q^{\frac{7}{4}n} + f_4^s(q)q^{\frac{3}{2}n}\\
&+ 6f_3^s(q)q^{\frac{5}{4}n+1} + f_2(q)q^{n+2} + 4f_1^s(q)q^{\frac{3}{4}n+3} + q^6
\end{align*}
and
\begin{align*}
f_1^+(q) &:= -q^3-q^2-q-1,\\
f_2(q) &:= 3q^5-3q^4+3q^3-6q^2-6,\\
f_3^+(q) &:= -q^6+q^5+3q^4+6q^3+7q^2+5q+3,\\
f_4^+(q) &:= 3q^7+6q^6-26q^5-41q^4-73q^3-56q^2-44q-9
\intertext{and}
f_1^-(q) &:= q^4-q^3-q^2-q-2,\\
f_3^-(q) &:= q^7-3q^6-q^5+5q^3+9q^2+7q+6,\\
f_4^-(q) &:= q^9-6q^8+4q^7+29q^6+3q^5-20q^4-74q^3-79q^2-74q-24.
\end{align*}
\end{cor}
\begin{proof}
This follows from Theorem \ref{thm:c4(n)gcd4}, Theorem \ref{thm:numberOfFriezes} and
\[
\lvert \PGL(4,K) \rvert = (q^3+q^2+q+1)(q^2+q+1)(q+1)q^6(q-1)^3.
\]
\end{proof}

In theory, it is likely possible to tackle the case $ k = 5 $ or even higher $ k $ with the same method. However, the last section indicates that this would likely require an extremely large amount of time and effort. Therefore, future efforts should probably focus on simplifying the method or finding a completely new approach before attempting to solve the case $ k = 5 $ or higher.

\newpage

\bibliographystyle{alphaurl}

\newcommand{\etalchar}[1]{$^{#1}$}

\end{document}